\documentclass{article}
\usepackage{authblk}
\usepackage[a4paper,
 total={140mm,237mm},
 left=35mm,
 top=30mm,
 ]{geometry}
\usepackage[utf8]{inputenc}
\usepackage{mathpazo}
\usepackage{hyperref}
\usepackage[anythingbreaks]{breakurl}
\usepackage{doi}
\usepackage{relsize}
\usepackage{mathtools}

\usepackage[T1]{fontenc}
\usepackage{amssymb, amsmath}
\usepackage[dvipsnames]{xcolor}
\usepackage[english]{babel}
\usepackage{amsthm}
\usepackage{enumitem}
\usepackage{tikz}
\usetikzlibrary{arrows}
\usetikzlibrary{calc}
\usepackage{bm}
\usepackage{csquotes}
\usepackage[shortcuts]{extdash}
\usepackage{xcolor}
\usepackage{authblk}
\usepackage{tabularx}
\usepackage[shortcuts]{extdash}

\usepackage[bb=dsserif]{mathalpha}

\usepackage{graphicx} %

\usepackage{url}
\usepackage{breakurl}
\usepackage{xfrac}
 
\newcommand{\introthmname}{}
\newtheorem{introthminn}{\introthmname}

\usepackage{thmtools}
\usepackage{xfrac}
\usepackage{thm-restate}
\usepackage[capitalise]{cleveref}
\usepackage{verbatim}
\usepackage{mathabx}
\usepackage{graphicx}
\usepackage{amsfonts}
\usepackage{pb-diagram}
\usepackage[normalem]{ulem}

\usepackage{adjustbox}
\usepackage{mathtools}
\usepackage{todonotes}

\usepackage{mathrsfs}
\usepackage{bm}
\usepackage{enumitem}

\usepackage[format=plain,
            labelfont=it,
            textfont=it]{caption}
\usepgflibrary{shapes.geometric}
\usepackage{float}
\allowdisplaybreaks

 \hypersetup{breaklinks=true, %
           colorlinks=false,   %
           hidelinks
        }

\usepackage[maxnames=20, url=false,  doi=false]{biblatex} %

\AtEveryBibitem{\clearlist{language}}
\renewbibmacro{in:}{}

\bibliography{global.bib}

\usepackage{subfiles}

\newtheorem{theorem}{Theorem}[section]

\newtheorem{corollary}[theorem]{Corollary}

\newcommand{\ignore}[1]{}
\newtheorem{lemma}[theorem]{Lemma}
\newtheorem{prop}[theorem]{Proposition}
\newtheorem{fact}[theorem]{Fact}
\theoremstyle{remark}
\newtheorem{remark}[theorem]{Remark}
\newtheorem{notation}[theorem]{Notation}

\theoremstyle{definition}
\newtheorem{example}[theorem]{Example}

\newtheorem{examples}[theorem]{Examples}
\newtheorem{definition}[theorem]{Definition}
\newtheorem{question}[theorem]{Question}
\newtheorem{observation}[theorem]{Observation}

  \newtheorem{thmx}{Theorem}

\title{Minimal and intrinsic topologies\\ on 
monoids of elementary embeddings}

\author[1]{J. de la Nuez Gonz\'{a}lez}
\author[2]{Zaniar Ghadernezhad}
\author[3]{Paolo Marimon}
\author[4]{Michael Pinsker}
\affil[1]{School of Mathematics, Korea Institute for Advanced Study (KIAS). \newline Seoul, Republic of Korea.}
\affil[2]{School of Physical and Chemical Sciences, Queen Mary University of London. London, United Kingdom.}
\affil[3]{Department of Computer Science. University of Oxford. \newline Oxford, United Kingdom.}
\affil[4]{Institut f\"{u}r Diskrete Mathematik \& Geometrie, TU Wien. Vienna, Austria.}

\def\acts{\curvearrowright}

\DeclareMathOperator{\acl}{acl}
\DeclareMathOperator{\dcl}{dcl}

\DeclareMathOperator{\Aut}{Aut}
\DeclareMathOperator{\End}{End}
\DeclareMathOperator{\EEmb}{EEmb}
\DeclareMathOperator{\Emb}{Emb}
\newcommand{\sA}{\mathbb{A}}

\newcommand{\actson}{\curvearrowright}

  \def\G{\overline{G}}
\def\a{\overline{a}}
\def\b{\overline{b}}
\def\f{\overline{f}}

\newcommand{\M}[0]{\mathcal{M}}

\newcommand{\U}[0]{\mathbb{U}}

\newcommand{\nn}[0]{S}%
\newcommand{\N}[0]{\mathbb{N}}

\renewcommand{\U}[0]{\mathcal{U}}
\newcommand{\V}[0]{\mathcal{V}}
\newcommand{\W}[0]{\mathcal{W}}

\DeclareMathOperator{\im}{Im}

\newcommand{\pw}[0]{\tau_{\mathrm{pw}}}
\newcommand{\zar}[0]{\tau_{\mathrm{Z}}}
\newcommand{\mpw}[0]{\tau_{\mathrm{mp}}}

 \newcommand{\C}[0]{\mathcal{C}}
  \newcommand{\D}[0]{\mathcal{D}}
 \newcommand{\E}[0]{\mathcal{E}}

\newcommand{\X}[0]{\mathfrak{X}}
\newcommand{\id}[0]{\ind}

\def\Ind#1#2{#1\setbox0=\hbox{$#1x$}\kern\wd0\hbox to 0pt{\hss$#1\mid$\hss}
	\lower.9\ht0\hbox to 0pt{\hss$#1\smile$\hss}\kern\wd0}
\def\ind{\mathop{\mathpalette\Ind{}}}
\def\Notind#1#2{#1\setbox0=\hbox{$#1x$}\kern\wd0\hbox to 0pt{\mathchardef
		\nn="3236\hss$#1\nn$\kern1.4\wd0\hss}\hbox to
	0pt{\hss$#1\mid$\hss}\lower.9\ht0
	\hbox to 0pt{\hss$#1\smile$\hss}\kern\wd0}

\makeatletter
\newcommand{\tpitchfork}{%
  \vbox{
    \baselineskip\z@skip
    \lineskip-.52ex
    \lineskiplimit\maxdimen
    \m@th
    \ialign{##\crcr\hidewidth\smash{$-$}\hidewidth\crcr$\pitchfork$\crcr}
  }%
}

\newenvironment{subproof}[1][\proofname]{%
\begin{proof}[#1]%
	}{%
\end{proof}%
}

\newcommand{\fps}[1]{[#1]^{<\omega}} 

\newcommand{\orq}[0]{\equiv}
\global\long\def\cl{\mathrm{acl}}

\renewcommand\leq{\leqslant}
\renewcommand\geq{\geqslant}

\newcommand{\rr}[0]{\mathcal{R}}
\newcommand{\ie}[0]{\mathrm{IE}}

\DeclareMathOperator{\Sym}{Sym}

\newcommand{\gc}[0]{G^{\sim}}

\begin{document}

\maketitle
\begin{abstract}
To every $\omega$-categorical structure $\M$ one can associate two spaces of symmetries which determine the structure up to first-order bi-inter\-pretability: the topological group $\Aut(M)$ of its automorphisms and the topological monoid $\EEmb(M)$ of its elementary embeddings, both equipped with the topology of pointwise convergence $\pw$. We investigate the relation of $\pw$ to other topologies on these spaces: in particular, when $\pw$ is minimal, i.e.~does not admit any strictly coarser Hausdorff semigroup topology.

A common method to prove minimality of $\pw$ on $\EEmb(M)$ is to show that it coincides with the algebraically defined semigroup Zariski topology $\zar$. We show that $\pw$ differs from $\zar$ on $\EEmb(M)$ whenever $\Aut(M)$ has a non-trivial centre. In spite of this, we then prove that whenever algebraic closure on $M$ is modular, then $\pw$ is minimal on $\EEmb(M)$. This covers,
for example, countable vector spaces and projective spaces over finite fields. {Turning to $\Aut(M)$, we describe the semigroup topologies coarser than $\pw$ on the automorphism groups of structures for which algebraic %
{independence} satisfies independent 3-amalgamation.}
 We conclude by proving that for  the real and the rational Urysohn space and sphere, the metric pointwise topology $\mpw$ %
is minimal on $\EEmb(M)$,
equals $\zar$, and is strictly coarser than $\pw$.

\end{abstract}

\textbf{Keywords:} Pointwise convergence topology, Zariski topology, elementary embedding, oligomorphic permutation group, saturation, model-theoretic stability, model-theoretic simplicity, base monotonicity, modularity, lovely pairs, Urysohn space.\\ 
\textbf{MSC classes:} 20M20, 54H15, 22A15, 03C45, 03C15.

\tableofcontents

\section{Introduction}
\subsection{Spaces of symmetries and their topologies}

Given a countable structure $\M$, there are two natural spaces of symmetries preserving its structure as given by first-order logic: its automorphism group $\Aut(\M)$ 
{(consisting of the symmetries that are bijections), }%
and its monoid of elementary embeddings $\EEmb(\M)$.\footnote{{Elementary (self-)embeddings are injective maps on a structure  preserving the truth of all first-order formulas.}} %
From these objects, considered respectively as a permutation group and a transformation monoid, one can draw conclusions about the original structure $\M$. For example, we can tell whether or not $\M$ is \textbf{$\omega$-categorical}, i.e.~up to isomorphism the only countable model of its first-order theory; by the theorem of Ryll-Nardzewski, Engeler, and Svenonius (see the textbook~\cite{tent2012course}), this is the case if and only if $\Aut(\M)$ is \textbf{oligomorphic}, that is, it acts on all finite powers of its domain %
with finitely many orbits. 
 In the $\omega$-categorical case, we can even measure exactly ``how well'' $\Aut(\M)$ and $\EEmb(\M)$ still know $\M$: they determine $\M$ up to first-order interdefinability, i.e. for any other structure, its automorphism group (respectively monoid of elementary embeddings) acts the same way if and only if that structure has a first-order definition without parameters in $\M$, and vice-versa.\\

One can then abstract further: $\Aut(\M)$ and $\EEmb(\M)$ are, respectively, a topological group and a topological monoid when equipped with the \textbf{topology $\pw$ of pointwise convergence}, for which a subbasis of open sets is given by sets of all maps which send a particular point $x\in \Omega$ to another specific point $y\in \Omega$ (here and throughout, $\Omega$ denotes the domain of the structure $\M$).  Moreover, $\Aut(\M)$ is $\pw$-closed in the group of all permutations of $\Omega$, and $\EEmb(\M)$ is $\pw$-closed in the full monoid $\Omega^\Omega$. This implies that $\pw$ is \textbf{Polish}  on these spaces, i.e., separable and induced by a complete metric.
From these topological-algebraic structures, we can still extract considerable information about $\M$: in the $\omega$-categorical case both the topological group $\Aut(\M)$ and the topological monoid $\EEmb(\M)$ determine $\M$ up to first-order %
 {bi-interpretability,} a more generous equivalence than the interdefinability mentioned above
(Coquand; see~\cite{AhlbrandtZiegler}, and~\cite{Lascar}). %
 To provide another example, %
 from $\Aut(\M)$ we can tell whether or not $\M$ has a certain combinatorial property called the Ramsey property: this is the case if and only if the topological group $\Aut(\M)$ is extremely amenable, i.e.~every continuous action of it on a compact Hausdorff space has a fixed point~\cite{KPT}.\\

This invites a further step of abstraction where we view these spaces of symmetries only as algebraic structures by forgetting the topology, giving rise to two natural questions:
\begin{question}\label{q1} How much information about $\M$ is still encoded in the pure group $\Aut(\M)$ or the pure monoid $\EEmb(\M)$? 
\end{question}
\begin{question}\label{q2} What is the relation of $\pw$ to the other (Hausdorff/Polish) topologies compatible with the algebraic structure of $\Aut(\M)$ or $\EEmb(\M)$?
\end{question}

\subsection{The case of groups}

Over the past four decades, a rich literature has emerged on these questions in the context of automorphism groups.
 Regarding \cref{q1}, in the $\omega$-categorical case, {we often have that the pointwise convergence topology is the same in all possible actions of $\Aut(\M)$ as a closed oligomorphic permutation group; this then implies that the algebraic structure of $\Aut(\M)$ encodes $\M$ up to bi-interpretability.}
As exemplary references, we point to~\cite{DixonNeumannThomas, Truss} for the notion of the small index property; to~\cite{Rubin} for %
{$\forall\exists$}-interpretations; and to~\cite{HodgesHodkinsonLascarShelah, KechrisRosendal} for ample generics. {In fact, when $\Aut(\M)$ has ample generics, then $\pw$ is even the unique  Polish group topology on it (which is a stronger statement than the pointwise topology being the same in all actions); this then provides a complete answer to~\cref{q2} in the Polish case.} {The surveys~\cite{MacphersonSurvey, rosendal2025aspectsautomaticcontinuity} contain many further references to notable works.}  Counterexamples {to uniqueness of $\pw$ amongst Polish group topologies, or even to $\M$ being determined up to bi-interpretability by the abstract group $\Aut(\M)$}, such as that given in~\cite{EvansHewitt}, are hard to construct; indeed, it is consistent with $\mathrm{ZF}$ that $\pw$ is always the unique Polish group topology on $\Aut(\M)$~\cite{Lascar} (cf.~\cite{solovay1970model, shelah1984can}). {Not only the topology $\pw$, but} even the action $\Aut(\M)\actson \Omega$ can be reconstructed from the pure group structure in many cases under the additional assumption that $\M$ has \textbf{no algebraicity}, i.e., for each finite $A\subseteq \Omega$ the pointwise stabiliser of $A$ acts only with infinite orbits on $\Omega\setminus A$~\cite{Rubin}.\\

Looking at \cref{q2}, in many interesting special circumstances $\pw$ is such that there are no strictly coarser Hausdorff group topologies on $\Aut(\M)$. Topological groups with this property are known as \textbf{minimal}, and such groups have been  intensively studied~\cite{dikranjan2013minimality}. 
Examples of minimal topological groups arising from automorphism groups of countable structures are  the full symmetric group on a countable set 
 by~\cite{gaughan1967topological}, and more generally the automorphism groups of $\omega$-categorical stable structures~\cite{ben2016weakly}. The latter is a rich class of model-theoretically ``tame’’ structures, including, for example, infinite-dimensional vector spaces and projective spaces over finite fields. Indeed, the automorphism groups of stable $\omega$-categorical structures are not only minimal but \textbf{totally minimal}:  every continuous surjective homomorphism  to a Hausdorff topological group is open.
In their survey of minimal topological groups, Dikranjan and Megrelishvili explicitly ask which oligomorphic permutation groups are minimal
~\cite[Question 2.3]{dikranjan2013minimality}.\\

On the positive side, from~\cite{ghaddln24} we have some additional results outside the stable context under the model-theoretic weakening of stability known as simplicity: the automorphism groups of simple transitive $\omega$-categorical one-based structures with weak elimination of imaginaries are minimal~\cite{ghaddln24}. This includes, for example, the automorphism group of any  vector space of infinite dimension over a finite field %
with a {symplectic or unitary 
form}~\cite{cherlin2003finite}. On the negative side, we can identify three 
recurring 
sources of non-minimality: non-transitivity, the presence of intervals of some form, and the presence of a metric structure. The first source can lead to failures of minimality by ``localising'' the topology of pointwise convergence to an orbit of $\Aut(\M)$ (\cref{ex:bipartite}). 
From the second source, an interval topology $\tau_{\mathrm{int}}$ can be defined, for example on $\Aut(\mathbb{Q}; <)$, with the basic open sets consisting of maps sending a particular element into a specific interval (rather than to another specific  element); in this example, $\tau_{\mathrm{int}}$  is minimal and strictly coarser than $\pw$~\cite{chang2017minimum, ghaddln24}. The third source occurs outside of the $\omega$-categorical setting, and allows to define a ``metric pointwise topology’’ $\mpw$ coarser than $\pw$; an example is the real Urysohn sphere~\cite{uspenskij1990group}.

\subsection{The case of monoids}

With the notable exception of some early works by Lascar~\cite{Lascar, Lascar82}, the study of the behaviour of $\pw$ on the monoid of elementary embeddings $\EEmb(\M)$ of a structure $\M$  (or other more general spaces of symmetries) is much younger: it was only initiated about 15 years ago~\cite{Reconstruction}. Since then, a considerable number of results have emerged on the topic, as recently surveyed in~\cite{marimon2025guide}. The motivation for this line of research is threefold. Firstly,  studying monoids of elementary embeddings sheds light also on automorphism groups: this perspective can already be found in Lascar's work~\cite{Lascar, Lascar82}. Secondly, such study can help us develop tools and examples for the general investigation of topological semigroups, as demonstrated in the seminal paper~\cite{elliott2023automatic}; compared to groups, substantially different methods to prove, say, minimality are required, and results differ: for example, every monoid of elementary embeddings of an $\omega$-categorical structure has a Polish topology strictly finer than $\pw$~\cite[Proposition~5.1]{elliott2023polish}. Finally, the study of $\EEmb(\M)$ often serves as an initial step to understanding even richer spaces of symmetries~\cite{bodirsky2015topological} such as endomorphism monoids (maps preserving existential positive  formulas) and polymorphism clones (higher-arity maps preserving primitive positive formulas), with the latter having important applications 
to constraint satisfaction problems in
theoretical computer science~\cite{BPP-projective-homomorphisms, RomanMichael}.\\

Moving from $\Aut(\M)$ to $\EEmb(\M)$, there are surprising differences in topological behaviour that we explore in this paper. At a first glance, $\EEmb(\M)$ may look extremely close to $\Aut(\M)$ in the contexts we study. In fact, for any \textbf{saturated} structure $\M$ (which includes $\omega$-categorical ones), it coincides with the $\pw$-closure of $\Aut(\M)$ in $\Omega^\Omega$. Yet, there is a fascinating tension between the weaker algebraic structure of a monoid (which inhibits the use of many  techniques coming from groups) and the fact that we have a  richer space of maps (making it potentially easier to see the first-order behaviour of $\M$ from the pure  algebraic structure of the monoid).\\

Arguably, as mentioned in~\cite{marimon2025guide}, $\EEmb(\M)$ often behaves better than $\Aut(\M)$: already Lascar gives an extremely satisfying answer to \cref{q1} for $\EEmb(\M)$ by proving that any isomorphism between the monoids of  elementary embeddings of two countable saturated $G$-finite structures is continuous when restricted to their automorphism groups. In fact, it is even a topological isomorphism between the monoids as shown recently  in~\cite{marimon2025guide}. No result at this level of generality is known for isomorphisms between  automorphism groups. %
As another example, by~\cite[Theorem~3.2]{pinskerschindler},  if two $\omega$-categorical structures with no algebraicity have isomorphic monoids
of elementary self-embeddings, then this isomorphism is automatically a topological one; in fact, it follows from results in~\cite{Rubin} (see~\cite[Theorem~58]{marimon2025guide}) that any such 
isomorphism is even induced by %
a bijection between the domains of the structures yielding an isomorphism of the actions of the associated transformation monoids. %
No such result is known for groups, with various special cases proved in work following~\cite{Rubin}.\\

Also with regard to minimality,  $\EEmb(\M)$ seems to behave better than $\Aut(\M)$.
 From~\cite{pinskerschindler}, we know that $\pw$ is minimal on the monoid of elementary embeddings of any $\omega$-categorical structure with no algebraicity. This  already shows that standard sources of failure of minimality on $\Aut(\M)$ do not seem to work for $\EEmb(\M)$, since both $(\mathbb{Q}; <)$ (where one can define an interval topology on $\Aut(\M)$)  and the random bipartite graph (where one can define a localised topology) have no algebraicity.

\subsection{The Zariski topology}\label{subsect:Zariski}

Minimality of $\pw$ on  $\EEmb(\M)$ can often be shown by proving that it  has a purely algebraic description in terms of the \textbf{semigroup Zariski topology} $\zar$ from~\cite{elliott2023automatic}; the latter is known to be contained in every other Hausdorff semigroup topology {(that is, every Hausdorff topology making multiplication continuous)}. In $\zar$, the closed sets are generated by  solution sets of equations over the semigroup in one free variable with parameters from the semigroup. Since it is defined in purely algebraic terms, any algebraic isomorphism between monoids is a homeomorphism of their Zariski topologies. 
A priori,  $\zar$ is not particularly well-behaved or expressive: it need not be Hausdorff or a semigroup topology. However, it so happens that it often coincides with $\pw$ on   $\EEmb(\M)$, as first demonstrated in~\cite{pinskerschindler} for all $\omega$-categorical structures with no algebraicity. This was already observed in~\cite{elliott2023polish}, where it was then asked whether there exists an $\omega$-categorical structure  at all where $\pw\neq\zar$ on its endomorphism monoid (monoids of the form $\EEmb(\M)$ are a special case of these; they are precisely those endomorphism monoids with a dense set of invertibles). The question can be answered affirmatively even for $\EEmb(\M)$ %
 by the results of~\cite{BodirskyEvansKompatscherPinsker} (cf.~\cite{marimon2025guide}), and in~\cite{pinskerschindler} another more natural and tangible example was provided; yet these two examples remain the only known ones to this day.
 In fact, showing that $\pw=\zar$ is the standard path to prove minimality of $\pw$ on endomorphism monoids. This approach was taken, in particular, in~\cite{elliott2023automatic,elliott2023polish,pinskerschindlerQ}, where %
 other methods were then used to show that $\pw$ is also maximal, and hence unique, among Polish semigroup topologies on various natural endomorphism monoids.
\\ 

To compare this situation  with the case of groups,  $\zar$ is hyperconnected on $\Aut(\M)$ (and hence not Hausdorff) whenever $\M$ has no algebraicity~\cite{bardyla2025note}. {Moreover, }
the more intensively studied \textbf{group Zariski  topology}~\cite{bryant1977verbal}, where inversion may be used in the definition of closed sets, is often not a group topology on $\Aut(\M)$~\cite{ghaddln24}; with notable exceptions being the automorphism groups of $(\mathbb{N}; =)$ and $(\mathbb{Q}; <)$.

\section{Results}

We now present our main results; all relevant definitions can be found in Section~\ref{sect:prelims}. We formulate our results in terms of {a permutation group}
$G$ acting on a countable set $\Omega$. To connect this with the previous discussion, we equip $\Omega^\Omega$ with the topology of pointwise convergence $\pw$. It is easy to see that $G\actson \Omega$ is closed in the set of all permutations with respect to $\pw$ if and only if it equals $\Aut(\M)$ for some first-order structure $\M$ on $\Omega$. As mentioned above, the closure $\G$ of $G$ in $\Omega^\Omega$ is precisely the monoid $\EEmb(\M)$ of elementary self-embeddings of $\M$ whenever $\M$ is saturated; in particular, this is the case whenever $\M$ is $\omega$-categorical. All theorems will feature assumptions on the operator $\acl$ of \textbf{algebraic closure} (with respect to $G\acts\Omega$): for any finite $A\subseteq \Omega$,  %
we let $\acl(A)$  consist of all elements with a finite orbit under the action of the pointwise stabiliser  $G_A$. We will always assume, in particular,   
that  algebraic closure is \textbf{locally finite}, i.e., $\acl(A)$ is finite for all finite $A\subseteq \Omega$.  This is satisfied by the automorphism groups of 
$\omega$-categorical structures. An extreme form is the aforementioned notion of no algebraicity which means $\acl(A)=A$ for all finite $A\subseteq \Omega$.\\

Our first theorem provides, for the first time, general conditions implying $\pw\neq\zar$ on monoids of elementary embeddings; hitherto  only two examples were known,  cf.~\cref{subsect:Zariski}.

\begin{thmx}[{\cref{thm:zar}}]\label{mainthm: a} Let $G\acts \Omega$ be a closed permutation group with locally finite algebraic closure, and suppose that $G$ has a non-trivial centre. %
Then the semigroup Zariski topology $\zar$ is neither Hausdorff on $G$, nor on $\G$. In particular, it is properly contained in $\pw$. 
\end{thmx}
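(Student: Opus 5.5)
The plan is to exhibit two distinct points that no pair of Zariski-open sets can separate, namely the identity $1$ and a non-trivial central element $z\in Z(G)$. Everything rests on one algebraic feature of $z$: it commutes with all of $\G$, not only with $G$. Indeed, $zg=gz$ for all $g\in G$. Since $G$ is $\pw$-dense in $\G$ and composition is $\pw$-continuous, we get $zf=fz$ for all $f\in\G$. Moreover $z^{-1}\in Z(G)$ as well.

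\textbf{Step 1: sorting the equations.} A basic Zariski-closed set is the solution set $F$ of an equation $s(x)=t(x)$ in one variable with parameters from $G$ (respectively from $\G$). Let $k$ and $l$ be the numbers of occurrences of $x$ in $s$ and $t$. Because $z$ is central in $\G$, substituting $zx$ gives $s(zx)=z^k s(x)$ and $t(zx)=z^l t(x)$.
\begin{itemize}
\item If $k=l$, the equation is \emph{balanced}. Then $z^k$ is an injective map, so it can be cancelled on the left, and $F$ satisfies $zF=F$ exactly; here we use that $z^{-1}$ is also central.
\item If $k\neq l$, the equation is \emph{unbalanced}. For these I will show that the solution set is $\pw$-nowhere dense in $G$ (respectively in $\G$).
\end{itemize}
Since Zariski-closed sets are $\pw$-closed, every Zariski-closed set is a finite union of sets of the form $B\cap N$. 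Here $B$ is a $z$-invariant $\pw$-closed set, being an intersection of balanced solution sets, and $N$ is either everything or a closed nowhere dense set.

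\textbf{Step 2: nowhere density of unbalanced solution sets (the main obstacle).} Fix an unbalanced equation $s(x)=t(x)$ and a basic $\pw$-open set $[\bar a\mapsto\bar b]$ meeting $G$. I would construct $g$ in this open set with $s(g)\neq t(g)$, extending the finite partial map step by step.

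Since $z\neq 1$, pick $c$ with $z(c)\neq c$. Commuting with $G_c$ forces $z(c)\in\dcl(c)$, so $z$ acts on every orbit in a definable, and hence controlled, way. Local finiteness of $\acl$ lets us choose images of new points outside the finite sets $\acl$ of everything already fixed. Iterating the parameters of $s$ and $t$ along a chosen orbit, we can arrange that $s(g)$ and $t(g)$ disagree at some point. The imbalance $k\neq l$ is what should make this possible: the two sides differ by a power of $z$, and this power acts non-trivially at a suitable point.

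This is the step where I expect the real work. Handling parameters from $\G$ rather than $G$ means tracking images of finite sets through non-surjective maps. If it turns out to be easier, I would first reduce to showing that the unbalanced solution set has empty $\pw$-interior, which suffices by closedness.

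\textbf{Step 3: Baire category conclusion.} Suppose Zariski-open sets $U\ni 1$ and $V\ni z$ are disjoint. Then $\G=C\cup D$, where $C=\G\setminus U$ and $D=\G\setminus V$; write $C=\bigcup_i (B_i\cap N_i)$ and $D=\bigcup_j (B'_j\cap N'_j)$ as in Step 1.

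Work in the $\pw$-open set $W=G\setminus\bigcup\{\text{nowhere dense pieces}\}$, which is comeagre. On $W$ the covering uses only the $z$-invariant pieces $B_i,B'_j$ for which the corresponding $N$ is everything. Hence $W\cap C$ and $W\cap D$ are $z$-invariant relatively closed sets covering $W$.

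Now $1\notin C$, and $z$-invariance of the pieces of $C$ gives $z\notin C$; symmetrically, from $z\notin D$ we get $1\notin D$. So $1\notin C\cup D=\G$, which is absurd. For this to go through, $1$ and $z$ must be handled within $W$: since $1\notin C$ is an open condition, we can replace $1$ by nearby points $g$ of $W$ and $z$ by $zg$. The same argument then works in $\G$.

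Thus $\zar$ is not Hausdorff on $G$ or on $\G$. Since $\pw$ is Hausdorff and $\zar\subseteq\pw$, the containment is proper.
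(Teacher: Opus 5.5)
Your overall architecture is the paper's. Balanced (singular) disequalities satisfied by $1$ are also satisfied by $z$, by centrality of $z$ in $\overline{G}$ and injectivity of $z^k$. Unbalanced (regular) ones should have $\pw$-dense open solution sets. Step~1 and the transfer of centrality to $\overline{G}$ are fine.

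The genuine gap is Step~2, which is the entire technical content of the theorem (the paper's Lemma~\ref{lem:comeagre1}). You give no argument for it, and the mechanism you suggest cannot work. Density of solutions of unbalanced disequalities has nothing to do with $z$: it holds for every $G$ with locally finite algebraic closure, with or without centre. For a fixed $g$, the elements $s(g)$ and $t(g)$ are not related by any power of $z$. Centrality only gives that if $g$ and $zg$ both solve $s=t$ (with $k>l$), then $z^{k-l}$ fixes $\im(s(g))$ pointwise. That is no contradiction when the order of $z$ divides $k-l$, e.g.\ $z=-1$ on a vector space over $\mathbb{F}_3$ and the equation $xx=1$. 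In any case it does not produce a non-solution inside a prescribed basic $\pw$-open set.

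What is missing is a direct construction of a witness $\delta\in G$ via Neumann's Lemma. Given a basic open set demanding $\delta(\bar b)=\bar b'$, proceed as follows.
\begin{enumerate}
\item Pick $e_0\notin\acl(\emptyset)$ suitably independent.
\item Follow the two trajectories of $e_0$ under the two words simultaneously: $a_0=(e_0,e_0)$ and $a_{i+1}=\delta(\lambda_i a_i^1,\eta_i a_i^2)$, padding the shorter word with $1$.
\item At each step choose the new images by Neumann's Lemma, so that the next point is algebraically independent from everything already placed.
\item Keep the finite partial map $\bar b\mapsto\bar b'$, $(\lambda_j a_j^1,\eta_j a_j^2)\mapsto a_{j+1}$ with its domain and range tuples in one $G$-orbit, so it extends to some $\delta\in G$.
\end{enumerate}
Since the longer word takes one more step, its endpoint $\lambda_n a_n^1$ is independent from, hence different from, the endpoint $\eta_m a_m^2$ of the shorter word. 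So $\delta$ violates the equation at $e_0$. Parameters from $\overline{G}$ are handled because they are injective, so finite tuples can be pulled back, and they preserve $\acl$ and $\ind^{a}$ on finite sets.

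Step~3 also needs repair. An arbitrary Zariski-closed set is an intersection of finite unions of solution sets, not a finite union of them. Moreover, $W\cap C$ is not $z$-invariant because $W$ is not, so ``$1\notin C$ implies $z\notin C$'' fails on the unbalanced pieces. Your ``nearby points'' patch can be made to work, but no Baire category is needed. Take basic neighbourhoods $U_{\mathrm b}\cap U_{\mathrm u}\ni 1$ and $V\ni z$. Step~1 gives $z\in U_{\mathrm b}$, so $U_{\mathrm b}\cap V$ is a non-empty $\pw$-open set, and it meets the $\pw$-dense open set $U_{\mathrm u}$.
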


Natural examples of  structures whose  automorphism groups satisfy the above assumptions are countable vector spaces over finite fields of size $>2$, %
or infinitely many disjoint copies of an edge $K_2$ (which is essentially the source of the original counterexample of~\cite{pinskerschindler}).\\

\cref{mainthm: a} tells us that the standard method of showing $\pw=\zar$ in order to prove minimality of $\pw$ on $\EEmb(\M)$  cannot work for $\omega$-categorical structures with a non-trivial 
central automorphism. 
Such an automorphism can be viewed as a strong
form of algebraicity; recall from Section~\ref{subsect:Zariski} that no algebraicity implies $\pw=\zar$. We thus develop  alternative techniques to establish minimality as long as the lattice of algebraically closed sets is modular:

\begin{thmx}[\cref{t:minimality endomorphism monoids}]\label{mainthm: b}Let $G\acts\Omega$ be the automorphism group of a countable saturated structure such that {algebraic closure is  locally finite  and modular.}
Then, $\pw$ is minimal amongst Hausdorff semigroup topologies on $\G$.  
\end{thmx}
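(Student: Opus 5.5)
Let $\tau$ be a Hausdorff semigroup topology on $\G$ with $\tau\subseteq\pw$. We have to show $\pw\subseteq\tau$. It suffices to show that, for each finite tuple $a$, the basic $\pw$-neighbourhood $U_a=\{f\in\G: f(a)=a\}$ of the identity is a $\tau$-neighbourhood of $1$. Translating by elements of $\G$ then handles arbitrary basic sets $\{f: f(a)=b\}$, since these are of the form $\{f : f(a)=g(a)\}$ with $g\in\G$ and $\pw$ is generated by such translates.

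The tool I plan to use is the family of algebraic \emph{idempotent-like} elements of $\G$. Saturation together with locally finite $\acl$ should let me pick, for each finite algebraically closed $A$, an elementary embedding $e_A\in\G$ fixing $A$ pointwise whose image $M_A$ is \emph{independent} from a large algebraically closed set over $A$. Modularity is what makes such independence well-behaved: $\acl(A\cup B)\cap\acl(A\cup C)$ can be computed lattice-theoretically, and in a modular lattice one has $\acl(X)\cap\acl(Y)=\acl(X\cap Y)$ whenever $X$ and $Y$ are independent over their intersection.

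The core step is to characterise the stabiliser $\G_{(A)}$ of a finite algebraically closed $A$ by conditions that are closed in any Hausdorff semigroup topology. Concretely, I would pick $e,e'\in\G$ fixing $A$ pointwise with $\acl(\im e)\cap\acl(\im e')=A$, using modularity and saturation to achieve this. I would then show that $f$ fixes $A$ pointwise iff $f$ satisfies a finite system of identities $f e\, s= e\, t$ and $f e' s'= e' t'$ for suitable parameters from $\G$. Solution sets of such equations are $\tau$-closed because $\tau$ is Hausdorff and multiplication is continuous. This would show that $\G_{(A)}$ and all its translates $\{f: f|_A=g|_A\}$ are $\tau$-closed.

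With finitely many orbits of $G_{(A)}$-cosets relevant (local finiteness of $\acl$ plus saturation), the complement of $\G_{(A)}$ inside a suitable $\tau$-compact or finite union would become $\tau$-closed. This would make $\G_{(A)}$ $\tau$-open. If finiteness fails, I would instead argue via a Baire or open-mapping style argument at $1$: pick $\tau$-neighbourhoods $V$ with $V\cdot V\subseteq W$ and use that $V$ must meet only boundedly many of the closed sets $\{f: f|_A=g|_A\}$.

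I expect the main obstacle to be this last transition from ``closed'' to ``open''. This is exactly where the Zariski method fails when the centre is non-trivial (\cref{mainthm: a}), because central elements give coincidences that cannot be separated by equations. I would attack it by using the independent embeddings $e$ to conjugate a putative sequence $f_n\to_\tau 1$ with $f_n|_A\neq \mathrm{id}$. Modularity would then force the limit $e f_n e'$ either to converge in $\pw$ to something distinct from $ee'$, contradicting Hausdorffness, or to show that the discrepancies are witnessed by a central automorphism. The latter case I would rule out by using the elementary-embedding images, which no central automorphism fixes setwise unless it is trivial.
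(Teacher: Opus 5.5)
\textbf{There is a genuine gap.} It lies in the core step and in the step you yourself flag as the main obstacle.

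First, the pointwise stabiliser $\overline{G}_{(A)}$ cannot be characterised by equations of the kind you propose.
\begin{itemize}
\item If $e,s,t$ are fixed, the equation $fes=et$ prescribes $f$ on the whole infinite set $\im(es)$. But $G_A$ moves every point outside the finite set $\acl(A)$ through an infinite orbit. So $\overline{G}_{(A)}$ contains maps that disagree at some point of $\im(es)$, and no finite system of such equations has $\overline{G}_{(A)}$ as its solution set.
\item If instead $s,t$ are existentially quantified, the solution set is a projection, and there is no reason for it to be $\tau$-closed.
\end{itemize}
More fundamentally, closedness is the wrong property. The sets $\{f: f_{\restriction A}=g_{\restriction A}\}$ are already $\pw$-clopen, and what you must show is that they are $\tau$-open. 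None of your proposed transitions from closed to open works:
\begin{itemize}
\item These sets are indexed by the $G$-orbit of an enumeration of $A$, which is infinite as soon as $A\not\subseteq\acl(\emptyset)$. So the complement of $\overline{G}_{(A)}$ is an infinite union of closed sets.
\item No compactness is available.
\item $\tau$ is an arbitrary Hausdorff semigroup topology coarser than $\pw$, so no Baire or open-mapping argument applies. It is also not assumed first countable, so sequences would have to be replaced by nets.
\end{itemize}
The closedness information you extract from equations is essentially what $\zar$ already gives you. By Theorem~\ref{mainthm: a} this cannot suffice under the present hypotheses: infinite-dimensional vector spaces over $\mathbb{F}_q$ with $q>2$ have modular algebraic closure and non-trivial centre, so $\zar\subsetneq\pw$ there.

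Your plan for disposing of the centre is also false. By \cref{lem:centretransfer}, a central $\gamma\in Z(G)$ commutes with every $e\in\overline{G}$. Hence $\gamma(\im e)=\im(\gamma e)=\im(e\gamma)=\im(e)$, so every central automorphism preserves every image of an elementary embedding setwise (scalar multiplication preserves all subspaces).

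What is missing is a way to exploit Hausdorffness of $\tau$ that is not mediated by equations. The paper argues by contradiction in four steps.
\begin{enumerate}
\item A filter and cluster-point argument shows that if every element were $\tau$-rigid at every point, then $\tau$ would not be Hausdorff.
\item Hence there is a non-empty $G$-invariant set $\Sigma$ on which some $w'$ is deeply $\tau$-dispersive, and $\Delta=\Omega\setminus\Sigma$ is algebraically closed. This forces every $\tau$-neighbourhood of a suitable power of $w$ to contain a product $S_{\mathcal{E}}h$ along a chain of algebraically closed sets whose consecutive intersections lie in $\Delta$.
\item Modularity, via base monotonicity of algebraic independence, is used to construct by a category argument a universally embedded model $\Omega'$, which is a $1$-sink.
\item Hausdorffness is used a second time to make $\{s:\alpha s\neq\beta s\}=\{s:\im(s)\not\subseteq\Omega'\}$ $\tau$-open, where $\alpha,\beta$ agree exactly on $\Omega'$. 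The sink property then produces an element of this open set whose image lies inside $\Omega'$, a contradiction.
\end{enumerate}
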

{\cref{mainthm: b} subsumes the results of~\cite{pinskerschindler} which were obtained under the stronger assumptions of $\omega$-categoricity and  no algebraicity. In particular, it yields minimality of $\pw$ for the {monoid of elementary embeddings}
of any infinite-dimensional vector space over a finite field, {even though, over fields of size $>2$, we know that $\pw\neq\zar$ from  Theorem~\ref{mainthm: a}.}
{More generally,} the conclusion of \cref{mainthm: b} holds for all (model-theoretically) simple one-based {$\omega$-categorical} structures %
with  %
weak elimination of imaginaries.
Simple one-based {$\omega$-categorical} structures 
include
all smoothly approximable structures~\cite{cherlin2003finite}, which contain all $\omega$-categorical $\omega$-stable structures~\cite{cherlin1985}, and all simple finitely homogeneous structures~\cite{baldwin2024simplehomogeneousstructuresindiscernible}. We discuss further applications %
also beyond this context in Section~\ref{sec:discussion}.} \\ %

{Moving %
to the context of groups, we improve the main result of~\cite{ghaddln24} (Theorem B therein) by describing {all} %
semigroup topologies on automorphism groups %
which are coarser than $\pw$ under more general conditions:} 

\begin{thmx}[\cref{thm:onebased}] \label{mainthm: c} 
Let $G\acts\Omega$ be the automorphism group of a countable saturated structure such that algebraic closure is  locally finite
and { algebraic independence} satisfies independent $3$-amalgamation.
Then, every semigroup topology on $G$ coarser than $\pw$ is a generalised pointwise topology. If $G$ is transitive, then $\pw$ is minimal amongst {$T_0$} semigroup topologies on $G$; if $G$ is moreover primitive, then {the only semigroup topology on $G$ strictly coarser than $\pw$ is the indiscrete topology.}
\end{thmx}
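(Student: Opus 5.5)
Let $\tau$ be a semigroup topology on $G$ with $\tau\subseteq\pw$. For each finite algebraically closed $A\subseteq\Omega$, consider the family $\mathcal{N}_A$ of $\tau$-neighbourhoods of the identity and the subgroups they contain. The plan has three steps. First, describe $\tau$ at the identity by the collection $\mathcal{S}$ of open subgroups (pointwise stabilisers $G_{(B)}$, or setwise stabilisers $G_{\{B\}}$ of algebraically closed finite sets) that are $\tau$-neighbourhoods of $1$. Second, show that this collection determines $\tau$, which gives the \emph{generalised pointwise topology}. Third, read off the consequences for transitive and primitive $G$. Because $\tau$ is only a semigroup topology, inversion need not be continuous, so all arguments must use multiplication alone. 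Since $G$ is a group, however, left and right translations are homeomorphisms of $(G,\tau)$, because $g\mapsto hg$ is continuous with continuous inverse $g\mapsto h^{-1}g$. Therefore $\tau$ is determined by its neighbourhood filter at $1$.

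The key step is a \enquote{subgroup generation} lemma. Let $U$ be a $\tau$-neighbourhood of $1$. Since $U\in\pw$, it contains $G_{(A)}$ for some finite algebraically closed $A$. By continuity of multiplication, choose $V$ with $V^3\subseteq U$, and take $V\supseteq G_{(A')}$. I want to show that $U$ contains a set of the form $G_{(C)}\cdot W$, or more usefully that the $\tau$-interior of $U$ contains $\bigcup_{h} G_{(C)}$-conjugates, yielding an open subgroup $G_{(D)}$ with $D\subseteq A$ that is $\tau$-open. Independent $3$-amalgamation is the tool here. Given $B,C$ algebraically closed with $B\cap C$ algebraically closed and $B\ind_{B\cap C} C$, independent $3$-amalgamation should give $G_{(B\cap C)}\subseteq G_{(B)}G_{(C)}G_{(B)}$, or a bounded product of this kind. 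This is the analogue of the argument from~\cite{ghaddln24} under one-basedness. Using saturation, I translate $A'$ by elements of $V$ to get independent copies. Then, from the fact that finitely many products of $V$ lie in $U$, I deduce that $U$ contains $G_{(D)}$, where $D$ is the intersection of $A$ with the algebraic closures of those copies. The resulting minimal sets $D$ form an ideal-like family $\mathcal{F}$ of finite algebraically closed sets, closed under $G$-conjugation (by the translation homeomorphisms) and under finite unions (by multiplication of neighbourhoods). The topology generated by $\{gG_{(D)}: D\in\mathcal{F}\}$ is the generalised pointwise topology, and by the lemma it coincides with $\tau$. I expect this lemma to be the main obstacle: turning $3$-amalgamation into an equality of stabiliser products without using inverses, and controlling the $\acl$-closures via local finiteness.

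For the remaining claims, $\mathcal{F}$ is a $G$-invariant family closed under unions. If $\mathcal{F}$ contains some nonempty $D$, then, when $G$ is transitive, the union of its conjugates includes every singleton's closure up to algebraic closure. Hence $\mathcal{F}$ is cofinal among finite sets, and $\tau=\pw$. Otherwise $\mathcal{F}=\{\emptyset\}$, or $\mathcal{F}$ consists only of $\acl(\emptyset)$, so $\tau$ is the topology generated by cosets of $G_{(\acl\emptyset)}$. Under transitivity and local finiteness, if $\acl(\emptyset)$ is nonempty it must be all of the finite orbit, so it is empty because $\Omega$ is infinite; the topology is then indiscrete, which is not $T_0$. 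This yields minimality among $T_0$ semigroup topologies. In the primitive case, the only possible nontrivial $\mathcal{F}$ would come from a $G$-invariant equivalence relation ($D$ and the closures generated), which primitivity excludes. Hence the only topology strictly coarser than $\pw$ is the indiscrete one.
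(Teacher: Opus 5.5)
Your proposal has two genuine gaps, and the first one makes the classification you aim for false.

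\textbf{Wrong target.} You try to show that $\tau$ is generated by cosets of the $\tau$-open pointwise stabilisers $G_{(D)}$. A generalised pointwise topology $\pw^{\Delta,K}$, however, has basic neighbourhoods $G^K_A$ twisted by a closed normal subgroup $K\leq\gc(\Delta)$. Every $\tau$-neighbourhood of $1$ may contain such a $K\neq 1$, which moves points inside their $\sim$-classes.

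Here is a concrete counterexample. Let $\Omega$ be countably many disjoint edges and $G=\mathbb{Z}_2\wr S_\infty$. Then $G$ is transitive and $\acl(a)$ is the edge of $a$. Independent $3$-amalgamation for $\ind^a$ is immediate: realise the part of $C_1$ outside $A$ on fresh edges. Let $\iota$ be the central involution swapping every edge, and set $\tau:=\pw^{\Omega,\{1,\iota\}}$, with neighbourhood base $G_A\cup\iota G_A$ at $1$. This is a group topology, and it is neither $\pw$ nor indiscrete. Since $\iota$ fixes no point, no $G_{(D)}$ with $D\neq\emptyset$ is $\tau$-open. So your $\mathcal{F}=\{\emptyset\}$, and your argument would conclude that $\tau$ is indiscrete. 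Setwise stabilisers do not help either: any intersection of them contains the involution swapping a single edge $e$, and this is not in $G_A\cup\iota G_A$ once $A$ contains $e$ and another edge. Thus your transitive dichotomy $\tau\in\{\pw,\text{indiscrete}\}$ is false, which is precisely why the theorem only asserts minimality among $T_0$ topologies.

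The paper treats the case $\Delta_\tau=\Omega$ (every point rigid) via \cref{cor:T0}. First, $\tau$ is a group topology: $\gc(\Omega)$ is compact and normal, compact paratopological groups are topological, and $\tau$ and $\pw$ induce the same quotient topology on $G/\gc(\Omega)$. Second, by the cluster-point argument of \cref{rigidity over all implies non T2}, $\tau$ is not Hausdorff unless $\tau=\pw$, hence not $T_0$. In the primitive case the relevant point is that $\sim$ is a $G$-invariant equivalence relation with finite classes, hence trivial, so $K_\tau\leq\gc(\Omega)=1$.

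\textbf{No engine for the key lemma.} ``Translate $A'$ by elements of $V$ to get independent copies'' presupposes that the $\tau$-neighbourhood $V$ contains elements moving $A'$ into (nearly) independent position. Nothing in your sketch supplies them, and on points where $\tau$ is rigid they do not exist. Which points $\tau$ can move is exactly what determines $D$. The paper isolates this through the rigidity/dispersivity dichotomy:
\begin{itemize}
\item $\Delta_\tau$ is the set of points on which every element is $\tau$-rigid.
\item Non-rigidity yields dispersivity, via Neumann's Lemma and continuity of $s\mapsto\tilde h^{-1}sh$ (\cref{l:moving out}).
\item Dispersivity spreads over orbits (\cref{tau-dispersive everywhere}), which forces $\Delta_\tau$ to be algebraically closed.
\item \cref{building chains} turns $\V$ with $\V^{2n+1}\subseteq\U$ into a chain $\mathcal{E}$ of length $n$ whose consecutive links meet exactly in a fixed $A\in\X_{\Delta_\tau}$, with $G_{\mathcal{E}}h\subseteq\U$ and $h\in G^{K_\tau}_A$.
\end{itemize}
Only then is independent $3$-amalgamation applied, along a chain of length $3$ (\cref{l:reaching over the empty set}); the length-$2$ product $G_{(B)}G_{(C)}G_{(B)}$ is what stationarity gives. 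This yields $G_Ah\subseteq\U$ for every $\U\in\mathcal{N}_\tau(1)$. Applying it to $\V$ with $\V^2\subseteq\U$ gives $G^{K_\tau}_A\subseteq\V^2\subseteq\U$.

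The reverse inclusion $\pw^{\Delta_\tau,K_\tau}\subseteq\tau$ also comes from rigidity (\cref{minimal K}) and has no counterpart in your sketch. You never show that the sets $D$ you extract give $\tau$-open subgroups, as opposed to subsets of $U$ that depend on the chosen translates.
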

We describe generalised pointwise topologies in \cref{d:generalized pointwise stabilizer}. {\cref{mainthm: c} improves on~\cite[Theorem B]{ghaddln24} in several ways: first, we classify semigroup topologies on $G$ coarser than $\pw$ (rather than group topologies); second, in the non-transitive case, we remove the assumption that points are algebraically closed; third, in the 
transitive case we show minimality of $\pw$ amongst the {$T_0$} semigroup topologies (rather than Hausdorff group topologies); {fourth, in the primitive case, {without imposing any separation assumptions, we show there are no non-indiscrete semigroup topologies strictly coarser than $\pw$.}\\

In the non-transitive context, minimality of $\pw$ can fail by localising the topology to an orbit (an example where this happens is  the random bipartite graph). On top of our theorem applying to the aforementioned examples of simple one-based $\omega$-categorical structures with weak elimination of imaginaries, Theorem~\ref{thm:onebased} works under more general assumptions which include, for example, various simple (not one-based) $\omega$-categorical Hrushovski constructions~\cite{Wagner:ST,ghaddln24}. Since the original argument in~\cite{ghaddln24} heavily relied on taking conjugates of pointwise stabilisers, our proof follows a substantially different route, with the initial portion of the argument for \cref{mainthm: c} running in parallel to that for \cref{mainthm: b}.\\

We conclude by studying topologies on the semigroup of elementary embeddings of Ury\-sohn spaces. Recall that in the group case, the presence of a metric is a source of failure of  minimality for $\pw$. 
By adapting techniques from~\cite{pinskerschindler} to the metric setting, we show that under general conditions, the Zariski topology is a Hausdorff semigroup topology and hence minimal amongst  all such topologies on the monoid of elementary embeddings; that it agrees with the metric pointwise topology $\mpw$; and that it is strictly coarser than $\pw$. In particular, we obtain the following (for the mentioned spaces the isometric embeddings are precisely the elementary embeddings):

\begin{thmx}[{\cref{thm:urysohnmain}}] \label{mainthm: d}Let $\U$ be any of the following Urysohn spaces:
\begin{itemize}
    \item the rational Urysohn space;
    \item the rational Urysohn sphere;
    \item the real Urysohn space;
    \item the real Urysohn sphere;
    \item the rational ultrametric Urysohn space.
\end{itemize}
Then, the semigroup Zariski topology $\zar$ on the space $\mathrm{IE}(\U)$ of isometric embeddings of $\U$ is strictly coarser than $\pw$ and  
minimal amongst Hausdorff semigroup topologies on $\mathrm{IE}(\U)$. Moreover, $\zar$ %
{equals} the metric pointwise topology $\mpw$.
\end{thmx}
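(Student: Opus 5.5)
The plan is to prove three inclusions. First, $\mpw$ is a Hausdorff semigroup topology on $\ie(\U)$. Second, $\mpw\subseteq\zar$. Third, $\mpw\subsetneq\pw$. Since $\zar$ is contained in every Hausdorff semigroup topology (as recalled in \cref{subsect:Zariski}), the first two steps give $\zar=\mpw$. Any Hausdorff semigroup topology $\tau\subseteq\zar$ then satisfies $\zar\subseteq\tau$, so $\zar$ is minimal. The third step supplies the strictness.

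For the first step, take as basic $\mpw$-open sets those of the form $\{f : d(f(x_i),y_i)<\varepsilon \text{ for } i\le n\}$. Hausdorffness is immediate, since distinct $f\neq g$ differ at some point $x$ by a positive distance. For continuity of composition I would use that isometric embeddings are $1$-Lipschitz. This gives
\[
d(fg(x),f'g'(x))\le d(f(g(x)),f'(g(x))) + 2\,d(g(x),g'(x)),
\]
so $f'g'$ is close to $fg$ at $x$ once $g'$ is close to $g$ at $x$ and $f'$ is close to $f$ at the single point $g(x)$. The same argument works verbatim in each of the five spaces listed.

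The second step is the core and the main obstacle. Following the strategy of \cite{pinskerschindler}, I would show that for every $x,y$ and every admissible $\varepsilon>0$, the set $\{f : d(f(x),y)\ge\varepsilon\}$ is Zariski closed. It is then a finite intersection of such sets, or its complement a union, that is needed to express basic $\mpw$-closed sets, and Zariski closed sets are closed under finite unions and arbitrary intersections. The concrete plan is as follows.
\begin{enumerate}
\item Using the extension property and homogeneity of $\U$, build two isometric embeddings $s,t$ whose equaliser $\{z : s(z)=t(z)\}$ is exactly the closed set $\{z : d(z,y)\ge\varepsilon\}$. For instance, glue over the complement of the open ball a one-point-amalgamation-style free copy of the ball, and embed this back into $\U$ by universality.
\item Find parameters $u,v$ so that the one-variable equation $s\,f\,u=t\,f\,v$ (or a short system of such equations) forces the single point $f(x)$ into that equaliser, while leaving the rest of $f$ unconstrained.
\end{enumerate}
I expect the difficulty to lie in item 2. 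Because embeddings cannot collapse points, one cannot simply restrict to $x$. The first thing I would try is to choose $u$ whose image is $\{x\}$ together with a set $B$ of points. The values of $f$ on $B$ should be unconstrained, which I would ensure by also building $s,t$ to agree on a large "free" region reached only through $B$, or else by intersecting over many choices. The ultrametric space and the spheres will need the distance set to be handled separately, for example by using only distances that remain realisable.

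For the third step, I would show that the $\pw$-open set $\{f : f(x)=x\}$ is not $\mpw$-open. Given any basic $\mpw$-neighbourhood of the identity determined by $x_1,\dots,x_n$ and $\varepsilon$, I would use the extension property to find an automorphism that moves $x$ by a small $\delta>0$ and moves each $x_i$ by less than $\varepsilon$. The idea is to realise a one-point perturbation $x'$ of $x$ with $d(x',x)=\delta$ and $|d(x',x_i)-d(x,x_i)|\le\delta$, and then extend by homogeneity. Such an automorphism lies in the $\mpw$-neighbourhood but not in the $\pw$-open set, which gives strictness.
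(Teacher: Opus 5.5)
Your first step matches the paper (Lemma~\ref{lem:hausdorff}). Your third step is essentially the paper's perturbation argument (Lemma~\ref{lem: buildmetric}), with one correction: take $d(x',x_i)=d(x,x_i)$ exactly, and $\delta$ small, rather than only up to $\delta$. Otherwise the map fixing the $x_i$ and sending $x\mapsto x'$ is not a partial isometry, and homogeneity does not apply.

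The genuine gap is item~2 of your second step. It is the heart of the inclusion $\mpw\subseteq\zar$, and you leave it open. Suppose $s,t$ equalise exactly $\U\setminus B_\varepsilon(y)$; this is the paper's pinching, and your amalgam sketch for it is plausible. Then $sfu=tfu$ has solution set $\{f : f(\im u)\cap B_\varepsilon(y)=\emptyset\}$, where $\im u$ is an isometric copy of $\U$ containing $x$. Since $f$ is the unknown, $f(\im u\setminus\{x\})$ can land anywhere, including inside the ball while $f(x)$ stays outside. So $s,t$, fixed in advance, cannot be made to ``agree on a free region reached only through $B$'': that region is $f(B)$, which moves with $f$. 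Intersecting such solution sets over many $u$ goes the wrong way, since each one already lies strictly inside $\{f : d(f(x),y)\ge\varepsilon\}$. What you would need is a finite \emph{union} of such sets that contains every $f$ with $f(x)$ far from $y$. That is, among finitely many images, at least one must miss a ball whenever $f(x)$ does. Nothing in your proposal produces this.

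The missing idea is the paper's \emph{spreading} property. One needs isometric embeddings $\sigma,\theta$ with $a\in\im\sigma\cap\im\theta$ and $\im\sigma\cap B_\epsilon(\im\theta)\subseteq B_\epsilon(a)$. They are built by the extension property with $d(\sigma(a_i),\theta(a_j))=d(a,a_i)\oplus d(a,a_j)$. Pinching plus shift-continuity of $\zar$ then make $Z_{(b,\zeta,\eta)}$, the set of $s$ with $\im(s\sigma)\cap B_\zeta(b)\neq\emptyset$ and $\im(s\theta)\cap B_\eta(b)\neq\emptyset$, Zariski open. If $s(c)$ and $s(e)$, with $c\in\im\sigma$ and $e\in\im\theta$, both lie near $b$, then $d(c,e)<\epsilon$. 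This forces $c\in B_\epsilon(a)$ and hence $s(a)$ near $b$. The trapping loses radius: one only gets $W_{(a,b,\zeta)}\subseteq Z_{(b,\zeta,\eta)}\subseteq W_{(a,b,\rho)}$. So you cannot describe $\{f : d(f(x),y)\ge\varepsilon\}$ by equations at the single scale $\varepsilon$. Instead, show that each $s_0\in W_{(a,b,\epsilon)}$ has a Zariski neighbourhood inside it, using small radii around $s_0(a)$ chosen via standardness of the distance monoid. For the real space and sphere, pinching and spreading are first built in the countable rational space and then extended to the completion, for rational radii only.
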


\section{Preliminaries}\label{sect:prelims}

{Throughout this paper, $\Omega$ denotes an infinite set. In Sections~\ref{sec:chains} to~\ref{sec:discussion}, we will further assume that $\Omega$ is countable unless we specify otherwise.}
Groups and monoids in this paper will 
usually act {faithfully} on $\Omega$. We write $\fps{\Omega}$ for the collection of finite subsets of $\Omega$.

\subsection{Group actions}

Given a group action  $G\acts\Omega$ and $B\in[\Omega]^{<\omega}$, we write $G_B$ for the \textbf{pointwise stabiliser} of $B$ and $G_{\{B\}}$ for its \textbf{setwise stabiliser}. The \textbf{definable closure of} $B$, denoted by $\dcl(B)$,  is the set of all elements in $\Omega$ which are fixed by the action of 
$G_B$.  The \textbf{algebraic closure of} $B$, denoted by $\acl(B)$, is the set of elements of $\Omega$ with finite orbit under the action of $G_B$.  %
We say that $G\acts\Omega$ has \textbf{locally finite} algebraic closure if $\acl(B)$ is finite for all  $B\in[\Omega]^{<\omega}$.
We denote by $\X$ the set of all finite  \textbf{algebraically closed} subsets of $\Omega$, i.e.~those $B\in[\Omega]^{<\omega}$ with $\acl(B)=B$. {The action $G\acts\Omega$ has \textbf{no algebraicity} if all finite sets are algebraically closed.}\\

Note that this definition of algebraic closure on infinite sets does not agree in general with group-theoretic algebraic closure (cf.~\cite[Chapter 4.1]{hodges1993model}).\\ 

    For a tuple $\overline{a}:=(a_1, \dots, a_n)$ of elements of $\Omega$, we write $\acl(\overline{a}):=\acl(\{a_1, \dots a_n\})$ for its algebraic closure. %
    Similarly, for tuples of elements $\overline{b}_1, \dots, \overline{b}_n$ from $\Omega$, we write $\acl(\overline{b}_1 \dots \overline{b}_n)$ or $\acl(\overline{b}_1, \dots, \overline{b}_n)$ for the algebraic closure of the set of all elements appearing in them.\\

    {Let $G\acts\Omega$ be a group action with locally finite algebraic closure. We say that algebraic closure is \textbf{modular} 
    if for all $A,B,C\in\X$ with $A\subseteq B$,}  
  \[\acl(A\cup(C\cap B))=\acl(A\cup C)\cap B\;.\]  

{Clearly, algebraic closure is modular for any group action with no algebraicity. Classic examples of structures whose automorphism groups have modular algebraic closure are %
{infinite-dimensional vector spaces over finite fields.}
In Section~\ref{sec:discussion}, we  discuss further  examples.\\

{If $n\geq 1$ and  $\a\in \Omega^n$, then we write $G\cdot \a$ for the orbit of $\a$ under the diagonal action $G\acts \Omega^n$.} A group action $G\acts\Omega$ is called \textbf{oligomorphic} if $G\acts\Omega^n$ has finitely many orbits for all $n\geq 1$.  It is well-known and easy to prove that oligomorphic actions have locally finite algebraic closure.\\

Given two finite tuples $\overline{c},\overline{c}'$ of elements of $\Omega$ of the same length, we write $\overline{c}\orq\overline{c}'$ to denote the fact that $\overline{c}$ and $\overline{c}'$ belong to the same orbit under the diagonal action of $G$. For $C, C'\in\fps{\Omega}$, we write $C\orq C'$ to indicate that there are enumerations $\overline{c}$ and $\overline{c}'$ of $C$ and $C'$ respectively such that $\overline{c}\orq\overline{c}'$. For $D\in\fps{\Omega}$, we write $\overline{c}\orq_{D}\overline{c}'$ as an abbreviation for  $\overline{c}\overline{d}\orq \overline{c}'\overline{d}$, where $\overline{d}$ is some enumeration of $D$. Similarly, for $D, C,C'\in\fps{\Omega}$, we write $C\orq_{D}C'$ if there {are} %
enumerations $\overline{c}$ and $\overline{c}'$ of $C$ and $C'$ respectively such that $\overline{c}\orq_{D}\overline{c}'$.%

\begin{observation}\label{closure transitivity and local finiteness}
	{Let $G\acts \Omega$, and consider the binary relation $R$ on $\Omega$ defined by  $R(a,b)$ if and only if $a\in\acl(b)$. Then $R$ is an equivalence relation when restricted to any  $G$-orbit $\Delta$ with the property that $G\acts\Delta$ has locally finite algebraic closure.}%
\end{observation}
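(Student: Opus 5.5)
Reflexivity is immediate, since $a$ is fixed by $G_{\{a\}}$ and so has orbit of size one; thus $a\in\acl(a)$. Transitivity holds on all of $\Omega$. If $a\in\acl(b)$ and $b\in\acl(c)$, then the orbit $G_c\cdot b$ is finite. Hence $H=G_{\{c\}}\cap G_b$ has finite index in $G_c$: its index equals $|G_c\cdot b|$, and $G_{\{c\}}=G_c$. Also $a$ has a finite orbit under $G_b\supseteq H$, so $H\cdot a$ is finite. Since $G_c$ is a finite union of cosets $gH$, the orbit $G_c\cdot a=\bigcup g(H\cdot a)$ is finite, and $a\in\acl(c)$. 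Neither of these steps uses local finiteness or the orbit restriction.

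The real content is symmetry, which I will prove by a counting argument inside the orbit $\Delta$. Fix $a,b\in\Delta$ with $a\in\acl(b)$; we must show $b\in\acl(a)$. Since $a,b$ lie in the same orbit, choose $g\in G$ with $g(b)=a$. Consider the set $S=\{x\in\Delta : x\in\acl(a)\}$, which is finite by local finiteness. Set $a_0=b$, $a_1=a$, and $a_{n+1}=g(a_n)$, so $a_n=g^{n}(b)$. The relation $x\in\acl(y)$ is $G$-invariant, so $a_{n+1}\in\acl(a_n)$ for all $n$. By transitivity, each $a_n$ with $n\geq 1$ lies in $\acl(b)$, and $\acl(b)$ is finite by local finiteness. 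Hence the sequence $(a_n)$ is eventually periodic, and in fact periodic because $g$ is injective: from $a_i=a_j$ with $i<j$ we get $b=a_{j-i}$. So $b=g^{m}(b)$ for some $m\geq1$.

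Now, for $m\geq 2$, $b=a_m\in\acl(a_{m-1})\subseteq\dots\subseteq\acl(a_1)=\acl(a)$ by repeated transitivity. For $m=1$ we have $a=g(b)=b$, and so $b\in\acl(a)$ trivially. This proves symmetry, and so $R$ is an equivalence relation on $\Delta$.

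The main obstacle is noticing where the hypothesis on $\Delta$ enters. It is used only to guarantee that the forward orbit $\{g^n(b)\}$, which lies in $\Delta$ and is contained in $\acl(b)\cap\Delta$, is finite. If we only have local finiteness of $G\acts\Delta$, then $\acl$ here should be read as computed relative to the action on $\Delta$. I would therefore check that the chain $a_n$ stays inside $\Delta$, which is clear because $a_n=g^n(b)$ lies in the orbit $\Delta$. I would also check that finiteness of $G_b\cdot a_n$ is all that is needed, which also holds.
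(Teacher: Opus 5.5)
Your proof is correct, but for symmetry it takes a different route from the paper.

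The paper argues in one line. From $a\in\acl(b)$ it gets $\acl(a)\subseteq\acl(b)$. Since $a,b$ lie in the same orbit, the two closures have the same finite size, so they coincide and $b\in\acl(b)=\acl(a)$. Put differently, the paper applies pigeonhole to the whole finite set: $g$ maps $\acl(b)$ bijectively onto $\acl(a)\subseteq\acl(b)$, so that inclusion is an equality.

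You instead apply pigeonhole to the forward orbit $(g^n(b))_n$ of a single point inside the finite set $\acl(b)\cap\Delta$. You extract a period $g^m(b)=b$ and then walk back along the chain $a_{k+1}\in\acl(a_k)$ to obtain $b=a_m\in\acl(a)$.

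The paper's version is shorter and needs no chain argument. Yours makes explicit that only finiteness of $\acl(b)\cap\Delta$ is used, which is the right reading of the hypothesis that $G\acts\Delta$ has locally finite algebraic closure. The paper's phrase ``the size of the algebraic closure'' should likewise be read as $|\acl(\cdot)\cap\Delta|$. With that reading its argument goes through unchanged, because $\Delta$ is $G$-invariant.

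For transitivity, you also supply the finite-index argument behind the fact $\acl(a)\subseteq\acl(b)$, which the paper simply asserts. The set $S$ you introduce is never used and can be dropped.
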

\begin{proof}
	 By definition of $\acl$, the relation $R$ is reflexive. Since {for all $a,b\in\Omega$}, $a\in\acl(b)$ implies $\acl(a)\subseteq\acl(b)$, it is also transitive. For symmetry, let $a,b\in \Delta$, and assume  $a\in\acl(b)$. Then  $\acl(a)\subseteq\acl(b)$. Since the size of the algebraic closure is the same for all elements of $\Delta$, finiteness now implies  $\acl(a)=\acl(b)$, and hence  $b\in\acl(a)$.
\end{proof}

 {Sometimes, it will be convenient to treat tuples as sets. In such cases, for tuples $\a$ and $\b$, we write %
      $\overline{a}\cap\overline{b}$ for the set of elements of $\Omega$ that belong to both $\overline{a}$ and $\overline{b}$. We define $\overline{a}\cup\overline{b}$ and $\overline{a}\subseteq\overline{b}$ similarly.

 \subsection{Semigroup topologies}

 {A \textbf{semigroup topology} on a semigroup $S$ is a topology for which multiplication $S\times S\to S$ is (jointly) continuous; we do not include any separation axiom in this definition, and state explicitly when a topology is assumed to be $T_0$, $T_1$ or Hausdorff. A group endowed with a semigroup topology is also known as a \textbf{paratopological group}.}

 {Given a topological space $(X, \tau)$, for $a\in X$, we shall write $\mathcal{N}_\tau(a)$ for the set of $\tau$-open neighbourhoods of $a$.}

\subsubsection{The topology of pointwise convergence}\label{subsubsect:pw}

Let $\Omega$ be a set and consider the \textbf{full monoid} $\Omega^\Omega$, where multiplication is given by composition of functions.
Given $C\in\fps{\Omega}$ and a submonoid $S\subseteq\Omega^{\Omega}$ we write $\nn_{C}$ for the collection of elements of {$S$}
fixing $C$ pointwise. For a {finite}  partial function $\phi\colon A\to B$, where  {$A,B\in[\Omega]^{<\omega}$}, we write $\nn_{\phi}$ for the set of elements 
$s\in S$ with $s_{\restriction A}=\phi$. {For $a,b\in\Omega$, we write $\langle a, b\rangle$ for the map sending $a$ to $b$.} The monoid $\Omega^\Omega$ is naturally endowed with the \textbf{topology of pointwise convergence} $\pw$, for which a basis is provided by the sets $S_\phi$. {(Similarly, the sets $S_{\langle a, b\rangle}$ for $a,b\in\Omega$ are a subbasis for this topology.)} \\

When $\Omega$ is countable, $\pw$ makes   $\Omega^\Omega$ a Polish topological semigroup, i.e.~it is separable and completely metrizable, and multiplication is continuous. Similarly, any closed submonoid of $\Omega^\Omega$ endowed with the subspace topology is still a Polish topological semigroup. %
Given a group $G$ of permutations of $\Omega$, we denote by $\overline{G}$ the closure of $G$ in $\Omega^\Omega$ with respect to $\pw$. It is easy to see this is also a monoid.

\subsubsection{The semigroup Zariski topology}
Let $S\subseteq \Omega^\Omega$ be a monoid. The \textbf{Zariski  topology on $S$} is the topology generated by solution sets of inequalities of the form
\begin{equation}\label{eq:inequality}
    \lambda_{n} s \lambda_{n-1}s\cdots \lambda_1 s \lambda_0\neq \eta_{m} s \eta_{m-1} s \cdots \eta_1 s \eta_0\;,
\end{equation}
 where $\lambda_0, \dots, \lambda_n,\eta_0,\dots, \eta_m\in S$. It need not be Hausdorff or a semigroup topology in general. However, for $S$ the endomorphism monoid of an $\omega$-categorical structure, it often coincides with the pointwise convergence topology on $S$, with the {only counterexamples provided so far in~\cite{pinskerschindler} and~\cite{marimon2025guide}.}
 In general, it is contained in any $T_1$ semigroup topology on $S$~\cite[Proposition 2.1]{elliott2023automatic}. We say that a topology on a semigroup $S$ is \textbf{semitopological} (or shift-continuous) if for each $s\in S$, the shift-maps $\lambda_s, \rho_s: S\to S$ given by $u\mapsto us$ and $u\mapsto su$ respectively are continuous. The Zariski topology is semitopological on any semigroup~\cite[Proposition 2.2]{elliott2023automatic}.

 \subsubsection{Localised and generalised pointwise topologies}\label{subsub:localised}

{Let $S\subseteq \Omega^\Omega$ be a monoid, and let  $\Delta\subseteq \Omega$ be $S$-invariant. Then, by the \textbf{localised pointwise convergence topology} at $\Delta$, denoted by  $\pw^\Delta$, we mean the topology on $S$ whose subbasis of open sets is given by the 
sets ${\nn}_\phi$ as in Section~\ref{subsubsect:pw} where $\phi\colon A\to B$ for  $A,B\subseteq \Delta$.}\\
  
{Whilst the topologies $\pw^\Delta$ can yield Hausdorff group topologies coarser than $\pw$ on automorphism groups of $\omega$-categorical structures (\cref{ex:bipartite}), we will see in \cref{prop:localized are not T1} that, {unless they equal $\pw$,} they 
are %
{not}
Hausdorff on their monoids of elementary embeddings.}\\

Localised pointwise topologies may be further generalised to what we shall call generalised pointwise topologies. 
 These topologies are needed for the precise statement of \cref{thm:onebased} (\cref{mainthm: c} in the introduction); they will not be needed outside~\cref{section groups}.\\

Let $\Omega$ be a countable set and let $G\acts\Omega$ be a group acting on $\Omega$ with locally finite algebraic closure.
  Given $a\in\Omega$, we write $[a]:=\cl(a)\cap G\cdot a$. {It follows from~\cref{closure transitivity and local finiteness} that the sets of the form $[a]$ form a partition of $\Omega$ into finite sets, and hence are the equivalence classes of an equivalence relation $\sim$ with finite classes.} 
	    {Let $\Delta\subseteq \Omega$ be $G$-invariant.}
    {We write $G(\Delta)$ for the group of all permutations of $\Delta$ which on each finite subset of $\Delta$ agree with some element of $G$.}  %
     {By $\gc(\Delta)$ we denote the set of elements of $G(\Delta)$ that %
     {preserve each $\sim$-class setwise.} %
     It is easy to see that $\gc(\Delta)$ is a profinite normal subgroup of $G(\Delta)$ (\cref{ob:gcdelta}).}

  \begin{definition} \label{d:generalized pointwise stabilizer}
    {Let $\Delta\subseteq \Omega$ be $G$-invariant, and let $K$ be a topologically closed subgroup of  $\gc(\Delta)$ which is normal in $G(\Delta)$. %
    The  \textbf{generalised pointwise} topology  $\pw^{\Delta,K}$ on {$\overline{G}$}
    is given by the collection of basic open sets of the form} 
  	\begin{equation}\label{basic neighourhoods}
     \overline{G}_{\phi}^{K}:=\{s\in %
     \overline{G}
     \;|\;\exists h\in K\,\,s_{\restriction A\cap\Delta}=(\phi h)_{\restriction A\cap\Delta}\}\;, 
  	\end{equation}
  	{where $\phi$ is the restriction of an element of $G$ to a set in  $\X_\Delta:=\{A\cap\Delta\;|\; A\in\X\}$.
    We will also write $\overline{G}_{A}^{K}$ for $\overline{G}_{\mathrm{Id}_{A}}^{K}$.} {When $\Delta=\emptyset$, we take $G(\Delta)=\gc(\Delta)\{1\}$, and so we view the indiscrete topology $\tau_\mathrm{ind}:=\{\emptyset, \overline{G}\}$ as the generalised pointwise topology associated with the degenerate case of $\Delta=\emptyset$.}
  \end{definition}
 {The group $G$ inherits the  generalised topology from $\overline{G}$ by restriction.} {Note that by setting $K=\{1\}$,  localised topologies are just a special case of generalised pointwise topologies.}%

\subsection{Model-theoretic background} 

Essentially all groups and semigroups we will study arise as the automorphism groups and monoids of elementary embeddings of countable first-order structures. We expect some familiarity with basic concepts in model theory such as that of a first-order structure, types, and elementary embeddings. We refer the reader to~\cite[Chapters 1-4]{tent2012course}.

\subsubsection{Structures and their symmetries}  We denote structures by $\M$ or $\M'$ and their domains by $\Omega$ or $\Omega'$. We denote first-order theories by $T$ and first-order languages by $\mathcal{L}$ (possibly with indices). With the exception of Section~\ref{sec:uri}, we always work in a countable language. We write $\Aut(\M)$ for the automorphism group of $\M$, $\Emb(\M)$ for its monoid of  embeddings, $\EEmb(\M)$ for its monoid of elementary embeddings, and $\End(\M)$ for its endomorphism monoid.

\subsubsection{Saturation, $\omega$-categoricity, homogeneity}  We say that a countable structure $\M$ is $\bm{\omega}$\textbf{-categorical} if its automorphism group $\Aut(\M)$ is oligomorphic. %
Given a structure $\M$ and an infinite cardinal $\kappa\leq|\Omega|$, we say that $\M$ is \textbf{$\kappa$-saturated} if for all subsets $B\subseteq \M$ with $|B|<\kappa$ and all $n\in\mathbb{N}$, $\M$ realises all $n$-types over $B$. We say that $\M$ is \textbf{saturated} if it is $\kappa$-saturated for $\kappa=|\Omega|$. It is easy to see that countable saturated structures are such that $\overline{\Aut(\M)}=\EEmb(\M)$. Moreover, two saturated models of the same complete theory and cardinality are always isomorphic~\cite[Lemma 5.2.8]{tent2012course}, and every $\omega$-categorical structure is saturated. It is also well-known that a complete theory (possibly in an uncountable language) has a countable saturated model if and only if it is \textbf{small}, that is, it has only countably many $n$-types for each $n\in\mathbb{N}$~\cite[Lemma 4.3.9]{tent2012course}. Moreover, whenever $G\acts \Omega$ is the automorphism group of a countable saturated structure $\M$, {there is a countable saturated structure $\M'$ in a countable language with the same automorphism group.} {We will also often implicitly make use of the fact that if $\M''$ is a countable model of a small theory, then $\M''$ has a countable saturated elementary extension $\M\succeq\M''$.}

We shall often work under the following assumption, which we refer to as \ref{sat}:

  	\begin{enumerate}[label=($\clubsuit$),ref={$(\clubsuit)$}]
 
  \item \label{sat} $G$ is the automorphism group of  a countable saturated  structure $\M$ with  domain $\Omega$ and has locally finite algebraic closure (hence, {$\overline{G}=\EEmb(\M)$}).
  	\end{enumerate}

A  structure $\M$ is \textbf{homogeneous} if any isomorphism between finitely generated substructures of $\M$ extends to an automorphism of $\M$. The \textbf{age} of a structure $\M$ is the class of finitely generated substructures of $\M$, up to isomorphism. Fra\"{i}ss\'{e}'s Theorem (see \cite[Theorem 4.4.4]{tent2012course}) states that an isomorphism-closed  class $\mathcal{K}$ of finitely generated structures in a countable language $\mathcal{L}$ is the age of a countable homogeneous structure $\mathcal{M}$ if and only if it is closed under substructures and has the joint embedding and amalgamation properties; $\mathcal{M}$ is then unique up to isomorphism, and called the \textbf{Fra\"{i}ss\'{e} limit of $\mathcal{K}$.} 
 Well-known examples of  homogeneous structures include the random graph (the Fra\"{i}ss\'{e} limit of the class of finite graphs),  $(\mathbb{Q};<)$, any infinite-dimensional vector space over a finite field (in the language with scalar functions for the field elements), and the rational Urysohn space, whose age {is the class of} finite metric spaces with rational distances. Homogeneous structures in a finite relational language (even with finitely many constants) are $\omega$-categorical, and called \textbf{finitely homogeneous structures}.

\subsubsection{Algebraic independence}

 {Relations of independence and related techniques from model theory will play a central role in Sections~\ref{section groups} and~\ref{sec:min monoids}. In this paper, we are mainly interested in algebraic independence, the natural notion of independence arising from algebraic closure (\cref{def:alg ind}),  which generalises linear independence in the context of vector spaces. Following modern model-theoretic approaches initiated by~\cite{adler2005explanation, adler2009geometric}, in Section~\ref{sec: calcindependence}, we introduce a more abstract calculus of %
 independence relations %
  in order to make our arguments more transparent and more easily generalisable.}  

 \begin{definition} \label{def:alg ind}
 {Let $G\acts\Omega$.} %
     We define \textbf{algebraic independence} $\ind^{a}$ as follows: for all %
    $A,B,C\in[\Omega]^{<\omega}$,
    \[A\ind^{a}_C B \text{ if and only if } \acl(A\cup C)\cap\acl(B \cup C)=\acl(C)\;.\]
        {In the above setting, we say that $A$  is \textbf{ algebraically independent from} $B$ \textbf{over} $C$.}
    {If $C=\emptyset$, we omit it, writing only %
    ${A\ind^a B}$.  
    We also apply the notation to tuples instead of sets.}
    \end{definition}

  {We shall frequently make use of Neumann's Lemma~\cite[Fact 3.1.5]{d2023axiomatic} (cf.~\cite{neumann1976structure}):
   \begin{lemma}[Neumann's Lemma] {Let $G\acts\Omega$ with locally finite algebraic closure. Then, for any $A, B, C\in\fps{\Omega}$, there is $A'\equiv_C A$ such that $A'\ind^{a}_C B$.}
   \end{lemma}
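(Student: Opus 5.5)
The plan is to reduce the statement to B.\,H.~Neumann's classical covering lemma: a group is never the union of finitely many left cosets of subgroups of infinite index. I will take this lemma as known; if a self-contained argument is wanted, the standard induction on the number of distinct subgroups occurring among the cosets proves it.

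Fix $A,B,C\in\fps{\Omega}$ and let $H:=G_C$ be the pointwise stabiliser of $C$. Put $D:=\acl(C)$, and
\[X:=\acl(A\cup C)\setminus D, \qquad Y:=\acl(B\cup C)\setminus D.\]
By local finiteness of algebraic closure, $D$, $X$ and $Y$ are all finite. By definition of $\acl$, every $x\in X$ has an infinite $H$-orbit. Also, $D$ is $H$-invariant as a set, since any $h\in H$ fixes $C$ and hence permutes $\acl(C)$.

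The key step is to find $h\in H$ with $hX\cap Y=\emptyset$. For $x\in X$ and $y\in Y$, consider the set $\{h\in H \mid hx=y\}$.
\begin{itemize}
\item This set is either empty or a left coset $gH_x$, where $H_x$ is the stabiliser of $x$ in $H$.
\item Since the $H$-orbit of $x$ is infinite, $H_x$ has infinite index in $H$.
\end{itemize}
There are only finitely many pairs $(x,y)$, so the union of these sets is a finite union of cosets of infinite-index subgroups. By Neumann's covering lemma it does not cover $H$. Any $h\in H$ outside the union satisfies $hX\cap Y=\emptyset$.

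Now set $A':=hA$. Because $h$ fixes $C$ pointwise, we get $A'\equiv_C A$. Since $h$ is a permutation in $G$ fixing $C$, we have
\[\acl(A'\cup C)=h\,\acl(A\cup C)=hX\cup D.\]
Moreover, $hX\cap D=\emptyset$: the set $D$ is $H$-invariant and $X\cap D=\emptyset$. Therefore
\[\acl(A'\cup C)\cap\acl(B\cup C)=(hX\cup D)\cap(Y\cup D)=(hX\cap Y)\cup D=D=\acl(C).\]
This is exactly $A'\ind^a_C B$.

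The only non-routine ingredient is the covering lemma. Everything else is bookkeeping, where the points to check are the $H$-invariance of $\acl(C)$ and the finiteness of $X$ and $Y$. That finiteness is precisely where local finiteness of algebraic closure is needed, since otherwise we would face infinitely many cosets.
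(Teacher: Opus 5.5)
Your proof is correct. The paper does not prove this lemma itself but cites it from the literature, and the standard derivation there is exactly yours: apply B.~H.~Neumann's covering lemma (equivalently, P.~M.~Neumann's lemma for $G_C$ acting on $\Omega\setminus\acl(C)$, where every orbit is infinite) to find $h\in G_C$ moving the finite set $\acl(A\cup C)\setminus\acl(C)$ off the finite set $\acl(B\cup C)\setminus\acl(C)$. Your bookkeeping also checks out: $h\,\acl(C)=\acl(C)$ for $h\in G_C$, equivariance $\acl(hE)=h\,\acl(E)$ holds, and local finiteness keeps the number of cosets finite.
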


    Below, we define independent $3$-amalgamation, which is needed for the statement of \cref{mainthm: c}, and which will thus be important in Section~\ref{section groups}. This notion arises from the study of simple theories in model theory~\cite{kim1997simple, kim2013simplicity}. Indeed, countable saturated models of simple one-based theories with weak elimination of imaginaries serve as our main example of structures for which algebraic independence satisfies this property. We further discuss this point in 
    \cref{sec:discussion}. 

    \begin{definition} Let $G\acts\Omega$ have locally finite algebraic closure. We say that algebraic %
    {independence} 
    satisfies \textbf{independent $\bm{3}$-amalgamation} if whenever $A,B_{1},B_{2},C_{1},C_{2}\in[\Omega]^{<\omega}$ are algebraically closed and such that $B_{1}\ind^{a}_{A}B_{2}$, $A\subseteq B_{i}$ and $C_{i}\ind^{a}_{A}B_{i}$ for $i=1,2$ and $C_{1}\orq_{A}C_{2}$,  there exists $D\in[\Omega]^{<\omega}$ such that
    $D\orq_{B_{i}} C_{i}$ for $i=1,2$ and $D\ind^{a}_{A}(B_{1}\cup B_{2})$.
    \end{definition}

\section{Failures of Hausdorffness in topologies coarser than \texorpdfstring{$\pw$}{the topology of pointwise convergence}}
In this section, we investigate certain (not necessarily strictly) coarser topologies than $\pw$ on monoids of elementary embeddings of countable structures. In particular, in Subsection~\ref{sub:localised}, we prove that localised topologies which are not equal to $\pw$ fail to be $T_1$; and in Subsection~\ref{sub:zar}, we prove that the semigroup Zariski topology is not Hausdorff as long as the automorphism groups of structures have locally finite algebraic closure and non-trivial centre.

\subsection{{Proper} localised topologies {are not \texorpdfstring{$T_1$}{T1} on 
\texorpdfstring{$\EEmb(\M)$}{monoids of elementary embeddings}} }\label{sub:localised}

 Localised pointwise topologies are a natural source of Hausdorff group topologies coarser than $\pw$ in automorphism groups. In this subsection we give a quick proof that on monoids of elementary embeddings of countable saturated structures they are never $T_1$. The proof uses some basic saturation arguments from model theory. We hope it may help the reader appreciate what is to come in terms of the topological behaviour of $\overline{G}$ being very different from that of $G$.

 \begin{example}\label{ex:bipartite}  Consider the random bipartite graph $\mathcal{B}$ with orbits  $X$ and $Y$. This is just the Fra\"{i}ss\'{e} limit of the class of bipartite graphs where each side of the partition is named by a unary predicate.  Let $G:=\Aut(\mathcal{B})$ and consider the localised pointwise convergence topology $\pw^X$. It is easy to observe that the identity is closed in $\pw^X$ (on either $G$ or $\overline{G}$)~\cite[Example 3D]{ghaddln24}. For $G$ this is equivalent to $\pw^X$ being Hausdorff, and indeed $\pw^X$ is a Hausdorff group topology on $G$ strictly coarser than $\pw$. However, as we will see in \cref{prop:localized are not T1}, $\pw^X$ on $\overline{G}$ is not even $T_1$.
  \end{example}

\begin{lemma}\label{lem:saturatedcase} Let $G\acts\Omega$ be the automorphism group of a countable saturated structure. Let $X,Y\subseteq\Omega$ be disjoint. Suppose that for all $B\in[X]^{<\omega}$ %
{we have that} $\dcl(B)\cap Y=\emptyset$. Then, for each $a\in Y$, there are $\alpha, \beta\in\overline{G}$ such that $\alpha\upharpoonright_X=\beta\upharpoonright_X$ and $\alpha(a)\neq\beta(a)$.

\end{lemma}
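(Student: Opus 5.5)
We will build $\alpha$ and $\beta$ as elementary embeddings of $\M$ into itself via a compactness/saturation argument. Since $\M$ is countable saturated, $\overline{G}=\EEmb(\M)$, so it suffices to produce two elementary embeddings agreeing on $X$ and disagreeing at $a$. The plan is to first find, in some elementary extension, a single copy of $X$ together with two distinct realisations of the type of $a$ over $X$. We then pull this configuration back into $\M$ using saturation.

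First, fix $a\in Y$ and let $p(x)=\mathrm{tp}(a/X)$. Since $a\notin\dcl(B)$ for every finite $B\subseteq X$, we claim that the type
\[
q(x,x') = p(x)\cup p(x')\cup\{x\neq x'\}
\]
is consistent. Take a finite subset of $q$. It mentions only finitely many parameters $B\subseteq X$ and finitely many formulas $\varphi(x,\bar b)$ true of $a$. Since $a\notin\dcl(B)$, some $g\in G_B$ moves $a$. Then $a$ and $g(a)$ satisfy $\mathrm{tp}(a/B)$ and are distinct, so the finite part is realised in $\M$ itself.

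Next, we transfer this to $\M$. Let $\M^*\succeq\M$ be a countable elementary extension containing a realisation $(a_1,a_2)$ of $q$. Enlarging if necessary, using smallness of the theory as mentioned in the preliminaries, we may take $\M^*$ countable saturated. Hence $\M^*\cong\M$, since both are saturated models of the same theory and cardinality.

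Now we use the two realisations. Since $a_1\equiv_X a$, there is a partial elementary map fixing $X$ and sending $a\mapsto a_1$. Extend it to an elementary embedding $\alpha^*\colon\M\to\M^*$ by the back-and-forth/forth part of saturation of $\M^*$, which is countable saturated, so every partial elementary map from a countable set of $\M$ extends. Similarly obtain $\beta^*\colon\M\to\M^*$ fixing $X$ with $a\mapsto a_2$. Compose both with an isomorphism $\iota\colon\M^*\to\M$, setting $\alpha=\iota\alpha^*$ and $\beta=\iota\beta^*$. These are elementary self-embeddings of $\M$, hence lie in $\overline{G}$. They agree on $X$, both acting as $\iota\restriction_X$, and $\alpha(a)=\iota(a_1)\neq\iota(a_2)=\beta(a)$.

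The main obstacle is the extension step. The domain $\M$ is countable but $X$ may be infinite, so we cannot just use finite homogeneity. We need the standard fact that in a countable saturated $\M^*$, any elementary map from a subset of a countable model into $\M^*$ extends to all of $\M$. This is realised one element at a time, each time realising a type over countably many parameters. Countable saturation, meaning $\omega$-saturation, only guarantees realising types over finite sets, so this does not directly apply. I would therefore instead avoid infinite parameter sets by realising everything inside $\M$ via a chain argument. Enumerate $\Omega=\{c_0,c_1,\dots\}$ and build finite partial elementary maps $\alpha_n,\beta_n$ on $\{c_0,\dots,c_n\}\cup\{a\}$. We maintain that $\alpha_n,\beta_n$ agree on $X\cap\{c_i\}_{i\le n}$, that they differ at $a$, and that the pair is jointly elementary. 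This is a finite-type realisation at each step, using $\omega$-saturation of $\M$ together with the non-definability hypothesis for the finite sets $B$. The limits $\alpha=\bigcup\alpha_n$ and $\beta=\bigcup\beta_n$ are then the required maps. This finite approximation is where I would spend the care.

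Concretely, we maintain a tuple-level invariant. Let $\bar c=(c_0,\dots,c_n)$, and let $\bar c_X$ list its entries lying in $X$. The invariant is that $\alpha_n(\bar c)\,\alpha_n(a)\,\beta_n(\bar c)\,\beta_n(a)$ realises the type of some $\bar c\,a\,\bar c'\,a'$ where $\bar c'$ agrees with $\bar c$ on $\bar c_X$ and $a'\neq a$. Extending by one element is then possible by $\omega$-saturation. The base case uses $g\in G_\emptyset$ moving $a$; such a $g$ exists because $\dcl(\emptyset)\cap Y=\emptyset$.
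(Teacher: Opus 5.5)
Your diagnosis of why the first route stalls is correct: extending a partial elementary map with infinite domain $X\cup\{a\}$ needs more than $\omega$-saturation. The chain construction you fall back on has a genuine gap, however. Your invariant only records agreement on $\bar c_X$, the points of $X$ enumerated so far. The claim that it can always be extended by one element using $\omega$-saturation is false, because points of $X$ not yet enumerated already constrain $\alpha(a)$ and $\beta(a)$.

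Here is a concrete failure. Let $\M$ be an equivalence relation $E$ with infinitely many infinite classes (an $\omega$-categorical structure), and let $Y=\{a\}$ and $X=\Omega\setminus\{a\}$. Since $\dcl(B)=B$ for finite $B$, the hypothesis holds.
\begin{itemize}
\item Your base case permits any $g$ moving $a$, e.g.\ one with $g(a)$ in a different $E$-class from $a$. Then $\alpha(a)$ and $\beta(a)$ are not $E$-related, and the invariant holds.
\item The class of $a$ is infinite and contained in $X\cup\{a\}$, so eventually some $c\in X$ with $c\,E\,a$ is enumerated.
\item You then need $\alpha(c)=\beta(c)$, and elementarity forces $\alpha(a)\,E\,\alpha(c)=\beta(c)\,E\,\beta(a)$, a contradiction.
\end{itemize}
No choice of realisation at that stage helps: the invariant was satisfiable but not extendable.

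The missing idea is to impose the constraints from all of $X$ simultaneously, which the paper does by compactness on a parameter-free infinitary type. Enumerate $\Omega$ as $\overline{b}=(b_0,b_1,\dots)$ with $b_0=a$, let $p(\overline{x})=\mathrm{tp}(\overline{b})$, and set
\[\Gamma(\overline{x},\overline{y}):=p(\overline{x})\cup p(\overline{y})\cup\{x_0\neq y_0\}\cup\{x_i=y_i : b_i\in X\}.\]
A finite fragment involves only indices from a finite $J\ni 0$. The $\dcl$ hypothesis, applied to the finite set $\{b_i: i\in J\}\cap X$, gives $g\in G$ fixing that set pointwise and moving $a$. The pair $((b_i)_{i\in J},(g(b_i))_{i\in J})$ then realises the fragment in $\M$. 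Hence, by compactness and L\"owenheim--Skolem and passing to a countable saturated elementary extension, $\Gamma$ is realised in a countable saturated model of the theory of $\M$. That model is isomorphic to $\M$, and since $\Gamma$ has no parameters, it is realised in $\M$ itself by some $(\overline{c},\overline{d})$. The maps $\alpha\colon b_i\mapsto c_i$ and $\beta\colon b_i\mapsto d_i$ preserve $p$, so they are elementary, and they are as required.

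This also removes the extension problem of your first attempt, since the realisation already is the whole map. If you want to keep your step-by-step construction, the correct invariant is that $\Gamma$, together with the finitely many values assigned so far, is consistent with the elementary diagram of $\M$. This is preserved by $\omega$-saturation: take a realisation in an elementary extension of $\M$, and realise in $\M$ the type, over the finitely many assigned values, of the next pair. Its base case, however, is precisely the consistency of $\Gamma$ established above.
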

\begin{proof} We consider the countable saturated structure $\M$ with automorphism group $G\acts\Omega$. Let $\overline{b}:=(b_0, b_1, b_2, \dots)$ be a tuple of length $\omega$ enumerating $\M$ with $a=b_0$. Consider the infinitary type $p(\overline{x}):=\mathrm{tp}(\overline{b})$. For the tuples of variables of length $\omega$, $\overline{x}$ and $\overline{y}$, consider the partial type $\Gamma(\overline{x}, \overline{y})$ given by
\[p(\overline{x})\cup p(\overline{y}) \cup\{x_0\neq y_0\}\cup\{x_i=y_i\vert\  b_i\in X\}.\]
Note that $\Gamma(\overline{x}, \overline{y})$ is finitely consistent. To see this, for some finite indexing set $J\subseteq\omega$, consider the restriction of $\Gamma$ to variables $x_i$ and $y_i$ from $J$, which we call $\Gamma_J(\overline{x}_J, \overline{y}_J)$. Since $\dcl((b_i\vert i\in J)\cap X)\cap Y=\emptyset$, there is $g\in G$ such that $g\upharpoonright_{(b_i\vert i\in J)\cap X}=\mathrm{Id}_{(b_i\vert i\in J)\cap X}$ and $g(a)\neq a$, meaning that $\Gamma_J(\overline{x}_J, \overline{y}_J)$ is satisfied by the tuple $((b_i\vert i\in J),(g(b_i)\vert i\in J))$. Since $\Gamma(\overline{x}, \overline{y})$ is finitely satisfiable, by compactness and the L\"{o}wenheim-Skolem Theorem it is also satisfiable in some model $\M'$, which can be chosen to be countable and saturated {by possibly taking a suitable elementary extension}. In particular,  $\M'$ is isomorphic to $\M$ and so, we know that $\Gamma(\overline{x}, \overline{y})$ is satisfied in $\M$ by a tuple $(\overline{c}, \overline{d})$. Consider the maps $\alpha$ and $\beta$ given by $\alpha: b_i\mapsto c_i$ and $\beta:b_i\mapsto d_i$ for $i<\omega$. Since $\alpha$ and $\beta$ preserve the types of arbitrary finite tuples, they are elementary embeddings of $\M$ by construction. Moreover, by construction, $\alpha(a)=c_0\neq d_0=\beta(a)$ and for each $b_i\in X$, $\alpha(b_i)=\beta(b_i)$, meaning that $\alpha\upharpoonright_X=\beta\upharpoonright_X$, as desired.
\end{proof}

\begin{prop}\label{prop:localized are not T1}
    Let $G\acts\Omega$ be the automorphism group of a countable saturated structure. Let $X\subseteq\Omega$ be a $G$-invariant set. Then, $\pw^X$ is either equal to $\pw$ or not $T_{1}$ on $\overline{G}$. 
\end{prop}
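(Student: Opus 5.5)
Let $Y:=\Omega\setminus X$, which is also $G$-invariant. The plan is to split into two cases according to whether some finite subset of $X$ has definable closure meeting $Y$.

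\emph{Case 1: for every $B\in[X]^{<\omega}$ we have $\dcl(B)\cap Y=\emptyset$.} We will show $\pw^X$ is not $T_1$. If $Y=\emptyset$ then $\pw^X=\pw$ and we are done, so pick $a\in Y$. By Lemma~\ref{lem:saturatedcase} there are $\alpha,\beta\in\overline{G}$ with $\alpha\upharpoonright_X=\beta\upharpoonright_X$ and $\alpha(a)\neq\beta(a)$; in particular $\alpha\neq\beta$. Every subbasic open set of $\pw^X$ has the form $\overline{G}_\phi$ with $\phi\colon A\to B$ and $A,B\subseteq X$. Membership of $s$ in such a set depends only on $s\upharpoonright_X$. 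Hence every $\pw^X$-open set containing $\alpha$ also contains $\beta$, and vice versa, so the two points are topologically indistinguishable. This shows $\pw^X$ fails even $T_0$, and in particular is not $T_1$.

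\emph{Case 2: for every $a\in Y$ there is a finite $B_a\subseteq X$ with $a\in\dcl(B_a)$.} We show that $\pw^X=\pw$. The inclusion $\pw^X\subseteq\pw$ is clear, so it suffices to show that each $\overline{G}_{\langle a,b\rangle}$ with $a\in Y$ is $\pw^X$-open. Fix $s\in\overline{G}$ with $s(a)=b$. Since elements of $\overline{G}$ preserve orbits of finite tuples (being limits of elements of $G$), elements of $\overline{G}$ preserve definable closure in the following sense: if $t\upharpoonright_{B_a}=s\upharpoonright_{B_a}$, then $t(a)=s(a)$. To see this, approximate $s$ and $t$ on $B_a\cup\{a\}$ by $g,h\in G$. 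Then $h^{-1}g$ fixes $B_a$ pointwise, so it fixes $a$, giving $g(a)=h(a)$. Also $s(B_a)\subseteq X$ because $X$ is invariant and closed under the action of $\overline{G}$. Therefore $\overline{G}_{s\upharpoonright B_a}$ is a $\pw^X$-open neighbourhood of $s$ contained in $\overline{G}_{\langle a,b\rangle}$. Combined with the sets $\overline{G}_{\langle x,y\rangle}$ for $x\in X$, this shows that $\pw\subseteq\pw^X$.

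\emph{The gap between the cases.} The dichotomy is not yet exhaustive: some elements of $Y$ may lie in $\dcl$ of a finite subset of $X$ while others do not. To handle this, put $Y_0:=\{a\in Y\mid a\notin\dcl(B)\text{ for all }B\in[X]^{<\omega}\}$. This set is $G$-invariant. If $Y_0=\emptyset$, we are in Case 2. Otherwise, pick $a\in Y_0$ and apply Lemma~\ref{lem:saturatedcase} with $X$ and the set $\{a\}$ in place of $Y$. The lemma's hypothesis only requires $\dcl(B)\cap\{a\}=\emptyset$ for all finite $B\subseteq X$, which holds by the choice of $a$. The Case 1 argument then goes through unchanged.

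The main point to check is that Lemma~\ref{lem:saturatedcase} can be applied with $Y$ replaced by the singleton $\{a\}$, and that the definable-closure transfer to $\overline{G}$ is valid. The latter is routine by density of $G$ in $\overline{G}$.
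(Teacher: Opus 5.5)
Your proof is correct and follows essentially the paper's argument. Points of $\Omega\setminus X$ lying in $\dcl$ of a finite subset of $X$ are already controlled by $\pw^X$ (the paper phrases this as $\pw^X=\pw^{X\cup Z}$ for such orbits $Z$), and otherwise Lemma~\ref{lem:saturatedcase} produces $\alpha\neq\beta$ agreeing on $X$. Applying that lemma directly to the original $X$ and the singleton $\{a\}$ is a slightly cleaner organisation: it avoids the paper's ``without loss of generality'' enlargement of $X$, which tacitly uses transitivity of $\dcl$, and your argument shows that $\pw^X$ even fails $T_0$.
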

\begin{proof} Firstly, note that if $Z\subseteq\Omega\setminus X$ is a $G$-orbit and for some $B\in[X]^{<\omega}$ we have  $\dcl(B)\cap Z\neq \emptyset$, then $\pw^{X}=\pw^{X\cup Z}$. {To see this, note first that trivially $\pw^{X}\subseteq \pw^{X\cup Z}$. For the other inclusion,   consider an arbitrary non-empty subbasic open   set %
$\G_{\langle a, b\rangle}$
of $\pw^{X\cup Z}$, %
for $a,b\in X\cup Z$,  and let $\gamma\in \G_{\langle a, b\rangle}$. Since both $X$ and $Z$ are $G$-invariant, either $a,b\in X$, or $a,b\in Z$. In the first case, we have $\G_{\langle a, b\rangle}\in \pw^X$, so consider  the second case. Then there is some $B\in\fps{X}$ such that $a\in\dcl(B)$. In particular,  $b\in\dcl(\gamma(B))$, and so $\gamma\in\G_{\gamma_{\upharpoonright B}}\subseteq \G_{\langle a, b\rangle}$. Thus, $\G_{\langle a, b\rangle}\in \pw^X$,  whence  $\pw^X=\pw^{X\cup Z}$.} Therefore, we may assume without loss of generality that $X\subsetneq \Omega$ and for all $G$-orbits $Z\subseteq\Omega\setminus X$ and all $B\in[X]^{<\omega}$, $\dcl(B)\cap Z=\emptyset$. But then, we are in the setting of Lemma~\ref{lem:saturatedcase} and we can build $\alpha, \beta\in\overline{G}$ agreeing on $X$ but disagreeing on some point $a\in\Omega\setminus X$. In particular, $\beta$ belongs to any open neighbourhood of $\alpha$ in $\pw^X$, meaning that $\pw^X$ is not $T_{1}$.
\end{proof}

\subsection{On the centre and the Zariski topology}\label{sub:zar}

In this subsection, we prove that the semigroup Zariski topology $\tau_Z$ is not Hausdorff on $G$ or $\overline{G}$ whenever $G\acts\Omega$ has locally finite algebraic closure and non-trivial centre. These assumptions are easily satisfied by various natural infinite permutation groups such as the automorphism group $\mathrm{GL}(\aleph_0, q)$ of a countably infinite vector space over a finite field of size $q>2$, or the automorphism group of infinitely many disjoint copies of an edge $K_2$. Indeed, the latter case was at the core of the original example of~\cite{pinskerschindler} of the endomorphism monoid of a finitely homogeneous structure  for which $\zar$ is not Hausdorff.\\ %

Let $S$ be a semigroup. A \textbf{semigroup mixed equality} is an expression of the form
\begin{equation}\label{eq:cequality}
   \lambda_{n}  x \lambda_{n-1}  x \cdots   \lambda_1 x \lambda_0= \eta_{m}\  x \eta_{m-1} \ x \cdots  \eta_1 
   x  \eta_0\;,
\end{equation}
where $\lambda_0, \dots, \lambda_n,\eta_0,\dots, \eta_m\in S$ for $n\geq m$, and $x$ is a variable. By a \textbf{semigroup mixed disequality}, we mean an expression of the form of (\ref{eq:cequality}) with $\neq$ instead of $=$. One may think of a semigroup mixed equality as a positive atomic formula in one free variable in the language of semigroups (with equality and composition) expanded by constants for elements of $S$, and similarly of a mixed disequality as a negated atomic formula. We say that a semigroup mixed equality/disequality is \textbf{singular} if $n=m$. Otherwise, we say it is \textbf{regular}. {Our key insight for proving non-Hausdorffness will be that locally finite algebraic closure of $G$ implies that  open sets of $\zar$ defined by regular mixed disequalities are large (they are open and dense in $\pw$); and non-trivial central elements of $G$ cannot be distinguished from the identity  by open sets defined by singular mixed disequalities.}

\begin{notation} 
{Below, $S\subseteq \Omega^\Omega$ is a transformation monoid, and we consider elements $f \in S^2$. For $a \in \Omega^2$, we write $fa$ for the natural (componentwise) action: writing $f = (f', f'')$ and $a = (a', a'')$, we set $fa = (f'a', f''a'')$. For $a \in \Omega^2$ and $\delta \in S$, we write $\delta a$ for $(\delta a', \delta a'')$, the diagonal action of $\delta$ on $\Omega^2$.}
\end{notation}
\begin{definition}\label{def:freesequence} Let $G\acts\Omega$, and let $S\subseteq \Omega^\Omega$ be a {transformation monoid} with $G\subseteq S$. %
 Consider sequences $\overline{f}:=(f_{n}, \dots, f_0)$ where for each $i\leq n$, $f_i\in S^2$ and $\a:=(a_{n}, \dots, a_0)$ such that for each $i\leq n$, $a_i\in\Omega^2$. Further consider finite tuples $\b$ and $\b'$ from $\Omega$ such that
$\b\orq \b'$. We say that $\a$ is a \textbf{free} sequence with respect to $\f$ over $(\b, \b')$ if {$a_0$ does not contain any element in $\acl(\emptyset)$ and } the following two conditions hold:
\begin{itemize} 
\item  There exists some $\delta\in G$ such that $\delta(\overline{b})=\overline{b}'$ and $a_{i+1}=\delta f_i a_i$ for all $i<n$. That is,
\[ %
(f_{n-1} a_{n-1}) \dots (f_{0}a_{0})\b\orq %
a_n\dots a_1\b'\;;\] 

\item For all $i<n$,
\[f_{i+1}a_{i+1}\ind^{a}%
(f_ia_i)\dots (f_0a_0)\b\;.\]
\end{itemize} 
\end{definition}

\begin{lemma}\label{lem:buildfree} Let $G\acts \Omega$ have  locally finite algebraic closure. Let $S\subseteq \Omega^\Omega$ be a {transformation monoid} such that $G\subseteq  S\subseteq  \G$. Let $\a$ be a free sequence with respect to $\f$ over $(\b, \b')$ as in Definition~\ref{def:freesequence}, and let  $f_{n+1}\in {S}^2$. Then, there is $a_{n+1}\in\Omega^2$ such that $(a_{n+1},\a)$ is free with respect to $(f_{n+1},\f)$ over $(\b, \b')$.
\end{lemma}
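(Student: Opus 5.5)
The plan is to keep the element $\delta$ witnessing freeness of $\a$ fixed on everything it already controls, and to choose $a_{n+1}$ inside the orbit prescribed by the first condition of Definition~\ref{def:freesequence}. Independence will then be obtained from Neumann's Lemma.

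\emph{Step 1: reduce the first condition to an orbit condition.} Put $C:=(f_{n-1}a_{n-1})\dots(f_0a_0)\b$ and $E:=a_n\dots a_1\b'=\delta(C)$. For the extended sequence we need some $\delta'\in G$ with $\delta'(\b)=\b'$ and $a_{i+1}=\delta' f_ia_i$ for all $i\le n$. Any $\delta'\in\delta G_C$ satisfies the conditions for $i<n$. So the admissible choices are exactly $a_{n+1}\in G_E\cdot \delta(f_na_n)$, i.e.\ $a_{n+1}\orq_E \delta(f_na_n)$. The condition for $i<n$ in the second clause is inherited from $\a$. The only new requirement is therefore
\[f_{n+1}a_{n+1}\ind^a D,\qquad D:=(f_na_n)\dots(f_0a_0)\b .\]

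\emph{Step 2: pull $D$ back along $f_{n+1}$.} Write $f_{n+1}=(f',f'')$ with $f',f''\in S\subseteq\G$. Elements of $\G$ are limits of elements of $G$, so by local finiteness they satisfy $\acl(f'X)=f'(\acl(X))$ for finite $X$. In particular they map $\acl(\emptyset)$ onto itself. They are also injective. Hence
\[D':=(f')^{-1}(\acl(D))\cup(f'')^{-1}(\acl(D))\]
is finite. By Neumann's Lemma choose $a_{n+1}\orq_E\delta(f_na_n)$ with $a_{n+1}\ind^a_E D'$.

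Now take $y\in\acl(f_{n+1}a_{n+1})\cap\acl(D)$, say $y\in\acl(f'a'_{n+1})$, so $y=f'(x)$ with $x\in\acl(a'_{n+1})$. Then $x\in D'$, hence $x\in\acl(a_{n+1})\cap\acl(D')\subseteq\acl(E)$. The case $y\in\acl(f''a''_{n+1})$ is symmetric, and more care is needed when $y$ lies only in the algebraic closure of both components jointly; I expect to handle this by applying the same argument to the pair, using $\acl$ of the whole pair. To conclude $y\in\acl(\emptyset)$ it then suffices that
\[\acl(a_{n+1})\cap\acl(E)=\acl(\emptyset),\]
because then $x\in\acl(\emptyset)$ and so $y=f'(x)\in\acl(\emptyset)$.

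\emph{Step 3: the key identity from freeness, and the main obstacle.} Since $a_{n+1}\orq_E\delta(f_na_n)$ and $E=\delta(C)$, the identity in Step 2 is equivalent to $f_na_n\ind^a C$. For $n\ge1$ this is exactly the second clause of Definition~\ref{def:freesequence} for $\a$ at index $n-1$.

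The main obstacle is the base case $n=0$. There $C=\b$, and nothing forces $f_0a_0\ind^a\b$; the only available hypothesis is that $a_0$ avoids $\acl(\emptyset)$. My first attempt would be to strengthen the Neumann step: first move $a_1$ over $E$ as above, and then use transitivity and monotonicity of $\ind^a$ to absorb the part $\acl(a_1)\cap\acl(\b')$. If that does not suffice, I would check whether the intended reading of the definition already includes an independence condition at the bottom level that makes $n=0$ degenerate. Either way, the induction step $n\ge1$ follows from Steps 1 and 2 as written.
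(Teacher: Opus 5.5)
Your Steps 1--3 follow the paper's proof: transfer along $\delta$, then Neumann's Lemma over $E=\overline{b}'_n$ relative to a pullback of $D$ along $f_{n+1}$, then $f_na_n\ind^{a}C$ to get $\delta(f_na_n)\ind^{a}E$. Pulling back $\acl(D)$ rather than $D$, as you do, is the right form of that step.

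The genuine gap is the case you postpone: $y\in\acl(f'a'_{n+1},f''a''_{n+1})\cap\acl(D)$ lying in neither $\acl(f'a'_{n+1})$ nor $\acl(f''a''_{n+1})$. It cannot be handled ``by applying the same argument to the pair''. The two coordinates are moved by different maps, so $\acl(f_{n+1}a_{n+1})$ is not the image of $\acl(a_{n+1})$ under any single map. In fact no suitable $a_{n+1}$ need exist. Let $G=\mathrm{AGL}$ act on $V=\mathbb{F}_3^{(\omega)}$, so that $\acl$ is affine hull and $\acl(\emptyset)=\emptyset$. Take $e\neq b$, $\overline{b}=\overline{b}'=(b)$, $a_0=(e,e)$ and $f_0=(1,1)$; then even $f_0a_0\ind^{a}\overline{b}$ holds. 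Let $f_1=(1,h)$ with $h(x)=2x-e$, the homothety of ratio $2$ centred at $e$. Every admissible $a_1$ has the form $(x,x)$. Since $e=2x-h(x)$ is an affine combination of $x$ and $h(x)$, we get $e\in\acl(x,h(x))\cap\acl(e,b)$ with $e\notin\acl(\emptyset)$, so $f_1a_1\ind^{a}(f_0a_0)\overline{b}$ fails.

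So the statement, with the pair $f_{i+1}a_{i+1}$ read as a set per the paper's convention, is false. The paper's final step (``$S\subseteq\overline{G}$ implies $f_{n+1}a_{n+1}\ind^{a}(f_na_n,\overline{b}_n)$'') passes over exactly this point. What your Step 2 does prove is the coordinatewise statement: $f'a'_{n+1}\ind^{a}D$ and $f''a''_{n+1}\ind^{a}D$. With freeness defined coordinatewise, this carries through the induction. It is also all that \cref{lem:comeagre1} needs, since there one only uses $\lambda_na_n^1\notin\acl\bigl((f_ma_m)\cdots(f_0a_0)\overline{b}\bigr)$.

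On the base case $n=0$ your diagnosis is right, but your first remedy cannot work. Admissibility forces $\acl(a_1)\cap\acl(\overline{b}')$ to be the image under some $\delta'\in G$ of $\acl(f_0a_0)\cap\acl(\overline{b})$, so no choice of $a_1$ removes it. For instance, take $\overline{b}=\overline{b}'=(e)$ with $e\notin\acl(\emptyset)$, $a_0=(e,e)$ and $f_0=f_1=(1,1)$. Then the only admissible $a_1$ is $(e,e)$, for any $G\curvearrowright\Omega$, and the required independence fails.

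The correct fix is your second alternative: freeness must include the bottom condition $f_0a_0\ind^{a}\overline{b}$, and the coordinatewise version suffices. The paper's proof uses this tacitly, since it invokes ``$f_na_n\ind^{a}\overline{b}_n$ by Definition~\ref{def:freesequence}'' also when $n=0$. \cref{lem:comeagre1} arranges it through the choice of $e_0$. With these two amendments (the bottom condition, and coordinatewise rather than joint independence), your Steps 1--3 form a complete proof.
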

\begin{proof} Let 
{$\b_{n}:=(f_{n-1} a_{n-1}, \dots, f_{0}a_{0},\b)$} and $\b_{n}':=(a_{n},\dots, a_1,\b')$.
 By freeness, there is an element $\delta\in G$ such that   $\delta \b_n=\b_n'$. Setting $c:=\delta(f_na_n)$, we then have $(f_{n}a_{n},\b_n)\orq (c,\b_n')$.  By Definition~\ref{def:freesequence}, we have  $f_{n}a_{n}{ \ind^{a} }%
 \b_{n}$, and hence $c \ind^{a} %
 \b_{n}'$. Consider the finite tuple  %
 $f_{n+1}^{-1}(f_na_n,\b_n)$; this is well-defined since both components of  $f_{n+1}$ are injective. By Neumann's Lemma, there is $a_{n+1}\orq_{\b_{n}'}c$ such that %
 $a_{n+1}\ind^{a}_{\b_{n}'} f_{n+1}^{-1}(f_{n}a_n,\b_n)$. Since also $a_{n+1}\ind^{a} \b_n'$, this implies $a_{n+1}\ind^{a}  f_{n+1}^{-1}(f_{n}a_n,\b_n)$. Then $S\subseteq \overline{G}$ implies 
 \[f_{n+1}a_{n+1}\ind^{a}(f_n a_n,\overline{b}_n)=( f_na_n,\dots, f_0a_0,\b),\]
as required in the second condition of the definition of freeness. 
Since $a_{n+1}\orq_{\b_{n}'}c$, we have $(f_{n}a_{n},\b_n)\orq (a_{n+1},\b_n')$, as required by the first condition.
\end{proof}

\begin{lemma}\label{lem:comeagre1} Let $G\acts\Omega$ have locally finite algebraic closure. Let $S\subseteq \Omega^\Omega$ be a transformation monoid such that $G\subseteq S\subseteq \G$. Then, the set of solutions to any regular semigroup mixed {disequality} %
for $S$ is %
{dense and open} in $S$ with respect to $\pw$.
\end{lemma}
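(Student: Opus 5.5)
The plan is to prove the two halves of the statement separately: openness from continuity of composition, and density by building a solution inside a given basic open set using free sequences and Lemma~\ref{lem:buildfree}.

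\emph{Openness.} Write the regular disequality as $L(x)\neq R(x)$, where $L(x)=\lambda_n x\lambda_{n-1}\cdots x\lambda_0$ and $R(x)=\eta_m x\cdots x\eta_0$ with $n>m$. Since $(S,\pw)$ is a topological semigroup, $x\mapsto L(x)$ and $x\mapsto R(x)$ are $\pw$-continuous maps $S\to S$. The set $\{(u,v)\in S^2 : u\neq v\}$ is open in $\pw$, because $\Omega^\Omega$ with $\pw$ is Hausdorff. Hence the solution set, being the preimage of this set under $x\mapsto (L(x),R(x))$, is open.

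\emph{Density.} Fix a non-empty basic open set $S_\phi$ with $\phi\colon A\to B$ finite. Since $S\subseteq\G$, the map $\phi$ is the restriction of some element of $G$, so we may fix enumerations $\b$ of $A$ and $\b'=\phi(\b)$ with $\b\orq\b'$. It then suffices to find $\delta\in G$ with $\delta\b=\b'$ and $L(\delta)\neq R(\delta)$, since then $\delta\in G\cap S_\phi\subseteq S_\phi$. To encode both sides at once, I use pairs $f_i=(f_i',f_i'')\in S^2$ defined by
\[
f_i'=\lambda_i\ (0\le i\le n),\qquad f_i''=\eta_i\ (i\le m),\qquad f_i''=\mathrm{Id}\ (m<i\le n).
\]
Choose $c\in\Omega\setminus\acl(\emptyset)$, which is possible because $\acl(\emptyset)$ is finite and $\Omega$ is infinite, and set $a_0:=(c,c)$. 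The one-term sequence $(a_0)$ is then free with respect to $(f_0)$ over $(\b,\b')$: the first condition only asks for some $\delta\in G$ mapping $\b$ to $\b'$, and the independence condition is vacuous.

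Applying Lemma~\ref{lem:buildfree} $n$ times, I obtain a free sequence $(a_n,\dots,a_0)$ with respect to $(f_n,\dots,f_0)$. This yields a single $\delta\in G$ with $\delta\b=\b'$ and $a_{i+1}=\delta f_ia_i$ for all $i<n$. Unwinding the recursion, the first component of $f_na_n$ equals $L(\delta)(c)$. The second component of $f_ma_m$ equals $R(\delta)(c)$, because along the second coordinate exactly $m$ copies of $\delta$ are interleaved with $\eta_0,\dots,\eta_m$ up to stage $m$.

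Freeness gives $f_na_n\ind^a (f_{n-1}a_{n-1})\cdots(f_0a_0)\b$, and since $m\le n-1$ this right-hand side contains $f_ma_m$. So if $L(\delta)(c)=R(\delta)(c)$, that common element would lie in $\acl(f_na_n)\cap\acl(f_ma_m)\subseteq\acl(\emptyset)$. On the other hand, every element of $\G$ is injective and maps $\acl(\emptyset)$ onto itself: it is finite and pointwise preserved in finite orbits, so an injective map sending it into itself is a bijection on it. Hence elements of $\G$ map $\Omega\setminus\acl(\emptyset)$ into itself, and since $c\notin\acl(\emptyset)$ we get $L(\delta)(c)\notin\acl(\emptyset)$. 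This contradiction shows $L(\delta)(c)\neq R(\delta)(c)$, so $\delta$ is a solution in $S_\phi$.

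The main obstacle I expect is the bookkeeping in the density step. One must check that the componentwise recursion $a_{i+1}=\delta f_ia_i$, with identity padding on the second coordinate beyond stage $m$, really reproduces $R(\delta)(c)$ at stage $m$ rather than at stage $n$. It is exactly the regularity $n>m$ that places $f_ma_m$ strictly among the earlier terms, so that the independence clause of Definition~\ref{def:freesequence} applies to the pair $(f_na_n,f_ma_m)$. A secondary point is to confirm that the $\delta$ provided by freeness is a single element of $G$ that works uniformly for all stages simultaneously, which is how the first condition of Definition~\ref{def:freesequence} is stated.
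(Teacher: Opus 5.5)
Your plan is the paper's. Openness comes from continuity: you use continuity of composition plus Hausdorffness of $\pw$, the paper uses $\zar\subseteq\pw$, which amounts to the same. Density comes from the padded pairs $f_i$, a free sequence built with \cref{lem:buildfree}, and a comparison of $\lambda_na_n^1$ with $\eta_ma_m^2$. Your unwinding of the recursion is correct. So is your argument that elements of $\G$ map $\Omega\setminus\acl(\emptyset)$ into itself, which fills in a step the paper leaves implicit.

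The gap is the choice of the starting point. You take an arbitrary $c\notin\acl(\emptyset)$ and observe that the independence clause of \cref{def:freesequence} is vacuous for a one-term sequence. That is literally true, but it is exactly why \cref{lem:buildfree} cannot be applied to $((c,c))$ as a black box. In its first step ($n=0$, so $\b_0=\b$), the proof of that lemma uses $f_0a_0\ind^{a}\b$, and without this the step genuinely fails. For instance, let $G=\Sym(\Omega)$, let $S_\phi$ be given by $\phi=\mathrm{id}_{\{c\}}$ (so $\b=\b'=(c)$), and consider the regular disequality $xx\neq x$; your recipe allows this very $c$. Every $\delta\in G$ with $\delta\b=\b'$ fixes $c$, so $L(\delta)(c)=c=R(\delta)(c)$. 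Concretely, $a_1=\delta f_0a_0$ is forced to be $(c,c)$, and $f_1a_1=(c,c)$ is not algebraically independent from $(f_0a_0)\b=(c,c,c)$. So no free extension of $((c,c))$ exists, and your final contradiction has nothing to stand on.

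The missing idea is that $c$ must be chosen depending on $\b$ and $f_0$. By Neumann's Lemma, take $c\notin\acl(\emptyset)$ algebraically independent from the finite sets $\lambda_0^{-1}(\acl(\b))$ and $\eta_0^{-1}(\acl(\b))$. This makes $\lambda_0c$ and $\eta_0c$ independent from $\b$, using that elements of $\G$ commute with $\acl$ on finite sets. This is the role of the paper's choice of $e_0\notin\acl(\emptyset)$ with $e_0\ind^{a}f_0^{-1}(\b)$: it supplies the independence from $\b$ consumed by the first application of \cref{lem:buildfree}. With that choice, the rest of your argument coincides with the paper's.
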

\begin{proof} Consider a regular semigroup mixed %
{disequality} 
of the form of (\ref{eq:cequality}) and let $\U$ be {the set of elements of {$S$}
satisfying it. Then $\U$ is $\pw$-open since $\zar\subseteq \pw$.} %
To prove density of $\U$, consider a basic open set   $\nn_{(\b,\b')}$ of $S$.  Let 
\[\f:=((\lambda_n, 1),\dots, (\lambda_{m+1}, 1), (\lambda_m, \eta_m),(\lambda_{m-1}, \eta_{m-1}), \dots, (\lambda_0, \eta_0)).\] 
Select $e_0\ind^a f_0^{-1}(\b)$ with $e_0\not\in\acl(\emptyset)$, and set $a_0:=(e_0, e_0)$. Then $a_0$ is free with respect to $f_0$ over $(\bar b,\bar b')$. We can then use Lemma~\ref{lem:buildfree} to build a free sequence $\a$ with respect to $\f$ over $(\b, \b')$.  Since $n>m$, at the end of this process, 
\[f_n a_n\ind^a(f_m a_{m})\dots  (f_0 a_0)\b.\]

{In particular, writing $a_i=(a_i^1,a_i^2)$ for all $i<n$, we have that %
$\lambda_n a^1_n\neq \eta_ma_m^2$.} %
{Since $\a$ is a free sequence with respect to $\f$ over $(\b, \b')$,} there is $\delta\in G$ such that $\delta\b=\b'$ and $a_{i+1}=\delta {f_i}
a_i$ for all $i<n$. Since %
$\lambda_n a^1_n\neq \eta_m a_m^2$, we see that $\delta$ satisfies the given disequality.\\

\end{proof}

Lemma~\ref{lem:comeagre1} has some interesting consequences for the study of mixed identities in semigroups. Let $S$ be a semigroup. A semigroup mixed equality for $S$ is a \textbf{semigroup mixed identity} if its solution set is all of $S$.  \cref{lem:comeagre1} tells us that %
in the context of locally finite algebraic closure, regular semigroup {disequalities} have large sets of solutions:

\begin{corollary}\label{cor:mixedidentitiessingular} Let $G\acts\Omega$ have locally finite algebraic closure. Let $S\subseteq \Omega^\Omega$ be a transformation monoid such that $G\subseteq S\subseteq \overline{G}$. Then all mixed semigroup identities of $S$ are singular.
\end{corollary}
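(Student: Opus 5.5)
The plan is to argue by contradiction, using \cref{lem:comeagre1} directly. Suppose a semigroup mixed equality of the form (\ref{eq:cequality}) is a mixed identity of $S$, i.e.\ every element of $S$ satisfies it, and suppose it is regular. The first step is a normalisation: by definition of a mixed equality we have $n\geq m$, and regularity means $n\neq m$, so in fact $n>m$. This is exactly the case covered by \cref{lem:comeagre1}, whose proof uses the orientation $n>m$. If we also want to allow equalities written with the longer side on the right, we note that swapping the two sides produces an equality with the same solution set, so we may assume without loss of generality that the side with more occurrences of $x$ is on the left.

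The second step is to pass to the corresponding regular mixed disequality, obtained by replacing $=$ with $\neq$. Its solution set $\U\subseteq S$ is the complement of the solution set of the equality. Since the equality is an identity, $\U=\emptyset$. On the other hand, \cref{lem:comeagre1} says that $\U$ is $\pw$-dense in $S$.

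The final step is to derive the contradiction. $S$ is nonempty, since it contains $G$ and in particular the identity. A dense subset of a nonempty space cannot be empty, so $\U\neq\emptyset$. This contradicts $\U=\emptyset$. Hence no regular mixed equality can be an identity, so every mixed semigroup identity of $S$ is singular.

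There is no real obstacle here; all the work is in \cref{lem:comeagre1}. The only point needing care is the bookkeeping convention: checking that the hypothesis $n\geq m$ in the definition of mixed equalities, together with regularity, really gives the strict inequality $n>m$ required by the lemma.
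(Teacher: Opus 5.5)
Your argument is correct and is exactly the paper's (implicit) proof: the corollary is stated as an immediate consequence of \cref{lem:comeagre1}. The solution set of the associated regular disequality is $\pw$-dense in the nonempty monoid $S$, hence nonempty, so no regular mixed equality can hold identically; your side-swapping remark is harmless but unnecessary, since the definition already imposes $n\geq m$.
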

{We remark that the study of mixed identities in groups (where mixed equalities are given by group rather than semigroup words) has a long history, with some of the most recent results focussing on oligomorphic groups or groups with no algebraicity, see e.g.~\cite{bodirsky2025mixed,etedadialiabadi2021dense,slanina2019laws, marimon2026all}. In particular, the conclusion of Corollary~\ref{cor:mixedidentitiessingular} has been shown recently for all groups with an action with no algebraicity in~\cite{marimon2026all}. This stronger assumption (compared to locally finite algebraic closure) is due to the fact that group words are more expressive than semigroup words.}\\

{In the following, we denote the centre of a semigroup $S$ by $Z(S)$.}

\begin{lemma}\label{lem:centretransfer} Let $G\acts\Omega$. Suppose that $\gamma\in Z(G)$. Then, $\gamma\in Z(\G)$. 
\end{lemma}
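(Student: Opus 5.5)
The plan is a density-plus-continuity argument. The statement says a central element $\gamma$ of $G$ commutes with every element of $\overline{G}$, the $\pw$-closure of $G$ in $\Omega^\Omega$. I would first observe that $\gamma\in G\subseteq\overline{G}$. It then remains to show $\gamma s=s\gamma$ for every $s\in\overline{G}$.

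Fix $s\in\overline{G}$ and a point $x\in\Omega$; it suffices to show $\gamma(s(x))=s(\gamma(x))$. Since $s$ lies in the $\pw$-closure of $G$, the basic open set $\overline{G}_{\phi}$ with $\phi:=s_{\restriction\{x,\gamma(x)\}}$ contains some $g\in G$. Thus $g$ agrees with $s$ on the finite set $\{x,\gamma(x)\}$. We then compute
\[\gamma(s(x))=\gamma(g(x))=g(\gamma(x))=s(\gamma(x))\;,\]
using successively that $g(x)=s(x)$, that $\gamma g=g\gamma$ because $\gamma\in Z(G)$, and that $g(\gamma(x))=s(\gamma(x))$.

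Since $x$ was arbitrary, $\gamma s=s\gamma$, so $\gamma\in Z(\overline{G})$. Alternatively, one can phrase this as follows: the maps $u\mapsto\gamma u$ and $u\mapsto u\gamma$ are $\pw$-continuous on $\Omega^\Omega$ by continuity of composition. They agree on $G$, and $\pw$ is Hausdorff, so they agree on the closure $\overline{G}$.

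There is no real obstacle here. The only point requiring care is choosing the approximating $g$ to match $s$ on \emph{both} $x$ and $\gamma(x)$, rather than on $x$ alone.
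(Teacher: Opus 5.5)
Your proof is correct and is essentially the paper's argument: approximate $s\in\overline{G}$ by some $g\in G$ agreeing with it on $\{x,\gamma(x)\}$, and chain $\gamma s(x)=\gamma g(x)=g\gamma(x)=s\gamma(x)$. Your alternative phrasing is also valid: $u\mapsto\gamma u$ and $u\mapsto u\gamma$ are $\pw$-continuous maps that agree on the dense set $G$, and $\pw$ is Hausdorff, so they agree on $\overline{G}$.
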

\begin{proof} Let $\alpha\in \G$. Take $a\in \Omega$. There is some $\alpha'\in G$ such that $\alpha'_{\upharpoonright \{a,\gamma(a)\}}=\alpha_{\upharpoonright \{a,\gamma(a)\}}$. Hence, 
\[\gamma\alpha(a)=\gamma\alpha'(a)=\alpha'\gamma(a)=\alpha\gamma(a),\]
where the first and last equalities follow from the choice of $\alpha'$ and the second equality from $\gamma\in Z(G)$. Since the equation above holds for all $a\in \Omega$, it follows that $\gamma\alpha=\alpha\gamma$, and so that $\gamma\in Z(\G)$.
\end{proof}

\ignore{
\begin{theorem} Let $\sA$ be a core such that $\Aut(\sA)$ has non-trivial centre. Then, there is some finite $F\subseteq A$ such that for all $\mathcal{O}\in\zar$ with $1\in\mathcal{O}$, 
\[\mathcal{O}\subsetneq \mathcal{U}_F(1):=\{f\in\End(\sA)\vert f_{\upharpoonright F}=1_{\upharpoonright F}\}.\]
In particular, $\zar$ is strictly coarser than $\pw$.
\end{theorem}
\begin{proof}
    Let $\gamma$ be a non-trivial central element in $\End(\sA)$. Let $F$ be finite such that $\gamma_{\upharpoonright F}\neq1_{\upharpoonright F}$. Consider a basic Zariski-open neighbourhood of the identity $\mathcal{O}$. Then, we can consider $\mathcal{O}$ as $\mathcal{O}_{{\mathrm{s}}}\cap\mathcal{O}_{{\mathrm{r}}}$, where $\mathcal{O}_{{\mathrm{s}}}$ is the intersection of (finitely many) Zariski-open sets defined by inequalities of type {\mathrm{s}} and $\mathcal{O}_{{\mathrm{r}}}$  is defined by the intersection of (finitely many) Zariski-open sets defined by inequalities of type {\mathrm{r}}.\\
    
    Firstly, note that, since $\sA$ is a core $\gamma$ satisfies all inequalities of type {\mathrm{s}} satisfied by $1$. In fact, if, for $a_1, \dots, a_n, b_1, \dots, b_n\in\End(\sA)$ we have that
    \[a_n\cdots a_1\neq b_n\cdots b_1,\]
    then we can deduce that
    \begin{align*}
        a_n\gamma\dots a_1\gamma &=\gamma^na_n\cdots a_1\\
        &\neq \gamma^n b_n\cdots b_1\\
        &= b_n\gamma\dots b_1\gamma \;,
    \end{align*}
where the first and last equalities follow from $\gamma\in Z(\End(\sA))$ and the intermediate inequality follows from $\gamma$, and so $\gamma^n$ being injective (since $\sA$ is a core). Secondly, since $\zar$ is coarser than $\pw$ and $\gamma\in\mathcal{O}_{{\mathrm{s}}}$, there is some finite $G\supseteq F$ such that 
    \[\mathcal{U}_G(\gamma):=\{\eta\in\End(\sA)\vert \eta_{\upharpoonright G}=\gamma_{\upharpoonright G}\}\subseteq \mathcal{O}_{{\mathrm{s}}}.\]
    Thirdly, by Corollary~\ref{cor:typeII}, there is some $\delta\in\mathcal{U}_G(\gamma)\cap\mathcal{O}_{{\mathrm{r}}}$. In particular, by construction $\delta\in\mathcal{O}=\mathcal{O}_{{\mathrm{s}}}\cap\mathcal{O}_{{\mathrm{r}}}$. But since $\delta_{\upharpoonright F}=\gamma_{\upharpoonright F}$, $\delta\not\in\mathcal{U}_F(1)$, meaning that $\mathcal{O}\subsetneq\mathcal{U}_F(1)$, as desired.
\end{proof}}

\begin{theorem}\label{thm:zar} Let $G\acts \Omega$ have locally finite algebraic closure and non-trivial centre. Let $S\subseteq \Omega^\Omega$ be a transformation monoid such that $G\subseteq S\subseteq \G$. Then, the semigroup Zariski topology $\zar$ on $S$ is not Hausdorff.
\end{theorem}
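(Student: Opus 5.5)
Fix a non-trivial central element $\gamma\in Z(G)$. By Lemma~\ref{lem:centretransfer}, $\gamma\in Z(\G)$, and hence $\gamma\in Z(S)$. The plan is to show that the identity $1$ and $\gamma$ cannot be separated by disjoint $\zar$-open sets. Every $\zar$-open set is a union of finite intersections of solution sets of disequalities as in (\ref{eq:inequality}). So it suffices to show the following: for any basic open $\mathcal{O}\ni 1$ and $\mathcal{O}'\ni\gamma$, each a finite intersection of such solution sets, we have $\mathcal{O}\cap\mathcal{O}'\neq\emptyset$.

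First split each basic set into its singular and regular parts: $\mathcal{O}=\mathcal{O}_{\mathrm s}\cap\mathcal{O}_{\mathrm r}$ and $\mathcal{O}'=\mathcal{O}'_{\mathrm s}\cap\mathcal{O}'_{\mathrm r}$. The key observation is that a singular disequality
\[\lambda_n x\cdots\lambda_1x\lambda_0\neq\eta_n x\cdots\eta_1 x\eta_0\]
is satisfied by $1$ if and only if it is satisfied by $\gamma$. Indeed, by centrality, substituting $x=\gamma$ gives $\gamma^n\lambda_n\cdots\lambda_0$ and $\gamma^n\eta_n\cdots\eta_0$ on the two sides, and $\gamma^n$ is a bijection, so these differ exactly when $\lambda_n\cdots\lambda_0\neq\eta_n\cdots\eta_0$. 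Consequently both $1$ and $\gamma$ lie in $W:=\mathcal{O}_{\mathrm s}\cap\mathcal{O}'_{\mathrm s}$.

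Next use that $\zar\subseteq\pw$ on $S$. This holds because $\pw$ is a $T_1$ semigroup topology and $\zar$ is contained in every such topology, or directly because the solution sets are $\pw$-open. It follows that $W$ is a $\pw$-open set containing $1$, so it is nonempty. By Lemma~\ref{lem:comeagre1}, each regular disequality has a $\pw$-dense open solution set in $S$, so the finite intersection $\mathcal{O}_{\mathrm r}\cap\mathcal{O}'_{\mathrm r}$ is $\pw$-dense open. Therefore it meets the nonempty $\pw$-open set $W$, and any element of this intersection lies in $\mathcal{O}\cap\mathcal{O}'$. Hence every $\zar$-neighbourhood of $1$ meets every $\zar$-neighbourhood of $\gamma\neq 1$, so $\zar$ is not Hausdorff on $S$. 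Taking $S=G$ and $S=\G$ gives both cases of \cref{mainthm: a}. Since $\pw$ is Hausdorff, this also shows $\zar\subsetneq\pw$.

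One bookkeeping point needs care: the convention $n\geq m$ in (\ref{eq:cequality}). A disequality of the form (\ref{eq:inequality}) with more occurrences of $x$ on the right can simply be rewritten with its sides swapped, so every generating disequality is either singular or regular in the paper's sense. The genuine content is entirely in Lemma~\ref{lem:comeagre1}, which is assumed. Once the singular/regular split is in place, the only step requiring thought is the centrality computation above.
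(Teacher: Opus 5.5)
Your proof is correct and follows the paper's argument: centrality of $\gamma$ in $S$ (via Lemma~\ref{lem:centretransfer}) together with injectivity of $\gamma^n$ places $\gamma$ in the singular part of any basic $\zar$-neighbourhood of $1$, and Lemma~\ref{lem:comeagre1} makes the regular part $\pw$-dense, so it meets any nonempty $\zar$-open (hence $\pw$-open) set. The only cosmetic difference is that you split both neighbourhoods and intersect all regular parts at once, whereas the paper splits only the neighbourhood of $1$ and intersects $\mathcal{O}_{\mathrm r}$ with the nonempty set $\mathcal{O}_{\mathrm s}\cap\mathcal{U}$.
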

\begin{proof} {Let $\gamma \in Z(G)$ be a non-trivial central element of $G$. Let $\mathcal{O}$ and $\mathcal{U}$ be any $\zar$-basic open neighbourhoods of $1$ and $\gamma$, respectively; we show that $\mathcal{O} \cap \mathcal{U} \neq \emptyset$. 
We may write $\mathcal{O} = \mathcal{O}_{\mathrm{s}} \cap \mathcal{O}_{\mathrm{r}}$, where $\mathcal{O}_{\mathrm{s}}$ and $\mathcal{O}_{\mathrm{r}}$ are the intersection of finitely many $\zar$-open sets defined by singular and regular disequalities, respectively.} %

{We  argue that $\gamma\in\mathcal{O}_{\mathrm{s}}$. In fact, $\gamma$ satisfies all singular disequalities
    satisfied by $1$. To see this, suppose that, for 
    {$\lambda_0, \dots, \lambda_n, \eta_0, \dots, \eta_n\in {S}$},
    we have that
    \[{\lambda_n\cdots \lambda_0\neq \eta_n\cdots \eta_0\;.}\]
    Then, we can deduce that}
    \begin{align*}
        \lambda_n\gamma\dots \lambda_1\gamma\lambda_0 &=\gamma^n\lambda_n\cdots \lambda_0\\
        &\neq \gamma^{n} \eta_n\cdots \eta_0\\
        &= \eta_n\gamma\dots \eta_1\gamma \eta_0\;,
    \end{align*}
{where the first and last equalities follow from $\gamma \in Z(S)$ (by Lemma~\ref{lem:centretransfer}), and the intermediate inequality follows from the injectivity of $\gamma\in S\subseteq\overline{G}$, and hence of $\gamma^{n}$.}

{It follows that $\mathcal{O}_{\mathrm{s}}\cap \mathcal{U}$ is non-empty. %
Now Lemma~\ref{lem:comeagre1} implies that $\mathcal{O}_{\mathrm{r}}$, as the finite intersection of $\pw$-dense and $\pw$-open sets, is itself $\pw$-dense. Since $\zar\subseteq \pw$, it then is also $\zar$-dense. We conclude that $\mathcal{O}_{\mathrm{r}}\cap \mathcal{O}_{\mathrm{s}}\cap \mathcal{U}$ is non-empty, which we had to show.  %
}
\end{proof}

\section{Chains, rigidity, and dispersivity}\label{sec:chains}

  \subsection{Chains}
 \newcommand{\fts}[1]{(#1)^{\omega}} 
 
 In this section, we work again with a group $G\acts\Omega$ with locally finite algebraic closure, and the semigroup (under composition) given by its topological closure $\overline{G}$ in the space $\Omega^{\Omega}$ with the 
topology of pointwise convergence $\pw$. 
 We let  $S\in\{G, \overline{G}\}$.  \\

For $k \geq 0$, a \textbf{chain of length $k$} is a $(k+1)$-tuple
$\C = (\overline{c}_i)_{i=0}^{k}$ where $\overline{c}_i$'s are
themselves finite tuples. When $k$ is known or implicit from the context, we
refer to $\C$ simply as a \textbf{chain}. If $\acl(\overline{c}_i) =
\overline{c}_i$ for every $i \leq k$ (identifying each tuple
$\overline{c}_i$ with the set of its coordinates), we call $\C$ a
\textbf{chain of algebraically closed sets}.

Given a chain $\C = (\overline{c}_i)_{i=0}^{k}$, we write
$\nn_{(\overline{c}_i)_{i=0}^{k}}$, or simply $\nn_{\C}$, for the product
\begin{equation*}
  \nn_{\overline{c}_0}\,\nn_{\overline{c}_1}\cdots\nn_{\overline{c}_k},
\end{equation*}
where, for each $i \leq k$, $\nn_{\overline{c}_i}$ denotes the stabiliser
of $\overline{c}_i$ in $S$. Recall each $\nn_{\overline{c}_i}$ is a basic open set of the topology $\pw$.
   
 \begin{lemma}
 	\label{o:commutation}  For any 
   $C\in\fps{\Omega}$ 
and any $g\in G$, we have $\nn_{g(C)}g= g\nn_{C}$. 
 \end{lemma}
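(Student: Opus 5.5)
The claim $\nn_{g(C)}g=g\nn_C$ is a direct verification by double inclusion. The only fact needed is that $g\in G$ is a bijection of $\Omega$ with $g^{-1}\in G\subseteq S$. Since $S\in\{G,\G\}$ is a monoid containing $G$, multiplying an element of $S$ on either side by $g$ or $g^{-1}$ stays in $S$.

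For the inclusion $g\nn_C\subseteq \nn_{g(C)}g$, take $s\in\nn_C$. We write $gs=(gsg^{-1})g$ and check that $t:=gsg^{-1}\in S$ fixes $g(C)$ pointwise. For $c\in C$,
\[t(g(c))=gs(c)=g(c),\]
using that $s$ fixes $c$. Hence $t\in\nn_{g(C)}$, and so $gs\in\nn_{g(C)}g$.

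For the reverse inclusion, take $t\in\nn_{g(C)}$ and write $tg=g(g^{-1}tg)$. For $c\in C$,
\[g^{-1}tg(c)=g^{-1}g(c)=c,\]
so $s:=g^{-1}tg\in\nn_C$, again an element of $S$ by closure under composition. Therefore $tg=gs\in g\nn_C$.

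There is essentially no obstacle. The one point to state explicitly is that conjugation by $g\in G$ preserves $S$. This holds because $S$ is a monoid containing $G$, which is immediate for $S=G$, and for $S=\G$ follows from $\G$ being a monoid containing $G$. The argument never requires $g$ to be invertible inside $S$ beyond $g^{-1}\in G$, so both cases $S=G$ and $S=\G$ are handled uniformly.
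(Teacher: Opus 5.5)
Your proposal is correct and matches the paper's proof: both establish the two inclusions by conjugating ($s:=g^{-1}hg$ for one direction, $h:=gsg^{-1}$ for the other) and checking that the conjugate fixes the relevant set pointwise. Your explicit remark that conjugation by $g\in G$ keeps you inside $S\in\{G,\G\}$ because $S$ is a monoid containing $G$ is a point the paper leaves implicit.
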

 \begin{proof}
 For the inclusion $\nn_{g(\C)}\,g \subseteq g\,\nn_{\C}$, let
$h \in \nn_{g(\C)}$. Then $s := g^{-1}hg$ stabilises $\C$, that is,
$s \in \nn_{\C}$, and $hg = gs \in g\,\nn_{\C}$. The reverse inclusion is
analogous: given $s \in \nn_{\C}$, set $h := gsg^{-1} \in \nn_{g(\C)}$, so
that $gs = hg \in \nn_{g(\C)}\,g$. Hence $g\,\nn_{\C} \subseteq
\nn_{g(\C)}\,g$, and the two inclusions give
$\nn_{g(\C)}\,g = g\,\nn_{\C}$.
 \end{proof}
 
  \begin{observation}
	   \label{o:chain} Assume that we are given a chain $\C=(\overline{c}_{i})_{i=0}^{k}$, where each $\overline{c}_{i}\in\fps{\Omega}$. Then  $\nn_{\mathcal{C}}$ is precisely the set of $s\in S$ such that there exists a chain %
       ${\mathcal{C}'}:=(\overline{c}_{i}')_{i=0}^{k}$ with $\overline{c}_0'=\overline{c}_0$, $\overline{c}'_{k}=s(\overline{c}_{k})$, and $\overline{c}'_{i}\overline{c}'_{i+1}\orq \overline{c}_{i}\overline{c}_{i+1}$ for every $i<k$.  
  \end{observation}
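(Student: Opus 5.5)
We prove both inclusions by induction on $k$, unwinding the definition $\nn_{\C}=\nn_{\overline{c}_0}\nn_{\overline{c}_1}\cdots\nn_{\overline{c}_k}$. Throughout we use two facts. First, for $s\in S$ and finite tuples $\overline{a}$, we have $s(\overline{a})\orq\overline{a}$: this is immediate if $s\in G$, and if $s\in\overline{G}$ it holds because $s$ agrees on $\overline{a}$ with some element of $G$. Second, if $\overline{a}\orq\overline{a}'$, then for every $t\in S$ there is $t'\in S$ with $t'(\overline{a}')=\overline{a}$; indeed, some $g\in G\subseteq S$ maps $\overline{a}'$ to $\overline{a}$. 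For $k=0$ the statement reads $\nn_{\overline{c}_0}=\{s : s(\overline{c}_0)=\overline{c}_0\}$, which is the definition.

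\emph{Inclusion $\subseteq$.} Let $s=s_0s_1\cdots s_k$ with $s_i\in\nn_{\overline{c}_i}$, and put $\overline{c}_i':=s_0\cdots s_{i-1}(\overline{c}_i)$. Then $\overline{c}_0'=\overline{c}_0$, and since $s_k$ fixes $\overline{c}_k$ we get $\overline{c}_k'=s_0\cdots s_{k-1}(\overline{c}_k)=s(\overline{c}_k)$. For $i<k$, the map $s_i$ fixes $\overline{c}_i$, so
\[\overline{c}_i'\overline{c}_{i+1}'=s_0\cdots s_{i-1}s_i(\overline{c}_i\overline{c}_{i+1}).\]
By the first fact this tuple lies in the same orbit as $\overline{c}_i\overline{c}_{i+1}$, because a composite of elements of $S$ lies in $S$.

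\emph{Inclusion $\supseteq$.} Let $\C'$ be a chain as in the statement. We construct $g_1,\dots,g_k\in G$ with $g_i(\overline{c}_{j})=\overline{c}_{j}'$ for $j\in\{i-1,i\}$; this is possible since $\overline{c}_{i-1}'\overline{c}_i'\orq\overline{c}_{i-1}\overline{c}_i$. Set $g_0:=1$ and $s_i:=g_i^{-1}g_{i+1}$ for $i<k$. Then $s_i$ fixes $\overline{c}_i$, because $g_{i+1}(\overline{c}_i)=\overline{c}_i'=g_i(\overline{c}_i)$. For $i=0$ we use $\overline{c}_0'=\overline{c}_0$, so $g_1$ fixes $\overline{c}_0$.

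Finally put $s_k:=g_k^{-1}s$. This lies in $S$ because $G\subseteq S$ and $S$ is a monoid, and it fixes $\overline{c}_k$, because $s(\overline{c}_k)=\overline{c}_k'=g_k(\overline{c}_k)$. The product telescopes:
\[s_0s_1\cdots s_k=g_0^{-1}g_k g_k^{-1}s=s.\]

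The only delicate point is the membership $s_k\in S$ when $S=\overline{G}$, which holds because $G\overline{G}\subseteq\overline{G}$. Everything else is direct verification.
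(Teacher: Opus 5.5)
Your proof is correct and follows essentially the same route as the paper. For $\subseteq$ you take $\overline{c}_i'$ to be the image of $\overline{c}_i$ under the partial product, which coincides with the paper's $g_0\cdots g_i(\overline{c}_i)$ because $s_i$ fixes $\overline{c}_i$. For $\supseteq$ you choose witnesses in $G$ for consecutive pairs and telescope, as the paper does; the only differences are in indexing ($g_0:=1$ and $s_k:=g_k^{-1}s$, where the paper sets $g_k:=s$). The argument you actually give is direct, so the announced induction on $k$ is never used and can be dropped.
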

  \begin{proof}
  Recall that $\nn_{\C} = \nn_{\overline{c}_0}\nn_{\overline{c}_1}\cdots\nn_{\overline{c}_k}$. First assume that $s\in \nn_{\C}$, so $s=g_0\cdots g_k$ with $g_i\in \nn_{\overline{c}_i}$ for each %
  {$i\leq k$.}
  Then set $\overline{c}_i':=g_0\cdots g_i(\overline{c}_i)$ for every $i$. 
  For $i < k$, the element $g_0 \cdots g_i$ witnesses
  $\overline{c}_i\,\overline{c}_{i+1} \orq \overline{c}'_i\,\overline{c}'_{i+1}$: it sends
  $\overline{c}_i$ to $\overline{c}'_i$ by definition, while, using $g_{i+1} \in \nn_{\overline{c}_{i+1}}$, \begin{equation*}
    g_0 \cdots g_i(\overline{c}_{i+1})
      = g_0 \cdots g_i g_{i+1}(\overline{c}_{i+1})
      = \overline{c}'_{i+1}.
  \end{equation*}
Conversely, suppose such a chain 
${\mathcal{C}'}$ exists. For each $i < k$ pick,
  using $\overline{c}_i\,\overline{c}_{i+1} \orq \overline{c}'_i\,\overline{c}'_{i+1}$, an element
  {$g_i \in G$}
  with $g_i(\overline{c}_i\,\overline{c}_{i+1}) = \overline{c}'_i\,\overline{c}'_{i+1}$.
  Set $g_k := s$, so that $g_k(\overline{c}_k) = s(\overline{c}_k) = \overline{c}'_k$ as well. Note that $    s = g_0 \,(g_0^{-1}g_1)\,(g_1^{-1}g_2)\cdots(g_{k-1}^{-1}g_k),
$ and  $g_0 \in \nn_{\overline{c}_0}$ and  $g_{i-1}^{-1}g_i\in \nn_{\overline{c}_i}$ for every $1\leq i\leq k$.  Hence
  $s \in \nn_{\overline{c}_0}\nn_{\overline{c}_1}\cdots\nn_{\overline{c}_k} = \nn_{\C}$.

  \end{proof}
  
\begin{observation}\label{obs: subchains}
 Let $\C=(\overline{c}_i)_{i=0}^k$ be a chain, let $J\subseteq \{0, \dots, k\}$ and consider the subchain $\D:=(\overline{c}_i)_{i\in J}$. Then, $\nn_{\D}\subseteq\nn_{\C}$.

\end{observation}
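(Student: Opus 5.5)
The plan is to use that each factor $\nn_{\overline{c}_i}$ contains the identity of $S$. Write $J=\{j_0<j_1<\dots<j_l\}$, so that
\[
\nn_{\D}=\nn_{\overline{c}_{j_0}}\nn_{\overline{c}_{j_1}}\cdots\nn_{\overline{c}_{j_l}} .
\]
Given $s\in\nn_{\D}$, we write $s=g_{j_0}g_{j_1}\cdots g_{j_l}$ with $g_{j_r}\in\nn_{\overline{c}_{j_r}}$ for each $r\leq l$.

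For each index $i\in\{0,\dots,k\}\setminus J$, we insert the identity $1\in S$ in position $i$. This is legitimate because $S\in\{G,\overline{G}\}$ is a monoid and $1$ fixes $\overline{c}_i$ pointwise, so $1\in\nn_{\overline{c}_i}$. The resulting product $h_0h_1\cdots h_k$, with $h_i=g_i$ for $i\in J$ and $h_i=1$ otherwise, equals $s$ and has $h_i\in\nn_{\overline{c}_i}$ for every $i$. Hence $s\in\nn_{\C}$.

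The only degenerate case is $J=\emptyset$. There one should read $\nn_{\D}$ as the empty product $\{1\}$, and $1\in\nn_{\C}$ by the same argument. Alternatively, one could argue via \cref{o:chain} by filling the missing links of the witnessing chain with repeated tuples, but the direct identity-insertion argument above is simpler and has no real obstacle.
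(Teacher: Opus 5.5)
Your proposal is correct and is essentially the paper's own argument: write $s\in\nn_{\D}$ as an ordered product over $J$ and pad the positions outside $J$ with $1\in\nn_{\overline{c}_i}$ to obtain a factorisation witnessing $s\in\nn_{\C}$. Your remark on $J=\emptyset$ is harmless, since the paper's definition of a chain has at least one term, so that case does not arise.
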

\begin{proof} %
Let $s\in\nn_{\D}$. By definition, $s=\prod_{j\in J} g_j$, {where the $j$'s are in increasing order and} %
$g_j\in\nn_{\overline{c}_j}$. For $i\leq k$, let
\[g'_i:=\begin{cases} g_i &\text{ if } i\in J;\\
1 &\text{ otherwise.}
\end{cases}\]
Then, $s=\prod_{j\in J} g_j=g'_0\cdot \dots \cdot g_k'\in \nn_{\C}$,
concluding the proof. 
\end{proof}

  \begin{corollary}
  	\label{change chains} Assume that $A\in\fps{\Omega}$ and $k\geq 0$. {Let $\C=(\overline{c}_{i})_{i=0}^{k}$  and $\C'=(\overline{c}_{i}')_{i=0}^{k}$ be two chains such that $\overline{c}_{i}\overline{c}_{i+1}\orq_{A} \overline{c}'_{i}\overline{c}'_{i+1}$ for all $0\leq i\leq k-1$.}
    \begin{itemize}
     \item {If $\overline{c}_{0}=\overline{c}'_{0}$, there is $g\in G_{A}$ such that $\nn_{\C'}g=\nn_{\C}$, and $g(\overline{c}'_{k})=\overline{c}_k$;}
        \item {If $\overline{c}_{k}=\overline{c}'_{k}$, there is $g\in G_{A}$ such that $g\nn_{\C}=\nn_{\C'}$ and $g(\overline{c}_{0})=\overline{c}_0'$.}
    \end{itemize}

  \end{corollary}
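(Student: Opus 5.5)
We prove both items by induction on $k$, using Lemma~\ref{o:commutation} to move group elements past stabilisers. We construct the required $g$ explicitly as a product of automorphisms $h_0,\dots,h_{k-1}\in G_A$ witnessing the hypotheses $\overline{c}_i\overline{c}_{i+1}\orq_A\overline{c}'_i\overline{c}'_{i+1}$.

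For the first item, assume $\overline{c}_0=\overline{c}'_0$. We define inductively elements $g_i\in G_A$ with $g_i(\overline{c}_i)=\overline{c}'_i$ and
\[\nn_{\overline{c}'_0}\cdots\nn_{\overline{c}'_i}\,g_i=\nn_{\overline{c}_0}\cdots\nn_{\overline{c}_i}.\]
Take $g_0=1$. Given $g_i$, both $g_i^{-1}(\overline{c}'_i\overline{c}'_{i+1})$ and $\overline{c}_i\overline{c}_{i+1}$ are $A$-conjugates of $\overline{c}_i'\overline{c}'_{i+1}$, since $g_i\in G_A$. Their first coordinates agree, because $g_i^{-1}(\overline{c}'_i)=\overline{c}_i$. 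Hence there is $u\in G_{A\cup\overline{c}_i}$ with $u(\overline{c}_{i+1})=g_i^{-1}(\overline{c}'_{i+1})$. Set $g_{i+1}:=g_iu$; then $g_{i+1}\in G_A$ and $g_{i+1}(\overline{c}_{i+1})=\overline{c}'_{i+1}$. Lemma~\ref{o:commutation} gives $\nn_{\overline{c}'_{i+1}}g_{i+1}=g_{i+1}\nn_{\overline{c}_{i+1}}$, so
\[\nn_{\overline{c}'_0}\cdots\nn_{\overline{c}'_{i+1}}\,g_{i+1}=\nn_{\overline{c}'_0}\cdots\nn_{\overline{c}'_i}\,g_iu\,\nn_{\overline{c}_{i+1}}=\nn_{\overline{c}_0}\cdots\nn_{\overline{c}_i}\,u\,\nn_{\overline{c}_{i+1}}.\]
Since $u\in\nn_{\overline{c}_i}$, which is a group when $S=G$ and a monoid containing $u$ when $S=\overline{G}$, we get $\nn_{\overline{c}_i}u=\nn_{\overline{c}_i}$. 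This holds in both cases because $u$ is invertible in $G$ and $u^{-1}\in\nn_{\overline{c}_i}$. The right-hand side therefore equals $\nn_{\overline{c}_0}\cdots\nn_{\overline{c}_{i+1}}$.

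At $i=k$ this yields $\nn_{\C'}g_k=\nn_\C$ with $g_k(\overline{c}_k)=\overline{c}'_k$. The statement asks for $g$ with $\nn_{\C'}g=\nn_\C$ and $g(\overline{c}'_k)=\overline{c}_k$. So we must check the orientation. If the intended condition is $g^{-1}(\overline{c}_k')=\overline{c}_k$, the element $g=g_k$ works as is. Otherwise we run the symmetric construction with the roles of $\C$ and $\C'$ swapped, which yields $\nn_\C g'=\nn_{\C'}$ and $g'(\overline{c}_k)=\overline{c}'_k$, and then take $g=g'^{-1}$. Right-multiplying by $g'^{-1}$ gives $\nn_\C=\nn_{\C'}g'^{-1}$ and $g'^{-1}(\overline{c}'_k)=\overline{c}_k$, exactly as required.

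The second item is the mirror image: induct from the right end, where $\overline{c}_k=\overline{c}'_k$, and peel off factors on the left. Alternatively, apply the first item to the reversed chains and note that $\nn_\C$ for $S=G$ is closed under inversion, with $\nn_\C^{-1}$ equal to the product for the reversed chain. This shortcut does not work for $S=\overline{G}$, so there we perform the direct left-sided induction. The step we expect to require the most care is checking that $\nn_{\overline{c}_i}u=\nn_{\overline{c}_i}$, and its left analogue, hold in $\overline{G}$ as well as in $G$; this follows since $u,u^{-1}\in G_{\overline{c}_i}\subseteq\nn_{\overline{c}_i}$.
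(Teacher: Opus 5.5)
Your induction is correct and is essentially the paper's argument with different bookkeeping. The paper adds the last link and corrects on the left by $\ell\in G_{A\overline{c}'_k}$, taking $g=\ell h$. You correct on the right by $u\in G_{A\cup\overline{c}_i}$, taking $g_{i+1}=g_iu$. These agree, since $g_iug_i^{-1}$ fixes $g_i(\overline{c}_i)=\overline{c}'_i$. Your construction yields $\nn_{\C'}g_k=\nn_{\C}$ with $g_k(\overline{c}_k)=\overline{c}'_k$, and your check that $\nn_{\overline{c}_i}u=\nn_{\overline{c}_i}$ also holds in $\overline{G}$ is right.

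\textbf{The orientation paragraph is wrong.} Running your construction with $\C$ and $\C'$ swapped produces $g'$ with $\nn_{\C}g'=\nn_{\C'}$ and $g'(\overline{c}'_k)=\overline{c}_k$, not $g'(\overline{c}_k)=\overline{c}'_k$. The construction always sends the last link of the chain in the role of $\C$ to the last link of the chain in the role of $\C'$. So $g'^{-1}(\overline{c}_k)=\overline{c}'_k$, and you are back where you started.

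No argument can do better, because the literal conclusion is false. Take $G=\Sym(\mathbb{N})$, which has no algebraicity, with $A=\emptyset$, $k=1$, $\C=((0),(1,2))$ and $\C'=((0),(2,3))$.
\begin{itemize}
\item By \cref{o:chain}, $\nn_{\C}=\{t\mid 0\notin t(\{1,2\})\}$.
\item Likewise $\nn_{\C'}g=\{t\mid 0\notin tg^{-1}(\{2,3\})\}$.
\item So $\nn_{\C'}g=\nn_{\C}$ forces $g(\{1,2\})=\{2,3\}$, which is incompatible with $g(2)=1$.
\end{itemize}
Hence the first item must be read with $g(\overline{c}_k)=\overline{c}'_k$, equivalently $g^{-1}(\overline{c}'_k)=\overline{c}_k$. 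That is exactly what your $g_k$ satisfies, and you should say so explicitly rather than claim the literal version. Nothing downstream is affected: \cref{skipping terms} and \cref{c:reachability} use only $g\in G_A$ and the coset identities.

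The paper's own proof contains the matching slip. It rewrites $h\nn_{\overline{c}_{k+1}}$ as $\nn_{h^{-1}(\overline{c}_{k+1})}h$, whereas \cref{o:commutation} gives $\nn_{h(\overline{c}_{k+1})}h$. With the correct identity, its argument lands on your orientation.

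A smaller point about your shortcut for the second item: $\nn_{\C}$ is not closed under inversion. Inversion carries it to the product for the reversed chain. The shortcut then gives $g(\overline{c}_0)=\overline{c}'_0$ only when you feed it the corrected first item.
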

  \begin{proof}
  {We argue by induction on the length $k$ of the chains. We begin by proving the first statement. The base case of $k=0$ is trivial since then $\C=\C'$ and we may take $g=\mathrm{Id}$. So, for the inductive hypothesis, suppose the claim holds up to $k\geq 0$ and consider chains $\C=(\overline{c}_{i})_{i=0}^{k+1}$ and $\C'=(\overline{c}'_{i})_{i=0}^{k+1}$ with $\overline{c}_{0}=\overline{c}_{0}'$ and $\overline{c}_{i}\overline{c}_{i+1}\orq_{A} \overline{c}_{i}'\overline{c}'_{i+1}$ for all $0\leq i\leq k$. Denote by $\tilde{\C}$ and $\tilde{\C'}$ the chains obtained from $\C$ and $\C'$ respectively by removing the final term of the chain.}
  
  {By inductive hypothesis, there is $h\in G_A$ such that $\nn_{\tilde{\C'}} h=\nn_{\tilde{\C}}$ and $h(\overline{c}_k')=\overline{c}_k$. Using \cref{o:commutation} for the last equation below, we obtain:}
\begin{equation}\label{eq:chainarg1}
\nn_{\C}=\nn_{\tilde{\C}}\nn_{\overline{c}_{k+1}}=\nn_{\tilde{\C'}}h\nn_{\overline{c}_{k+1}}=\nn_{\tilde{\C'}}\nn_{h^{-1}(\overline{c}_{k+1})}h\;.  
\end{equation}
Note that  since $h\in G_A$,
\[\overline{c}'_k h^{-1}(\overline{c}_{k+1})\equiv_A h(\overline{c}'_k) \overline{c}_{k+1}=\overline{c}_k \overline{c}_{k+1}\equiv_A \overline{c}'_k \overline{c}'_{k+1}\;.\]
{In particular, this means there is $\ell\in G_{A \overline{c}_k'}$ such that $\ell h^{-1} (\overline{c}_{k+1})=\overline{c}'_{k+1}$. Note that by \cref{o:commutation},} 
\[\nn_{h^{-1}(\overline{c}_{k+1})}=\ell^{-1}\nn_{\ell h^{-1}(\overline{c}_{k+1})} \ell\;.\]
{Furthermore, since $\ell\in G_{\overline{c}'_k}$, $\nn_{\overline{c}_k'} \ell^{-1}=\nn_{\overline{c}_k'}$. (This is just because for $s\in \nn_{\overline{c}_k'}$, we can write $s=s\ell\ell^{-1}$, where $s\ell\in\nn_{\overline{c}_k'}$ too.) Hence, using the two equations above in the right-hand side of \eqref{eq:chainarg1}, we get:}
\[\nn_{\C}=\nn_{\tilde{\C'}}\nn_{h^{-1}(\overline{c}_{k+1})}h=\nn_{\tilde{\C'}}\ell^{-1}\nn_{\ell h^{-1}(\overline{c}_{k+1})} \ell h=\nn_{\tilde{\C'}}\nn_{\overline{c}_{k+1}'} \ell h\;.\]
  {Taking $g:=\ell h$, we can thus conclude the argument for the first part.}

  {The argument for the second part is identical, by carrying out the induction in reverse. That is, now for the inductive step we will denote by $\tilde{\C}$ and $\tilde{\C'}$ the chains obtained from $\C$ and $\C'$ respectively by removing their first term. Then, taking $h\in G_A$ with $h \nn_{\tilde{C}}=\nn_{\tilde{C}'}$ and $h(\overline{c}_1)=\overline{c}_1'$, just as in \eqref{eq:chainarg1}, we obtain} 
  \begin{equation}
    \label{nn equality}	\nn_{\C'}=\nn_{\overline{c}_0'}\nn_{\widetilde{\C'}}=\nn_{\overline{c}_0'}h\nn_{\widetilde{\C}}=h\nn_{h^{-1}(\overline{c}_0')}\nn_{\widetilde{\C}}\;.  
    \end{equation}
{By an analogous argument to the above, we can choose $h'\in G_{A\overline{c}_1}$ such that $h h' (\overline{c}_0)=\overline{c}_0'$, and similarly deduce that} 
\begin{equation*}
    \nn_{\C'}
      = h\,h'\,\nn_{\overline{c}_0}\,\nn_{\widetilde{\C}}
      = h\,h'\,\nn_{\C}.
  \end{equation*}
   {Finally, setting $g := hh'$ yields the desired element of $G_A$.}
  
  \end{proof}

  We remind the reader of the following standard consequence of Neumann's Lemma, which is referred to frequently throughout the paper:  
  \begin{fact}
  	\label{f:neumann} {Let $G\acts\Omega$ with locally finite algebraic closure.} Let $C\in\fps{\Omega}$ be algebraically closed. Then, for all $A, B\in\fps{\Omega}$ there is $A'\in \fps{\Omega} $ such that $A'\equiv_C A$ and $A'\cap B= A\cap B\cap C$.  
  \end{fact}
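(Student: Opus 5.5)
The plan is to derive the Fact from Neumann's Lemma applied over $C$. Given $A,B\in\fps{\Omega}$, Neumann's Lemma (with parameter set $C$) yields $A'\equiv_C A$ such that $A'\ind^a_C B$, that is,
\[\acl(A'\cup C)\cap\acl(B\cup C)=\acl(C)=C,\]
where the last equality holds because $C$ is algebraically closed. Using $A'\subseteq\acl(A'\cup C)$ and $B\subseteq\acl(B\cup C)$, we conclude that $A'\cap B\subseteq C$.

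Next we pin down $A'\cap C$. Since $A'\equiv_C A$, there is $g\in G_C$ with $g(A)=A'$, and $g$ fixes $C$ pointwise. An element $x\in A$ lies in $C$ if and only if $g(x)=x\in C$. Moreover, if $g(x)\in C$ then $x=g^{-1}(g(x))=g(x)\in C$, because $g^{-1}$ also fixes $C$. Hence $A'\cap C=g(A\cap C)=A\cap C$.

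Combining the two steps:
\[A'\cap B=A'\cap B\cap C=(A'\cap C)\cap B=A\cap C\cap B.\]
This gives the required equality $A'\cap B=A\cap B\cap C$.

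No step is a real obstacle. The only point that needs care is the middle step, namely checking that the witness $g$ of $A'\equiv_C A$ can be taken to fix $C$ pointwise, so that $A\cap C$ is preserved exactly. This is immediate from the definition of $\equiv_C$ via the diagonal action.
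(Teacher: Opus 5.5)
Your proof is correct, and it is exactly the derivation the paper intends when it calls this fact a standard consequence of Neumann's Lemma (the paper gives no further proof). Applying the lemma over $C$ yields $A'\cap B\subseteq \acl(C)=C$, and the witness $g\in G_C$ of $A'\equiv_C A$ yields $A'\cap C=A\cap C$.
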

  
  An easy but useful consequence of this is the following: 
  \begin{lemma}
 	\label{l:intersections} Let $G\acts\Omega$ with locally finite algebraic closure.
 	Then for any algebraically closed $A,C,D\in\fps{\Omega}$  there is some algebraically closed $E\in\fps{\Omega}$ and $g\in G$ such that $E\cap A=C\cap D\cap A$ and the following inclusion holds:
  \begin{equation*}
  	\nn_{E}g\subseteq \nn_{C}\nn_{D}.
  \end{equation*}
 \end{lemma}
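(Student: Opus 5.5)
We will produce $E$ and $g$ by conjugating $C$ with an element fixing $D$. We use Fact~\ref{f:neumann} over the algebraically closed set $D$. Applying it with $C$ in the role of $A$ and $A\cup D$ in the role of $B$, we obtain $C'\in\fps{\Omega}$ with $C'\equiv_D C$ and
\[C'\cap (A\cup D)=C\cap (A\cup D)\cap D = C\cap D.\]
Let $h\in G_D$ witness $C'\equiv_D C$, i.e.\ $h(C)=C'$ and $h$ fixes $D$ pointwise. Then $C'$ is algebraically closed, being an image of an algebraically closed set under an element of $G$.

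Next we compute the product. By Lemma~\ref{o:commutation}, $\nn_{C'}h=\nn_{h(C)}h=h\nn_C$. Since $h\in \nn_D$, we have $h\nn_D=\nn_D$. Hence
\[\nn_C\nn_D = \nn_C h^{-1}\nn_D\supseteq \nn_C h^{-1}.\]
This form is not quite what we need, so we use conjugation on the other side instead. Apply Lemma~\ref{o:commutation} with $g:=h^{-1}$ and the set $C$: we get $\nn_{h^{-1}(C)}h^{-1}=h^{-1}\nn_C$, hence $\nn_C=h\nn_{h^{-1}(C)}h^{-1}$. It is cleaner to choose the conjugate directly, as follows. Take $k\in G_D$ with $k^{-1}(C)=C'$, which is possible by applying the construction above to $k:=h^{-1}$. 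Then
\[\nn_C\nn_D\supseteq \nn_C k = k\,\nn_{k^{-1}(C)} = k\,\nn_{C'},\]
since $k\in\nn_D$. Using Lemma~\ref{o:commutation} once more, $k\nn_{C'}=\nn_{k(C')}k=\nn_C k$, which is circular. So we instead put the $\nn_{C'}$ on the left: for $s\in \nn_{C'}$ we want $s k\in \nn_C\nn_D$, i.e.\ we want $\nn_{C'}k\subseteq \nn_C\nn_D$. This holds when $\nn_{C'}k=k\nn_{k^{-1}(C')}$ and $k\in\nn_C$, which is the wrong stabiliser.

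The correct orientation is therefore to move $D$, not $C$. Apply Fact~\ref{f:neumann} over the algebraically closed $C$ to get $D'=h(D)$ with $h\in G_C$ and $D'\cap(A\cup C)=C\cap D$. Set $E:=D'$, which is algebraically closed, and $g:=h$. Then
\[\nn_E g=\nn_{h(D)}h=h\nn_D\subseteq \nn_C\nn_D,\]
using Lemma~\ref{o:commutation} and $h\in\nn_C$. Moreover,
\[E\cap A=D'\cap A=D'\cap(A\cup C)\cap A=C\cap D\cap A,\]
as required.

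The only step needing care is choosing the orientation: we must conjugate the right-hand factor $D$ by an element of the left-hand stabiliser $\nn_C$. The freeness supplied by Fact~\ref{f:neumann} (applied with $B=A\cup C$) gives exactly the required intersection.
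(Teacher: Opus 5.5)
Your final argument is correct and is exactly the paper's proof: choose $h\in G_C$ via Fact~\ref{f:neumann} so that $h(D)\cap(A\cup C)=C\cap D$, set $E:=h(D)$ and $g:=h$, and use Lemma~\ref{o:commutation} to get $\nn_{h(D)}h=h\nn_D\subseteq\nn_C\nn_D$. The earlier attempt to move $C$ by an element of $G_D$ can simply be deleted from the write-up, since it only ever yields $E=C$.
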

 \begin{proof}
 	By \cref{f:neumann}, we can find some element $g\in \nn_{C}\cap G$ such that $g(D)\cap A=C\cap D\cap A$. By applying \cref{o:commutation}, we have $\nn_{g(D)}g=g\nn_{D}\subseteq\nn_{C}\nn_{D}$, so the desired conclusion holds with $E=g(D)$. 
 \end{proof}
 
  \begin{lemma}\label{skipping terms}

        Let $\C=(\overline{c}_i)_{i=0}^{2k}$ be a  chain of algebraically closed sets of length $2k$ for some $k>0$. Assume that we are in one of the following two cases:
	    \begin{enumerate}[label=(\Roman*)]
        \item \label{case1} {There is some algebraically closed $A\in\fps{\Omega}$ such that  $\overline{c}_{2i}\cap \overline{c}_{2i+1}\cap \overline{c}_{2i+2}=A$ for all $0\leq i\leq k-1$.}   
	    	\item \label{case2} 
            {There is some $G$-invariant algebraically closed $\Delta\subseteq\Omega$ such that $\overline{c}_{2i}\cap \overline{c}_{2i+1}\subseteq\Delta$ for all $0\leq i\leq k-1$.} 
            
	    \end{enumerate}
	    Then there is a chain of algebraically closed sets $\D=(\overline{d}_{i})_{i=0}^{k}$ of length $k$ such that for some $g\in G$, 
	    \begin{equation*}
	    	\nn_{\mathcal{D}}{g}\subseteq \nn_{\C}.
	    \end{equation*}

         Moreover: 
	     \begin{itemize}
	    	\item In case \ref{case1} we can ensure $\overline{d}_{i}\cap\overline{d}_{i+1}=A$ for all $0\leq i\leq k-1$, and $g\in G_{A}$;
            \item In case \ref{case2}  we can ensure $\overline{d}_{i}\cap \overline{d}_{i+1}\subseteq\Delta$ for all $0\leq i\leq k-1$.
	    \end{itemize}
  \end{lemma}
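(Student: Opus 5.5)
The plan is to shorten the chain by \emph{dropping every odd-indexed term}. Each odd factor $\nn_{\overline{c}_{2i+1}}$ is replaced by a single well-chosen element of $G_{\overline{c}_{2i+1}}$, and that element is then pushed to the right using \cref{o:commutation}. The freedom in choosing this element, supplied by \cref{f:neumann}, controls the intersections of consecutive terms of the new chain.

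\textbf{The basic move.} For algebraically closed finite sets $\overline{c}_{2i},\overline{c}_{2i+1},\overline{c}_{2i+2}$ and any $h\in G_{\overline{c}_{2i+1}}$, we have $h\in \nn_{\overline{c}_{2i+1}}$. Hence
\[\nn_{\overline{c}_{2i+1}}\nn_{\overline{c}_{2i+2}}\supseteq h\,\nn_{\overline{c}_{2i+2}}=\nn_{h(\overline{c}_{2i+2})}\,h .\]
Since $\overline{c}_{2i+1}$ is algebraically closed, \cref{f:neumann} (applied with $C=\overline{c}_{2i+1}$, $A=\overline{c}_{2i+2}$ and $B=\overline{c}_{2i}$) gives such an $h$ with
\[h(\overline{c}_{2i+2})\cap\overline{c}_{2i}=\overline{c}_{2i}\cap\overline{c}_{2i+1}\cap\overline{c}_{2i+2}.\]

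\textbf{Iteration.} I would proceed by induction on $k$, or equivalently left to right, keeping the following invariant. After treating the first $j$ blocks, we have an element $g_j\in G$ and algebraically closed tuples $\overline{d}_0=\overline{c}_0,\dots,\overline{d}_j$ with
\[\nn_{\overline{d}_0}\cdots\nn_{\overline{d}_j}\,g_j\,\nn_{\overline{c}_{2j+1}}\cdots\nn_{\overline{c}_{2k}}\subseteq\nn_{\C}\]
and $\overline{d}_j=g_j(\overline{c}_{2j})$.

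To go from $j$ to $j+1$, rewrite $g_j\nn_{\overline{c}_{2j+1}}\nn_{\overline{c}_{2j+2}}\cdots$ via \cref{o:commutation} as
\[\nn_{g_j(\overline{c}_{2j+1})}\nn_{g_j(\overline{c}_{2j+2})}\cdots g_j .\]
Then apply the basic move to the triple $\overline{d}_j=g_j(\overline{c}_{2j})$, $g_j(\overline{c}_{2j+1})$, $g_j(\overline{c}_{2j+2})$, obtaining some $h$. Set $\overline{d}_{j+1}:=hg_j(\overline{c}_{2j+2})$ and $g_{j+1}:=hg_j$, and commute $h$ past the remaining factors, again by \cref{o:commutation}. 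Algebraic closedness is preserved, since images of algebraically closed sets under $G$ are algebraically closed. After $k$ steps this yields $\nn_{\D}g\subseteq\nn_{\C}$ with $g=g_k$.

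\textbf{Intersections.} By the choice of $h$ and since $g_j$ is a bijection,
\[\overline{d}_j\cap\overline{d}_{j+1}=g_j\bigl(\overline{c}_{2j}\cap\overline{c}_{2j+1}\cap\overline{c}_{2j+2}\bigr).\]
In case \ref{case1} this equals $g_j(A)$. Every $h$ used fixes $\overline{c}_{2j+1}$ pointwise (in the conjugated form), and $\overline{c}_{2j+1}\supseteq A$. So inductively every $g_j$ lies in $G_A$, which gives $\overline{d}_j\cap\overline{d}_{j+1}=A$ and $g\in G_A$. To make this precise, at step $j$ the $h$ fixes $g_j(\overline{c}_{2j+1})\supseteq g_j(A)=A$.

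In case \ref{case2}, the intersection is contained in $g_j(\overline{c}_{2j}\cap\overline{c}_{2j+1})\subseteq g_j(\Delta)=\Delta$ by $G$-invariance of $\Delta$.

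\textbf{Main obstacle.} The step I expect to require the most care is the bookkeeping of the accumulated conjugations: one must check that the Neumann choice is made relative to the already-moved tuple $\overline{d}_j$ rather than the original $\overline{c}_{2j}$, and that in case \ref{case1} each $h$ indeed fixes $A$. Everything else follows directly from \cref{o:commutation} and \cref{f:neumann}.
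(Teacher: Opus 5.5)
Your proof is correct, but it is organised differently from the paper's. Your bookkeeping is also right: the Neumann choice is made for the already-moved triple, and the formula $\overline{d}_j\cap\overline{d}_{j+1}=g_j(\overline{c}_{2j}\cap\overline{c}_{2j+1}\cap\overline{c}_{2j+2})$ handles both cases.

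\textbf{The paper's route.} The paper works at the level of types, in three steps.
\begin{enumerate}
\item Using \cref{f:neumann}, it builds an auxiliary chain $\C'=(\overline{c}'_i)_{i=0}^{2k}$ with $\overline{c}'_0=\overline{c}_0$. Consecutive pairs of $\C'$ have the same types as in $\C$, over $A$ in case I and over $\emptyset$ in case II. Even terms satisfy $\overline{c}'_{2i}\cap\overline{c}'_{2i+2}=A$ in case I, and this intersection lies in $\Delta$ in case II.
\item It invokes \cref{change chains} to get $g\in G_A$ with $\nn_{\C'}g=\nn_{\C}$.
\item It drops the odd terms via Observation~\ref{obs: subchains}, i.e.\ it picks the identity in each odd factor of $\nn_{\C'}$.
\end{enumerate}

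\textbf{Your route.} You stay inside the monoid throughout. In each odd factor of $\nn_{\C}$ you pick a nontrivial element, which is exactly the move of \cref{l:intersections}, and push it to the right with \cref{o:commutation}. Your implicit chain $g_j(\overline{c}_{2j}),\,g_j(\overline{c}_{2j+1}),\dots$ is in fact an instance of the paper's $\C'$, and your $g_k$ plays the role of the paper's transporting element.

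\textbf{What each buys.} Your argument is more elementary and self-contained.
\begin{itemize}
\item It bypasses \cref{change chains} and Observation~\ref{obs: subchains} entirely.
\item It produces the witness $g$ explicitly.
\item It gives $g\in G_A$ at once, because each chosen $h$ fixes the moved odd term $g_j(\overline{c}_{2j+1})\supseteq g_j(A)=A$. This is the same iterative mechanism the paper itself uses in the proof of \cref{l:aux}.
\end{itemize}
The paper's route instead separates the combinatorial content from the group-theoretic transport. The combinatorial part realises a chain with prescribed consecutive-pair types and small even-to-even intersections. The transport step is the reusable \cref{change chains}, which is used again in \cref{c:reachability} to replace a chain by an independent chain with the same pairwise types.
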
 
\begin{proof}
{First consider case~\ref{case1}. 
Our aim is to construct a chain of algebraically closed sets
$\C'=(\overline{c}'_{i})_{i=0}^{2k}$ such that 
$\overline{c}'_{0}=\overline{c}_{0}$, 
for all $0\leq i\leq 2k-1$, $\overline{c}_{i}\overline{c}_{i+1}
\equiv_{A}
\overline{c}'_{i}\overline{c}'_{i+1}$, and for all $0\leq i\leq k-1$,
$\overline{c}'_{2i}\cap\overline{c}'_{2i+2}=A$.
Given such a chain of algebraically closed sets $\C'$, we know from the first
part of \cref{change chains} that there is some $g\in G_{A}$ such that $\nn_{\C'}g=
\nn_{\C}$. In particular, letting $\D=(\overline{c}'_{2i})_{i=0}^{k}$, by
Observation~\ref{obs: subchains} we get $\nn_\D\subseteq\nn_{\C'}$, and so $\nn_\D g\subseteq \nn_{\C}$, thus completing the proof.}

{We now construct $\mathcal{C}'$ by induction. The base case is trivial: set $\overline{c}'_{0}=\overline{c}_{0}$. Suppose $(\overline{c}'_{i})_{i=0}^{2l}$ has already been constructed, for some $0\leq l<k$, with all the desired properties; we build $\overline{c}'_{2l+1}$ and $\overline{c}'_{2l+2}$.}

{First choose any $\overline{c}'_{2l+1}$ with $\overline{c}'_{2l}\overline{c}'_{2l+1}
\equiv_{A}
\overline{c}_{2l}\overline{c}_{2l+1}$. We claim there is $\overline{c}'_{2l+2}$ with $\overline{c}'_{2l+1}\overline{c}'_{2l+2}
\equiv_{A}
\overline{c}_{2l+1}\overline{c}_{2l+2}$
and
$\overline{c}'_{2l+2}\cap \overline{c}'_{2l}=A$. To see this, take $\overline{c}''_{2l+2}$ such that $\overline{c}'_{2l}\overline{c}'_{2l+1}
\overline{c}''_{2l+2}
\equiv_A
\overline{c}_{2l}\overline{c}_{2l+1}\overline{c}_{2l+2}$.
From this, and our assumption on $\C$, we get $\overline{c}'_{2l}\cap\overline{c}'_{2l+1}
\cap\overline{c}''_{2l+2}=A$. 
Now apply \cref{f:neumann} to
$\overline{c}''_{2l+2}$,
$\overline{c}'_{2l+1}$, and
$\overline{c}'_{2l}$ to obtain 
$\overline{c}'_{2l+2}\equiv_{A \overline{c}'_{2l+1}}
\overline{c}''_{2l+2}$ with}
\[
\overline{c}'_{2l+2}\cap \overline{c}'_{2l}
=
\overline{c}''_{2l+2}\cap
\overline{c}'_{2l+1}\cap
\overline{c}'_{2l}
=A,
\]
{as required. This completes the construction of $\C'$ in case~\ref{case1}.}

{In case~\ref{case2}, we construct $\C'$ in exactly the same way, but
requiring only that $\overline{c}_{i}\overline{c}_{i+1}
\equiv
\overline{c}'_{i}\overline{c}'_{i+1}$. In the inductive step, having found $\overline{c}_{2l+2}''$ such that $\overline{c}'_{2l}\overline{c}'_{2l+1}
\overline{c}''_{2l+2}
\equiv
\overline{c}_{2l}\overline{c}_{2l+1}\overline{c}_{2l+2}$, we use \cref{f:neumann} to
find $\overline{c}'_{2l+2}\equiv_{\overline{c}'_{2l+1}}
\overline{c}''_{2l+2}$,}
\[
\overline{c}'_{2l}\cap\overline{c}'_{2l+2}=\overline{c}''_{2l+2}\cap
\overline{c}'_{2l+1}\cap
\overline{c}'_{2l}\subseteq\Delta\;,
\]
{which yields the desired containments for $\D$.}
\end{proof}

 \subsection{Rigidity and dispersivity}
   For the entirety of this subsection we work with a closed permutation group $G\acts \Omega$ with locally finite algebraic closure. {We will} consider the semigroup $S$ to be either $G$ or $\overline{G}$.  We let $\tau$ denote a semigroup topology on $S$ coarser than $\pw$.  The arguments are carried out in parallel for two cases: when $S=G$ (needed for \cref{thm:onebased}) and when $S=\overline{G}$ with \ref{sat} holding (needed for \cref{t:minimality endomorphism monoids}).  Throughout the lemmas below, a reader interested in only one of these results may accordingly restrict their attention to the case corresponding to their preferred choice of $S$. In particular, generalised pointwise topologies are only used for \cref{thm:onebased}.\\
  
We introduce two properties that an element $t \in S$ may have relative to a
subset $X \subseteq \Omega$, given a semigroup topology $\tau$ on $S$ strictly
coarser than $\pw$: \textbf{$\bm{\tau}$-rigidity} and \textbf{$\bm{\tau}$-dispersivity}. As
shown in Item~\eqref{obs6} of Observation~\ref{moving tau dispersiveness around} and
Lemma~\ref{l:moving out}, these two properties are complementary in a weak sense.
We prove the following.
\begin{itemize}
  \item If every $s \in S$ is $\tau$-rigid on all of $\Omega$, then $\tau$ is not
   {Hausdorff} (Lemma~\ref{rigidity over all implies non T2}).
  \item Conversely, if some element fails to be $\tau$-rigid on some
    $a' \in \Omega$, then there is an element that is $\tau$-dispersive on some
    $a \in \Omega$ (Lemma~\ref{l:moving out}), and hence an element that is
    $\tau$-dispersive on the entire $G$-orbit of $a$
    (Lemma~\ref{tau-dispersive everywhere}).
\end{itemize}
In the settings of both \cref{thm:onebased} and
\cref{t:minimality endomorphism monoids}, the existence of some $s \in S$ that is
$\tau$-dispersive on a suitable orbit $\Sigma$ allows us to deduce the existence
of some $t \in S$ satisfying a stronger property, which we call
\textbf{deep $\bm{\tau}$-dispersivity}.
The existence of a deeply $\tau$-dispersive element on $\Sigma$ is what lets us
show that every suitable non-empty $\tau$-open set $\U$ contains all elementary
embeddings sending a fixed tuple to the final term of a chain of finite closed
sets of a prescribed type---that is, a chain in which the initial term and the
joint type of each pair of consecutive links are predetermined. This richness of
$\tau$-open sets is used, together with further arguments, in the proofs of the
main theorems of the next two sections.

 {From now until Lemma~\ref{rigidity over all implies non T2}, we allow for $\Omega$ to be uncountable.}

 \begin{definition}
  Let $s_0 \in S$.
  \begin{itemize}
    \item We say that $s_0$ is $\bm{\tau}$\textbf{-rigid} on a point
      $a \in \Omega$ if there exists $\V \in \mathcal{N}_\tau(s_0)$ such that
      $s(a) \in \cl(s_0(a))$ for all $s \in \V$.

    \item We say that $s_0$ is $\bm{\tau}$\textbf{-dispersive} on $a\in\Omega$ if for
      every finite $B \subseteq \Omega$ and every
      $\V \in \mathcal{N}_\tau(s_0)$ there is $s \in \V$ with
      $a \notin s(B)$.
  \end{itemize}
  For $X \subseteq \Omega$, we say that $s_0$ is $\tau$-rigid
  (resp.\ $\tau$-dispersive) \textbf{on $\bm{X}$} if it is $\tau$-rigid
  (resp.\ $\tau$-dispersive) on every $a \in X$. {(Note that, vacuously, every element of $S$ is $\tau$-rigid on $\emptyset$.)}
  Finally, we say that $s_0$ is \textbf{deeply
  $\bm{\tau}$\textbf{-dispersive}} on $X \subseteq \Omega$ if for every $n > 1$
  there exist $s_1, \dots, s_n \in S$, each $\tau$-dispersive on $X$, such that
  $s_1 \cdots s_n = s_0$. 
\end{definition}

  Note that any $s_{0}\in S$ is
  $\tau$-dispersive on every point $a$ outside of $\im(s_{0})$. {Moreover, no element of $S$ is $\tau$-dispersive on any $a\in\acl(\emptyset)$.}
  
  {For the topology of pointwise convergence, it is easy to see that
every $s_0 \in S$ is $\pw$-rigid on every point of $\Omega$ and is not
$\pw$-dispersive on any element of $\im(s_0)$.}

  We collect a few easy facts about these notions.  
  \begin{observation}\label{moving tau dispersiveness around} Let $S\in\{G, \overline{G}\}$, and let $\tau$ be a semigroup topology on $S$ coarser than $\pw$. 
  	Let $t,s\in S$ and $a,a'\in\Omega$. Then  
     \begin{enumerate}
     	\item \label{obs1} if $ts$ is $\tau$-rigid on $a$, then $s$ is also $\tau$-rigid on $a$;
     	\item \label{obs2}
        if $a'=s(a)$ and $ts$ is $\tau$-rigid on $a$, then $t$ is $\tau$-rigid on $a'$;
     	\item \label{obs3} if $t$ is $\tau$-dispersive on $a$,  then $ts$ is also $\tau$-dispersive on $a$;
        
      \item \label{obs4} {if $a=t(a')$ and $s$ is $\tau$-dispersive on $a'$, then $ts$ is $\tau$-dispersive on $a$;}
       
      \item \label{obs5}
      if $s_0$ is $\tau$-dispersive on $a$, then, given $\V \in \mathcal{N}_\tau(s_0)$
      and $B$ as in the definition, the witness $s \in \V$ for $a \notin s(B)$ may be taken in $G$;
      \item \label{obs6} if $s$ is $\tau$-rigid on some $G$-invariant $\Delta\subseteq\Omega$ and $a\in\cl(\Delta)\cap\im(s)$, then $s$ is not $\tau$-dispersive on $a$.  
     \end{enumerate}    
  \end{observation}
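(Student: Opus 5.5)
Each of the six items follows by unwinding the definitions, using two facts. First, multiplication in $\tau$ is jointly continuous (shift-continuity is all we need). Second, $\tau\subseteq\pw$, so $\tau$-neighbourhoods of a point are $\pw$-open; combined with density of $G$ in $\overline{G}$, every nonempty $\tau$-open set meets $G$. The plan is to treat the items in order, using right or left translation to pull back neighbourhoods.

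For \eqref{obs1} and \eqref{obs2}, let $\V\in\mathcal{N}_\tau(ts)$ witness rigidity of $ts$ on $a$. By continuity of $u\mapsto tu$ there is $\W\in\mathcal{N}_\tau(s)$ with $t\W\subseteq\V$. Then for $u\in\W$ we get $tu(a)\in\acl(ts(a))$. Since $t$ is an elementary embedding, it preserves and reflects algebraic closure: $t(\acl(X))=\acl(t(X))\cap\im(t)$, because elementary embeddings of a saturated structure preserve types over finite sets. Hence $u(a)\in\acl(s(a))$. For \eqref{obs2}, use instead $u\mapsto us$: there is $\W\in\mathcal{N}_\tau(t)$ with $\W s\subseteq\V$, so $u(a')=us(a)\in\acl(ts(a))=\acl(t(a'))$ for all $u\in\W$. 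When $S=G$ the preservation of $\acl$ is immediate. When $S=\overline{G}$, a map agreeing with an element of $G$ on the finite set $\acl(x)\cup\{y\}$ suffices, since $\acl$ is locally finite. In this case one must be slightly careful about orientation: the statement needed in \eqref{obs1} is that $t(y)\in\acl(t(x))$ implies $y\in\acl(x)$. I will check this via the $G$-approximation on the finite set $\acl(t(x))$-preimage together with a size count.

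For \eqref{obs3}, let $B$ be finite and $\V\in\mathcal{N}_\tau(ts)$. Choose $\W\in\mathcal{N}_\tau(t)$ with $\W s\subseteq\V$. Dispersivity of $t$ applied to the finite set $s(B)$ gives $u\in\W$ with $a\notin us(B)$, and $us\in\V$. For \eqref{obs4}, choose $\W\in\mathcal{N}_\tau(s)$ with $t\W\subseteq\V$. Dispersivity of $s$ on $a'$ with the same $B$ gives $u\in\W$ with $a'\notin u(B)$. By injectivity of $t$, we get $a=t(a')\notin tu(B)$. For \eqref{obs5}, the set $\{s\in\V: a\notin s(B)\}$ is $\V\cap\bigcap_{b\in B}\{s: s(b)\neq a\}$. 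This is a nonempty $\pw$-open set, so by density it contains an element of $G$.

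The main obstacle is \eqref{obs6}. Write $a=s(c)$ with $a\in\acl(D)$ for some finite $D\subseteq\Delta$; note that $\acl(\Delta)$ is read as a union of such $\acl(D)$. The difficulty is that $D$ need not lie in $\im(s)$. To handle this, I would show that $c\in\acl(E)$ for some finite $E\subseteq\Delta$, e.g.\ $E=s^{-1}$-pullbacks of $D\cap\im(s)$ together with $G$-invariance of $\Delta$, reducing via local finiteness. Then $\Delta$-rigidity supplies, for each $e\in E$, a neighbourhood on which $u(e)\in\acl(s(e))$. Intersecting these finitely many neighbourhoods gives $\V$ such that $u(c)\in\acl(u(E))\subseteq\acl(s(E))=:F$ for all $u\in\V$. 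Let $B$ be a finite set containing $\acl(s(E))$-preimage data. Since $u$ ranges over elementary maps with $u(c)\in F$ and $F$ is finite, $u(c)$ has only finitely many possible values. I would then aim to take $B$ so that $a=u(c)$ is forced. If this fails, apply the same argument to $s^{-1}$ in the group case and use a pigeonhole over finitely many options, contradicting dispersivity. Of all the steps, this bookkeeping is where I would expect to spend the effort.
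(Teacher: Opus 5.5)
Items (1)--(5) are sound and essentially match the paper. For (5) you use density of $G$ in the nonempty $\pw$-open set $\V\cap\{r: a\notin r(B)\}$, which is equivalent to the paper's argument. In (1) no size count is needed: if $tu(a)\in\acl(ts(a))$, take $g\in G$ agreeing with $t$ on the two points $u(a),s(a)$; then $u(a)\in g^{-1}(\acl(g(s(a))))=\acl(s(a))$.

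Item (6) has a genuine gap, in two places.

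\textbf{The pullback.} Your choice of $E$ is misstated. From $a\in\acl(D)$ you cannot conclude $a\in\acl(D\cap\im(s))$: in a vector space take $a=d_1+d_2$ with $d_1,d_2\notin\im(s)$. So $s^{-1}(D\cap\im(s))$ need not work. The correct choice is $E:=g^{-1}(D)$ for some $g\in G$ with $g(c)=s(c)=a$, which exists because $s\in\overline{G}$. Then $E\subseteq\Delta$ by $G$-invariance and $c\in\acl(E)$.

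\textbf{The target.} More seriously, you aim at the wrong goal. Rigidity only confines $u(c)$ to the finite set $\acl(s(c))$, and $u(c)=a$ cannot be forced in general. For example, take $s=1$ and $\tau=\pw^{\Omega,\gc(\Omega)}$ on the automorphism group of infinitely many disjoint edges: every $\tau$-neighbourhood of $1$ contains the transposition of the edge through $c$. Your fallback via $s^{-1}$ and an unspecified pigeonhole is also unavailable when $S=\overline{G}$.

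Refuting dispersivity only requires a fixed finite $B$ with $a\in u(B)$ for all $u\in\V$. Take $B:=\acl(E)$, and let $\V$ be the intersection of the rigidity neighbourhoods at the points of $E$. For $u\in\V$ we have $u(B)=\acl(u(E))\subseteq\acl(s(E))=s(B)$. Both equalities hold because an element of $\overline{G}$ agrees with some element of $G$ on the finite set $B$. Since $u$ is injective, $|u(B)|=|s(B)|$, so $u(B)=s(B)\ni s(c)=a$.

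This set-level equality $u(\acl(E))=s(\acl(E))$ is the missing step. With it, the argument is uniform in $S\in\{G,\overline{G}\}$ and needs no inverse of $s$.
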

  \begin{proof}
   \eqref{obs1}. Since $ts$ is $\tau$-rigid on $a$, there is $\V \in \mathcal{N}_\tau(ts)$   with $r(a) \in \cl(ts(a))$ for all $r \in \V$. As $\tau$ is a semigroup topology, the map $\rho_t \colon S \to S$, $u \mapsto tu$, is continuous, so $\U := \rho_t^{-1}(\V)$ is a $\tau$-neighbourhood of $s$. For every $u \in \U$ we have $tu(a) \in \cl(ts(a))$; since elements of $S$ preserve algebraic closure, $u(a) \in \cl(s(a))$. Thus $s$ is $\tau$-rigid on $a$.
  Statement \eqref{obs2} follows in the same way using the map $\lambda_s:S\to S$ given by $u\mapsto us$.

\eqref{obs3}. Suppose $t$ is $\tau$-dispersive on $a$. Fix a finite $B \subseteq \Omega$ and
  $\V \in \mathcal{N}_\tau(ts)$; then $\U := \lambda_s^{-1}(\V)$ is a $\tau$-neighbourhood of $t$. Applying $\tau$-dispersivity to the finite set $s(B)$ and $\U$, we obtain $u \in \U$ with  $a \notin u(s(B))$. Since $us \in \V$ and $a \notin us(B)$, the element $us$ witnesses ({for this particular neighbourhood}) that  $ts$ is $\tau$-dispersive on $a$. Item \eqref{obs4} follows in the same way using $\rho_t$.\\

     \eqref{obs5}. Let $s\in\V$ witness $\tau$-dispersivity on $a$ with respect to $B$. As $\tau$ is coarser than $\pw$,  there is {a finite partial map $\phi$}
     with $s \in \nn_\phi \subseteq \V$. Now, take $\psi$ to be the extension of $\phi$ by $s_{\upharpoonright B}$. We still have that $s\in \nn_{\psi}\subseteq\V$. Since every element of $S$ agrees on each finite set with some element of $G$, we have  $\nn_\psi \cap G \neq \emptyset$. Any $r \in \nn_\psi \cap G$ satisfies $a\not\in r(B)=s(B)$.
  So it is the required witness in $G$.\\
     
 \eqref{obs6}. We provide a finite $B$ and $\V \in \mathcal{N}_\tau(s)$ with $a \in r(B)$ for   every $r \in \V$, which shows that $s$ is \emph{not} $\tau$-dispersive on $a$. Write   $a = s(a')$, and, {using $G$-invariance of $\Delta$,} choose a finite $A \subseteq \Delta$ with $a' \in \acl(A)$. For each $c \in A$, $\tau$-rigidity of $s$ on $c$ yields $\V_c \in \mathcal{N}_\tau(s)$ with $r(c) \in \cl(s(c))$ for   all $r \in \V_c$. Set $\V_A := \bigcap_{c \in A} \V_c \in \mathcal{N}_\tau(s)$; then $s \in \V_A$ and $r(A) \subseteq \cl(s(A))$ for every $r \in \V_A$. Since the elements of $S$ {preserve} algebraic closure and $\acl$ is locally finite, $\cl(A)$ is finite and   $r(\cl(A)) = \cl(s(A)) = s(\cl(A))$ for all $r \in \V_A$. Put $B := \cl(A)$. As  $a' \in \acl(A) = B$, we get $a = s(a') \in s(\cl(A)) = r(B)$ for every $r \in \V_A$, as required.

  \end{proof}
\begin{remark}\label{rem:deepstuff}
By Item~\eqref{obs3} of Observation~\ref{moving tau dispersiveness around}, if an element $g \in G$   is $\tau$-dispersive on some $X \subseteq \Omega$, then it is deeply $\tau$-dispersive on $X$:  taking $t = g$ and $s = g^{-1}$ in \eqref{obs3}, $gg^{-1} = 1$ is $\tau$-dispersive   on $X$, so for every $n > 1$ the factorisation $g = 1 \cdots 1\, g$ expresses
  $g$ as a product of elements of $S$ that are $\tau$-dispersive on $X$.
\end{remark}

{The lemma below is a routine exercise in topology, which we will need for the subsequent lemma. For basic facts about filters in topology, the reader may consult~\cite[$\S$ 12]{willard2012general}:}
 \begin{lemma}\label{lem:filterlemma} {Let $S$ be a closed submonoid of $\Omega^\Omega$. Let $\mathfrak{F}$ be a filter on $S$ and suppose that for every $A\in\fps{\Omega}$ there is $F_A\in\mathfrak{F}$ such that $\{f_{\upharpoonright A} \ \vert \ f\in F_A\}$ is finite. Then, $\mathfrak{F}$ has a cluster point in $S$ (with respect to $\pw$).}    
   \end{lemma}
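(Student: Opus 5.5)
The plan is a compactness argument in the product of finite discrete spaces; Tychonoff's theorem will do the work, applied coordinatewise. For each $a\in\Omega$, take $A=\{a\}$. Then $F_{\{a\}}\in\mathfrak{F}$ and the set $K_a:=\{f(a)\mid f\in F_{\{a\}}\}$ is finite. I would consider the product space $K:=\prod_{a\in\Omega}K_a$, viewed as a subset of $\Omega^\Omega$. With the topology of pointwise convergence it is the product of finite discrete spaces, hence compact by Tychonoff. It is also closed in $\Omega^\Omega$.

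The first step is to pass to a filter living in this compact set. The sets $F\cap F_{\{a_1\}}\cap\dots\cap F_{\{a_n\}}$, for $F\in\mathfrak{F}$ and finitely many $a_i$, are non-empty, since $\mathfrak{F}$ is a filter. Their traces do not yet lie inside $K$, but I would handle this by working with the images $\{f\in F\}$ and closures instead.

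Concretely, for each $F\in\mathfrak{F}$ let $\overline{F}$ denote its $\pw$-closure in $\Omega^\Omega$ intersected with the closed set $K$. The family $\{\overline{F\cap F_{\{a_1\}}\cap\dots\cap F_{\{a_n\}}}\cap \bigcap_{i}\pi_{a_i}^{-1}(K_{a_i})\}$ has the finite intersection property. In fact I would use the simpler family of closed sets $C_{F,E}:=\mathrm{cl}_{\pw}(F\cap\bigcap_{a\in E}F_{\{a\}})\cap\{g\in\Omega^\Omega\mid g(E)\text{ coordinatewise in }K_a\}$ for $F\in\mathfrak{F}$ and finite $E\subseteq\Omega$. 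Each such set is non-empty because it contains $F\cap\bigcap_{a\in E}F_{\{a\}}$. The sets are directed downward, and all of them lie in $\prod_{a\in E}K_a\times\Omega^{\Omega\setminus E}$.

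The main obstacle is that only finitely many coordinates are constrained at a time, so one cannot intersect directly inside the compact $K$. I would resolve this with the standard argument for a filter base whose every coordinate projection is eventually finite. Consider the projected filters $\pi_a(\mathfrak{F})$ on the finite sets $K_a$, and extend $\mathfrak{F}$ to an ultrafilter $\mathfrak{U}$. For each $a$, $\pi_a(\mathfrak{U})$ is an ultrafilter on $\Omega$ containing the finite set $K_a$, so it is principal, generated by some $g(a)\in K_a$.

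Define $g\in\Omega^\Omega$ by these values. Then $\mathfrak{U}$ converges to $g$ in $\pw$: for a basic neighbourhood given by a finite $A$ with $g\restriction_A$, we have $\bigcap_{a\in A}\pi_a^{-1}(g(a))\in\mathfrak{U}$. Since $S$ is closed and $S\in\mathfrak{U}$, the limit satisfies $g\in S$. Finally, $g$ is a cluster point of $\mathfrak{F}\subseteq\mathfrak{U}$, because every neighbourhood of $g$ lies in $\mathfrak{U}$ and so meets every $F\in\mathfrak{F}$.

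This ultrafilter route bypasses the closure bookkeeping above. The hypothesis is in fact used only for singletons $A$.
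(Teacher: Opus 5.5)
Your final ultrafilter argument is correct and is essentially the paper's proof. In both, you extend $\mathfrak{F}$ to an ultrafilter, observe that its pushforward contains a finite set and is therefore principal, assemble the limit map, and use closedness of $S$; you push forward along evaluation at single points, where the paper restricts to finite sets $A$. Working with singletons makes the paper's compatibility check between the $p_A$ automatic, and the abandoned Tychonoff/closure discussion in your earlier paragraphs is never used and can simply be deleted.
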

   \begin{proof} {By the ultrafilter lemma, the filter $\mathfrak{F}$ extends to an ultrafilter $\mathfrak{U}$. For each finite $A\subseteq \Omega$, let $r_A:\Omega^\Omega\to \Omega^A$ be the restriction map $f\mapsto f_{\upharpoonright A}$. Now, the pushforward $r_A(\mathfrak{U})$ of the ultrafilter $\mathfrak{U}$ with respect to $r_A$ is still an ultrafilter. Since by hypothesis, $r_A(\mathfrak{U})$ contains a finite set, $r_A(\mathfrak{U})$ is principal, and let $p_A\in\Omega^A$ be the principal element of $r_A(\mathfrak{U})$. Note that for $A, B\in\fps{\Omega}$, we must have that ${p_A}_{\upharpoonright A\cap B}={p_B}_{\upharpoonright A\cap B}$, and so in particular, there is $p\in\Omega^\Omega$ given by $\bigcup_{A\in\fps{\Omega}} p_A$. By construction, $p$ is a cluster point of $\mathfrak{U}$ (and so of $\mathfrak{F}$).  Since $S$ is closed in $\pw$ and each $p_A$ agrees on $A$ with some element of $S$ by construction, $p\in S$, as desired.}
   \end{proof}

  The following lemma generalises the argument of~\cite[Lemma 2.7]{ghaddln24} from groups to semigroups. We include proofs for both groups and semigroups since even in the former case we remove some unnecessary and hidden assumptions of~\cite[Lemma 2.7]{ghaddln24}.

    \begin{lemma}\label{rigidity over all implies non T2}  Let $G\acts \Omega$ be a closed permutation group  with locally finite algebraic closure and let $S$ be either $G$ or $\overline{G}$. %
    Then any semigroup topology $\tau$ on $S$ that is strictly coarser than $\pw$ and for which every $s\in S$ is $\tau$-rigid on $\Omega$ %
    {is not Hausdorff.}
   \end{lemma}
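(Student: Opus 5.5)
We argue by contradiction: assume $\tau$ is Hausdorff, strictly coarser than $\pw$, and every $s\in S$ is $\tau$-rigid on $\Omega$. The goal is to show that $\tau$ then contains $\pw$, which contradicts strict coarseness. Since $\tau$ is a semigroup topology, shifts are continuous. So it suffices to show that every $\pw$-neighbourhood of the identity contains a $\tau$-neighbourhood of $1$. Indeed, since $\nn_{\phi}$ is obtained from stabilisers by translating by elements of $G$, and left translation by elements of $G$ is a $\tau$-homeomorphism, this yields $\pw\subseteq\tau$ at every point of $S$ when $S=G$. For $S=\overline{G}$ one uses $s\nn_A\subseteq \nn_{s_{\restriction A}}$ together with continuity of $\rho_s$ at $1$. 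In fact it is cleaner to work directly at an arbitrary $s_0$.

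\emph{Step 1: rigidity gives finite branching.} Fix $s_0\in S$ and a finite $A\subseteq\Omega$. Rigidity at each $a\in A$ produces $\V_A\in\mathcal N_\tau(s_0)$ such that $r(a)\in\acl(s_0(a))$ for all $r\in\V_A$ and $a\in A$. By local finiteness of $\acl$, the set $\{r_{\restriction A}: r\in\V_A\}$ is then finite.

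\emph{Step 2: filter argument.} Suppose, for a contradiction, that some $\pw$-basic neighbourhood $\nn_{\psi}\ni s_0$ (where $\psi=s_0\restriction_{A_0}$) contains no $\tau$-neighbourhood of $s_0$. Consider the family $\{V\setminus \nn_\psi: V\in\mathcal N_\tau(s_0)\}$. All these sets are nonempty and closed under finite intersections, so they generate a filter $\mathfrak F$. By Step 1, for every finite $A$ the filter contains a set on which restrictions to $A$ take finitely many values. Lemma~\ref{lem:filterlemma} then gives a $\pw$-cluster point $p\in S$ of $\mathfrak F$. (For $S=G$, apply the lemma in $\overline{G}$, and note that rigidity bounds images and preimages as needed. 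More directly, $p$ is a limit of automorphisms whose values on $A$ lie in $\acl(s_0(A))$. Applying the same argument to inverses, which is legitimate if we use that inversion is compatible with how we built the neighbourhoods, or restricting to the $\overline{G}$ case first and then noting $p$ is surjective onto $\acl$-classes by the finiteness, gives $p\in G$.) Since each set of $\mathfrak F$ avoids the $\pw$-open set $\nn_\psi$, we have $p\notin\nn_\psi$, so $p\neq s_0$.

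\emph{Step 3: contradiction with Hausdorffness.} Because $\tau\subseteq\pw$, a $\pw$-cluster point is also a $\tau$-cluster point of $\mathfrak F$. Moreover, $\mathfrak F$ refines the $\tau$-neighbourhood filter of $s_0$, so $s_0$ is also a $\tau$-cluster point, indeed a $\tau$-limit point of $\mathfrak F$. In a Hausdorff space, a filter converging to $s_0$ has no cluster point other than $s_0$. Hence $p=s_0$, a contradiction. Therefore every $\pw$-open set is $\tau$-open, i.e.\ $\tau=\pw$, contradicting strictness. Thus $\tau$ is not Hausdorff.

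The main obstacle is Step 2 in the group case $S=G$. The cluster point is produced in $\overline{G}$, and one must ensure it lies in $G$, or else run the Hausdorff argument inside $\overline{G}$. I would handle this by also obtaining rigidity of inverses. Alternatively, I would use that $p$ agrees on each finite $A$ with some $r\in\V_A\cap G$ mapping $\acl$-classes bijectively onto $\acl(s_0(A))$; this forces $p$ to be surjective, so $p\in G$ because $G$ is closed.
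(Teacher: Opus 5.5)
Your proposal is correct and follows essentially the paper's proof. The filter generated by the $\tau$-neighbourhoods of $s_0$ with the offending $\pw$-basic set removed has finitely many restrictions to each finite set, by rigidity and local finiteness of $\acl$. Lemma~\ref{lem:filterlemma} then yields a $\pw$-cluster point (hence a $\tau$-cluster point) $p\neq s_0$, which is incompatible with the filter $\tau$-converging to $s_0$ in a Hausdorff space.

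For $S=G$, the route to use is your surjectivity argument, which is what the paper does at the identity. Elements of the relevant neighbourhood map each finite algebraically closed $A$ onto $s_0(A)$, so $p$ is a permutation and lies in $G$ because $G$ is closed in $\Sym(\Omega)$. You should drop two side remarks. The inversion-based option fails because inversion need not be $\tau$-continuous for a semigroup topology. The preliminary reduction to $1$ for $\overline{G}$ via $\rho_s$ fails because $sU$ need not be a $\tau$-neighbourhood of $s$.
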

   
   \begin{proof} 
  We give the proof for $\overline{G}$ and indicate the adaptation to $G$ at the end. Since $\tau\subsetneq\pw$, there are $r\in S$ and $\V\in\mathcal{N}_{\pw}(r)$ such that for any {$\U\in\mathcal{N}_\tau(r)$}, $\U$ is not a subset of $\V$. Without loss of generality let $\V=\nn_{r_{\upharpoonright A_0}}$, {where $A_0$ is finite and algebraically closed.}  {By the rigidity assumption, for each $a \in \Omega$, there is $\V_a \in \mathcal{N}_\tau(r)$ with  $s(a) \in \cl(r(a))$ for all $s \in \V_a$; 
now, for $A\supseteq A_0$ finite and algebraically closed, 
 set $\V_A := \bigcap_{a \in A} \V_a \in \mathcal{N}_\tau(r)$.
  As the elements of $\overline{G}$ send algebraically closed sets to algebraically closed sets,   $r \in \V_A$ and every $s \in \V_A$ restricts to a bijection $s_{\upharpoonright A}: A\to r(A)$. Now, let }
   \[\V_A':=\V_A\setminus(\V_A\cap\V)\;.\]
  {Note that $\V'_A$ is non-empty since otherwise $\V_A\subseteq \V$, contradicting the choice of $\V$. Let $\X^{A_0}$ be the set of finite algebraically closed subsets of $\Omega$ which contain $A_0$. For $A\in\X^{A_0}$ and $\mathcal{U}\in\mathcal{N}_\tau(r)$, let $\mathcal{W}_{A, \mathcal{U}}:=\V'_A\cap \mathcal{U}$. Note that $\mathcal{W}_{A, \mathcal{U}}$  is non-empty: $\U\cap \V_A$ is a $\tau$-neighbourhood of $r$ and so it is not entirely contained in $\V$ by choice of $\V$.
   Now, 
   consider the family of sets $\mathfrak{V}:=\{\mathcal{W}_{A, \U}\ \vert \ A\in \X^{A_0}, \U\in\mathcal{N}_\tau(r)\}$. Note that $\mathfrak{V}$ is a prefilter: firstly, as we argued above, all of its sets are non-empty.} 
     Secondly,
   given $A, B\in \X^{A_0}$,
   \[\V'_{\acl(A\cup B)}\subseteq \V'_A\cap \V'_B\;.\]

  {This is because $\V_{\acl(A\cup B)}\subseteq \V_A\cap \V_B$, and we are then removing the same set on both sides of the containment. Hence, for $A, B\in\X^{A_0}$, $\mathcal{U}, \mathcal{U}'\in\mathcal{N}_\tau(r)$,}
\[\mathcal{W}_{\acl(A\cup B), \U\cap \U'}=\U\cap\U'\cap \V'_{\acl(A\cup B)}\subseteq \U\cap\U'\cap \V'_A\cap \V'_B=\mathcal{W}_{A, \U}\cap\mathcal{W}_{B, \U'}\;.\]
  
In particular, $\mathfrak{V}$ generates a filter $\mathfrak{F}$. 
{By construction, for each finite $A\in \fps{\Omega}$, we have that $\{f_{\upharpoonright A} \, \ \vert \, f\in\mathcal{W}_{\acl(A\cup A_0), S}\}$ is finite. So, by \cref{lem:filterlemma},} $\mathfrak{F}$ has a $\pw$-cluster point $p\in S$. Now, by construction, $p\neq r$. {Moreover, $p$ lies in the $\pw$-closure (and so the $\tau$-closure) of every $\tau$-neighbourhood of $r$. Hence, $p$ and $r$ cannot be separated by $\tau$-open sets
and so $\tau$ is not Hausdorff.}

   {The argument  for $G$ is essentially identical, except that in this case, 
   composing with inverses, we may work at the identity ($r = 1$), so that $\V = \nn_{A_0}$, and for each algebraically closed $A\in\fps{\Omega}$,  any $f\in\V_A$ is such that $f_{\upharpoonright A}\in\Sym(A)$. Consequently, the map $p$ obtained by the above construction is a permutation of $\Omega$. To see this, observe that we can run the same argument of Lemma~\ref{lem:filterlemma} restricting our attention to algebraically closed sets, and if for $A\in\fps{\Omega}$ algebraically closed, $p_A\in\Sym(A)$, then the cluster point $p$ belongs to $\Sym(\Omega)$. Since
  $G$ is
  closed in $\mathrm{Sym}(\Omega)$, we can conclude that  $p \in G$.}
   \end{proof}
 The fact below follows from a union of chains argument which is standard in model theory:
\begin{fact}\label{extensions of isomorphisms}
	Let $\M$ be a countable saturated structure, $J$ a countable index set, and $\{\phi_{j}\}_{j\in J}$ a countable collection of elementary isomorphisms between countable substructures of $\M$. Then, there is a countable saturated elementary extension
    $\M'$ of $\M$ such that each $\phi_{j}$  extends to an automorphism of $\M'$.  
\end{fact}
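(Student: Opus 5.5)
We prove the Fact by an $\omega$-length union of chains of countable saturated elementary extensions, handling one partial isomorphism at a time via a dovetailed bookkeeping. Fix an enumeration $(j_n)_{n<\omega}$ of $J$ in which every index occurs infinitely often (if $J$ is finite, or even empty, repeat indices; the empty case is trivial). We build countable saturated models $\M=\M_0\preceq\M_1\preceq\M_2\preceq\cdots$ together with partial elementary maps $\psi^{j}_{n}$ (elementary isomorphisms between countable subsets of $\M_n$), with $\psi^j_0=\phi_j$ and $\psi^j_n\subseteq\psi^j_{n+1}$. At stage $n$ only $\psi^{j_n}_n$ is enlarged; the others are kept fixed. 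The step has two parts.

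First, given a countable saturated $\M_n$ and a partial elementary map $\psi$ between countable subsets of $\M_n$, we find a countable saturated $\M_{n+1}\succeq\M_n$ and an automorphism $\sigma$ of $\M_{n+1}$ with $\psi\subseteq\sigma\restriction_{\mathrm{dom}\psi}$. For this we use the saturation argument already used in Lemma~\ref{lem:saturatedcase}. Take a countable saturated $\N\succeq\M_n$. Because $\psi$ is elementary, the types $\mathrm{tp}(\mathrm{dom}\psi)$ and $\mathrm{tp}(\mathrm{im}\psi)$ agree as types in countably many variables. Hence the elementary diagram of $\M_n$ with $\mathrm{dom}\psi$ replaced by $\mathrm{im}\psi$ is consistent, and we obtain an elementary embedding $e\colon\M_n\to\N'$ extending $\psi$, where we may take $\N'\succeq\M_n$ countable saturated. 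Then $e\colon\M_n\to\N'$ and the inclusion $\M_n\to\N'$ are two elementary embeddings into a countable saturated model. Since countable saturated models are homogeneous and universal for countable elementary maps, $e$ extends to an elementary map $\N'\to\N'$ whose image is a countable elementary submodel. Directly, this does not yet give an automorphism. The clean route is back-and-forth: a partial elementary map with countable domain inside a countable saturated model $\N$ extends to an automorphism of $\N$ only if the domain is \emph{finite} (or, more generally, when one can keep extending). For a countable domain, this is exactly why we must pass to extensions.

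Second, and this is where the dovetailing comes in, we ensure that back-and-forth succeeds in the limit $\M'=\bigcup_n\M_n$. This union is a countable elementary extension of $\M$. Enumerate each $\M_n$. At each stage $n$ where $j_n=j$, we extend $\psi^j_n$ by one element: put the next element of $\M_n$ not yet in its domain into the domain (forth), and then, at the next occurrence of $j$, the next element not yet in its image into the image (back). To do so, we realise the required type over the countable set $\mathrm{dom}\psi^j_n\cup\mathrm{im}\psi^j_n$. That type need not be realised in $\M_n$, since it is over a countably infinite set, but it is realised in some countable saturated $\M_{n+1}\succeq\M_n$: smallness is preserved, so the type is consistent with the elementary diagram, and we take a saturated elementary extension of a countable model realising it. Since each $j$ occurs infinitely often and every element of $\M'$ lies in some $\M_n$ and is eventually handled, $\sigma_j:=\bigcup_n\psi^j_n$ is a bijective elementary map of $\M'$, that is, an automorphism extending $\phi_j$.

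The main obstacle is saturation of the union $\M'$, since a countable union of saturated models need not be saturated. The remedy is to interleave a third task into the bookkeeping. Enumerate, in a dovetailed way, all types over finite subsets of all the $\M_n$; these are countably many by smallness. At suitable stages we realise the next such type inside $\M_{n+1}$, which is automatic since $\M_n$ is already saturated. Every type over a finite subset of $\M'$ is a type over a finite subset of some $\M_n$, so it is realised in $\M_n\subseteq\M'$. Thus $\M'$ is countable and saturated for finite parameter sets, which for countable models is equivalent to saturation in the sense needed for isomorphism with $\M$. I expect checking this equivalence (countable $\omega$-saturated implies saturated in the countable sense) and the bookkeeping to be the only delicate points.
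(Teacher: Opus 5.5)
Stripped down to its second paragraph and its closing observation, your argument is correct, but it organises the back-and-forth differently from the paper. You enlarge a single map $\psi^{j_n}_n$ by a single element at each stage. To do so you realise a $1$-type over the countable set $\mathrm{im}\,\psi^{j_n}_n$ (or $\mathrm{dom}\,\psi^{j_n}_n$) in a countable saturated $\M_{n+1}\succeq\M_n$, and you need a dovetailed schedule to make each $\bigcup_n\psi^j_n$ total and onto $\M'$.

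The paper instead treats all $j\in J$ at once and absorbs whole models. At stage $2i+1$ it realises, for every $j$ simultaneously, the transport along $\phi^{2i}_j$ of the type of an enumeration of all of $\M_{2i}$. This is a type in $\omega$ variables over a countable set; distinct $j$ use disjoint variable tuples, so compactness handles them together. At even stages it does the same backwards. Thus $\mathrm{dom}\,\phi^{2i+1}_j=\M_{2i}$ and $\mathrm{im}\,\phi^{2i}_j=\M_{2i-1}$, so exhaustion is automatic and no bookkeeping is needed. Your version only ever realises $1$-types, at the price of the scheduling.

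Otherwise the ingredients are the same: consistency of a transported type, Löwenheim--Skolem, and saturation of the union. Note that consistency comes from elementarity of the partial map, not from smallness as you write; smallness only supplies the countable saturated elementary extension.

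Three points in your write-up should be cleaned up.

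First, the ``first part'' of the step is a false start that you never use, and its claims fail in general. In $(\mathbb{Q};<)$, an order-isomorphism $e$ from the elementary submodel $(0,1)\cap\mathbb{Q}$ onto $\mathbb{Q}$ is elementary, but it extends to no elementary self-map of $\mathbb{Q}$, since there is nowhere to send $2$. Delete that part.

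Second, the ``main obstacle'' is not one. For a countable structure, saturation in the paper's sense only concerns types over finite sets. Every finite subset of $\M'$ lies in some $\M_n\preceq\M'$, so the union of a countable elementary chain of countable saturated models is always countable and saturated. This is the one line the paper uses, and your third task is vacuous.

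Third, the bookkeeping as written does not guarantee exhaustion. Taking ``the next element of $\M_n$ not yet in the domain'' with respect to a fresh enumeration of each $\M_n$ need not ever reach a given element. Instead:
\begin{itemize}
\item when $\M_k$ is built, fix an enumeration $(m_{k,l})_{l<\omega}$ of it;
\item fix a surjection $\pi\colon\omega\to\omega\times\omega$ whose first coordinate at $t$ is at most $t$ (e.g.\ inverse Cantor pairing);
\item at the $t$-th forth (resp.\ back) occurrence of $j$, add $m_{k,l}$ with $(k,l)=\pi(t)$ to the domain (resp.\ image), unless it is already there.
\end{itemize}
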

\begin{proof} 
This is a standard union of chains argument (see~\cite[Chapters 2, 4, and 5]{tent2012course} for several arguments of this kind). For each $j\in J$, let $A_j$ and $B_j$ be, respectively, the domain and codomain of $\phi_j$. Let $\M_0:=\M$ and fix an enumeration $\overline{m}_0$ of $\M_0$.

By compactness, {the L\"{o}wenheim-Skolem theorem, and the existence of countable saturated elementary extensions of countable models,} we can build a countable saturated elementary extension $\M_1\succeq \M_0$ such that $\M_1$ contains tuples of countable length $(\overline{b}^1_j)_{j\in J}$ such that for each $j\in J$, 
\[\mathrm{tp}(\overline{b}^1_j/B_j)=\mathrm{tp}(\overline{m}_0/A_j)\;.\]
Note in particular that this correspondence of types yields elementary isomorphisms $(\phi^1_j)_{j\in J}$ where $\phi^1_j: \overline{m}_{0}\to \overline{b}^1_j$ extends $\phi_j:A_j\to B_j$  for each $j\in J$. Next we build countable and saturated $\M_2\succeq\M_1$ containing tuples $(\overline{a}^2_j)_{j\in J}$ such that 
\[\mathrm{tp}(\overline{a}^2_j/\overline m_{0})=\mathrm{tp}(\overline m_{1}/\overline{b}^1_j)\;\]
for each $j\in J$, where $\overline{m}_1$ is an enumeration of $\mathcal{M}_1$.
Again, we get an elementary isomorphism  $\phi^2_j: \overline{a}^2_j\to \overline m_{1}$  for each $j\in J$.

We proceed inductively by back-and-forth building a sequence of countable elementary extensions (each of which is saturated)
\[\M_0\preceq \M_1\preceq \cdots \M_{2i}\preceq \M_{2i+1}\preceq\cdots\;,\]
with enumerations $\overline{m}_i$ for $i\geq 0$, together with sequences $(\overline{a}_j^{2i})_{i\in\mathbb{N}, j\in J}$ and $(\overline{b}_j^{2i+1})_{i\in\mathbb{N}, j\in J}$, and elementary isomorphisms %
$\phi^n_j$ such that, for each $j\in J$, {$i\geq1$}, $\phi_j^{2i+1}: \overline{m}_{2i}\to\overline{b}^{2i+1}_j$ and $\phi_j^{2i}:\overline{a}^{2i}_j\to \overline{m}_{2i-1}$, %
and $\phi_j^{n+1}$ extends
  $\phi_j^n$ for all $n$.
Put $\M':=\bigcup_{n\in\mathbb{N}}\M_n$, and  $\phi_j^{\infty}:=\bigcup_{n\in\mathbb{N}}\phi_j^n$ for each $j\in J$. Since $\M'\succeq\M$ is a countable union of countable saturated models, it is still countable and saturated. Moreover, by construction, $\phi_j^\infty$ is an automorphism of $\M'$ extending $\phi_j$ for each $j\in J$, as desired.
\end{proof}

 \begin{lemma}
    \label{l:moving out} 
    Let $S\in\{G, \overline{G}\}$ and let $\tau$ be a semigroup topology on $S$ coarser than $\pw$.  Fix $s_{0}\in S$ and assume either $s_{0}\in G$ or hypothesis \ref{sat} 
    holds.  Let $a'\in\Omega$ and suppose  $s_0$ is not $\tau$-rigid on $a'$. Then there exists some $s_{1}\in S$ which is $\tau$-dispersive  on some $a\in  (G\cdot a')\cap \im(s_{1})$. 
    Moreover, if $s_{0}\in G$, %
    we can take $s_{1}=s_{0}$ and $a=s_{0}(a')$.   
 \end{lemma}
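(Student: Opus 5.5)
The plan is to handle the case $s_0\in G$ first, and then reduce the general case under \ref{sat} to it.

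\emph{Case $s_0\in G$.} For $g\in G$, both $u\mapsto gu$ and $u\mapsto g^{-1}u$ are $\tau$-continuous (\cref{moving tau dispersiveness around}), so left translation by $g$ is a $\tau$-homeomorphism of $S$. Also $g$ maps $\acl$ to $\acl$. Hence $s_0$ is $\tau$-rigid (resp.\ $\tau$-dispersive) on $a'$ iff $1$ is. Thus it suffices to show:

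\begin{quote}
if $1$ is not $\tau$-rigid on $a'$, then $1$ is $\tau$-dispersive on $a'$,
\end{quote}

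since then $s_0=s_0\cdot 1$ is dispersive on $s_0(a')$ by Item~\eqref{obs4}.

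Fix a finite $B$ and $\U\in\mathcal N_\tau(1)$. I will look for $u\in\U$ with $u^{-1}(a')\notin B$.

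\begin{enumerate}[label=(\arabic*)]
\item By joint continuity of multiplication at $(1,1)$, pick $\W\in\mathcal N_\tau(1)$ with $\W\W\W\subseteq\U$.
\item Since $\tau\subseteq\pw$, $\W\supseteq S_C$ for some finite algebraically closed $C$, which I may enlarge to contain $a'$ and $\acl(B)$.
\item Non-rigidity gives $w\in\W$, which by Item~\eqref{obs5}-style reasoning may be taken in $G$, with $w(a')\notin\acl(a')$.
\item Using Neumann's Lemma (\cref{f:neumann}) inside $G_{\acl(a')}$-orbits, choose $h,h'\in S_C\cap G$ (so that $h,h'\in\W$) such that $u:=hwh'\in\U$ and $u^{-1}(a')=h'^{-1}w^{-1}h^{-1}(a')$ avoids $B$.
\end{enumerate}

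The key point in step (4) is that the point $w^{-1}(a')$ lies outside $\acl(a')$, hence outside the part of $C$ that is forced. So $h'$ can move it off the finite set $B$ while fixing $C\cap\acl(a')$-relevant data.

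\emph{Main obstacle.} I expect step (4) to be the hardest. A neighbourhood $\W$ only contains the stabiliser $S_C$ of a set $C$ that already contains $a'$, so conjugators cannot move $a'$ itself. The escape point must therefore be produced by $w$ before the conjugation, and I must check that $w^{-1}(a')\notin C$ can be arranged. I would try iterating non-rigidity inside $\W^k$ to push $a'$ further out until it leaves $C$, using local finiteness of $\acl$.

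\emph{General case under \ref{sat}.} Here $s_0\in\overline G$ is an elementary embedding. By saturation, there is $g\in G$ agreeing with $s_0$ on any prescribed finite set. I would use Item~\eqref{obs5} and the filter argument of \cref{lem:filterlemma} to replace $s_0$ by an element of $G$ lying in each given neighbourhood: non-rigidity of $s_0$ at $a'$ transfers to such a $g$ via $s_0=\lim g_n$ in $\pw$. Then I would apply the group case to obtain dispersivity of some $s_1$ at a point $a\in G\cdot a'$ in $\im(s_1)$.

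If this transfer fails, the fallback is \cref{extensions of isomorphisms}. It realises $s_0$ as an automorphism of a saturated elementary extension $\M'\cong\M$. I would run the group argument there and pull the resulting element back through the isomorphism $\M'\cong\M$. This yields $s_1\in S$ dispersive on a point of $G\cdot a'$ in its image, but not necessarily $s_1=s_0$.
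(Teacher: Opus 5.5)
There is a genuine gap, and it is exactly at the step you flag as the main obstacle. In step (4) your conjugators $h,h'$ lie in $S_C$ with $C\supseteq\acl(B)\cup\{a'\}$. So $h^{-1}(a')=a'$, and $h'^{-1}$ fixes $B$ pointwise and hence maps $\Omega\setminus B$ onto itself. Consequently $u^{-1}(a')\in B$ if and only if $w^{-1}(a')\in B$, and the construction gains nothing over $w$ itself. Non-rigidity only gives $w^{-1}(a')\notin\acl(a')$, which does not exclude $w^{-1}(a')\in B$. Your proposed iteration inside $\W^k$ has no mechanism preventing the successive images of $a'$ from cycling inside the finite set $B$.

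The missing idea is that the conjugator need not be $\tau$-close to $1$. By Neumann's Lemma, pick $h\in G_{\acl(a')}$ with $B\cap h(B)\subseteq\acl(a')$. Then shrink $\U$ to $\U\cap h\U h^{-1}$; this is still a $\tau$-neighbourhood of $1$, because $x\mapsto hxh^{-1}$ is a $\tau$-homeomorphism fixing $1$. Now take $w$ in this set with $w^{-1}(a')\notin\acl(a')$. Either $w^{-1}(a')\notin B$ and $w$ works, or $w^{-1}(a')\in B\setminus\acl(a')$. In the latter case $w^{-1}(a')\notin h(B)$, so $u:=h^{-1}wh\in\U$ satisfies $u^{-1}(a')=h^{-1}w^{-1}(a')\notin B$. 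The paper runs exactly this argument contrapositively: on $\V\cap\tilde h\V h^{-1}$, non-dispersivity forces $a\in s(B)\cap s(h(B))\subseteq s(\acl(a'))$, which is rigidity. Your reduction to $s_0=1$ is fine, except that translation gives ``$s_0$ is dispersive on $s_0(a')$ iff $1$ is dispersive on $a'$'', not dispersivity at the same point $a'$ for both.

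For $s_0\in\overline G\setminus G$, your primary route does not work. Non-rigidity is a statement about the $\tau$-neighbourhoods of $s_0$. An element $g\in G$ that is $\pw$-close to $s_0$ has an unrelated neighbourhood system: rigidity of $g$ at $a'$ is equivalent to rigidity of $1$ at $a'$, independently of $s_0$. Left translation by $s_0$ is continuous but not a homeomorphism, and \cref{lem:filterlemma} says nothing about this transfer.

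Your fallback points in the paper's direction but is not meaningful as stated. $\tau$ is a topology on $S$, not on $\Aut(\M')$, so one cannot ``run the group argument there''. What is needed is an intertwining element. Apply \cref{extensions of isomorphisms} to the push-forwards by $s_0$ of the Neumann elements $g_{a',B}\in G_{\acl(a')}$, for all pairs $(a',B)$. This yields $t\in S$ and $\hat g_{a',B}\in G_{ts_0(a')}$ with $ts_0\,g_{a',B}=\hat g_{a',B}\,ts_0$. The conjugation trick can then be performed at $s_1:=ts_0$ using $\V\cap\hat g_{a',B}\V g_{a',B}^{-1}$. Finally, continuity of $u\mapsto tu$ pulls the resulting rigidity back to a $\tau$-neighbourhood of $s_0$.
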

 \begin{proof}

We prove the two cases separately, arguing by contrapositive.
 
   \textbf{Case 1:} $s_0 \in G$.
  \begin{subproof}[Proof of Case 1]
  Suppose $s_0$ is not $\tau$-dispersive on {$a\in\Omega$.}
  Then there are $B\in[\Omega]^{<\omega}$ and $\V\in\mathcal{N}_{\tau}(s_{0})$ such that $a\in s(B)$ for all $s\in\V$.
  Clearly, we may assume $a=s_0(a')$ for some $a'\in B$.   

   By Neumann's Lemma, there is $h \in G_{a'}$ with
  $B\cap h(B)\subseteq \cl(a')$. Set $\tilde{h} := h^{s_0^{-1}}$ and
  \[
    \W := \V \cap \lambda_h^{-1}\rho_{\tilde{h}^{-1}}^{-1}(\V)
        = \V \cap \{\, s \in S \mid \tilde{h}^{-1} s h \in \V \,\},
  \]
  where $\rho_t \colon u \mapsto tu$ and $\lambda_t \colon u \mapsto ut$. Since
  $\tilde{h} s_0 h^{-1} = s_0$ we have $s_0 \in \W$, so by continuity of multiplication
  $\W \in \mathcal{N}_\tau(s_0)$; one checks $\tilde{h} \in G_a$.

    For $s \in \W$ we have $a \in s(B)$ and, as $\tilde{h}^{-1} s h \in \V$,
  $a \in \tilde{h}^{-1} s\, h(B)$; applying $\tilde h \in G_a$ gives $a \in s(h(B))$. Hence
  \[
    a \in s(B) \cap s(h(B)) \subseteq s\bigl(B \cap h(B)\bigr) 
   \subseteq s(\cl(a')) = \cl(s(a')).
  \]
  	The equality above implies that $\cl(a)=\cl(s(a'))$, so that $s(a')\in\cl(a)$ by \cref{closure transitivity and local finiteness}. As this holds for all $s \in \W$,
  $s_0$ is $\tau$-rigid on $a'$.
  \end{subproof}

 	  \textbf{Case 2:} $s_0 \notin G$ and \ref{sat} holds.
    \begin{subproof}[Proof of Case 2]
   The previous argument fails: for
  $s_0 \in S \setminus G$ and $h \in G$, an element $\bar{h} \in G$ with
  $\bar{h} s_0 h^{-1} = s_0$ need not exist, since $s_0$ is not invertible. We instead use
  \cref{extensions of isomorphisms}. Let $\mathcal{F}$ be the countable collection of pairs
  $(a', B)$ with $a' \in \Omega$ and $B \subseteq \Omega$ finite. For each such pair,
  Neumann's Lemma gives %
  {$g_{a',B} \in G_{\acl(a')}$}
  with $g_{a',B}(B) \cap B \subseteq \cl(a')$; let
  \[
    \phi_{a',B} := \{\, \langle s_0(c), s_0(d) \rangle \mid \langle c, d \rangle \in g_{a',B} \,\}
  \]
  be the push-forward of $g_{a',B}$ by $s_0$. Applying \cref{extensions of isomorphisms} to
  this countable family, and using uniqueness up to isomorphism of the countable saturated
  model, we obtain $t \in S$ and, for each $(a', B) \in \mathcal{F}$, an element
  $\hat{g}_{a',B} \in G$ with $t s_0\, g_{a',B} = \hat{g}_{a',B}\, t s_0$. {Let $s_1:=t s_0$, $a:=s_0(a')$, and $a^* := s_1(a') = t(a)$. Note that by construction, $\hat{g}_{a',B}\in G_{a^*}$.}

  {Suppose that $s_1$ is not $\tau$-dispersive on $a^*$:}
  there are $\V \in \mathcal{N}_\tau(s_1)$ and finite $B$ with $a^* \in s(B)$ for all
  $s \in \V$. Put
  \[
    \W := \V \cap \hat{g}_{a',B}\, \V\, g_{a',B}^{-1}\, \mbox{ } , \qquad
    \W' := \{\, s \in S \mid t s \in \W \,\}.
  \]
  Since $\hat{g}_{a',B}^{-1} s_1 g_{a',B} = s_1 \in \V$ we have $\W \in \mathcal{N}_\tau(s_1)$,
  and $\W' \in \mathcal{N}_\tau(s_0)$. As in the first case,  for all $s \in \W'$
  \[a^* \in ts\bigl(g_{a',B}(B) \cap B\bigr) \subseteq ts\,(\cl(a')),\] and thus $a\in \cl(sa')$, 
  whence $s_{0}$ is $\tau$-rigid on $a'$, by \cref{closure transitivity and local finiteness} again.
\end{subproof}\end{proof}
  
  \begin{lemma}\label{tau-dispersive everywhere}  Let $S\in\{G, \overline{G}\}$ and $\tau$ be a semigroup topology on $S$ coarser than $\pw$. 
  	Assume that either $s_{0}\in G$ or hypothesis \ref{sat} holds. Let $s_{0}$ be $\tau$-dispersive on $a=s_{0}(a')$ where $a'\in \Omega$. Then there is $s_{1}\in S$ which is $\tau$-dispersive on the whole $G\cdot a$. If $s_{0}\in G$, then we can conclude that any $g\in G$ is $\tau$-dispersive on $G\cdot a$. 
  \end{lemma}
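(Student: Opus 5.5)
The plan is to derive everything from Items~\eqref{obs3} and~\eqref{obs4} of Observation~\ref{moving tau dispersiveness around}, which let dispersivity be moved by left translation and preserved by right multiplication. The first step is the same in both cases. Let $g\in G$ and put $b:=g(a)$. Item~\eqref{obs4}, applied with $t=g$ and $s=s_0$ (which is $\tau$-dispersive on $a$), shows that $gs_0$ is $\tau$-dispersive on $b$. Item~\eqref{obs3} then shows that $gs_0y$ is $\tau$-dispersive on $b$ for every $y\in S$. So the whole task is to produce one element $s_1$ that can be written as $g_b s_0 y_b$ with $g_b(a)=b$ and $y_b\in S$, simultaneously for every $b\in G\cdot a$.

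If $s_0\in G$, this is immediate. Take any $h\in G$ and $b=g(a)$, and set $y:=s_0^{-1}g^{-1}h\in G$. Then $h=gs_0y$, so $h$ is $\tau$-dispersive on $b$. Since $b$ and $h$ were arbitrary, every element of $G$ is $\tau$-dispersive on all of $G\cdot a$, which is the stronger conclusion claimed when $s_0\in G$.

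If $s_0\notin G$ and \ref{sat} holds, $s_0$ cannot be inverted, so I would instead look for $t\in S=\EEmb(\M)$ with
\[t(\Omega)\subseteq\bigcap_{b\in G\cdot a} g_b s_0(\Omega),\]
where for each $b$ we fix one $g_b\in G$ with $g_b(a)=b$; the orbit is countable, so this is a countable intersection. Given such a $t$, define $y_b:=(g_bs_0)^{-1}\circ t$. This is a well-defined composition of elementary maps $\Omega\to\Omega$, hence lies in $\EEmb(\M)=S$, and it satisfies $t=g_bs_0y_b$. By the first step, $s_1:=t$ is then $\tau$-dispersive on every $b\in G\cdot a$.

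The main obstacle is constructing $t$. A plain compactness argument does not obviously work, because "$x\in\im(g_bs_0)$" is not first-order. I would use \cref{extensions of isomorphisms}, in the same way it was used in Case~2 of \cref{l:moving out}. The partial elementary isomorphisms $\phi_b:=(g_bs_0)^{-1}\colon g_bs_0(\Omega)\to\Omega$ form a countable family, so that fact gives a countable saturated $\M'\succeq\M$ in which each $\phi_b$ extends to an automorphism $\hat\phi_b$. Fix an isomorphism $\sigma\colon\M'\to\M$, which exists by uniqueness of the countable saturated model.

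The point of the construction is that the $\hat\phi_b$ are automorphisms of $\M'$ restricting to $\phi_b$ on $g_bs_0(\Omega)$. I would then try to extract, inside $\M'$ and by iterating the Fact along a chain $\M\preceq\M'\preceq\M''\preceq\cdots$ with a diagonal enumeration of the countably many $b$'s, a copy of $\M$ that lies in $\bigcap_b g_bs_0(\M)$ after transport back by $\sigma$. Pulling this copy back to $\M$ would give the required $t$.

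If this direct route fails, my fallback is a weaker target. It is enough to find $s_1$ together with, for each $b$, some $x_b\in S$ such that $x_bs_1$ is of the form $g's_0y$ with $g'(a)=x_b(b)$ and $x_b$ injective. In that version the automorphisms supplied by \cref{extensions of isomorphisms} play the role of the $g_b$, exactly as the $\hat g_{a',B}$ did in \cref{l:moving out}.
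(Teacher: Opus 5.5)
Your first step is correct: Items~\eqref{obs4} and~\eqref{obs3} show that $gs_0y$ is $\tau$-dispersive on $g(a)$. Your treatment of the case $s_0\in G$ is also correct. The gap is in the case $s_0\notin G$. There you reduce everything to finding $t\in S$ with $\im(t)\subseteq\bigcap_b g_bs_0(\Omega)$, but you never produce such a $t$, and your sketched method cannot produce it as stated.

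Once the $g_b$ are fixed arbitrarily, $\bigcap_b g_bs_0(\Omega)$ is a fixed subset of $\Omega$. Passing to $\M'\succeq\M$ does not change it, and for careless choices it need not contain any elementary submodel. For example, take a pure countable set with $s_0(n)=2n$, $a=0$, $g_0=1$, and $g_1$ the permutation swapping $2k$ and $2k+1$: the intersection is already empty.

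Moreover, $\phi_b=(g_bs_0)^{-1}$ is the wrong family to feed into \cref{extensions of isomorphisms}. With $t:=\sigma\restriction\Omega$ and $\hat h_b:=\sigma\hat\phi_b\sigma^{-1}\in G$ you get $t=(\hat h_b\,t\,g_b)\,s_0$. By Item~\eqref{obs4}, this only shows that $t$ is $\tau$-dispersive on $\hat h_bt(b)=\sigma\bigl((g_bs_0)^{-1}(b)\bigr)=t(a')$, which is the same point for every $b$.

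Your fallback does not repair this. From $x_bs_1=g's_0y$ you only learn that $x_bs_1$ is $\tau$-dispersive on $x_b(b)$. Dispersivity cannot be pulled back from $x_bs_1$ to $s_1$ when $x_b\notin G$, since Item~\eqref{obs4} only pushes it forward along left factors.

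The missing observation is that Item~\eqref{obs4} allows an arbitrary left factor from $S$. So there is no need to write $s_1$ as $gs_0y$ with $g\in G$. The paper applies \cref{extensions of isomorphisms}, exactly as in Case~2 of \cref{l:moving out}, to the push-forwards $s_0g_bs_0^{-1}\colon s_0(\Omega)\to s_0(\Omega)$, where $g_b(a')=b$ for $b\in G\cdot a'$. This yields $t\in S$ and $\hat g_b\in G$ with $\hat g_b\,ts_0=ts_0\,g_b$.

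Then $s_1:=ts_0=(\hat g_bt)\,s_0\,g_b^{-1}$. By Items~\eqref{obs4} and~\eqref{obs3}, $s_1$ is $\tau$-dispersive on $\hat g_bt(a)=s_1(g_b(a'))=s_1(b)$. Elementary maps preserve types, and in a countable saturated structure orbits are exactly the type-classes. Hence every $c\in(G\cdot a)\cap\im(s_1)$ has the form $s_1(b)$ with $b\in G\cdot a'$. Points outside $\im(s_1)$ are trivially points of $\tau$-dispersivity.
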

  \begin{proof}
   By \cref{extensions of isomorphisms} we obtain $t \in S$ and $g_b, \hat{g}_b \in G$
  (for $b \in G \cdot a'$) with $g_b(a') = b$ and $\hat{g}_b\, t s_0\, g_b^{-1} = t s_0$.
  Set $s_1 := t s_0$. Since $s_0$ is $\tau$-dispersive on $a$, Item~\eqref{obs4} of
  Observation~\ref{moving tau dispersiveness around} gives that $s_1$ is $\tau$-dispersive
  on $t(a)$. We show $s_1$ is $\tau$-dispersive on every $c \in G \cdot t(a) = G \cdot a$.
  Fix such $c$. We may assume that $c\in\im(s_{1})$, since otherwise $s_{1}$ is automatically $\tau$-dispersive on $c$, so let $b := s_1^{-1}(c)$. From $\hat{g}_b s_1 g_b^{-1} = s_1$ and
  Item~\eqref{obs4}, $s_1$ is $\tau$-dispersive on
  \[
    \hat{g}_b\bigl(t(a)\bigr) = s_1\bigl(g_b(a')\bigr) = s_1(b) = c.
  \]
  As $c$ was arbitrary, $s_1$ is $\tau$-dispersive on $G \cdot a$.
  If $s_0 \in G$, the above holds with $t = 1$, so $s_0$ is $\tau$-dispersive on $G \cdot a$;
  by Item~\eqref{obs3} of Observation~\ref{moving tau dispersiveness around}, every element of $G$
  is then $\tau$-dispersive on $G \cdot a$.
  \end{proof}

In the last two lemmas of this section, when working with $G$ we mention generalised pointwise topologies. Note that the parts of the statements mentioning such topologies are only needed for \cref{thm:onebased} and are not required for \cref{t:minimality endomorphism monoids}.

  \begin{lemma}
  	\label{l:aux} Let $S$ be either $G$ or $\overline{G}$, where in the latter case \ref{sat} holds. 
  Let $\tau$ be a semigroup topology on $S$ coarser than $\pw$. Let $s\in S$ be deeply $\tau$-dispersive on some $G$-invariant set $\Sigma\subseteq \Omega$. 
    Then, for every fixed $D\in\X$ and every $\V\in\mathcal{N}_{\tau}(s)$ there are $E\in\X$ and $h\in G$ such that $\nn_{E}h\subseteq \V$ and %
    $E\cap D\cap\Sigma=\emptyset$.  
  	Moreover, if the set $\Delta:=\Omega\setminus\Sigma$ is algebraically closed, $s=1$ and %
    {$\pw^{\Delta,K}\subseteq\tau$}
    {for some compact $K\leq \gc(\Delta)$, $K\unlhd G(\Delta)$}, then we may assume that {$h\in S^{K}_{D\cap\Delta}$}. 
  \end{lemma}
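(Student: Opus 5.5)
Write $D\cap\Sigma=\{d_1,\dots,d_m\}$; this set is finite. I would use deep dispersivity to split $s$ into $m+1$ factors (or more, if convenient), say $s=s_1\cdots s_n$, with each $s_i$ $\tau$-dispersive on $\Sigma$. Multiplication is continuous at $(s_1,\dots,s_n)$, so there are $\V_i\in\mathcal{N}_\tau(s_i)$ with $\V_1\cdots\V_n\subseteq\V$. Since $\tau\subseteq\pw$, each $\V_i$ contains a basic set $\nn_{\phi_i}$ around $s_i$. By Item~\eqref{obs5} of Observation~\ref{moving tau dispersiveness around}, the elements of $\V_i$ witnessing dispersivity may be taken in $G$. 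The idea is to pick, one factor at a time, witnesses $g_i\in\V_i\cap G$ and then rewrite the product $g_1\nn_{C_1}\cdots$ as a set of the form $\nn_E h$ via \cref{o:commutation}. Concretely, once $g_i\in\V_i\cap G$ is chosen, there is some $C_i\in\X$ with $g_i\nn_{C_i}=\nn_{g_i(C_i)}g_i\subseteq\V_i$ (take $C_i\supseteq$ the domain of $\phi_i$, so that $g_i$ agrees with $s_i$ there). The product $\nn_{g_1(C_1)}g_1\nn_{g_2(C_2)}g_2\cdots$ can then be pushed left to give $\nn_{E_1}\nn_{E_2}\cdots\nn_{E_n}h\subseteq\V$, where $E_i=g_1\cdots g_{i-1}g_i(C_i)$ and $h=g_1\cdots g_n$. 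This is $\nn_{\mathcal{E}}h$ for the chain $\mathcal{E}=(E_i)$.

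The dispersivity choices are made so that the points of $D\cap\Sigma$ are successively expelled. Using dispersivity of $s_i$ on $\Sigma$ with $B$ a suitable finite set, I choose $g_i$ so that $(g_1\cdots g_{i-1})^{-1}$ of the relevant points of $D\cap\Sigma$ does not lie in $g_i(C_i)$. In other words, I arrange $E_i\cap D\cap\Sigma=\emptyset$ step by step. I would actually aim to get it for one $E_i$ at a time and then compress. It is easier to first arrange the chain so that the consecutive intersections $E_i\cap E_{i+1}$ avoid $D\cap\Sigma$, and to ensure $E_i\cap D\cap\Sigma$ shrinks as $i$ increases. Then I apply \cref{l:intersections} repeatedly (the first-term version) to collapse $\nn_{E_1}\nn_{E_2}$ into $\nn_{E'}g'$ with $E'\cap D=E_1\cap E_2\cap D$. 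Iterating and absorbing the group elements into $h$ via \cref{o:commutation}, the chain collapses to a single $\nn_E h$ with $E\cap D\cap\Sigma\subseteq\bigcap_i E_i\cap D\cap\Sigma=\emptyset$. The count of factors is what guarantees that every point of $D\cap\Sigma$ is removed from at least one link. The main obstacle is the bookkeeping that the collapsed intersection is taken over the original $D$ and not over a translate. To handle this I would apply \cref{l:intersections} with $A=D$ at each stage, noting that the translating element can be chosen in $\nn_C\cap G$ and therefore fixes the already-controlled part.

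For the moreover clause, take $s=1$, so that the factorisation can be $1=1\cdots1$ and all factors are dispersive on $\Sigma$ (as noted in Remark~\ref{rem:deepstuff}). Since $\pw^{\Delta,K}\subseteq\tau$, I may shrink each $\V_i$ to lie inside $\overline{G}^K_{D\cap\Delta}$-type neighbourhoods of $1$. I would then choose the witnesses $g_i\in S^K_{D'\cap\Delta}$ for some $D'\in\X$ containing $D$. Every element added to $h$ along the collapsing procedure then either comes from these sets or fixes $D$ pointwise. Because $K\unlhd G(\Delta)$ and $\Delta$ is algebraically closed and invariant, products and conjugates preserve membership in $S^K_{D\cap\Delta}$, so $h\in S^K_{D\cap\Delta}$.
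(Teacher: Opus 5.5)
Your construction breaks at the choice of $g_i$ and $C_i$. Dispersivity of $s_i$ on $\Sigma$ only gives this: for a finite $B$ fixed \emph{beforehand} and for $\V_i\in\mathcal N_\tau(s_i)$, some $g\in\V_i$ (which may be taken in $G$ by Item~\eqref{obs5}) has $d\notin g(B)$. Such a witness lies in $\V_i$, but in general not in the $\pw$-basic set $\nn_{\phi_i}$, which need not be $\tau$-open. So your parenthetical ``$g_i$ agrees with $s_i$ on the domain of $\phi_i$'' is unjustified. If it did hold, $g_i(C_i)$ would contain the predetermined set $s_i(\mathrm{dom}\,\phi_i)$, and no choice of $g_i$ could expel anything from it. The only way to secure $g_i\nn_{C_i}\subseteq\V_i$ is to choose $C_i$ \emph{after} $g_i$, using $\pw$-openness of $\V_i$ at $g_i$. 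But then $C_i$ depends on $g_i$, and dispersivity gives no control over whether $(g_1\cdots g_{i-1})^{-1}(d_j)\in g_i(C_i)$. So asking the dispersive witness to avoid its own neighbourhood set is circular, and as written nothing guarantees that any point of $D\cap\Sigma$ is ever removed from a link.

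The missing idea is to decouple the finite set from the dispersive witness by spending a second factor on it. The paper writes $s=s_1\cdots s_{2m}$ and takes $\W_i\in\mathcal N_\tau(s_i)$ with $\W_1\cdots\W_{2m}\subseteq\V$. It first fixes $\alpha_l\in G$ and $B_l\in\X$ with $\nn_{B_l}\alpha_l\subseteq\W_{2l}$. Only then does it choose the dispersive witness $g_l\in\W_{2l-1}\cap G$ with $h_{l-1}^{-1}(d_l)\notin g_l(B_l)$, where $h_{l-1}$ is the group element accumulated so far and $h_0=1$. Odd factors contribute single group elements; even factors contribute the sets. Alternatively, you could use continuity at $(s_i,1)$ to get $\V_i'\V_i''\subseteq\V_i$ with $\V_i''\in\mathcal N_\tau(1)$, fix $\nn_{B_i}\subseteq\V_i''$, and then pick $g_i\in\V_i'\cap G$ avoiding $B_i$.

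Once this is repaired, your collapsing step is sound. When you apply \cref{l:intersections} with $A=D$, the translating element fixes the current first link pointwise, hence fixes the already-controlled part of $D$. That is exactly the bookkeeping the paper performs inside its induction.

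For the ``moreover'' clause, your claim that every element absorbed into $h$ either lies in the $K$-sets or fixes $D$ pointwise is false: the elements from \cref{l:intersections} fix only the current first link. No such tracking is needed, though. Since $1\in\nn_E$, we have $h\in\nn_E h\subseteq\V$. So it suffices to replace $\V$ at the outset by $\V\cap\nn^{K}_{D\cap\Delta}$, which is a $\tau$-neighbourhood of $1$ because $\pw^{\Delta,K}\subseteq\tau$.
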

  
  \begin{proof}
{Suppose first $D \cap \Sigma=\emptyset$. It suffices to find $E\in\X$ and $h\in G$ such that $\nn_E h\subseteq\V$. Since $\tau\subseteq\pw$ and $\V\in\mathcal{N}_\tau(s)$, there is some $E'\in\X$ with $\nn_{s\upharpoonright E'}\subseteq \V$. Choose $h\in G$ with $h_{\upharpoonright E'}=s_{\upharpoonright E'}$ and set $E:=h(E')$.   By
\cref{o:commutation},
\[
  \nn_{E}\, h = \nn_{h(E')}\, h = h\, \nn_{E'} \subseteq \nn_{s \upharpoonright E'}
    \subseteq \V,
\]
as required. }

{Assume now that $D \cap \Sigma \neq \emptyset$, and let $d_1, \dots, d_m$ enumerate it.
Since $s$ is deeply $\tau$-dispersive on $\Sigma$, we may write $s = s_1 \cdots s_{2m}$
with each $s_i$ $\tau$-dispersive on $\Sigma$; by continuity of multiplication, choose
$\W_i \in \mathcal{N}_\tau(s_i)$ with $\W_1 \cdots \W_{2m} \subseteq \V$. Moreover, since
$\tau$ is coarser than $\pw$, for each $1 \leq l \leq m$ there are $\alpha_l \in G$ and
$B_l \in \X$ with $\nn_{B_l} \alpha_l \subseteq \W_{2l}$.} 
 We construct, by
  induction on $l$ for $1 \le l \le m$, elements $h_l \in G$ and $E_l \in \X$ with
  \begin{enumerate}
    \item\label{induction assumption1} $E_l \cap D \cap \Sigma \subseteq \{d_{l+1}, \dots, d_m\}$;
    \item\label{induction assumption2} $\nn_{E_l} h_l \subseteq \W_1 \cdots \W_{2l}$.
  \end{enumerate}
  Taking $l = m$ and $E := E_m$ gives the conclusion. %

    For the base case $l=1$, %
     using $\tau$-dispersivity of $s_1$ on $\Sigma$, choose
  $h'_1 \in \W_1 \cap G$ with $d_1 \notin h'_1(B_1)$. By \cref{o:commutation},
  $\nn_{h'_1(B_1)}\, h'_1 \alpha_1 = h'_1\, \nn_{B_1} \alpha_1 \subseteq \W_1 \W_2$, so the
  conditions hold with $E_1 := h'_1(B_1)$ and $h_1 := h'_1 \alpha_1$.

For the induction step, suppose $h_i, E_i$ have been constructed for all $i \le l$ where $1 \le l < m$. Using
  $\tau$-dispersivity of $s_{2l+1}$ on $\Sigma$, choose $g_{l+1} \in G \cap \W_{2l+1}$ with
  $h_l^{-1}(d_{l+1}) \notin g_{l+1}(B_{l+1})$. By \cref{o:commutation},
  \begin{equation}\label{eq:containment}
    \nn_{E_l}\, \nn_{h_l g_{l+1}(B_{l+1})}\, h_l g_{l+1} \alpha_{l+1}
      = (\nn_{E_l} h_l)\, g_{l+1}\, (\nn_{B_{l+1}} \alpha_{l+1})
      \subseteq (\W_1 \cdots \W_{2l})\, \W_{2l+1}\, \W_{2l+2}.
  \end{equation}
  {Now apply \cref{l:intersections} with $h_l g_{l+1}$ in place of $g$, with $E_l$ in place
of $C$, with $h_l g_{l+1}(B_{l+1})$ in place of $D$, and with $D$ in place of $A$.}
  This yields $E_{l+1} \in \X$ and $h'_{l+1} \in G$ with
  $\nn_{E_{l+1}} h'_{l+1} \subseteq \nn_{E_l}\, \nn_{h_l g_{l+1}(B_{l+1})}$ and
  $E_{l+1}\cap D = E_l \cap h_l g_{l+1}(B_{l+1})\cap D$, so that
  \[
    D \cap \Sigma \cap E_{l+1}\subseteq D\cap\Sigma\cap E_{l}\cap h_{l}g_{l+1}(B_{l+1})\subseteq \{d_{l+2}, \dots, d_m\}
  \]
  using $d_{l+1} \notin h_l g_{l+1}(B_{l+1})$ and \eqref{induction assumption1}. By
  \eqref{eq:containment},
  $\nn_{E_{l+1}} h'_{l+1} h_l g_{l+1} \alpha_{l+1} \subseteq \W_1 \cdots \W_{2l+2}$, so set
  $h_{l+1} := h'_{l+1} h_l g_{l+1} \alpha_{l+1}$.
 
    {Finally, suppose that $s = 1$ and $\pw^{\Delta, K} \subseteq \tau$ for some $K$ as in the
statement. Set $\V' := \V \cap \nn^K_{D \cap \Delta}\in \mathcal{N}_\tau(1)$. Running the argument above
with $\V'$ in place of $\V$ produces $E \in \X$ and $h \in G$ with
$E \cap D \cap \Sigma = \emptyset$ and $\nn_E h \subseteq \V'$. As $1 \in \nn_E$, we get
$h = 1 \cdot h \in \nn_E h \subseteq \V'$, and hence $h \in \nn^K_{D \cap \Delta}$, as
required.}
  \end{proof}

   \begin{lemma}\label{building chains}
  Let $S$ be either $G$ or $\overline{G}$, where in the latter case \ref{sat} holds, and let
  $\tau$ be a semigroup topology on $S$ coarser than $\pw$. Let $\Delta \subseteq \Omega$ be
  $G$-invariant and algebraically closed, and assume $w$ is deeply $\tau$-dispersive on
  $\Omega \setminus \Delta$. Fix $k > 0$ and pick $\U \in \mathcal{N}_\tau(w^{2k+1})$. Then
  there are $h \in \U$ and a chain of algebraically closed sets
  $\E = (\overline{e}_i)_{i=0}^{k}$ with
  \begin{enumerate}
    \item\label{propc} $\overline{e}_i \cap \overline{e}_{i+1} \subseteq \Delta$ for all $0 \le i \le k-1$;
    \item\label{propd} $\nn_{\E} h \subseteq \U$.
  \end{enumerate}
  If, moreover, $w = 1$ and {$\pw^{\Delta, K}\subseteq \tau$ for some compact $K\leq \gc(\Delta)$, $K\unlhd G(\Delta)$}, then we may assume {that for some $A \in \X_\Delta$ we have $h\in G_{A}^{K}$ and $\overline{e}_i \cap \overline{e}_{i+1} = A$ for all
  $0 \le i \le k-1$.}
\end{lemma}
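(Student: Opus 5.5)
The plan is to write $w^{2k+1}$ as a product of $2k+1$ copies of $w$ and use continuity of multiplication. This gives neighbourhoods $\V_0,\dots,\V_{2k}\in\mathcal{N}_\tau(w)$ with $\V_0\V_1\cdots\V_{2k}\subseteq\U$. We will produce, for each $j$, a set $C_j\in\X$ and an element $h_j\in G$ with $\nn_{C_j}h_j\subseteq\V_j$. For the odd-indexed factors we use the trivial part of \cref{l:aux}, which only uses $\tau\subseteq\pw$. For the even-indexed factors we use the dispersive part of \cref{l:aux}, with $\Sigma:=\Omega\setminus\Delta$ and $D$ chosen adaptively.

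The product is then rearranged using \cref{o:commutation}. Inductively, we maintain a chain $\C=(\overline{c}_i)_{i=0}^{j}$ of algebraically closed sets and $g_j\in G$ with $\nn_{\C}g_j\subseteq\V_0\cdots\V_j$. For the next factor $\nn_{C_{j+1}}h_{j+1}$ we write
\[\nn_{\C}g_j\nn_{C_{j+1}}h_{j+1}=\nn_{\C}\nn_{g_j(C_{j+1})}g_jh_{j+1},\]
so the chain is extended by $\overline{c}_{j+1}:=g_j(C_{j+1})$. The key point is the choice of $D$ at even steps. When applying \cref{l:aux} to $\V_{j+1}$ with $j+1$ even, we take $D:=\acl(g_j^{-1}(\overline{c}_{j-1}\cup\overline{c}_j))$, which lies in $\X$. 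Then $C_{j+1}\cap D\cap\Sigma=\emptyset$, and transporting by $g_j$ gives $\overline{c}_{j-1}\cap\overline{c}_j\cap\overline{c}_{j+1}\subseteq\Delta$. It is even easier to use $D:=\acl(g_j^{-1}(\overline{c}_j))$, which yields $\overline{c}_j\cap\overline{c}_{j+1}\subseteq\Delta$ directly.

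The intended outcome is a chain of length $2k$ with $\overline{c}_{2i}\cap\overline{c}_{2i+1}\subseteq\Delta$ for all $i<k$. This is exactly case \ref{case2} of \cref{skipping terms}. With the indices arranged so that these intersection conditions sit on the pairs that lemma requires, it produces a chain $\E=(\overline{e}_i)_{i=0}^k$ with $\overline{e}_i\cap\overline{e}_{i+1}\subseteq\Delta$ and some $g\in G$ with $\nn_\E g\subseteq\nn_\C$. Setting $h:=g g_{2k}$ gives $\nn_\E h\subseteq\U$. Since $1\in\nn_\E$, also $h\in\U$. Only the odd-step disjointness from the next link is really needed, which explains why $2k+1$ factors suffice. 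The other links can be obtained from the trivial case of \cref{l:aux}.

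For the moreover part ($w=1$, $\pw^{\Delta,K}\subseteq\tau$), we fix $A$ in advance: take any $C_0$ from the first step and set $A:=C_0\cap\Delta\in\X_\Delta$. We then intersect all the $\V_j$ with $\nn^K_{A}$, which is a $\tau$-neighbourhood of $1$. Using the "moreover" clause of \cref{l:aux}, the elements $h_j$ can be taken in $\nn^K_{D\cap\Delta}$. With suitable $D$ containing the transported $A$, the links then meet $\Delta$ exactly in $A$ up to a twist by $K$. Running case \ref{case1} of \cref{skipping terms} instead gives $\overline{e}_i\cap\overline{e}_{i+1}=A$ and $g\in G_A$, hence $h\in G^K_A$.

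I expect the main obstacle to be this last bookkeeping. The $K$-twists mean that $h_j$ only fixes $A$ up to an element of $K$, and we must use that $K\unlhd G(\Delta)$ preserves $\sim$-classes to keep the intersections with $\Delta$ equal to a single fixed $A$ through the conjugations. If this fails, the fallback is to replace $A$ by its $K$-saturation within $\Delta$ and absorb the twist into $G^K_A$.
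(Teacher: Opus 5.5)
Your argument for the first part works once the parity is fixed. With $\overline{c}_{j+1}=g_j(C_{j+1})$, case~\ref{case2} of \cref{skipping terms} asks for $\overline{c}_{2i}\cap\overline{c}_{2i+1}\subseteq\Delta$. So the dispersive clause of \cref{l:aux} must be used at the odd steps $2i+1$, with $D=\acl(g_{2i}^{-1}(\overline{c}_{2i}))$, not at the even ones. The adaptive choice of $D$ is also unnecessary. Because $\Delta$ is $G$-invariant, the paper takes a single $\V\in\mathcal{N}_\tau(w)$ with $\V^{2k+1}\subseteq\U$ and one pair $(D,h_0)$ with $\nn_D h_0\subseteq\V$ (from $\tau\subseteq\pw$). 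It applies \cref{l:aux} once to get $(E,h_1)$ with $\nn_E h_1\subseteq\V$ and $E\cap h_0^{-1}(D)\subseteq\Delta$. It then uses the periodic chain $\overline{c}_{2i}=(h_0h_1)^i(D)$, $\overline{c}_{2i+1}=(h_0h_1)^ih_0(E)$, so one intersection condition is transported to every pair.

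The genuine gap is in the `moreover' part. Case~\ref{case1} of \cref{skipping terms} needs $\overline{c}_{2i}\cap\overline{c}_{2i+1}\cap\overline{c}_{2i+2}=A$ for one fixed $A$ and all $i<k$, and your non-periodic construction has no mechanism for this. Fixing $A:=C_0\cap\Delta$ in advance gives at most $\overline{c}_0\cap\overline{c}_1\subseteq A$. \cref{l:aux} only makes a new link avoid $D\cap\Sigma$, so it does not force later links to contain $A$. Nor does it stop $\overline{c}_{2i}\cap\overline{c}_{2i+1}$ from containing points of $\Delta\setminus A$, since the sets $C_{2i}\cap\Delta$ are dictated by the neighbourhoods $\V_{2i}$. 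Your sentence ``the links then meet $\Delta$ exactly in $A$'' is exactly what would need proof.

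The missing idea is periodicity, with $A$ defined afterwards. Take $h_0=1$, shrink $\V$ into $G^K_{D\cap\Delta}$, take $h_1\in G^K_{D\cap\Delta}$ from the moreover clause of \cref{l:aux}, and set $A:=D\cap E\cap h_1(D)$. This set is algebraically closed and contained in $D\cap\Delta$. Hence for $a\in A$ we have $h_1(a)\in[a]\subseteq\acl(a)\subseteq A$, so $h_1(A)=A$. Every triple intersection is then $h_1^i(D\cap E\cap h_1(D))=h_1^i(A)=A$, and \cref{skipping terms} yields $g\in G_A$ with $h=gh_1^k\in G^K_A$.

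The same observation shows that the $K$-twists you flag as the main obstacle are harmless. Any element agreeing on an algebraically closed $A\subseteq\Delta$ with a member of $K\leq\gc(\Delta)$ fixes $A$ setwise, so no $K$-saturation is needed.
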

   \begin{proof}

 Pick $\V \in \mathcal{N}_\tau(w)$ with $\V^{2k+1} \subseteq \U$. Since $\tau \subseteq \pw$
  there are $D \in \X$ and $h_0 \in \V$ with $\nn_D h_0 \subseteq \V$; we may assume
  $D \cap \Delta \in \X$.
  If $w = 1$ we may take $h_0 = 1$; if furthermore $\tau^{\Delta, K} \subseteq \tau$,
  {up to shrinking $\V$, we may assume that $\V\subseteq S^{K}_{D\cap\Delta}$. Note that, in particular, $s \in \V$ preserves $[b]$ for every $b\in D \cap \Delta$.}

   	  By \cref{l:aux} there are $E\in\X$ and $h_{1}\in G$ such that 
      \begin{itemize}
   	     \item {$\nn_{E}h_{1}\subseteq\V$; and}
   	     \item $E\cap h_{0}^{-1}(D)\subseteq \Delta$.
      \end{itemize}
   {In the case of $w=1$ and $\pw^{\Delta, K}\subseteq\tau$, we take $h_1\in G^K_{D\cap\Delta}$ as in \cref{l:aux}.}
     By repeated application of \cref{o:commutation} we obtain the {containment}
     below: 
      \begin{equation*}
   	   \nn_{\C}(h_{0}h_{1})^{k}h_{0}=\nn_{D}h_{0}(\nn_{E}h_{1}\nn_{D}h_{0})^{k}\subseteq\V^{2k+1},
      \end{equation*}
      where $\C=(\overline{c}_{i})_{i=0}^{2k}$ is the chain given by
      \begin{equation*}
         \overline{c}_{2i}=(h_{0}h_{1})^{i}(D),\, \text{ for }\,0\leq i\leq k\quad\quad \text{and} \quad\quad \overline{c}_{2i+1}=(h_{0}h_{1})^{i}h_{0}(E)\, \text{ for }\, 0\leq i< k.
      \end{equation*} 
      Notice that the inclusion $h_{0}^{-1}(D)\cap E\subseteq\Delta$ implies
      $D\cap h_{0}(E)\subseteq\Delta$ and thus $\overline{c}_{2i}\cap \overline{c}_{2i+1}\subseteq\Delta$ for all %
       {$0\leq i< k$}.\\
      
    \textbf{Claim:} Under the assumption that $w=1$ and $\tau^{\Delta,{K}}\subseteq\tau$ the intersection $\overline{c}_{2{i}}\cap \overline{c}_{2i+1}\cap \overline{c}_{2i+2}$ is a fixed $A\subseteq D\cap\Delta$ for all $0\leq i\leq k-1$.
      \begin{subproof}[Proof of Claim]
      	{As mentioned above, in this case we have taken $h_0=1$ and $h_1\in G^K_{D\cap\Delta}$. Consider the set $A:=D\cap E\cap h_{1}(D)$. By construction (and choice of $h_0$), $D\cap E\subseteq\Delta$, and so $A\subseteq D\cap\Delta$ and is algebraically closed. In particular, since $h_1\in S^K_{D\cap\Delta}$, $h_1$ fixes $A$ setwise. Hence, for $0\leq i\leq k-1$ we have that}
      	\begin{equation*}
      		\overline{c}_{2i}\cap \overline{c}_{2i+1}\cap \overline{c}_{2i+2}=h_{1}^{i}(D\cap E\cap h_{1}(D))=D\cap E\cap h_{1}(D),
      	\end{equation*}
        {as desired.}
      \end{subproof}
       The 
       {final part of the lemma} follows from \cref{skipping terms} {with $h = g(h_0 h_1)^k h_0$, where $g$ is the element appearing in the conclusion of \cref{skipping terms}}. Note that in the case {where} $w=1$, {and} $\pw^{\Delta,K}\subseteq\tau$ by case~\ref{case1} of 
       \cref{skipping terms} 
       we can {take $g\in G_A$.}
       {Meanwhile, along the proof, %
       we have chosen $h_{0}=1$ and} $h_{1}\in S_{A}^{K}$, so in this case $h\in S_{A}^{K}$.
   \end{proof}

\section{Independence relations and their calculus}\label{sec: calcindependence}

    \begin{definition}    	\label{d:indipendence basic} Let $G\acts\Omega$ be a group acting on an infinite 
        set with 
        locally finite algebraic closure.  An \textbf{independence relation} is a $G$-invariant ternary relation $\ind$ on  $[\Omega]^{<\omega}$, where we write $A\ind_{C}B$ whenever $(A,B,C)$ are related in $\ind$ and read ``$A$ is independent from $B$ over $C$'', which  satisfies the following axioms:  for all  $A,B,C,D\in[\Omega]^{<\omega}$ (writing unions as concatenations, so that e.g.~$BC:=B\cup C$, and writing $a$ instead of $\{a\}$ for $a\in\Omega$), 

        \begin{itemize}
            \item (symmetry) if $A\ind_C B$, then $B\ind_C A$;
            \item (normality)  if $A\ind_{C}B$, then $A\ind_{C}BC$;
    		\item (monotonicity) if $A\ind_{C} BD$,  then $A\ind_{C} B$;
    		\item (transitivity) if $A\ind_C B$ and $A\ind_{BC} D$, then $A\ind_{C} BD$;
    		\item (full existence) there is $A'\equiv_C A$
        such that $A'\ind_C B$;
        \item (anti-reflexivity) for $a\in\Omega$, if $a\ind_C a$, then $a\in\acl(C)$.
        \end{itemize}
        We write $A\ind B$ to abbreviate $A\ind_{\emptyset} B$.
    \end{definition}

     \begin{remark} It is well-known that $\ind^{a}$ is an independence relation%
     ~\cite[Proposition 3.16]{d2023axiomatic}.  For any independence relation, $\ind$, we have that if $A\ind_{C} B$, then $A\ind^{a}_{C} B$~\cite[Remark 1.2.11]{d2023axiomatic}. Note that Neumann's Lemma is just full existence for algebraic independence. 
    \end{remark}

    \begin{fact}[{\cite[Proposition 3.1.3]{d2023axiomatic}}] Let $G\acts\Omega$ be a group action with locally finite algebraic closure. Let $\ind$ be an independence relation. Then, $\ind$ satisfies the following additional properties for all $A,B,C\in[\Omega]^{<\omega}$:
    \begin{itemize}
          \item \normalfont{(existence)}   %
           $A\ind_{B}B$;
        \item \normalfont{(strong closure)} $A\ind_{C} B$ if and only if $\acl(A)\ind_{\acl(C)}\acl(B)$.  
    \end{itemize} 
    \end{fact}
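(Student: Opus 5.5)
The plan is to derive both properties from the axioms of Definition~\ref{d:indipendence basic}, together with $G$-invariance of $\ind$ and the fact that every $g\in G_C$ maps $\acl(C)$ onto itself.

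\textbf{Existence.} By full existence there is $A'\equiv_B A$ with $A'\ind_B B$. Choose $g\in G_B$ with $g(A')=A$. Since $g$ fixes $B$ pointwise, invariance turns $A'\ind_B B$ into $A\ind_B B$.

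\textbf{Absorbing closures into the sides.} The main auxiliary claim is:
\[
A\ind_C B \quad\Longrightarrow\quad A\ind_C \acl(BC).
\]
To prove it, put $B^*:=\acl(BC)$. By full existence over $BC$ there is $D\equiv_{BC}B^*$ with $D\ind_{BC}A$. Any $g\in G_{BC}$ maps $B^*$ onto itself, so $D=B^*$ as a set, and symmetry gives $A\ind_{BC}B^*$. Normality gives $A\ind_C BC$, and transitivity then yields $A\ind_C BC\,B^*$. Monotonicity gives $A\ind_C B^*$. Applying the claim again after symmetry, we get
\[
A\ind_C B\quad\Longrightarrow\quad \acl(AC)\ind_C \acl(BC).
\]
As a special case, existence ($X\ind_C C$) combined with the claim gives $X\ind_C \acl(C)$ for every finite $X$.

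\textbf{Backward direction of strong closure.} Assume $\acl(A)\ind_{\acl(C)}\acl(B)$. The special case above gives $\acl(A)\ind_C\acl(C)$. Since $C\cup\acl(C)=\acl(C)$, transitivity produces $\acl(A)\ind_C \acl(C)\acl(B)$. Monotonicity on the right, followed by symmetry and monotonicity on the left, then gives $A\ind_C B$.

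\textbf{Forward direction of strong closure.} From $A\ind_C B$ we already have $\acl(AC)\ind_C\acl(BC)$. Monotonicity gives $\acl(A)\ind_C\acl(B)$, and what remains is to raise the base from $C$ to $\acl(C)$. I expect this to be the main obstacle, since base monotonicity is not among the axioms.

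My first attempt is to exploit that $\acl(C)$ is finite and setwise $G_C$-invariant. Using full existence over $\acl(C)$, choose $A'\equiv_{\acl(C)}\acl(AC)$ with $A'\ind_{\acl(C)}\acl(BC)$. Every element of $G_{\acl(C)}$ also fixes $C$ pointwise, so $A'\equiv_C\acl(AC)$. I would then try to conjugate by an element of $G_C$, and use transitivity through the intermediate base $\acl(C)$, to move $A'$ back to $\acl(AC)$ while keeping independence from $\acl(BC)$.

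If this fails, the fallback is to show $\acl(AC)\ind_{\acl(C)}\acl(BC)$ directly. The idea is to apply transitivity in the form ``$X\ind_C \acl(C)$ and $X\ind_{\acl(C)} Y$ give $X\ind_C \acl(C)Y$'' read backwards: use full existence to produce a witness that is independent over $\acl(C)$, and then compare it with $\acl(AC)$ via the finitely many $G_C$-conjugates of $\acl(C)$. Failing both, I would invoke the cited \cite[Proposition 3.1.3]{d2023axiomatic} for this single step.
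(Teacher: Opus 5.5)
\textbf{Where the proposal stands.} Your arguments for existence, for the closure step $A\ind_C B\Rightarrow \acl(AC)\ind_C\acl(BC)$, and for the backward half of strong closure are correct. The paper gives no proof of this Fact, only the citation.

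\textbf{The gap.} It is the forward half. After reaching $\acl(A)\ind_C\acl(B)$ you still have to raise the base from $C$ to $\acl(C)$, and neither of your strategies does this.
In the first, full existence controls only the type of $A'$ over $\acl(C)$. To transport $A'\ind_{\acl(C)}\acl(BC)$ back to $\acl(AC)$ you would need $g\in G_C$ with $g(A')=\acl(AC)$ and $g(\acl(BC))=\acl(BC)$. That is essentially $A'\equiv_{\acl(BC)}\acl(AC)$, a stationarity-type statement that nothing supplies.
The second strategy, ``transitivity read backwards'', is exactly base monotonicity, which is not an axiom here.
More generally, no axiom ever enlarges the base of an independence statement you already have. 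Symmetry, normality and monotonicity keep the base, transitivity returns the smaller base, and full existence only produces statements about conjugates. In fact the forward half does not follow from Definition~\ref{d:indipendence basic} at all; it needs an extra hypothesis such as base monotonicity.

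\textbf{A counterexample.} Let $(V;E)$ be the random graph and $\Omega:=V\times\{0,1\}$. Put $x\,R\,y$ iff the first coordinates of $x$ and $y$ are $E$-adjacent. Let $G$ consist of the permutations preserving $R$ and the relation ``same first coordinate''. Then $\acl(x)$ is the two-element class of $x$, and $\acl(\emptyset)=\emptyset$.

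Let $A\ind^{s}_C B$ mean that $\acl(AC)\cap\acl(BC)=\acl(C)$ and no $R$-edge joins $\acl(AC)\setminus\acl(C)$ to $\acl(BC)\setminus\acl(C)$. Let $\ind$ coincide with $\ind^{s}$ over every base that is not a singleton. Over a singleton base $\{c\}$, let $A\ind_c B$ mean that $\acl(Ac)\cap\acl(Bc)=\acl(c)$ and every edge $x\,R\,y$ with $x\in\acl(Ac)\setminus\acl(c)$, $y\in\acl(Bc)\setminus\acl(c)$ satisfies $x\,R\,c$ and $y\,R\,c$.

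This $\ind$ satisfies all the axioms:
\begin{itemize}
\item Symmetry, normality, monotonicity, invariance and anti-reflexivity are clear.
\item Full existence is inherited from $\ind^{s}$, via the extension property of the random graph.
\item The cases of transitivity not inherited from $\ind^{s}$ are short checks. For example, when $C=\emptyset$ and $B=\{c\}$, the premise $A\ind_{\emptyset}c$ forbids edges from $\acl(A)$ to $c$, so none of the extra tolerated edges can occur.
\end{itemize}

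Now take $a,b,c$ in three distinct classes with $a\,R\,b$, $a\,R\,c$ and $b\,R\,c$. Then $a\ind_c b$ holds. But $\acl(a)\ind_{\acl(c)}\acl(b)$ fails: the base $\acl(c)$ has two elements, so $\ind^{s}$ applies there, and the edge $a\,R\,b$ violates it.

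\textbf{How to finish with base monotonicity.} With base monotonicity added, your outline closes in one line. From $A\ind_C\acl(BC)$ and $C\subseteq\acl(C)\subseteq\acl(BC)$, base monotonicity gives $A\ind_{\acl(C)}\acl(BC)$. Then your closure claim over the base $\acl(C)$, together with monotonicity, gives $\acl(A)\ind_{\acl(C)}\acl(B)$.
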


\begin{notation}
     We will apply   independence relations to finite tuples, %
     thus writing $\overline{a}\ind_{\overline{c}}\overline{b}$,
     whenever the relation holds for the sets of all entries of these  tuples. 
\end{notation}

\begin{definition} 
Let $G\acts\Omega$ have locally finite algebraic closure. Let $\ind$ be an  independence relation. We define the following additional property: for all $A,B,C,D\in \fps{\Omega}$ with $B\subseteq C\subseteq D\in[\Omega]^{<\omega}$, 
\begin{itemize}
    \item (base monotonicity)  if $A\ind_{B} D$, then $A\ind_C D$.
\end{itemize}
\end{definition}

{We note below that base monotonicity of $\ind^{a}$ is equivalent to modularity of algebraic closure. This fact is well-known under the additional assumption of algebraic closure forming a pregeometry (cf.~\cite[Proposition 1.2.17]{d2023axiomatic}):}

\begin{prop}\label{prop:modularity} {Let $G\acts\Omega$ with locally finite algebraic closure. Then, the following conditions are equivalent:}
\begin{itemize}
    \item $\ind^{a}$ satisfies base monotonicity;
    \item algebraic closure is modular.
\end{itemize}
\end{prop}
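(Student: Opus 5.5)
We prove the two implications directly from the definitions, reducing to algebraically closed sets via strong closure. Throughout we abbreviate $\acl(XY)$ for $\acl(X\cup Y)$, and we write $X\ind^{a}_{Y} Z$ as the statement $\acl(XY)\cap\acl(ZY)=\acl(Y)$.

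\emph{Modularity implies base monotonicity.} Let $B\subseteq C\subseteq D$ and $A\ind^{a}_{B} D$. By strong closure we may replace $A,B,C,D$ by their algebraic closures, so assume all four are in $\X$. The hypothesis then reads $\acl(AB)\cap D=B$, and we must show $\acl(AC)\cap D=C$. The inclusion $\supseteq$ is clear. For $\subseteq$, first note $\acl(AC)=\acl(\acl(AB)\cup C)$. Now apply modularity with the inclusion $C\subseteq D$, taking ``$A$'' to be $C$, ``$B$'' to be $D$ and ``$C$'' to be $\acl(AB)$. This gives
\[\acl(AC)\cap D=\acl(\acl(AB)\cup C)\cap D=\acl(C\cup(\acl(AB)\cap D))=\acl(C\cup B)=C.\]

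\emph{Base monotonicity implies modularity.} Let $A\subseteq B$ with $A,B,C\in\X$. We must show $\acl(A\cup(C\cap B))=\acl(AC)\cap B$. The inclusion $\subseteq$ is immediate. For the reverse, set $E:=C\cap B\in\X$ (intersections of algebraically closed sets are algebraically closed). The key observation is that $C\ind^{a}_{E} B$, since $\acl(CE)\cap\acl(BE)=C\cap B=E$. Let $F:=\acl(AE)$, so $E\subseteq F\subseteq B$. Base monotonicity with base $E$, intermediate set $F$ and top set $B$ then yields $C\ind^{a}_{F} B$, that is, $\acl(CF)\cap B=F$. Since $\acl(CF)=\acl(AC)$, this says exactly that $\acl(AC)\cap B=\acl(A\cup(C\cap B))$.

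The only delicate point is bookkeeping. We must check that strong closure legitimately lets us assume all sets are algebraically closed in the first direction. We must also check that the base monotonicity hypothesis $B\subseteq C\subseteq D$ is satisfied in the second direction, which it is because $F\subseteq B$ as $A,E\subseteq B$ and $B$ is closed. Neither step should pose real difficulty.
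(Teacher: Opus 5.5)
Your proof is correct and follows the paper's argument. The direction from base monotonicity to modularity is identical: base $C\cap B$, intermediate set $\acl(A\cup(C\cap B))$, top set $B$, starting from $C\ind^{a}_{C\cap B}B$. For the converse, you apply modularity to $C\subseteq D$ with $\acl(A\cup B)$ as the third set in a single step. The paper instead first uses modularity for $B\subseteq D$ to derive $A\cap D\subseteq B$, and then applies modularity for $C\subseteq D$ with $A$ itself; yours is a harmless streamlining of the same idea.
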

\begin{proof} {%
{Suppose first that $\ind^{a}$ satisfies base monotonicity, and let $A\subseteq B$ and $C$
be algebraically closed sets in $\fps{\Omega}$; we must show
$\acl(A\cup(C\cap B))=\acl(A\cup C)\cap B$.}
Let $D:=\acl(A\cup(C\cap B))$. Since $C\cap B\subseteq D\subseteq B$, by base monotonicity, $B\ind^a_{D} C$. So $B\cap\acl(C\cup D)=D$. Note that $\acl(C\cup D)=\acl(C\cup A)$. Hence, from the earlier identity, we obtain $D=\acl(A\cup C)\cap B$ as desired.}

{{Conversely, suppose that algebraic closure is modular. Let
$A,B,C,D\in\fps{\Omega}$ with $B\subseteq C\subseteq D$ and $A\ind^{a}_{B}D$; we must show
$A\ind^{a}_{C}D$.}
We may assume without loss of generality that the sets involved are algebraically closed. Note that
\[B=\acl(A\cup B)\cap D=\acl(B\cup(A\cap D))\;,\]
where the first equality follows from $A\ind^{a}_B D$, and the second by modularity (for $B\subseteq D$). But then, since $\acl(B\cup (A\cap D))=B$, we must have that $A\cap D\subseteq B$. Now, }
\[\acl(C\cup A)\cap D=\acl(C\cup (A\cap D))=C\;,\]
{where the first equality follows by modularity and the second from the fact that $A\cap D\subseteq B\subseteq C$. In particular, from the equation above, we have that $A\ind^{a}_C D$, and so that algebraic independence satisfies base monotonicity, as desired.} 
\end{proof}

   \begin{definition}
    \label{d:independence more} We define  additional properties an independence relation $\ind$ might have:
    \begin{itemize}
    \item (independent $3$-amalgamation)  Let   $A,B_{1},B_{2},C_{1},C_{2}\in[\Omega]^{<\omega}$ be algebraically closed and such that $B_{1}\ind_{A}B_{2}$, $A\subseteq B_{i}$ and $C_{i}\ind_{A}B_{i}$ for $i=1,2$ and $C_{1}\orq_{A}C_{2}$. Then there exists $D\in[\Omega]^{<\omega}$ such that
    $D\orq_{B_{i}} C_{i}$ for $i=1,2$ and $D\ind_{A}B_{1}B_{2}$.
    \item (stationarity)
    Let $A, B_1, B_2, C\in[\Omega]^{<\omega}$ be  algebraically closed
    and such that  $B_{1}\orq_{A} B_{2}$ and $B_i\ind_A C$ for $i=1,2$. Then  $B_{1}\orq_{AC} B_{2}$.
    \end{itemize}
    \end{definition}

\section{Minimal semigroup topologies on automorphism groups of simple structures}
 \label{section groups}

      In this section, we prove \cref{thm:onebased}, generalising previous work of~\cite{ghaddln24}. The reader interested in \cref{t:minimality endomorphism monoids} may skip this section with no loss of understanding, though it may serve as a warm-up for that result. 
    \subsection{More on independence relations}

  \begin{definition}
  \label{d:narrow} Given $k\geq 1$,  an \textbf{independent chain} of length  $k$ is a chain of algebraically closed sets $\C=(\overline{c}_{0},\dots, \overline{c}_{k})$ such that for all $1\leq i\leq k-1$ we have 
  \begin{equation*}
  	\overline{c}_{0}\dots \overline{c}_{i-1}\id_{\overline{c}_{i}}\overline{c}_{i+1}\dots \overline{c}_{k}.
  \end{equation*}
  For $A \in \X$, we say an independent chain is \textbf{over $\bm{A}$} if
  $\overline{c}_i \cap \overline{c}_{i+1} = A$ for all
  $0 \leq i \leq k-1$. We impose no further conditions on how $A$ sits inside
  the tuples.
  \end{definition}	
 
  The following is a standard observation. 
  \begin{observation}
	\label{o:chains} For $(\overline{c}_{0},\overline{c}_{1},\dots, \overline{c}_{k})$ where $k\geq 2$ to be an independent chain, it suffices to assume 
  $\overline{c}_0 \cdots \overline{c}_{i-1} \id_{\overline{c}_i}
  \overline{c}_{i+1}$ for all $1 \leq i \leq k-1$, or 
  $\overline{c}_i \id_{\overline{c}_{i+1}}
  \overline{c}_{i+2} \cdots \overline{c}_k$ for all $0 \leq i \leq k-2$.

	It follows that for every chain $(\overline{c}_{i})_{i=0}^{k}$ (over $A$) there is an independent chain $(\overline{c}'_{i})_{i=0}^{k}$ with $\overline{c}_{i}\overline{c}_{i+1}\orq \overline{c}'_{i}\overline{c}'_{i+1}$  (respectively $\overline{c}_{i}\overline{c}_{i+1}\orq_{A} \overline{c}'_{i}\overline{c}'_{i+1}$) for all $0\leq i\leq k-1$, and $\overline{c}_{k}=\overline{c}'_{k}$; alternatively, one may require $\overline{c}_{0}=\overline{c}'_{0}$ instead. 
  \end{observation}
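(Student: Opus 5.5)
The plan has two parts. First I show that the local conditions suffice, using the standard calculus of an independence relation (symmetry, monotonicity, normality, transitivity). Then I build the independent chain by induction, using full existence, and read off the two variants.

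For the first claim, suppose $\overline{c}_0\cdots\overline{c}_{i-1}\ind_{\overline{c}_i}\overline{c}_{i+1}$ for all $1\leq i\leq k-1$. I will prove by downward induction on $j$, from $j=i+1$ up to $j=k$, that $\overline{c}_0\cdots\overline{c}_{i-1}\ind_{\overline{c}_i}\overline{c}_{i+1}\cdots\overline{c}_j$.

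The case $j=i+1$ is the hypothesis. For the step from $j$ to $j+1$, use the hypothesis at index $j$, namely $\overline{c}_0\cdots\overline{c}_{j-1}\ind_{\overline{c}_j}\overline{c}_{j+1}$.
\begin{itemize}
\item Apply monotonicity and symmetry to get $\overline{c}_{j+1}\ind_{\overline{c}_j}\overline{c}_0\cdots\overline{c}_{j-1}$.
\item The intended next move is to pass to the base $\overline{c}_i\cdots\overline{c}_j$. Since base monotonicity is not available, I will not try to change the base directly. Instead I will use transitivity in the form: if $X\ind_{C}B$ and $X\ind_{BC}D$, then $X\ind_C BD$.
\item Set $X=\overline{c}_0\cdots\overline{c}_{i-1}$ and $C=\overline{c}_i$. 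By induction, $X\ind_C\overline{c}_{i+1}\cdots\overline{c}_j$, so it remains to show $X\ind_{\overline{c}_i\cdots\overline{c}_j}\overline{c}_{j+1}$.
\item This follows from the independence at $\overline{c}_j$. By normality, $\overline{c}_0\cdots\overline{c}_{j-1}\ind_{\overline{c}_j}\overline{c}_{j+1}\overline{c}_j$. Symmetry gives $\overline{c}_{j+1}\ind_{\overline{c}_j}\overline{c}_0\cdots\overline{c}_{j}$. A standard consequence of transitivity, monotonicity and normality (via the usual derivation of "$A\ind_C BD\Rightarrow A\ind_{CB}D$" for independence relations in the sense of d'Elbée's calculus) then gives $\overline{c}_{j+1}\ind_{\overline{c}_i\cdots\overline{c}_j}X$.
\end{itemize}

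The point I expect to need care is exactly this step of enlarging the base using only the listed axioms. If it is not derivable in general, I will restrict to $\ind^{a}$, where it holds by direct computation with $\acl$. The second form of the criterion, $\overline{c}_i\ind_{\overline{c}_{i+1}}\overline{c}_{i+2}\cdots\overline{c}_k$, is the mirror image: reverse the chain and use symmetry.

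For the second claim I build $\overline{c}'$ inductively from the end, taking $\overline{c}'_k=\overline{c}_k$. Suppose $\overline{c}'_{i+1},\dots,\overline{c}'_k$ have been built. Choose an element $g$ with $g(\overline{c}_{i+1})=\overline{c}'_{i+1}$, taking $g\in G_A$ in the "over $A$" case; this is possible because each $\overline{c}'_{i+1}\overline{c}'_{i+2}$ is built $\equiv_A$ to the original pair. Then apply full existence over $\overline{c}'_{i+1}$ to obtain
\[\overline{c}'_i\equiv_{\overline{c}'_{i+1}} g(\overline{c}_i)\quad\text{with}\quad \overline{c}'_i\ind_{\overline{c}'_{i+1}}\overline{c}'_{i+2}\cdots\overline{c}'_k.\]
In the over-$A$ case, $A\subseteq\overline{c}'_{i+1}$, so type-equality over $\overline{c}'_{i+1}$ gives $\overline{c}'_i\overline{c}'_{i+1}\equiv_A\overline{c}_i\overline{c}_{i+1}$. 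Type-equality also preserves algebraic closedness and the intersection $\overline{c}'_i\cap\overline{c}'_{i+1}=A$. The second criterion from the first part then shows the resulting chain is independent. The variant with $\overline{c}'_0=\overline{c}_0$ is symmetric: build forward and use the first criterion.
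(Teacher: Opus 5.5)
Your argument for the first claim breaks at the step you flagged, and the listed axioms cannot repair it. You pass from $\overline{c}_{j+1}\ind_{\overline{c}_j}\overline{c}_0\cdots\overline{c}_j$ to $\overline{c}_{j+1}\ind_{\overline{c}_i\cdots\overline{c}_j}X$. This is an instance of $A\ind_C BD\Rightarrow A\ind_{CB}D$, which is base monotonicity. The paper lists base monotonicity separately from the axioms of an independence relation, and it does not follow from them. Your fallback also fails: by \cref{prop:modularity}, $\ind^{a}$ has this property if and only if algebraic closure is modular, which is not assumed here.

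In fact the criterion itself can fail. Suppose $B\subseteq C\subseteq D$ and $E$ are algebraically closed with $E\ind_B D$ but not $E\ind_C D$. Then the chain $(D,C,B,E)$ satisfies the first criterion. Indeed, $D\ind_C B$ holds by existence and monotonicity since $B\subseteq C$, and $DC\ind_B E$ holds since $C\subseteq D$. But the chain is not independent, because $D\ind_C BE$ would give $E\ind_C D$ by monotonicity and symmetry.

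For $\ind^{a}$ this situation occurs in an infinite-dimensional affine space over a finite field, where $\acl$ is affine span. Take $B=\emptyset$, $C=\{b\}$, $D$ a line through $b$, and $E$ a line parallel to $D$. Then $E\cap D=\emptyset$, while $\acl(E\cup\{b\})$ is a plane containing $D$. Reversing such a chain refutes the second criterion in the same way. So the first claim needs base monotonicity as an extra hypothesis. That hypothesis holds in the paper's intended examples, e.g.\ non-forking independence in simple theories, and once it is granted your induction goes through as written.

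Your proof of the second claim rests on the criterion, so it inherits the gap. One application of full existence per step really is too weak. Consider the chain $(E,\emptyset,\{b\},\{a\})$ over $\emptyset$, with $a\in D\setminus\{b\}$. Your construction may return this chain itself. It satisfies your criterion, yet $\acl(E\cup\{b\})\cap\acl(\{a,b\})=D\neq\{b\}$, so it is not independent.

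The second claim is what \cref{c:reachability} uses, and it is still true without base monotonicity if you choose $\overline{c}'_j$ more carefully:
\begin{enumerate}
\item Start from $g(\overline{c}_j)$ and apply full existence successively over the growing bases $\overline{c}'_{j+1}\cdots\overline{c}'_m$, for $m=j+1,\dots,k-1$. Invariance keeps the independences already obtained, since they are over subsets of the new base. This gives $\overline{c}'_j\ind_{\overline{c}'_{j+1}\cdots\overline{c}'_m}\overline{c}'_{m+1}$ for each such $m$.
\item Chaining these by transitivity gives $\overline{c}'_j\ind_{\overline{c}'_{j+1}\cdots\overline{c}'_i}\overline{c}'_{i+1}\cdots\overline{c}'_k$ for all $j<i<k$.
\item Combine this, by symmetry and transitivity, with the inductive hypothesis $\overline{c}'_{j+1}\cdots\overline{c}'_{i-1}\ind_{\overline{c}'_i}\overline{c}'_{i+1}\cdots\overline{c}'_k$. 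This shows that $(\overline{c}'_j,\dots,\overline{c}'_k)$ is independent.
\end{enumerate}
Your bookkeeping of types over $A$, algebraic closedness and intersections is fine. The variant fixing $\overline{c}'_0$ is the mirror image of this construction.
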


 \begin{definition} Let $\ind$ be an independence relation on $\Omega$ and $k\geq 1$. We say that $\ind$ is $\bm{k}$\textbf{-narrow} if:  
	\begin{itemize}
		\item ($k$-narrowness) for every $A\in\X$ and every independent chain $\C=(\overline{c}_{0},\dots, \overline{c}_{k})$ over $A$, we have $\overline{c}_{0}\ind_{A} \overline{c}_{k}$.
	\end{itemize}

\end{definition}

\begin{remark}\label{rem:onebased}
 Note that $1$-narrowness just says that for all $A, B\in[\Omega]^{<\omega}$, $A\ind_{\acl(A)\cap\acl(B)} B$. We call this property \textbf{strong one-basedness}. From the observation at the end of \cref{def:alg ind},  it is easy to deduce that an independence relation is strongly one-based if and only if it is $\ind^{a}$. The definition of $k$-narrowness is inspired by the definition of the failure of $k$-ampleness in the sense of~\cite{palacin2013ample}.
\end{remark}

 {Certain simple $\omega$-categorical Hrushovski constructions
 that are not one-based} provide
  examples with a $3$-narrow notion of independence satisfying independent
  $3$-amalgamation~\cite[Section~3E]{ghaddln24}.

  The usefulness of independence relations in our context stems from the following result. 
  \begin{lemma}
   	\label{l:reaching over the empty set}  Let $G\acts\Omega$ have locally finite algebraic closure and $\ind$ be an independence relation. Let $\C = (\overline{c}_i)_{i=0}^{L}$ be a chain
  of algebraically closed sets such that, for some $A \in \X$,
    \begin{itemize}
    	\item $A\subseteq \overline{c}_{i}$ for all $0\leq i\leq L$, and
    	\item $\overline{c}_{i}\ind_{A} \overline{c}_{i+1}$ for all $0\leq i<L$.
    \end{itemize} 
    Assume moreover that one of the following holds:
    \begin{enumerate}[label=(\Alph*)]
   		\item \label{sta case} $L=2$ and $\ind$ satisfies stationarity.
   		\item \label{independent amalgamation case} $L=3$ and $\ind$ satisfies  independent $3$-amalgamation. 
   	\end{enumerate}   
    Then $G_{\mathcal{C}}=G_{A}$.
  \end{lemma}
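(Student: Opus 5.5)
The inclusion $G_{\C}\subseteq G_A$ is immediate: $A\subseteq\overline{c}_i$ for every $i$, so each factor $G_{\overline{c}_i}$ is contained in $G_A$, and $G_A$ is a group. For the reverse inclusion, fix $g\in G_A$. I will use \cref{o:chain} with $S=G$. It says that $g\in G_{\C}$ as soon as there is a chain $(\overline{c}'_i)_{i=0}^{L}$ with $\overline{c}'_0=\overline{c}_0$, $\overline{c}'_L=g(\overline{c}_L)$, and $\overline{c}'_i\overline{c}'_{i+1}\orq\overline{c}_i\overline{c}_{i+1}$ for all $i<L$. Write $\overline{d}:=g(\overline{c}_L)$. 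Since $g\in G_A$, invariance gives $g(\overline{c}_{L-1})\ind_A\overline{d}$ and $g(\overline{c}_{L-1})\,\overline{d}\orq\overline{c}_{L-1}\overline{c}_L$. Throughout, tuples are enumerated so that $A$ sits in fixed positions. Then "$\orq_A$" for tuples containing $A$ is literally a statement about pair types, and the required pair types can be read off from types over $A$ together with the other link.

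\textbf{Case \ref{sta case} ($L=2$).} I need $\overline{c}'_1$ with $\overline{c}_0\overline{c}'_1\orq\overline{c}_0\overline{c}_1$ and $\overline{c}'_1\overline{d}\orq\overline{c}_1\overline{c}_2$. By full existence, choose $\overline{c}'_1\orq_{\overline{c}_0}\overline{c}_1$ with $\overline{c}'_1\ind_{\overline{c}_0}\overline{d}$. From $\overline{c}_1\ind_A\overline{c}_0$ and invariance we get $\overline{c}'_1\ind_A\overline{c}_0$. Transitivity (together with normality, $A\subseteq\overline{c}_0$) then gives $\overline{c}'_1\ind_A\overline{c}_0\overline{d}$, and monotonicity gives $\overline{c}'_1\ind_A\overline{d}$. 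Now $\overline{c}'_1\orq_A\overline{c}_1\orq_A g(\overline{c}_1)$, and both $\overline{c}'_1$ and $g(\overline{c}_1)$ are $\ind_A$-independent from $\overline{d}$. Stationarity therefore yields $\overline{c}'_1\orq_{A\overline{d}}g(\overline{c}_1)$. Hence $\overline{c}'_1\overline{d}\orq g(\overline{c}_1)g(\overline{c}_2)\orq\overline{c}_1\overline{c}_2$, as required.

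\textbf{Case \ref{independent amalgamation case} ($L=3$).} I need $\overline{c}'_1,\overline{c}'_2$ such that the pairs $(\overline{c}_0,\overline{c}'_1)$, $(\overline{c}'_1,\overline{c}'_2)$ and $(\overline{c}'_2,\overline{d})$ have the types of the corresponding links of $\C$. As in Case \ref{sta case}, choose $\overline{c}'_1\orq_{\overline{c}_0}\overline{c}_1$ with $\overline{c}'_1\ind_A\overline{c}_0\overline{d}$; in particular $\overline{c}'_1\ind_A\overline{d}$. Next take any $C_1$ with $\overline{c}'_1C_1\orq\overline{c}_1\overline{c}_2$, so that $C_1\ind_A\overline{c}'_1$ by invariance, and put $C_2:=g(\overline{c}_2)$, so that $C_2\ind_A\overline{d}$. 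Both $C_1$ and $C_2$ satisfy $C_i\orq_A\overline{c}_2$, hence $C_1\orq_A C_2$. Apply independent $3$-amalgamation with $B_1:=\overline{c}'_1$ and $B_2:=\overline{d}$; this is allowed since $B_1\ind_A B_2$ and $A\subseteq B_i$, all sets being algebraically closed. It produces $D$ with $D\orq_{\overline{c}'_1}C_1$ and $D\orq_{\overline{d}}C_2$. Set $\overline{c}'_2:=D$. Then $\overline{c}'_1\overline{c}'_2\orq\overline{c}_1\overline{c}_2$ and $\overline{c}'_2\overline{d}\orq g(\overline{c}_2)g(\overline{c}_3)\orq\overline{c}_2\overline{c}_3$, which completes the chain.

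The main obstacle is bookkeeping rather than ideas. Independent $3$-amalgamation and stationarity are phrased for sets with $\orq_{B}$ meaning "some enumerations agree over $B$". I must make sure the resulting $D$ is enumerated so that the equivalences of pairs needed in \cref{o:chain} hold as tuple equivalences. Fixing the position of $A$ inside all tuples, and transporting enumerations along the witnessing automorphisms, handles this. I expect no other difficulty.
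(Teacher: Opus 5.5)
Your proof is correct and follows the paper's argument. Using \cref{o:chain}, you choose $\overline{c}'_1\orq_{\overline{c}_0}\overline{c}_1$ by full existence and use transitivity to get $\overline{c}'_1\ind_A g(\overline{c}_L)$; you then close the chain by stationarity when $L=2$ and by independent $3$-amalgamation with $B_1=\overline{c}'_1$, $B_2=g(\overline{c}_3)$ when $L=3$. You are slightly more explicit than the paper about the trivial inclusion $G_{\C}\subseteq G_A$, the auxiliary sets $C_1,C_2$ in the amalgamation step, and the tuple-level reading of stationarity and amalgamation, which the paper uses implicitly.
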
 
  \begin{proof}
  	We wish to apply \cref{o:chain}, according to which $G_{\mathcal{C}}$ equals the collection of all elements $s$ of %
    {$G$}     for which there is a chain of algebraically closed sets $(\overline{c}_{i}')_{i=0}^{L}$ where $\overline{c}'_{0}=\overline{c}_{0}$, $\overline{c}'_{L}=s(\overline{c}_{L})$, and 
  	$\overline{c}_{i}\overline{c}_{i+1}\orq_{A}\overline{c}'_{i}\overline{c}'_{i+1}$ for all $0\leq i< L$. 
  	
  	Fix some $s\in G_{A}$. To begin with, notice that by full existence we can find some $\overline{c}'_{1}\in\X$ such that $\overline{c}'_{1}\orq_{\overline{c}_{0}}\overline{c}_{1}$ and 
  	$s(\overline{c}_{L})\ind_{\overline{c}_{0}} \overline{c}'_{1}$. Since by assumption $\overline{c}_{0}\ind_{A} \overline{c}_{1}$, by invariance and the assumption that $s\in G_{A}$ we also have $\overline{c}_{0}\ind_{A} \overline{c}'_{1}$. An application of transitivity yields $\overline{c}_{0}s(\overline{c}_{L})\ind_{A} \overline{c}_{1}'$, from which one can derive, using monotonicity and symmetry, the following relation
  	\begin{equation}\label{first indepenendence}
  		\overline{c}_{1}'\ind_{A} s(\overline{c}_{L}).
  	\end{equation}
  	
  	\textbf{Case \ref{sta case}.} In this case, we claim that $(\overline{c}_{0},\overline{c}'_{1},s(\overline{c}_{2}))$ is a suitable chain between $\overline{c}_{0}$ and $s(\overline{c}_{2})$. Indeed, by assumption
  	\begin{equation*}
  	  \overline{c}_{1}\ind_{A} \overline{c}_{2},\quad\quad \overline{c}_{1}'\orq_{A}\overline{c}_{1},\quad\quad s(\overline{c}_{2})\orq_{A}\overline{c}_{2},
  	\end{equation*}
  	which together with \eqref{first indepenendence} allows to apply stationarity to conclude $\overline{c}'_{1}s(\overline{c}_{2})\orq_{A}\overline{c}_{1}\overline{c}_{2}$.   
  	
  	\textbf{Case \ref{independent amalgamation case}.} Here we take $L=3$. We claim we can find $\overline{c}'_{2}$ with $\overline{c}_{1}'\overline{c}_{2}'\orq_{A}\overline{c}_{1}\overline{c}_{2}$ and $\overline{c}_{2}'\overline{c}'_{3}\orq_{A}\overline{c}_{2}\overline{c}_{3}$, as a result of an application of independent $3$-amalgamation, which means $(\overline{c}_{0},\overline{c}'_{1},\overline{c}'_{2},s(\overline{c}_{3}))$ is the needed chain.
  	The corresponding assumptions are listed below: 
  	\begin{itemize}
  		\item $\overline{c}_{1}'\orq_{A}\overline{c}_{1}$ by construction, since $A\subseteq \overline{c}_{0}$;
  		\item $s(\overline{c}_{3})\orq_{A} \overline{c}_{3}$, since $s\in G_{A}$;
  		\item $\overline{c}_{2}\ind_{A} \overline{c}_{1}$, by assumption and symmetry;
  		\item $\overline{c}_{2}\ind_{A} \overline{c}_{3}$, by assumption;
  		\item $\overline{c}'_{1}\ind_{A} s(\overline{c}_{3})$, which is just \eqref{first indepenendence} with $L=3$.
  	\end{itemize}	
  \end{proof}
  
  \begin{corollary}
  	\label{c:reachability} In the setting of \cref{l:reaching over the empty set} (with the same $L\in\{2,3\}$), assume furthermore that $\ind$ is $k$-narrow for some $k\geq 1$. Then for every chain $\mathcal{C}=(\overline{c}_{i})_{i=0}^{kL}$ with $\overline{c}_{i}\cap \overline{c}_{i+1}=A$ for some fixed $A\in\X$ and all $0\leq i\leq kL-1$, we have $G_{A}\subseteq G_{\mathcal{C}}$. 
  \end{corollary}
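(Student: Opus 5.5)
The plan is to cut the long chain $\mathcal{C}$ of length $kL$ into $k$ consecutive blocks of length $L$ and use $k$-narrowness to show that the endpoints of each block can be chosen to form an independent chain of length $k$ over $A$. More precisely, I first replace $\mathcal{C}$ by an independent chain with the same consecutive pair-types over $A$. \cref{o:chains} (the ``over $A$'' version) produces a chain $\mathcal{C}'=(\overline{c}'_i)_{i=0}^{kL}$ with $\overline{c}'_0=\overline{c}_0$ and $\overline{c}_i\overline{c}_{i+1}\orq_A\overline{c}'_i\overline{c}'_{i+1}$. By the first part of \cref{change chains} there is then $g\in G_A$ with $G_{\mathcal{C}'}g=G_{\mathcal{C}}$. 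Since $g\in G_A$, it suffices to prove $G_A\subseteq G_{\mathcal{C}'}$: then $G_A=G_Ag\subseteq G_{\mathcal{C}'}g=G_{\mathcal{C}}$. So from now on I assume $\mathcal{C}$ is an independent chain over $A$.

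Next consider the subchain $\mathcal{D}=(\overline{c}_{jL})_{j=0}^{k}$ and the blocks $\mathcal{B}_j=(\overline{c}_{jL},\dots,\overline{c}_{(j+1)L})$.

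First I check that $\mathcal{D}$ is an independent chain over $A$. By the independence of $\mathcal{C}$, monotonicity gives $\overline{c}_{<jL}\ind_{\overline{c}_{jL}}\overline{c}_{>jL}$, and restricting further gives the independence required for $\mathcal{D}$. The intersection condition $\overline{c}_{jL}\cap\overline{c}_{(j+1)L}=A$ is the point that needs more than monotonicity. Here I apply $L$-narrowness-type reasoning through $k$-narrowness applied to $\mathcal{B}_j$ itself, viewed as an independent chain over $A$. This yields $\overline{c}_{jL}\ind_A\overline{c}_{(j+1)L}$, and that independence implies the intersection is $\acl(A)=A$. If this step needs $L\le k$, I would instead use the fact that independence along an independent chain propagates: transitivity applied to $\overline{c}_{i-1}\ind_{\overline{c}_i}\overline{c}_{i+1}$ together with the intersections being $A$ gives $\overline{c}_i\ind_A\overline{c}_{i+1}$ as in the hypotheses of \cref{l:reaching over the empty set}. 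Note also that every $\overline{c}_i$ contains $A$. Then $k$-narrowness applied to $\mathcal{D}$ gives $\overline{c}_0\ind_A\overline{c}_{kL}$.

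For the main step, I will use \cref{o:chain}. Fix $s\in G_A$; I need a chain $(\overline{c}'_i)$ with $\overline{c}'_0=\overline{c}_0$, $\overline{c}'_{kL}=s(\overline{c}_{kL})$ and consecutive pairs of the same type as in $\mathcal{C}$. Since $s\in G_A$ and $\overline{c}_0\ind_A\overline{c}_{kL}$, I aim to show that $s$ factors as $s=s_1\cdots s_k$ with each $s_j\in G_A$ and each factor realisable along one block. This is the part I expect to be the main obstacle.

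The approach I would try first is to choose an intermediate chain $\overline{d}_0=\overline{c}_0,\overline{d}_1,\dots,\overline{d}_k=s(\overline{c}_{kL})$ with $\overline{d}_j\overline{d}_{j+1}\orq_A\overline{c}_{jL}\overline{c}_{(j+1)L}$, built using full existence and stationarity or 3-amalgamation in the same way as in the proof of \cref{l:reaching over the empty set}. For each $j$, \cref{l:reaching over the empty set} applied to $\mathcal{B}_j$ gives $G_{\mathcal{B}_j}=G_A$. Hence any $A$-conjugate endpoint pair $(\overline{c}_{jL},t(\overline{c}_{(j+1)L}))$ with $t\in G_A$ is connected by a block-chain of the right types. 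Concatenating these block-chains with a suitable translation produces the required chain for $s$.

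A cleaner alternative is to apply \cref{l:reaching over the empty set} directly when $k=1$, and for general $k$ argue by induction on $k$. In that induction I would write $G_{\mathcal{C}}=G_{\mathcal{B}_0}G_{\overline{c}_L}\cdots$ and use that $G_{\mathcal{B}_j}=G_A$ and that $G_AG_{\overline{c}}G_A\cdots$ collapses. The remaining issue is to show that the products $G_A G_{\overline{c}_L} G_A\cdots G_{\overline{c}_{(k-1)L}} G_A$ equal $G_A$. I expect this to follow from \cref{o:chain} combined with narrowness of $\mathcal{D}$, exactly as the $k$-narrowness hypothesis is designed to provide.
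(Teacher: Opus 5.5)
Your reduction to an independent chain $\mathcal{C}'$ over $A$ via \cref{o:chains} and \cref{change chains} is fine. The main step, however, has a genuine gap, caused by cutting the chain the wrong way.

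You cut $\mathcal{C}$ into $k$ blocks $\mathcal{B}_j$ of $L$ \emph{consecutive} links and invoke \cref{l:reaching over the empty set} to get $G_{\mathcal{B}_j}=G_A$. That lemma needs $\overline{c}_i\ind_A\overline{c}_{i+1}$ for consecutive links. For consecutive links of $\mathcal{C}$ you only know $\overline{c}_i\cap\overline{c}_{i+1}=A$.

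Requiring every pair of algebraically closed sets meeting in $A$ to be $\ind$-independent over $A$ is exactly $1$-narrowness, which is not assumed when $k\geq 2$. The independent-chain conditions only give independence over the middle links, so your ``propagation by transitivity'' does not produce these independences.

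For the same reason, $k$-narrowness cannot be applied to $\mathcal{B}_j$, since it concerns independent chains of length $k$, not $L$. Applying it to the length-$k$ subchain $\mathcal{D}$ yields only the single relation $\overline{c}_0\ind_A\overline{c}_{kL}$. That is not the $L$ consecutive independences \cref{l:reaching over the empty set} requires, and a single independence of endpoints does not force $G_{\mathcal{C}}=G_A$.

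The proposed factorisation $s=s_1\cdots s_k$ and the collapse of $G_AG_{\overline{c}_L}G_A\cdots G_A$ are therefore unsupported. Note also that if even one block did satisfy the lemma's hypotheses, \cref{obs: subchains} would finish immediately, with no factorisation needed.

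The fix is to swap the roles of $k$ and $L$: use blocks of length $k$ and a subchain of length $L$.

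In $\mathcal{C}'$, each segment $(\overline{c}'_{ik},\dots,\overline{c}'_{(i+1)k})$ is an independent chain of length $k$ over $A$, by monotonicity. So $k$-narrowness gives $\overline{c}'_{ik}\ind_A\overline{c}'_{(i+1)k}$ for all $i<L$.

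Every term contains $A$, so \cref{l:reaching over the empty set} applies to the stride-$k$ subchain $(\overline{c}'_{ik})_{i=0}^{L}$ and gives $G_{(\overline{c}'_{ik})_{i=0}^{L}}=G_A$. Then $G_A\subseteq G_{\mathcal{C}'}$ follows from \cref{obs: subchains}.

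This is the paper's argument. Your decomposition and this one coincide only when $k=1$.
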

  \begin{proof}
  	By \cref{o:chains}  there is an independent chain $\mathcal{C}'=(\overline{c}'_{i})_{i=0}^{kL}$ over $A$ with $\overline{c}'_{i}\overline{c}'_{i+1}\orq_{A}\overline{c}_{i}\overline{c}_{i+1}$ for all $0\leq i<kL$ and $\overline{c}'_{kL}=\overline{c}_{kL}$. 
    By \cref{change chains}, $G_{\mathcal{C}'}=gG_{\mathcal{C}}$ for some $g\in G_{A}$. It thus suffices to prove that $G_{A}\subseteq G_{\mathcal{C}'}$. 
  	 
  	Since $\ind$ is $k$-narrow, we have that $\overline{c}'_{ki}\ind_{A} \overline{c}'_{k(i+1)}$ for all $0\leq i\leq L-1$. The conclusion can now be obtained by applying \cref{l:reaching over the empty set} to the chain $(\overline{c}'_{ik})_{i=0}^{L}$, as $G_{(\overline{c}'_{ik})_{i=0}^{L}}\subseteq G_{\mathcal{C}}$. 
  \end{proof}

    \subsection{Semigroup topologies on automorphism groups of simple structures}

\subsubsection{More on generalised pointwise topologies}
Before delving into the proof of \cref{mainthm: c}, we need to prove some basic facts about generalised pointwise topologies (including some mentioned in Section~\ref{subsub:localised}). In this section, $G$ is a closed group of permutations of an infinite set $\Omega$ with locally finite algebraic closure. 

\begin{definition}
    {For $A\in\X$ write $G[A]$ for the image of $G_{\{A\}}$ in $\mathrm{Sym}(A)$ under the restriction map. Let also} 
	\begin{equation*}
		\gc[A]=\{\,g\in G[A]\,|\,\forall a\in A\,\,g(a)\in[a]\,\}.
	\end{equation*}
\end{definition}

\begin{observation}\label{ob:gcdelta} {The group $\gc(\Delta)$ is a profinite normal subgroup of $G(\Delta)$.}
\end{observation}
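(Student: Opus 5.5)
The plan is to check four things in turn: that $\gc(\Delta)$ is a subgroup of $G(\Delta)$, that it is normal, that it is closed, and that it is compact in the pointwise topology on $\mathrm{Sym}(\Delta)$. Throughout we use that, since $\Delta$ is $G$-invariant, each $\sim$-class $[a]=\acl(a)\cap G\cdot a$ with $a\in\Delta$ lies inside $\Delta$. By Observation~\ref{closure transitivity and local finiteness} these classes partition $\Delta$ into finite sets.

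\emph{Subgroup.} First note that $G(\Delta)$ is itself a group. If $g,h\in G(\Delta)$ and $F\subseteq\Delta$ is finite, pick $g'\in G$ agreeing with $g$ on $h(F)$ and $h'\in G$ agreeing with $h$ on $F$; then $g'h'$ agrees with $gh$ on $F$. For inverses, pick $g'\in G$ agreeing with $g$ on the finite set $g^{-1}(F)$; then $g'^{-1}$ agrees with $g^{-1}$ on $F$. Now $\gc(\Delta)$ is defined by the condition that each class is preserved setwise, and this condition is closed under composition and inverses. Since each class is finite, a permutation mapping a class into itself maps it onto itself. Hence $\gc(\Delta)\le G(\Delta)$.

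\emph{Normality.} The key point is that every $g\in G(\Delta)$ permutes the $\sim$-classes: $g([a])=[g(a)]$. To see this, take a finite $F\supseteq[a]\cup[g(a)]$ and $g'\in G$ agreeing with $g$ on $F$. Elements of $G$ preserve both $\acl$ and orbits, so $g'([a])=[g'(a)]$. Normality follows: for $k\in\gc(\Delta)$ and $g\in G(\Delta)$ we get $gkg^{-1}([b])=gk([g^{-1}b])=g([g^{-1}b])=[b]$.

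\emph{Closed and compact.} The set $\gc(\Delta)$ is the intersection, over all classes $[a]$, of the sets $\{f: f([a])=[a]\}$, each of which is clopen in the pointwise topology. So $\gc(\Delta)$ is closed in $G(\Delta)$. Moreover $G(\Delta)$ is closed in $\mathrm{Sym}(\Delta)$, since its defining condition is local on finite sets. Therefore $\gc(\Delta)$ is closed in the compact product $\prod_{[a]}\mathrm{Sym}([a])$ of finite groups. Equivalently, it is the inverse limit of its finite restrictions to finite unions of classes. It is therefore compact and totally disconnected, i.e.\ profinite.

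The only step requiring care is the claim that elements of $G(\Delta)$ respect $\sim$, which relies on approximating them by elements of $G$ on a finite set containing both classes. The remaining verifications are routine.
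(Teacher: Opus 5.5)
Your proof is correct. The normality half is the paper's argument. The paper says $\gc(\Delta)$ is the kernel of the induced homomorphism $G(\Delta)\to\Sym(\Delta/{\sim})$. Your check that $g([a])=[g(a)]$ for $g\in G(\Delta)$, by approximating $g$ on the finite set $[a]\subseteq\Delta$ by an element of $G$, is exactly what makes that homomorphism well defined; the paper only asserts this step.

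For profiniteness the two routes differ.
\begin{itemize}
\item The paper identifies $\gc(\Delta)$, via restriction maps, with the inverse limit of the finite groups $\gc[A]$ for $A\in\X_\Delta$. This needs two checks: each $A\in\X_\Delta$ is a union of $\sim$-classes, and every compatible family $(g_A)_A$ glues to a permutation lying in $G(\Delta)$.
\item You instead view $\gc(\Delta)$ as $G(\Delta)\cap\prod_{[a]}\Sym([a])$. This set is closed because $G(\Delta)$ is closed in $\Sym(\Delta)$ and each class-preservation condition is clopen, and you conclude by Tychonoff.
\end{itemize}

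Your route is shorter. It also gives compactness directly in the pointwise topology, which is the form used in Corollary~\ref{cor:T0}.

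The paper's route gives an explicit description in terms of $G$: the restrictions of elements of $\gc(\Delta)$ to $A$ are exactly the elements of $\gc[A]$ that extend to some element of $\gc[B]$ for every $B\supseteq A$ in $\X_\Delta$. This is what Observation~\ref{largest Delta} later exploits. Your remark that $\gc(\Delta)$ is the inverse limit of its own finite restrictions does not by itself give that description. It does follow from compactness of $\prod_{[a]}\Sym([a])$ with a short extra argument: intersect the closed sets of class-preserving permutations that extend a given element of $\gc[A]$ and restrict into $\gc[B]$.
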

\begin{proof}
    {Consider the collection of groups $\gc[A]$ for $A\in\X_{\Delta}$, partially ordered with respect to inclusion between the sets in $\X_{\Delta}$ together with the restriction maps $r_{A}^{B}:\gc[B]\to \gc[A]$, for $A\subseteq B$ from $\X_{\Delta}$. Consider the inverse limit $G^\star$ of the system}
       \[(\,(\gc[A],{r_{A}^{B}})\,|\,A,B\in\X_{\Delta},A\subseteq B\,)\; .
   \]

   { An element of $G^{\star}$ is a tuple $\overline{g}=(g_{A})_{A\in\X_{\Delta}}$ with
  $r^{B}_{A}(g_{B})=g_{A}$ whenever $A\subseteq B$. Such a tuple determines a well-defined
  and faithful action of $G^{\star}$ on $\Delta$ by setting
  $\overline{g}\cdot a:=g_{A}\cdot a$ for any $A\in\X_{\Delta}$ with $a\in A$. Conversely, every
  $g\in\gc(\Delta)$ preserves each set $A\in\X_{\Delta}$ setwise and restricts on it to an
  element of $\gc[A]$, and these restrictions are compatible; the resulting map
  $\gc(\Delta)\to G^{\star}$ is an isomorphism. Since each $\gc[A]$ is finite,
  $\gc(\Delta)$ is profinite.
  It remains to show that $\gc(\Delta)$ is normal in $G(\Delta)$. By $G$-invariance of
  $\Delta$, the action of $G(\Delta)$ on $\Delta$ preserves the equivalence relation $\sim$,
  and hence induces an action on the set $\Delta/{\sim}$ of $\sim$-classes. Thus there is a
  group homomorphism $\pi\colon G(\Delta)\to\Sym(\Delta/{\sim})$ given by
  $\pi(g)([a])=[g(a)]$, and by definition $\gc(\Delta)=\ker\pi$. Being a kernel,
  $\gc(\Delta)$ is normal in $G(\Delta)$.}
\end{proof}

The following is immediate from the definition of a generalised pointwise topology.
   
\begin{observation} \label{largest Delta}
    Let $\Delta$ be a $G$-invariant {subset of $\Omega$}. Then $\pw^{\Delta,{\gc(\Delta)}}$ is %
    the coarsest  semigroup topology $\tau$ such that every $g\in G$ is $\tau$-rigid on $\Delta$. Moreover, given any semigroup topology $\tau$ on $G$ coarser than $\pw$, let $\Delta_{\tau}:=  \{\, a \in \Omega \mid \text{every } g \in G \text{ is }
  \tau\text{-rigid on } a \,\} $. %
  Then: 
    \begin{enumerate}[label=(\roman*)]
     \item \label{Delta c} $\Delta_{\tau}$ is $G$-invariant;
         \item \label{Delta b} $\Delta_{\tau}$ agrees with the set of elements $a\in\Omega$ such that some $g\in G$ is $\tau$-rigid on $a$;
     \item \label{Delta a} $\Delta_{\tau}$ is the largest {$G$-invariant} $\Delta\subseteq \Omega$ 
     such that $\pw^{\Delta, {\gc(\Delta)}}\subseteq \tau$.

    \end{enumerate}
   \end{observation}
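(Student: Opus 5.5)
We first establish the main claim: $\pw^{\Delta,\gc(\Delta)}$ is the coarsest semigroup topology $\tau$ for which every $g\in G$ is $\tau$-rigid on $\Delta$. This has two halves.

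\emph{Rigidity in $\pw^{\Delta,\gc(\Delta)}$.} Take $g\in G$ and $a\in\Delta$, and let $A:=\acl(a)\cap\Delta\in\X_\Delta$. The basic set $G^{\gc(\Delta)}_{g_{\restriction A}}$ contains $g$. Every $s$ in it satisfies $s(a)=gh(a)$ for some $h\in\gc(\Delta)$. Since $h(a)\in[a]\subseteq\acl(a)$ and $g$ preserves algebraic closure, $s(a)\in\acl(g(a))$.

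\emph{Minimality.} Suppose every $g$ is $\tau$-rigid on $\Delta$. Given a basic set $G^{K}_{\phi}$ with $K=\gc(\Delta)$, $\phi=g_{\restriction A}$ and $A\in\X_\Delta$, we take any $s_0$ in it and must find a $\tau$-neighbourhood of $s_0$ contained in it. Intersecting the rigidity neighbourhoods at the finitely many points of $A$ yields $\V\in\mathcal{N}_\tau(s_0)$ with $s(c)\in\acl(s_0(c))$ for all $c\in A$ and $s\in\V$. Because $s$ is injective and $s(c)$ lies in the same $G$-orbit as $s_0(c)$, we get $s(c)\in[s_0(c)]$. Hence the partial map $s_0^{-1}s$ on $A$ preserves each $\sim$-class of the finite set $A$. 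It also agrees with an element of $G$ on $A$, since $s,s_0\in G$. We need to realise it as the restriction of some $h\in\gc(\Delta)$. This is the key technical step, and we attempt it by a compactness/extension argument.

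\emph{The extension step.} The map $s_0^{-1}s$ restricts to the element $s_0^{-1}s\in G(\Delta)$. It maps each class $[c]$ with $c\in A$ into itself, but it may move other classes. We handle this by first passing to a larger finite set, namely the union of all classes meeting $A$, which is still contained in $\acl(A)\cap\Delta$. On the classes outside that set we correct the map, using that $G(\Delta)$ is closed and $\gc(\Delta)$ is its profinite normal subgroup (\cref{ob:gcdelta}). If this direct route fails, we use a cleaner approach: prove $s_0^{-1}s\in\gc(\Delta)G_A$ by showing that the finite partial map lies in $\gc[A]$ and that elements of $\gc[A]$ lift to $\gc(\Delta)$ via the inverse-limit description. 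Surjectivity of the inverse-limit projection onto finite levels follows from compactness, since it is an inverse limit of nonempty finite sets. This lifting is the main obstacle.

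\emph{Items (i)--(iii).} For (i), $G$-invariance of $\Delta_\tau$ follows from \cref{moving tau dispersiveness around}, items (1) and (2), applied with group elements: rigidity of $g$ on $a$ transfers to $gk^{-1}$ on $k(a)$. For (ii), if some $g$ is rigid on $a$, then $1=g^{-1}g$ is rigid on $a$ by item (2) with $t=g^{-1}$, read suitably. Item (1) then gives rigidity of every $h=h\cdot1$ on $a$. For (iii), combine the main claim with (i): $\pw^{\Delta_\tau,\gc(\Delta_\tau)}\subseteq\tau$. Conversely, any invariant $\Delta$ with $\pw^{\Delta,\gc(\Delta)}\subseteq\tau$ consists of points on which every $g$ is $\tau$-rigid, by the rigidity half above, so $\Delta\subseteq\Delta_\tau$.
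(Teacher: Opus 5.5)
Your rigidity half and your treatment of (i)--(iii) are fine. In (ii) the directions are slightly off: item (1) with $t=g$, $s=1$ gives rigidity of $1$, and item (1) with $t=h^{-1}$, $s=h$ then gives rigidity of any $h$.

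\textbf{The gap} is in the minimality half, at exactly the step you call the main obstacle. Rigidity at the points of $A$ only yields $(s_0^{-1}s)_{\restriction A}\in\gc[A]$. You then need every element of $\gc[A]$ to be the restriction of an element of $\gc(\Delta)$, and this is false in general. Compactness shows that an inverse limit of non-empty finite sets is non-empty. Its image in a given level, however, is only $\bigcap_{B\supseteq A}r^B_A(\gc[B])$, and the projection is onto only when the bonding maps $r^B_A$ are onto, which they need not be.

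\emph{Counterexample.} Let $\M$ be the Fra\"{i}ss\'{e} limit of finite two-sorted structures with a fixed-point-free involution $f$ on the sort $X$ and a relation $L\subseteq Y\times X$ with $L(y,x)\leftrightarrow\neg L(y,f(x))$.
\begin{itemize}
\item The classes are $[x]=\{x,f(x)\}$ and $[y]=\{y\}$.
\item An element of $G$ preserving every class fixes $Y$ pointwise. It therefore fixes every $x$ (choose $y$ with $L(y,x)$), so $\gc(\Omega)=\{1\}$.
\item Homogeneity gives $u\in G$ swapping $x$ and $f(x)$, so this transposition lies in $\gc[A]$ for $A=\{x,f(x)\}\in\X$.
\item Take $\tau=\pw$ and $\Delta=\Omega$. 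The $\pw$-open neighbourhood $G_{\{A\}}$ of $1$ witnesses rigidity at the points of $A$, yet it contains $u\notin G_A=G^{\gc(\Omega)}_A$.
\end{itemize}
So rigidity on $A$ alone cannot give the inclusion you want. Enlarging $A$ to the union of the classes meeting it changes nothing, since sets in $\X_\Delta$ are already unions of $\sim$-classes.

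\textbf{The fix} is to apply rigidity on a larger finite set. The sets $r^B_A(\gc[B])$, for $B\supseteq A$ in $\X_\Delta$, form a decreasing directed family of subsets of the finite set $\gc[A]$. Hence some $B$ attains the minimum, and the compactness argument shows that this minimum is exactly $\gc(\Delta)_{\restriction A}$. Now take $\V$ from rigidity at all points of $B$. This gives $(s_0^{-1}s)_{\restriction B}\in\gc[B]$, hence $(s_0^{-1}s)_{\restriction A}\in r^B_A(\gc[B])=\gc(\Delta)_{\restriction A}$, and your reduction then finishes the proof. This is the paper's argument, carried out at the identity.
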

   
  \begin{proof}  
  	Clearly, $\pw^{\Delta, {\gc(\Delta)}}$ is a {semigroup}
    topology in which every element is $\tau$-rigid on $\Delta$.
  	Since $G$ is a group, {by Item~\eqref{obs1} of \cref{moving tau dispersiveness around}, every $g\in G$ is $\tau$-rigid on $\Delta$ if and only if $1$ is. So,} for the first statement it is sufficient to prove that $\pw^{\Delta, {\gc(\Delta)}}$ is coarsest among semigroup topologies such that $1$ is $\tau$-rigid on $\Delta$. Let $\tau\subseteq\pw$ be such that {every $g\in G$}
    is $\tau$-{rigid}
    on $\Delta$ and pick $A\in\X_\Delta$. Consider the basic $\pw^{\Delta,{\gc(\Delta)}}$ open neighbourhood of the identity 
    \[{G_A^{\gc(\Delta)}=\{g\in G\;\vert \; g_{\upharpoonright A}\in \gc(\Delta)\upharpoonright A\}\;,}\]
    {where $A\in\X_\Delta$ and $\gc(\Delta)\upharpoonright A$ is the set of restrictions of permutations in $\gc(\Delta)$ to $A$. We shall make use of the description of $\gc(\Delta)$ as the inverse limit $G^\star$ provided in \cref{ob:gcdelta}. For $B\supseteq A$ from $\X_\Delta$, the finite sets $r_A^B(\gc[B])\subseteq\gc[A]$ form a decreasing directed family. In particular, since $\gc[A]$ is finite, by the inverse limit description of $\gc(\Delta)$, there is some $B\supseteq A$ from $\X_\Delta$ such that} 
    \[\gc(\Delta)\upharpoonright A=r_A^B(\gc[B])\;.\]
   {Now, by $\tau$-rigidity on $B\subseteq\Delta$, there is $\V\in\mathcal{N}_\tau(1)$ such that for all $b\in B$ and $g\in\V$, $g(b)\in[b]$. Hence,  for $g\in \V$, $g_{\upharpoonright B}\in\gc[B]$, and so $g_{\upharpoonright A}\in\gc(\Delta)\upharpoonright A$. In particular, $\V\subseteq G_A^{\gc(\Delta)}$, as desired. Since for any $\pw^{\Delta, \gc(\Delta)}$-open neighbourhood of $1$, we can find some $\V\in\mathcal{N}_\tau(1)$ contained in it, we can deduce that $\pw^{\Delta, \gc(\Delta)}\subseteq\tau$.}\\

  {We now move to proving the various items for $\Delta_\tau$. We begin by proving Item~\ref{Delta c}: take $a\in\Delta_\tau$ and consider $a':=h(a)$ for $h\in G$. For $g\in G$, $g h$ is $\tau$-rigid on $a$ by definition of $\Delta_\tau$. Hence, by Item~\eqref{obs2} of \cref{moving tau dispersiveness around} $g$ is $\tau$-rigid on $a'$. So, every $g\in G$ is $\tau$-rigid on $a'$, and so $\Delta_\tau$ is $G$-invariant. Item~\ref{Delta b} simply follows from the fact that $G$ is a group and Item~\eqref{obs1} of \cref{moving tau dispersiveness around}, as we pointed out earlier. Finally, we prove Item~\ref{Delta a}. Note that, by definition of $\tau$-rigidity, for $\tau'\subseteq \tau''$ then, if $g\in G$ is $\tau'$-rigid on $a$, then it is also $\tau''$-rigid on $a$. Now, for $G$-invariant $\Delta$, every $g\in G$ is $\pw^{\Delta, \gc(\Delta)}$-rigid on $\Delta$ by our first statement, and so $\tau$-rigid on $\Delta$. But then, by definition of $\Delta_\tau$,  $\Delta\subseteq\Delta_\tau$, as desired.} 
  \end{proof}

{Below, we will need a couple of facts about paratopological groups (see~\cite[Section 3]{tkachenko2013paratopological}):}

  \begin{fact}[{\cite[Lemma 4]{RavskyHClosed}}]\label{fact: paratopological quotient} Let $G$ be a paratopological group and $H\unlhd G$. If both $H$ and $G/H$ are topological groups with, respectively, the induced and quotient topology, then $G$ is a topological group.
  \end{fact}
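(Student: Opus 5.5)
The plan is the standard reduction to continuity of inversion at the identity. Since $G$ is a paratopological group, left and right translations are homeomorphisms. Hence it suffices to show that for every open $U\ni 1$ there is an open $W\ni 1$ with $W^{-1}\subseteq U$. Continuity of inversion at an arbitrary $g$ then follows by writing $(gw)^{-1}=w^{-1}g^{-1}$ and using continuity of right translation by $g^{-1}$. I will also use that the quotient map $\pi\colon G\to G/H$ is open: for open $O$, the set $\pi^{-1}(\pi(O))=OH=\bigcup_{h\in H}Oh$ is a union of translates of $O$, hence open.

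Now fix an open $U\ni 1$ and choose the neighbourhoods in the following order.
\begin{enumerate}
\item By joint continuity of multiplication at $(1,1)$, pick an open $V\ni 1$ with $V^2\subseteq U$.
\item Since $H$ is a topological group in the induced topology, pick an open $O\ni 1$ in $G$ with $(O\cap H)^{-1}\subseteq V$.
\item By continuity of multiplication again, pick an open $V'\ni 1$ with $V'\subseteq V\cap O$ and $V'^2\subseteq O$.
\item Since $G/H$ is a topological group and $\pi(V')$ is an open neighbourhood of the identity coset, choose an open $W\ni 1$ in $G$ with $W\subseteq V'$ and $\pi(W)^{-1}\subseteq\pi(V')$. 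This is possible by continuity of $\pi$ and of inversion in $G/H$.
\end{enumerate}

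The key step is to check that $W^{-1}\subseteq U$. Take $x\in W$. Then $x^{-1}H\in\pi(V')$, so $x^{-1}=vh$ for some $v\in V'$ and $h\in H$. Solving for $h$ gives $h^{-1}=xv\in WV'\subseteq V'^2\subseteq O$. Since also $h^{-1}\in H$, we get $h^{-1}\in O\cap H$, and therefore $h\in(O\cap H)^{-1}\subseteq V$. Consequently $x^{-1}=vh\in V'V\subseteq V^2\subseteq U$, as required.

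The main obstacle is this last step. The information from $G/H$ only pins down $x^{-1}$ modulo $H$, and the error term $h$ has to be controlled using inversion in $H$ alone. The trick is to express $h^{-1}$ as a product $xv$ of elements that are already known to be small. That is why $V'$ must be chosen with $V'^2\subseteq O$ before $W$ is selected, and why the neighbourhoods are fixed in the order listed above. Once this bookkeeping is in place, the remaining translation argument for the other points of $G$ is routine.
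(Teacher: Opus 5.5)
Your proof is correct and complete. Choosing $V'$ with $V'^2\subseteq O$ before fixing $W$ is what places the error term $h^{-1}=xv$ in $O\cap H$, so that $h$ can be controlled by inversion in $H$ alone. The other ingredients are also valid in any paratopological group: translations are homeomorphisms, $\pi$ is open because $\pi^{-1}(\pi(O))=OH$, and continuity of inversion reduces to the identity.

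The paper gives no proof of its own here and simply cites Ravsky, so your argument is a self-contained substitute for the citation. It uses no separation axioms, which is exactly what the application in \cref{cor:T0} needs, since the topology $\tau$ there is not assumed to be $T_0$.
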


  \begin{fact}[{\cite[Proposition 3.2]{tkachenko2013paratopological}}]\label{fact: paratopological compact} A compact paratopological group is a topological group.  
  \end{fact}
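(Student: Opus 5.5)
The plan is to show directly that inversion is continuous. In a paratopological group left and right translations are homeomorphisms, so it suffices to prove continuity of $x\mapsto x^{-1}$ at the identity $e$. That is, for every open $U\ni e$ we want an open $V\ni e$ with $V^{-1}\subseteq U$. I would argue by contradiction using nets and compactness, in the same spirit as the cluster-point argument of \cref{lem:filterlemma}.

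First I set up the net and extract a cluster point. Suppose $U\ni e$ is open and every $V\in\mathcal{N}(e)$ contains some $x_V$ with $x_V^{-1}\notin U$. Direct $\mathcal{N}(e)$ by reverse inclusion. Then $(x_V)$ converges to $e$, and $y_V:=x_V^{-1}$ lies in the closed set $G\setminus U$. Since $G\setminus U$ is closed in a compact space, it is compact, so $(y_V)$ has a subnet converging to some $y\in G\setminus U$. Passing to the corresponding subnet of $(x_V)$, which still converges to $e$, joint continuity of multiplication gives that the constant net $x_Vy_V=e$ converges to $e\cdot y=y$. In other words, every neighbourhood of $y$ contains $e$. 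Translating by $y^{-1}$ on the left, this says $y^{-1}$ lies in every neighbourhood of $e$, i.e.\ $y^{-1}\in N:=\bigcap\mathcal{N}(e)$.

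Next I would exploit the structure of $N$. Continuity of multiplication at $(e,e)$ gives, for each $W\in\mathcal{N}(e)$, some $W'\in\mathcal{N}(e)$ with $W'W'\subseteq W$. Hence $N$ is a subsemigroup containing $e$, and every net in $N$ converges to $e$. The goal is now to show $N$ is closed under inverses. Then $y=(y^{-1})^{-1}\in N\subseteq U$, contradicting $y\notin U$.

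To get $y\in N$ I would reuse compactness on the powers of $y$. The elements $y^{-n}$ lie in $N$, so the net $(y^{-n})_n$ converges to $e$. Take a subnet along which $(y^{n+1})$ converges to some $z$. Then the constant net $y^{-n}y^{n+1}=y$ converges to $e\cdot z=z$, so $y$ lies in every neighbourhood of $z$. Similarly, $y^{-n}y^{n}=e$ yields that $e$ lies in every neighbourhood of the cluster point of $(y^n)$. Combining these relations with translation invariance should give $y\in\bigcap\mathcal{N}(e)$. This is the step I expect to be the main obstacle, since without a separation axiom one has to juggle these ``specialisation'' relations carefully; the cluster points along the two subnets must be chosen compatibly.

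If that bookkeeping becomes unwieldy, I would fall back on two alternatives. One is to pass to the $T_0$-reflection $G/\overline{\{e\}}$-style quotient by the normal subgroup generated by $N$, and invoke \cref{fact: paratopological quotient}. The other is to use the known result that a compact Hausdorff semitopological group is topological, via Ellis' theorem, applied to that quotient.
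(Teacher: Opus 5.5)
Your reduction is correct. Translations are homeomorphisms, and the net argument does produce $y\notin U$ with $y^{-1}\in N:=\bigcap\mathcal{N}(e)$. Moreover, $N$ is a normal submonoid, since conjugation is a homeomorphism fixing $e$. So everything comes down to showing $N=N^{-1}$.

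\textbf{The gap.} This is exactly the step you leave open, and the power argument you sketch cannot supply it. If $y^{k+1}\to z$ along a subnet, then $y^{k}\to y^{-1}z$ along the same subnet, by continuity of left translation. So your two relations are literally the same statement, $z^{-1}y\in N$. More generally, \emph{every} cluster point $z$ of $(y^k)$ satisfies $z^{-1}y^k\in N$ for all $k$. This is because $y^j\in y^kN$ for $j\leq k$, and every open $V$ satisfies $VN=V$. So these relations only say that the decreasing closed sets $\overline{\{y^k\}}$ have a common point, which is automatic in a compact space. No amount of translation turns this into $y\in N$. Already algebraically it fails: in $\mathbb{Z}^2$ (written additively) with $N$ the positive cone of the lexicographic order, take $y=(0,-1)$ and $z=(-1,0)$. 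Then $-y\in N$ and $ky-z=(1,-k)\in N$ for all $k$, yet $y\notin N$.

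\textbf{The fallbacks do not close it.}
\begin{itemize}
\item The $T_0$-reflection is the quotient by $N\cap N^{-1}$. It leaves a compact $T_0$ paratopological group in which you must still prove $N=\{e\}$, which is the same problem.
\item The subgroup generated by $N$, with the subspace topology, has the same intersection of neighbourhoods of $e$, namely $N$. So it is a topological group only if $N$ already is a group. It also need not be closed, hence need not be compact. So \cref{fact: paratopological quotient} gives nothing.
\item Ellis' theorem needs Hausdorffness, which you have not obtained for any quotient.
\end{itemize}

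\textbf{The missing ingredient} is a second, stronger use of compactness. For example:
\begin{enumerate}
\item By Zorn's lemma and the finite intersection property, choose a minimal non-empty closed $M\subseteq G$ with $yM=M$.
\item Normality of $N$ gives $x^{-1}y^{-1}x\in N$, i.e.\ $yx\in\overline{\{x\}}$. Hence $y\overline{\{x\}}\subseteq\overline{\{x\}}$ for every $x\in G$.
\item For $x\in M$, the set $\bigcap_{k}y^{k}\overline{\{x\}}$ is non-empty, closed, $y$-invariant and contained in $M$. By minimality it equals $M$, which forces $\overline{\{x\}}=M$.
\item Since $yx\in M$, this gives $x\in\overline{\{yx\}}$, i.e.\ $x^{-1}yx\in N$.
\item By normality $y\in N\subseteq U$, which is the contradiction you wanted.
\end{enumerate}
The paper itself gives no proof of this fact and simply cites Tkachenko. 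A self-contained write-up would need an argument of this kind in place of the power argument.
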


  \begin{corollary} \label{cor:T0} {Let $G\acts\Omega$ be a closed permutation group with locally finite algebraic closure. Let $\tau$ be a semigroup topology on $G$ which is strictly coarser than $\pw$ and such that every $g\in G$ is $\tau$-rigid on $\Omega$. Then, $\tau$ is a group topology and hence, not $T_0$.}
\end{corollary}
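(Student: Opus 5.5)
Every $g\in G$ is $\tau$-rigid on $\Omega$, so Observation~\ref{largest Delta} applied with $\Delta=\Omega$ gives $\pw^{\Omega,\gc(\Omega)}\subseteq\tau$, where $\gc(\Omega)$ is a profinite normal subgroup of $G(\Omega)=G$ by Observation~\ref{ob:gcdelta}. Set $H:=\gc(\Omega)$. The plan is to apply Fact~\ref{fact: paratopological quotient} to $H\unlhd G$, with $(G,\tau)$ a paratopological group. We must check that $H$ with the induced topology and $G/H$ with the quotient topology are both topological groups.

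For $H$, I would show it is compact in $\tau$. It is compact in $\pw$, being profinite and $\pw$-closed in $G$: the condition $g(a)\in[a]$ for all $a$ is closed. Since $\tau\subseteq\pw$, $H$ is also $\tau$-compact, and Fact~\ref{fact: paratopological compact} then makes $(H,\tau_{\restriction H})$ a topological group.

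For $G/H$, I expect the quotient topology to be indiscrete, which is trivially a group topology. The basic $\pw^{\Omega,H}$-neighbourhood $G^{H}_A$ of $1$ equals $HG_A$ for $A\in\X$. Every $\tau$-open $\U\ni 1$ contains some such set and is a union of $H$-cosets up to $\tau$-saturation. The key step is to show that every nonempty $\tau$-open $\U$ satisfies $\U H=G$. Since $\tau\subsetneq\pw$, there is an $A_0$ such that no $\tau$-neighbourhood of $1$ lies inside $G_{A_0}$. I would rerun the filter argument of Lemma~\ref{rigidity over all implies non T2}. For $g$ in a $\tau$-neighbourhood of $1$ fixing each $[a]$ setwise, the restrictions to finite closed sets lie in the finite groups $\gc[A]$. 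The cluster point then lies in $H$, so every $\tau$-neighbourhood of $1$ meets $H\setminus G_{A_0}$ in the closure sense.

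I expect the main obstacle to be upgrading this to the statement that the closure of $\{1\}$ contains something, i.e.\ that $\tau_{\restriction H}$ is not the full profinite topology. Once $\tau_{\restriction H}$ is a compact topological group topology coarser than the profinite one, it is the quotient topology by its closed normal subgroup $N=\overline{\{1\}}^{\tau}\cap H$, and by the failure above $N\neq\{1\}$. Alternatively, I would avoid computing $G/H$ exactly. I would show directly that $\tau$ is a group topology by proving that inversion is continuous at $1$. Given $\U\ni1$, pick $\V$ with $\V\V\V\subseteq\U$ and $\V\subseteq HG_A$. Then $v^{-1}=h'g'$ is approximated using compactness of $H$: $H$ is covered by finitely many translates, and continuity of multiplication on the compact set $H$ handles the $H$-part.

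Finally, a group topology strictly coarser than $\pw$ in which $1$ is not closed is not $T_0$, since for topological groups $T_0$ implies Hausdorff. Non-Hausdorffness comes from Lemma~\ref{rigidity over all implies non T2}, which applies because every $s$ is $\tau$-rigid on $\Omega$ and $\tau\subsetneq\pw$. So once $\tau$ is shown to be a group topology, it is not $T_0$.
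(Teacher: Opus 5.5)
Your treatment of $H:=\gc(\Omega)$ matches the paper: it is compact in $\pw$, hence compact in $\tau$, hence a topological group by \cref{fact: paratopological compact}. So does your final step, where $T_0$ is equivalent to Hausdorff for group topologies and \cref{rigidity over all implies non T2} gives non-Hausdorffness. The $G/H$ part, however, has a genuine gap.

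Your central claim is that the quotient topology $\tau/H$ is indiscrete, i.e.\ that $\U H=G$ for every non-empty $\tau$-open $\U$. This is false. Since $\pw^{\Omega,H}\subseteq\tau$, each set $G_AH=G^{H}_A$ with $A\in\X$ is itself $\tau$-open, and typically $G_AH\neq G$. For a concrete case, take $G=\mathrm{GL}(V)$ with $V$ a countably-infinite-dimensional vector space over $\mathbb{F}_q$, $q>2$. Then $H$ is the group of scalars, and $\tau:=\pw^{\Omega,H}$ satisfies all the hypotheses of the corollary. Moreover $\tau/H$ is Hausdorff, because $\bigcap_{A\in\X}G_AH=H$, so it is certainly not indiscrete. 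You also state an inclusion the wrong way round: $\pw^{\Omega,H}\subseteq\tau$ says that every $HG_A$ is a $\tau$-neighbourhood of $1$, not that every $\tau$-neighbourhood of $1$ contains some $HG_A$. The filter argument you plan is therefore aimed at a false statement.

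The missing idea is a sandwich argument. Since $G^{H}_A=G_AH$, an $H$-saturated set is $\pw$-open if and only if it is $\pw^{\Omega,H}$-open. Hence $\pw^{\Omega,H}$ and $\pw$ induce the same quotient topology on $G/H$. From $\pw^{\Omega,H}\subseteq\tau\subseteq\pw$ you then get $\tau/H=\pw/H$. This is a group topology, being the quotient of the topological group $(G,\pw)$ by a normal subgroup, and \cref{fact: paratopological quotient} finishes the proof; this is exactly the paper's route.

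Your fallback of proving continuity of inversion directly does not close the gap as sketched. Write $v=hg$ with $g\in G_A$, so that $v^{-1}=g^{-1}h^{-1}$. The factor $g^{-1}$ is harmless: once $A$ is large it lies in any prescribed $\tau$-neighbourhood of $1$, since these are $\pw$-open. But $h=vg^{-1}$ is only known to lie in $H\cap\V G_A$, and you give no argument that this set is $\tau$-small. Supplying one amounts to proving \cref{fact: paratopological quotient} together with the identification of $\tau/H$, and the phrase about covering $H$ by finitely many translates does not do this.
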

\begin{proof} {By Lemma~\ref{rigidity over all implies non T2} and the fact that, for a topological group, $T_0$ and Hausdorffness are equivalent, we only need to prove that $\tau$ is a group topology. In the setting of \cref{largest Delta}, $\Delta_\tau=\Omega$, and so $\pw^{\Omega, \gc(\Omega)}\subseteq\tau$. Note that by definition, $G(\Omega)=G$, and so, by \cref{ob:gcdelta}, $\gc(\Omega)$ is a compact normal subgroup of $G$. We will proceed by showing that both $\gc(\Omega)$ and $G/\gc(\Omega)$ are topological groups. This implies that $G$ is a topological group by \cref{fact: paratopological quotient}.}\\

{Firstly, by \cref{ob:gcdelta}, $\gc(\Omega)$ with subspace topology induced by $\pw$ is a compact subgroup of $G=G(\Omega)$. In particular, since $\tau\subseteq\pw$, $\gc(\Omega)$ with the subspace topology induced by $\tau$ is also compact, and so a topological group by \cref{fact: paratopological compact}.}\\

{Now, we need to prove that $G/\gc(\Omega)$ with the quotient topology $(\tau)/{\gc(\Omega)}$ is also a topological group. Note that in our setting, by definition, for $A\in\X$, $G_A^{\gc(\Omega)}=G_A \gc(\Omega)$. So, $\pw^{\Omega, \gc(\Omega)}$ is the pullback of the quotient topology induced by $\pw$ on $G/\gc(\Omega)$. Compare $(\tau)/{\gc(\Omega)}$ to the quotient topology $(\pw)/{\gc(\Omega)}$ induced by $\pw$. We have that
\[(\tau)/\gc(\Omega)\subseteq (\pw)/\gc(\Omega)=\left(\pw^{\Omega, \gc(\Omega)}\right)/{\gc(\Omega)}\subseteq (\tau)/{\gc(\Omega)}\;,\]
 where the first containment is given by the fact that $\tau\subseteq\pw$, the equality was observed above, and the second containment follows since $\pw^{\Omega, \gc(\Omega)}\subseteq\tau$. Hence, $(\tau)/\gc(\Omega)=(\pw)/\gc(\Omega)$. Hence, $G/\gc(\Omega)$ with $(\tau)/\gc(\Omega)$ is a topological group. This concludes our proof.}
\end{proof}

  \begin{lemma}\label{minimal K}
  	\label{Omega0 and K0} Let $G\acts\Omega$ be a closed permutation group with locally finite algebraic closure. Let $\tau$ be a semigroup topology on $G$ coarser than $\pw$, and $\Delta_{\tau}$ the $G$-invariant set given by \cref{largest Delta}. 
    Let $K_{\tau}$ be the set of  {$\alpha\in G(\Delta_{\tau})$}
    such that for every $\V\in\mathcal{N}_{\tau}(1)$ and every $A\in\X_{\Delta_{\tau}}$, there exists $h\in\V$ with $h_{\restriction A}=\alpha_{\restriction A}$.  
   Then $K_{\tau}$ is closed, and it is the smallest $K$ such that $K\unlhd G(\Delta_{\tau})$, $K\leq {\gc(\Delta_{\tau})}$, and $\pw^{\Delta_{\tau},K}\subseteq\tau$.
  \end{lemma}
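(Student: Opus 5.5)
We work throughout with $\Delta:=\Delta_\tau$ and use the description of $\gc(\Delta)$ as an inverse limit of the finite groups $\gc[A]$, $A\in\X_\Delta$ (\cref{ob:gcdelta}). The proof has four parts: closedness of $K_\tau$, the containment $K_\tau\leq\gc(\Delta)$, the algebraic properties, and the two inequalities for $\pw^{\Delta,K}$.

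\emph{Closedness.} This is immediate. The defining condition of $K_\tau$ depends only on the restrictions $\alpha_{\restriction A}$ for $A\in\X_\Delta$. Hence if $\alpha\in G(\Delta)$ is a $\pw$-limit of elements of $K_\tau$, then for each $A$ some element of $K_\tau$ agrees with $\alpha$ on $A$, and so $\alpha$ satisfies the condition too.

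\emph{Containment in $\gc(\Delta)$.} Every $g\in G$ is $\tau$-rigid on $\Delta$. So for each $A\in\X_\Delta$ there is $\V\in\mathcal N_\tau(1)$ with $h(a)\in[a]$ for all $a\in A$ and $h\in\V$; then any $\alpha\in K_\tau$ satisfies $\alpha(a)\in[a]$ for all $a\in\Delta$.

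\emph{Subgroup and normality.} Clearly $1\in K_\tau$. For products, take $\alpha,\beta\in K_\tau$, $\V\in\mathcal N_\tau(1)$ and $A\in\X_\Delta$. Choose $\W$ with $\W\W\subseteq\V$. Note that $\beta(A)=A$, since $\beta$ preserves $\sim$-classes and $A$ is algebraically closed, hence a union of classes intersected with $\Delta$. Pick $h_1\in\W$ agreeing with $\beta$ on $A$ and $h_2\in\W$ agreeing with $\alpha$ on $A=\beta(A)$; then $h_2h_1\in\V$ agrees with $\alpha\beta$ on $A$. Inverses then come for free: $K_\tau$ is a closed subsemigroup of the profinite group $\gc(\Delta)$, and a closed subsemigroup of a compact group is a subgroup. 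For normality, given $\gamma\in G(\Delta)$, choose $g\in G$ agreeing with $\gamma$ on $A\cup\gamma^{-1}(A)$, suitably closed. Since conjugation by $g$ is $\tau$-continuous, $g^{-1}\V g$ is again a $\tau$-neighbourhood of $1$. An element $h\in g^{-1}\V g$ agreeing with $\alpha$ on $g^{-1}(A)$ then gives $ghg^{-1}\in\V$ agreeing with $\gamma\alpha\gamma^{-1}$ on $A$.

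\emph{$\pw^{\Delta,K_\tau}\subseteq\tau$.} This is the main obstacle, and it is a compactness argument. Fix $A\in\X_\Delta$; it suffices to find $\V\in\mathcal N_\tau(1)$ with $\V\subseteq G_A^{K_\tau}$, after which translation handles the other basic sets. Suppose no such $\V$ exists. By rigidity, shrinking neighbourhoods, every $h\in\V$ restricts on each $B\in\X_\Delta$ to an element of $\gc[B]$, which is a finite set. For each $\V$, let $P_\V$ be the set of compatible families $(\pi_B)_{B\supseteq A}$, with $\pi_B$ realised by some $h\in\V$ and $\pi_{B\restriction A}\notin K_\tau{\restriction A}$. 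I would argue, via a finite-intersection / König-type argument over the directed systems $\V\downarrow$ and $B\uparrow$ (compactness of the inverse limit of finite nonempty sets), that there is $\alpha\in\gc(\Delta)$ with $\alpha_{\restriction A}\notin K_\tau{\restriction A}$ such that for every $\V$ and every $B$, some $h\in\V$ agrees with $\alpha$ on $B$. The delicate point is to quantify over pairs $(\V,B)$ simultaneously, using that for fixed $B$ only finitely many restriction patterns occur. Such an $\alpha$ lies in $K_\tau$, a contradiction.

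\emph{Minimality.} Let $K\unlhd G(\Delta)$ with $K\leq\gc(\Delta)$ and $\pw^{\Delta,K}\subseteq\tau$, and let $\alpha\in K_\tau$. For every $A\in\X_\Delta$, the set $G_A^K$ is a $\tau$-neighbourhood of $1$, so it contains some $h$ agreeing with $\alpha$ on $A$. Hence $\alpha_{\restriction A}\in K_{\restriction A}$ for all $A$, and since $K$ is closed, $\alpha\in K$.
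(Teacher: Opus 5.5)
Your proof is correct and has the same architecture as the paper's: closedness, containment in $\gc(\Delta_\tau)$ via rigidity, subgroup and normality, the inclusion $\pw^{\Delta_\tau,K_\tau}\subseteq\tau$, and minimality via closedness of $K$. It differs from the paper at two points, and in both you are more careful.

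First, for the subgroup property the paper works with the finite sets $(K_\tau)_{\upharpoonright A}$. It shows that if $\phi,\psi$ are realised on $A$ by elements of every $\tau$-neighbourhood of $1$, then so is $\phi\psi$, and then uses that a subsemigroup of a finite group is a subgroup. You instead prove $\alpha\beta\in K_\tau$ at the level of elements (using $\beta(A)=A$), and get inverses because a closed subsemigroup of a compact group is a subgroup.

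Second, for $\pw^{\Delta_\tau,K_\tau}\subseteq\tau$ the paper argues directly. For each of the finitely many $\alpha\in\gc[A]\setminus(K_\tau)_{\upharpoonright A}$ it picks $\V_\alpha\in\mathcal{N}_\tau(1)$ none of whose elements agrees with $\alpha$ on $A$, and intersects these with a rigidity neighbourhood. The paper asserts the existence of $\V_\alpha$ ``by definition of $K_\tau$''. It actually needs the fact that a pattern in $\gc[A]$ realised by some element of every $\tau$-neighbourhood of $1$ is the restriction of an element of $K_\tau$. The paper's subsemigroup step tacitly uses the same fact. Your K\"onig-type argument is exactly a proof of this fact. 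So your route supplies a step the paper leaves implicit, while the paper's finite-intersection formulation is shorter once that fact is granted.

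When you write up the compactness step, drop the suggestion that a single $\V$ can be shrunk so that its elements restrict into $\gc[B]$ for all $B$ simultaneously; that is not possible in general. A clean formulation is as follows. For $B\supseteq A$ in $\X_{\Delta_\tau}$, let $P_B$ be the set of $\pi\in\gc[B]$ with $\pi_{\restriction A}\notin(K_\tau)_{\upharpoonright A}$ such that every $\V\in\mathcal{N}_\tau(1)$ contains some $h$ with $h_{\restriction B}=\pi$. By your contradiction hypothesis and rigidity (applied to $B$), $P_B$ is the intersection of a downward-directed family of nonempty subsets of the finite set $\gc[B]$, hence nonempty. Restriction maps $P_{B'}$ into $P_B$ for $B\subseteq B'$. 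Since $\X_{\Delta_\tau}$ is directed, a thread of this inverse system defines some $\alpha\in K_\tau$ with $\alpha_{\restriction A}\notin(K_\tau)_{\upharpoonright A}$, which is the desired contradiction.

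Finally, before translating, you pass from ``$G_A^{K_\tau}$ contains a $\tau$-neighbourhood of $1$'' to ``$G_A^{K_\tau}$ is $\tau$-open''. This uses that $G_A^{K_\tau}$ is a subgroup, which holds because $K_\tau$ is a group, and is worth stating.
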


  \begin{proof}    
  	{ 
       For any closed $K\leq {G(\Delta_\tau)}$ and $A\in\X_{\Delta_{\tau}}$ define 
      \begin{align*}
        K_{\upharpoonright A}:=&\{ \,\phi\in G[A]\,|\,\exists g\in K,\;g_{\restriction A}=\phi \}.
      \end{align*}    
      The fact that $K_\tau$ is closed is immediate from {its definition}. Moreover, every $g\in G$ is $\tau$-rigid on $\Delta_\tau$ by definition. So, {for each $a\in \Delta_\tau$,} there is $\V\in\mathcal{N}_\tau(1)$ such that for all $g\in\V$, $g(a)\in[a]$, and so, by definition of $K_\tau$, we have that it is contained in $\gc(\Delta_\tau)$. 
      
      We must show $K_\tau$ is a subgroup of $\gc(\Delta_\tau)$, normal in $G(\Delta_\tau)$. 
      For the first, note that continuity of multiplication at the identity easily implies that $(K_{\tau})_{\upharpoonright A}$ is a subsemigroup of $G[A]$: given $\phi, \psi \in (K_\tau)_{\upharpoonright A}$ and an arbitrary $\V \in \mathcal{N}_\tau(1)$, choose
      $\W \in \mathcal{N}_\tau(1)$ with $\W^2 \subseteq \V$ and $s, t \in \W$  with $s_{\restriction A} = \phi$ and $t_{\restriction A} = \psi$; then $st \in \V$ and $(st)_{\restriction A} = \phi \psi$. Since a subsemigroup of a finite group is necessarily a subgroup,  $(K_{\tau})_{\upharpoonright A}$ is a subgroup of $G[A]$ for every $A\in \X_{\Delta_{\tau}}$, and hence $K_{\tau}\leq {\gc(\Delta_{\tau})}$.
    Finally, invariance of   $\mathcal{N}_\tau(1)$ under conjugation by any fixed element of $G$
   shows $g\, (K_\tau)_{\upharpoonright A}\, g^{-1} = (K_\tau)_{\upharpoonright g(A)}$, from which we can deduce $K_\tau\unlhd G(\Delta_\tau)$.
   }
 
   { 
      We now show that $\pw^{\Delta_{\tau},K_{\tau}}\subseteq \tau$.
      Take 
     $A\in\X_{\Delta_{\tau}}$ and $\alpha\in \gc[A]\setminus (K_{\tau})_{\upharpoonright A}$. By definition of $K_\tau$, there is $\V_\alpha\in\mathcal{N}_\tau(1)$ such that no $g\in\V_\alpha$ agrees with $\alpha$ on $A$. By $\tau$-rigidity on $A$ further take $\V_0\in\mathcal{N}_\tau(1)$ such that for $g\in\V_0$, we have that for all $a\in A$, $g(a)\in[a]$.  Since $\gc[A]$ is finite,} 
     \[\V:={\V_0\cap}\bigcap_{\alpha\in{\gc[A]\setminus (K_{\tau})_{\upharpoonright A}}} \V_\alpha\in\mathcal{N}_\tau(1)\;\]
     {is a $\tau$-neighbourhood of the identity. Now, for $g\in\V$, by $g\in\V_0$ we know that $g_{\upharpoonright A}\in \gc[A]$, and by the definition of the $\V_\alpha$ we also know that $g_{\upharpoonright A}\in (K_\tau)_{\upharpoonright A}$. Hence, by definition of $\pw^{\Delta_\tau, K_\tau}$, $\V\subseteq G_A^{K_\tau}$. In particular, $G_A^{K_\tau}\in \mathcal{N}_\tau(1)$, and so we get the desired containment between topologies.}

      All that is left to show is the minimality property of the statement. 
      Fix now some closed $K\leq {\gc(\Delta_{\tau})}$, such that $K\unlhd G(\Delta_{\tau})$ and $\pw^{\Delta_{\tau},K}\subseteq\tau$.
      We need to show that $K_{\tau}\leq K$. Since $K_{\tau}$ is closed, it suffices to show that {for each $A\in\X_{\Delta_{\tau}}$,}
      \begin{equation*}
        (K_{\tau})_{\upharpoonright A}\leq K_{\upharpoonright A}.
      \end{equation*}
      {Note that from $\pw^{\Delta_{\tau},K}\subseteq\tau$, we know that $G_A^K\in\mathcal{N}_\tau(1)$. Let $\alpha\in K_\tau$. By the definition of $K_\tau$, there is $h\in G_A^K$ such that $h_{\upharpoonright A}=\alpha_{\upharpoonright A}$. So $\alpha_{\upharpoonright A}\in K_{\upharpoonright A}$. Hence, as desired, $(K_{\tau})_{\upharpoonright A}\leq K_{\upharpoonright A}$.}
  \end{proof}

\subsubsection{The proof of Theorem~\ref{mainthm: c}}

\begin{theorem}\label{thm:onebased} %

Let $G\acts\Omega$ be a closed permutation group with locally finite algebraic closure, and let $\ind$ be an independence relation satisfying:
	\begin{enumerate}[label=(\Roman*)]
        \item {$k$-narrowness for some $k\geq 1$; and}
		\item either stationarity or  independent $3$-amalgamation (from Definition \ref{d:independence more}) holds. 
	\end{enumerate}
  Then every {non-indiscrete} semigroup topology  $\tau$  on $G$  {coarser than} $\pw$ is {of the form $\pw^{\Delta_{\tau},K_{\tau}}$ for some {algebraically} closed $G$-invariant $\Delta_{\tau}\subseteq \Omega$ and some {closed} $K_{\tau}\unlhd G(\Delta_\tau)$ with $K_\tau\leq {\gc}(\Delta_\tau)$}.
  
\end{theorem}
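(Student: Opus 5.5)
The plan is to set $\Delta:=\Delta_\tau$ and $K:=K_\tau$ as in \cref{largest Delta} and \cref{minimal K}. From those results we already have $\pw^{\Delta,K}\subseteq\tau$, that $\Delta$ is $G$-invariant, and that $K$ is closed, normal in $G(\Delta)$ and contained in $\gc(\Delta)$. Two things remain: to show that $\Delta$ is algebraically closed, and to prove the reverse inclusion $\tau\subseteq\pw^{\Delta,K}$. Since both topologies are semigroup topologies on a group and shifts by elements of $G$ are homeomorphisms in each, it suffices to work at the identity. So we must show that every $\U\in\mathcal{N}_\tau(1)$ contains a basic set $G^K_{A}$ with $A\in\X_\Delta$.

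For algebraic closedness, take $a\in\acl(\Delta)$ and suppose $a\notin\Delta$. By \cref{largest Delta}\ref{Delta b}, $1$ is not $\tau$-rigid on $a$. \cref{l:moving out} (Case 1) then makes $1$ $\tau$-dispersive on $a$, and \cref{tau-dispersive everywhere} extends this to $G\cdot a$. This contradicts Item~\eqref{obs6} of \cref{moving tau dispersiveness around}, since $1$ is $\tau$-rigid on $\Delta$ and $a\in\acl(\Delta)\cap\im(1)$. The same reasoning shows that every $g\in G$, in particular $w=1$, is $\tau$-dispersive on $\Sigma:=\Omega\setminus\Delta$, and by \cref{rem:deepstuff} deeply so. If $\Sigma=\emptyset$, then $\Delta=\Omega$ and $\pw^{\Omega,\gc(\Omega)}\subseteq\tau$. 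In that case I would argue directly: any $\tau$ with $\pw^{\Omega,K}\subseteq\tau\subseteq\pw$ coincides with $\pw^{\Omega,K}$ by the minimality of $K_\tau$ together with a compactness argument in the profinite group $\gc(\Omega)$, in the spirit of \cref{cor:T0}. I expect this case to need some care, but it is not the heart of the proof.

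The main case is $\Sigma\neq\emptyset$. Fix $\U\in\mathcal{N}_\tau(1)$ and set $L\in\{2,3\}$ according to whether stationarity or independent $3$-amalgamation holds. Pick $\U'\in\mathcal{N}_\tau(1)$ with $(\U')^2\subseteq\U$, and apply \cref{building chains} with $w=1$ and $2\ell+1$ factors, where $\ell\geq kL$ is chosen so that the resulting chain has length $kL$. This gives $A\in\X_\Delta$, an element $h\in G^K_A\cap\U'$, and a chain $\E=(\overline{e}_i)_{i=0}^{kL}$ of algebraically closed sets with $\overline{e}_i\cap\overline{e}_{i+1}=A$ for all $i$ and $G_\E h\subseteq\U'$. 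By \cref{c:reachability}, $G_A\subseteq G_\E$, hence $G_Ah\subseteq\U'$.

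It remains to absorb the $K$-part. I would show that $G^K_A=G_A\cdot\{g\in G: g_{\upharpoonright A}\in K_{\upharpoonright A}\}$, and use the definition of $K_\tau$: for each of the finitely many $\phi\in K_{\upharpoonright A}$ there is some $h_\phi\in\U'$ with $(h_\phi)_{\upharpoonright A}=\phi$. The difficulty is to get $G_Ah_\phi\subseteq\U$ uniformly in $\phi$. My first attempt would be to repeat the chain construction in $\U'$ around each $h_\phi$, using translates $\U'h_\phi^{-1}$, which works because $h_\phi$ is a homeomorphism, and then intersect over the finitely many $\phi$. This yields a single larger $A'\in\X_\Delta$, after taking the algebraic closure of the union and using monotonicity of the basic sets, such that $G^K_{A'}\subseteq G_{A'}\{h_\phi\}\subseteq\U$. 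The step I expect to be the main obstacle is making $A$ independent of the chosen translate, that is, controlling the intersections $\overline{e}_i\cap\overline{e}_{i+1}$ uniformly. If direct intersection fails, I would fix $A$ first via \cref{skipping terms}\ref{case1} and rerun \cref{building chains} inside $\U'\cap G^K_A$.
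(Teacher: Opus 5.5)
Up to the point where you obtain $A\in\X_\Delta$, $h\in G^K_A$ and $G_Ah\subseteq\U'$, you follow the paper's proof exactly: \cref{largest Delta} and \cref{minimal K}, algebraic closedness via \cref{l:moving out} and Item~\eqref{obs6}, \cref{rem:deepstuff}, \cref{building chains} with $w=1$ and length $kL$, then \cref{c:reachability}.

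The gap is the final step, absorbing the $K$-part. You leave it open, and the route you sketch does not work. After enlarging $A$ to $A'$, the inclusion $G^K_{A'}\subseteq G_{A'}\{h_\phi\}_\phi$ is false in general. The reason is that $G^K_{A'}$ is a union of $|K_{\restriction A'}|$ cosets of $G_{A'}$, while your $h_\phi$ are indexed only by $K_{\restriction A}$. An element of $G^K_{A'}$ that restricts to $\phi$ on $A$ need not agree with $h_\phi$ on $A'$. Each enlargement of $A$ therefore creates new cosets that need new chain constructions, so the procedure never closes up. Your fallback of rerunning \cref{building chains} inside $\U'\cap G^K_A$ has the same problem.

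What you are missing is that no further chain construction is needed: one more multiplication absorbs the $K$-part.
\begin{itemize}
\item Since $K\leq\gc(\Delta)$ preserves each $\sim$-class, and $A\in\X_\Delta$ is a union of $\sim$-classes, $G^K_A$ is a subgroup of $G_{\{A\}}$. It contains $h$, and it contains $G_A$ as a normal subgroup.
\item Given $s\in G^K_A$, the element $h^{-1}s$ also lies in $G^K_A$. By the definition of $K_\tau$ there is $g\in\U'$ with $g_{\restriction A}=(h^{-1}s)_{\restriction A}$.
\item Then $h^{-1}sg^{-1}\in G_A$, so $s\in hG_Ag=G_Ahg\subseteq\U'\U'\subseteq\U$.
\end{itemize}
Hence $G^K_A\subseteq(G_Ah)\,\U'\subseteq\U$, which finishes the proof. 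This is also how the paper concludes. Its last display should be read with $h^{-1}s$ approximated rather than $s$, since the inclusion $h^{-1}\V\subseteq\V$ it implicitly uses is not available.

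Finally, your separate treatment of $\Sigma=\emptyset$ is unnecessary, and the compactness argument you allude to there is not supplied. Deep $\tau$-dispersivity on $\emptyset$ holds vacuously, and \cref{l:aux} and \cref{building chains} go through verbatim with $\Delta=\Omega$. So the main argument, with the fix above, covers this case as well.
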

\begin{proof}
  Let $\tau$ be a semigroup topology on $G$ coarser than $\pw = \pw^{\Omega, \{1\}}$, and let $\Delta_\tau$ and $K_\tau$ be as provided by \cref{largest Delta}
  and \cref{minimal K} respectively. It  suffices to prove:
  
  \begin{itemize}
     \item $\Delta_{\tau}$ is algebraically closed; and
     \item $\pw^{\Delta_{\tau},K_{\tau}}=\tau$.
  \end{itemize}

  Write $\Sigma:=\Omega\setminus\Delta_{\tau}$. By part \ref{Delta b} of \cref{largest Delta}, no element of $G$ is $\tau$-rigid on any $a\in \Sigma$.
  From \cref{l:moving out} and \cref{tau-dispersive everywhere} it follows that every element of $G$ is $\tau$-dispersive on all of $\Sigma$. 
  By Item~\eqref{obs6}  of \cref{moving tau dispersiveness around} no element $g\in G$ is $\tau$-dispersive on any
  $a\in\cl(\Delta_{\tau})$; hence, $\Delta_{\tau}$ is algebraically closed.

 Since $\pw^{\Delta_\tau, K_\tau} \subseteq \tau$ by \cref{minimal K}, it
  remains to show $\tau \subseteq \pw^{\Delta_\tau, K_\tau}$, which is
  equivalent to: every $\U \in \mathcal{N}_\tau(1)$ contains an element of
  $\mathcal{N}_{\pw^{\Delta_\tau, K_\tau}}(1)$.\\
  \textbf{Claim:} Every $\U\in\mathcal{N}_{\tau}(1)$ contains {$G_{A}h$ for some $A\in\X_{\Delta_{\tau}}$ and some $h\in G_{A}^{K_{\tau}}$}.
   \begin{subproof}[Proof of Claim]
   Since every element of $G$ is $\tau$-dispersive on $\Sigma$, every
    element of $G$ is deeply $\tau$-dispersive on $\Sigma$ {by \cref{rem:deepstuff}.}
    {Let $L\in\{2,3\}$ be the constant from \cref{l:reaching over the empty set}, and let $\U\in\mathcal{N}_{\tau}(1)$. Applying \cref{building chains}  with $w=1$ 
   {(and $k$ there replaced by $kL$)},  we obtain an algebraically closed set $A\subset \Delta_{\tau}$, an element {$h\in G_{A}^{K_{\tau}}$}, and a chain of algebraically closed sets $\mathcal{E}=(\overline{e}_{j})_{j=0}^{kL}$ such that 
   	 \begin{itemize}
   	 	\item $G_{\E}h\subseteq\U$; and 
   	 	\item $\overline{e}_{i}\cap \overline{e}_{i+1}=A$ for all $0\leq i\leq kL-1$. 
        	 \end{itemize}}

   By \cref{c:reachability} (applied to the first inequality) 
   \begin{equation*}
   	G_{A}h\subseteq G_{\mathcal{E}}h\subseteq\U.
   \end{equation*}
   \end{subproof}
{To conclude, let $\U\in\mathcal{N}_{\tau}(1)$ and choose $\V\in\mathcal{N}_{\tau}(1)$ with
$\V^{2}\subseteq\U$. By the claim, there are $A\in\X_{\Delta_{\tau}}$ and
$h\in G_{A}^{K_{\tau}}$ with $G_{A}h\subseteq\V$; in particular $h\in\V$, since
$1\in G_{A}$.
Now let $s\in G_{A}^{K_{\tau}}$, so that $s_{\restriction A}=\alpha_{\restriction A}$ for
some $\alpha\in K_{\tau}$. By \cref{minimal K} there is $g\in\V$ with
$g_{\restriction A}=\alpha_{\restriction A}$, whence $sg^{-1}\in G_{A}$ and so
$s\in G_{A}\,g$. As $h\in\V$ and $g\in\V$, we obtain}
\begin{equation*}
  G_{A}^{K_{\tau}}\subseteq G_{A}\,\V\subseteq (G_{A}h)(h^{-1}\V)\subseteq \V^2
    \subseteq\U ,
\end{equation*}
{where $G_{A}^{K_{\tau}}$ is as in \cref{d:generalized pointwise stabilizer}. Since
$G_{A}^{K_{\tau}}\in\mathcal{N}_{\pw^{\Delta_{\tau},K_{\tau}}}(1)$, this proves $\tau\subseteq\pw^{\Delta_{\tau},K_{\tau}}$, and the result follows.
}

\end{proof}

\begin{proof}[Proof of \cref{mainthm: c}]
    {Note that \cref{mainthm: c} follows from \cref{thm:onebased} by taking $k=1$. In this context, $\ind=\ind^{a}$ as pointed out in \cref{rem:onebased}, and so the second assumption in the theorem is precisely that algebraic independence satisfies independent $3$-amalgamation. Let $\tau\subseteq\pw$ be a semigroup topology on $G$. If $G$ is transitive, then the only $G$-invariant sets are $\emptyset$ and $\Omega$. In the case of $\Delta_\tau=\emptyset$, $\tau$ is the indiscrete topology. So, we may consider the case of $\Delta_\tau=\Omega$. Since $\sim$ is a $G$-invariant equivalence relation on $\Omega$, if $G$ is additionally primitive, $\gc(\Omega)=\{1\}$, and so the only semigroup topology on $G$ strictly coarser than $\pw$ is the indiscrete topology. Assuming only transitivity, by definition of $\Delta_\tau$ in Observation~\ref{largest Delta}, every $g\in G$ is $\tau$-rigid on $\Omega$, and so by \cref{cor:T0}, $\tau$  is either $\pw$ or not $T_0$.}
\end{proof}

\section{Minimality of $\pw$ for monoids of elementary embeddings}
\label{sec:min monoids}

 \subsection{Sinks}
  
  \newcommand{\into}{\hookrightarrow}

  \begin{definition}\label{sinks}
  Let $G$ satisfy \ref{sat} and let $S=\overline{G}$. For $k\geq 1$, we say that an (infinite) algebraically closed set $\Omega'\subsetneq \Omega$ is a $k$\textbf{-sink}  if the following conditions are satisfied:
  \begin{enumerate}
  		\item \label{sink3}  for every chain of algebraically closed sets $\C=(\overline{c}_{i})_{i=0}^{k}$ such that {$\overline{c}_{i}\cap \overline{c}_{i+1}\subseteq\Omega'$},
        for %
         $0\leq i\leq k-1$, there is $u\in\nn_{\C}$ such that $\im(u)\subseteq \Omega'$;
  		\item \label{sink4} for every $G$-invariant and algebraically closed $\Delta\subseteq\Omega$, there is $v\in S$ such that $\im(v)\cap \Omega'=v(\Delta)$.
  	\end{enumerate}
  \end{definition}

\begin{lemma}\label{lem:forsink1}  %
	Let $G\acts\Omega$ be the automorphism group of a countable saturated structure and $X\subseteq\Omega$ be an algebraically closed set. Then, there are $\alpha, \beta\in\overline{G}$ such that%
    \begin{equation}\label{eq:sink2}
       {X=\{a\in\Omega\,|\,\alpha(a)= \beta(a)\}}\;. %
    \end{equation}
    
\end{lemma}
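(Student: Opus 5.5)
The approach mirrors the compactness argument of \cref{lem:saturatedcase}. Let $\M$ be the countable saturated structure with $\Aut(\M)=G$, and enumerate $\Omega$ as $\overline{b}=(b_0,b_1,\dots)$. Put $p(\overline{x}):=\mathrm{tp}(\overline{b})$ and consider the partial type
\[\Gamma(\overline{x},\overline{y}):=p(\overline{x})\cup p(\overline{y})\cup\{x_i=y_i\mid b_i\in X\}\cup\{x_i\neq y_i\mid b_i\notin X\}.\]
If $(\overline{c},\overline{d})$ realises $\Gamma$ in $\M$, then the maps $\alpha\colon b_i\mapsto c_i$ and $\beta\colon b_i\mapsto d_i$ preserve types of finite tuples. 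Hence they are elementary embeddings, i.e.\ elements of $\overline{G}=\EEmb(\M)$, and they satisfy \eqref{eq:sink2} by construction.

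The first step is finite satisfiability. Fix a finite $J\subseteq\omega$, and let $B:=\{b_i\mid i\in J\}\cap X$ and $F:=\{b_i\mid i\in J\}\setminus X$. We need some $g\in G$ that fixes $B$ pointwise and satisfies $g(a)\neq a$ for every $a\in F$; then $((b_i)_{i\in J},(g b_i)_{i\in J})$ realises $\Gamma_J$. Since $X$ is algebraically closed, we have $\acl(B)\subseteq X$, so no $a\in F$ lies in $\acl(B)$. Each such $a$ therefore has an infinite orbit under $G_B$. We want to find $g\in G_B$ moving all the finitely many points of $F$ simultaneously. The cleanest route is Neumann's Lemma in the form of \cref{f:neumann}, applied with $C=\acl(B)$ and $A=B\cup F$, which is legitimate given locally finite algebraic closure. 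If $\acl$ is not assumed locally finite here, I would instead use Neumann's classical lemma: for finitely many points with infinite $G_B$-orbits, there is $g\in G_B$ with $g(F)\cap F=\emptyset$, because a group cannot be covered by finitely many cosets of subgroups of infinite index. In either case $g$ moves every point of $F$.

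The second step follows the end of the proof of \cref{lem:saturatedcase}. By compactness and Löwenheim--Skolem, $\Gamma$ is realised in some countable model of $\mathrm{Th}(\M,\overline{b})$. After passing to a countable saturated elementary extension, this model is isomorphic to $\M$ over nothing. What we actually need is a realisation of $\Gamma$ in $\M$ itself. Since $\Gamma$ only involves $p$, which is the type of an enumeration of a saturated model, the standard transfer applies exactly as in \cref{lem:saturatedcase}: the realising model $\M'$ is saturated and countable, so $\M'\cong\M$. Pulling back through such an isomorphism gives $(\overline{c},\overline{d})$ in $\M$. No parameters are involved, so this step is clean.

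The main obstacle is the finite-satisfiability step, specifically moving all points of $F$ at once while fixing $B$. I expect Neumann's Lemma to settle it, but I would check carefully that the version cited (\cref{f:neumann}, with $C=\acl(B)\subseteq X$ finite) applies. If locally finite algebraic closure is not among the hypotheses, I would fall back on the classical coset-covering form of Neumann's lemma, which only needs infinite orbits.
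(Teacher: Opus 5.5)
Your proposal is correct and is essentially the paper's proof: the same compactness-and-saturation transfer as in \cref{lem:saturatedcase}, with Neumann's Lemma (fixing $X\cap\{b_i\mid i\in J\}$ pointwise while moving the finitely many remaining points) giving finite satisfiability. The only difference is that the paper imposes the stronger conditions $x_i\neq y_j$ for all $b_i,b_j\notin X$, whereas your $x_i\neq y_i$ for $b_i\notin X$ is exactly what \eqref{eq:sink2} requires.
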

\begin{proof}
The argument is identical to that of Lemma~\ref{lem:saturatedcase}. 
Let $Y:=\Omega\setminus X$, and consider $\M$ as before. Let $\overline{b}:=(b_0, b_1, b_2, \dots)$ be a tuple of length $\omega$ enumerating $\M$. Consider the infinitary type $p(\overline{x}):=\mathrm{tp}(\overline{b})$. For the tuples of variables $\bar x$ and $\bar y$ of length $\omega$, consider the partial type 
\[\Gamma(\overline{x}, \overline{y}):=p(\overline{x})\cup p(\overline{y}) \cup\{x_{i}\neq y_{j}\vert b_i,b_{j}\in Y\}\cup\{x_i=y_i\vert\  b_i\in X\}.\]

It suffices to prove that $\Gamma(\overline{x}, \overline{y})$ is (finitely) satisfiable, and one then concludes as in \cref{lem:saturatedcase}. 

Let $J\subseteq\omega$ be finite,  $\Gamma_J(\overline{x}_J, \overline{y}_J)$  denote the restriction of $\Gamma$
to the variables $x_i, y_i$ with $i \in J$. Write
$B_J := \{\, b_i \mid i \in J \,\}$, and set $B_J^X := B_J \cap X$ and
$B_J^Y := B_J \cap Y$. Since $\acl(B_J^X) \cap Y = \emptyset$, Neumann's
Lemma provides $g \in G$ that fixes $B_J^X$ pointwise and satisfies
$g(B_J^Y) \cap B_J^Y = \emptyset$. The pair of tuples
$\bigl( (b_i)_{i \in J},\, (g(b_i))_{i \in J} \bigr)$ then satisfies
$\Gamma_J$: both tuples realise the restriction of $p$ to $J$, since
$g \in G$; the equalities $x_i = y_i$ for $b_i \in X$ hold because $g$ fixes
$B_J^X$ pointwise; and the inequalities $x_i \neq y_j$ for $b_i, b_j \in Y$
hold because $g(B_J^Y)$ is disjoint from $B_J^Y$. Hence $\Gamma$ is finitely
satisfiable, and so satisfiable.
\end{proof} 

  The following useful observation from \cite{pinskerschindler} follows from the fact that in a Hausdorff space $X$ the diagonal
$\{(x, x) \mid x \in X\} \subseteq X^2$ is closed.
  \begin{observation}\label{use sink}
  	In the situation of \cref{sinks}, since $\Omega'\subseteq\Omega$ is algebraically closed,  \cref{lem:forsink1} provides $\alpha, \beta\in S$ such that (\ref{eq:sink2}) holds. Hence, if $\tau$ is a Hausdorff semigroup topology on $S$
    then the set
    \begin{equation*}
       \{s\in S\,|\,\im(s)\nsubseteq \Omega'\,\}=\{s\in S\,|\,\alpha s\neq \beta s\}
    \end{equation*}  
  	is open in $\tau$. 
  \end{observation}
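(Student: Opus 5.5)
The statement to prove is \cref{use sink}. Given a Hausdorff semigroup topology $\tau$ on $S$, we must show that $\{s\in S\mid \im(s)\nsubseteq\Omega'\}$ is $\tau$-open. The plan has two steps: first rewrite this set algebraically using \cref{lem:forsink1}, then read off openness from Hausdorffness and continuity of multiplication.

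For the first step, apply \cref{lem:forsink1} to the algebraically closed set $X:=\Omega'$. This gives $\alpha,\beta\in\overline{G}=S$ with $\Omega'=\{a\in\Omega\mid \alpha(a)=\beta(a)\}$. For $s\in S$ we then have $\alpha s=\beta s$ if and only if $\alpha(s(x))=\beta(s(x))$ for all $x\in\Omega$, that is, if and only if $s(x)\in\Omega'$ for every $x$, i.e.\ $\im(s)\subseteq\Omega'$. Taking complements yields the claimed identity
\[\{s\in S\mid \im(s)\nsubseteq\Omega'\}=\{s\in S\mid \alpha s\neq\beta s\}.\]

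For the second step, consider the map $\Phi\colon S\to S\times S$, $s\mapsto(\alpha s,\beta s)$. Each coordinate is a left translation. Since $\tau$ is a semigroup topology, multiplication is jointly continuous, so fixing the left factor gives a continuous map. Hence $\Phi$ is continuous into $S\times S$ with the product topology. Because $\tau$ is Hausdorff, the diagonal $\{(x,x)\mid x\in S\}$ is closed in $S\times S$, so its complement is open. The set $\{s\mid\alpha s\neq\beta s\}$ is the preimage of this complement under $\Phi$, and is therefore $\tau$-open.

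Neither step is a real obstacle once \cref{lem:forsink1} is available, since that lemma supplies the pair $\alpha,\beta$ whose equaliser is exactly $\Omega'$. The only point needing care is that the equaliser identity must hold on all of $\Omega$, not merely on finite subsets. This is exactly what \eqref{eq:sink2} provides.
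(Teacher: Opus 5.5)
Your proof is correct and follows the paper's argument. You apply \cref{lem:forsink1} to get $\alpha,\beta$ with equaliser exactly $\Omega'$, so that $\alpha s=\beta s$ iff $\im(s)\subseteq\Omega'$. You then use continuity of $s\mapsto(\alpha s,\beta s)$ and closedness of the diagonal in a Hausdorff space, which is exactly the justification the paper gives.
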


\begin{remark} If $\Omega'\subset\Omega$ is a $k$-sink, then it is an $n$-sink for every $n\geq k$. To see this, we only need to verify condition~(\ref{sink3}) in Definition~\ref{sinks}. %
Let $\C=(\overline{c}_i)_{i=0}^n$ be a chain of algebraically closed sets and consider the subchain $\D:=(\overline{c}_i)_{i=0}^k$. Since $\Omega'$ is a $k$-sink, there is $u\in\nn_\D=\nn_{\overline{c}_0}\cdots \nn_{\overline{c}_k}$ with $\im(u)\subseteq\Omega'$. %
 {Since the identity belongs to
  $\nn_{\overline{c}_l}$ for each $k < l \leq n$, we also have $u \in \nn_{\C}$,
  so condition~(\ref{sink3}) holds for $n$.}
\end{remark}

The following is a standard union of chains argument (cf.~\cite[Lemma 4.5]{marimon2025guide}):

   \begin{lemma}\label{many divisors}
  	Let $\M$ be a countable saturated structure and let $S$ be its monoid of elementary embeddings.
    Let $(s_{l})_{l\in\mathbb{N}}$  be a countable sequence of elements  of $S$. Then there is $w\in S$ such that for each $m\geq 1$ there is $t_m\in S$ with $w=t_{m}s_{m-1}s_{m-2}\cdots s_{0}$.  
  \end{lemma}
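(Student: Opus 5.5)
We build $w$ as the limit of a back-and-forth union of chains, in the style of \cref{extensions of isomorphisms}. We may assume $\M$ is in a countable language. Set $p_m:=s_{m-1}\cdots s_0$ for $m\geq 1$ (and $p_0:=1$). The goal is a single elementary embedding $w$ such that, for every $m$, the partial map
\[\psi_m:=w\circ p_m^{-1}\colon \im(p_m)\to \Omega\]
(well defined, since $p_m$ is injective) extends to an elementary embedding $t_m\colon\M\to\M$. Then $t_m p_m=w$, as required.

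For a fixed $m$ this is easy. The map $\psi_m$ is elementary, being the composition of the elementary bijection $p_m^{-1}\colon \im(p_m)\to\Omega$ with $w$. Extending an elementary map with countable domain to all of $\M$ needs, via compactness and saturation, that the type of an enumeration of $\Omega$ over $\im(p_m)$, pushed along $\psi_m$, is realised in $\M$. Saturation only handles types over finite sets, so a realisation has to be built into an elementary extension. Equivalently, $t_m$ exists iff $\mathrm{tp}(\overline{m}/\im(p_m))$, transported by $\psi_m$, is realised in $\M$. Countable saturation alone does not guarantee this over the infinite set $\im(w)$; this is the main obstacle.

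We handle the obstacle by constructing the target model rather than $w$ inside a fixed $\M$. Let $\M_0:=\M$ and $w_0:=\mathrm{Id}$. Inductively build countable saturated elementary extensions
\[\M_0\preceq\M_1\preceq\cdots,\]
using compactness, Löwenheim--Skolem, and the existence of countable saturated elementary extensions. At each stage $n$, for every $m\leq n$, realise in $\M_{n+1}$ a copy of $\M_n$ over the current partial images, so that the partial maps $t_m^{(n)}\colon \M_n\to\M_{n+1}$ are elementary and satisfy $t^{(n)}_m\restriction \M_{n-1}\subseteq t^{(n+1)}_m$. Their restrictions to $\im(p_m)\subseteq \M_0$ must equal $w\circ p_m^{-1}$, where $w\colon\M\to\M$ is fixed as the identity embedding $\M_0\hookrightarrow\M'$.

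More concretely, take $w=\mathrm{Id}_{\M}$ viewed into $\M'$. We then need $t_m\colon\M'\to\M'$ elementary with $t_m\restriction\im(p_m)=p_m^{-1}$, that is, $t_m p_m=\mathrm{Id}$ on $\M$. The maps $p_m^{-1}\colon\im(p_m)\to\Omega$ form a countable family of elementary maps between countable subsets of $\M$. As in \cref{extensions of isomorphisms}, the forth-only chain construction, which realises at each stage the type of the current model over the domain transported by the current map, yields a countable saturated $\M'=\bigcup_n\M_n$ in which each $p_m^{-1}$ extends to an elementary self-embedding $t'_m$. Since $\M'\cong\M$, fix an isomorphism $\sigma\colon\M'\to\M$. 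Put $w:=\sigma\restriction_{\M}\in S$ and $t_m:=\sigma t'_m\sigma^{-1}\in S$. Then
\[t_m p_m=\sigma t'_m p_m=\sigma\restriction_\M=w.\]

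The remaining routine checks are that the union $\M'$ is saturated (a countable union of a chain of saturated models, as used in \cref{extensions of isomorphisms}) and that the unions of the extending maps are total elementary self-embeddings of $\M'$. The first is exactly the point where care is needed.
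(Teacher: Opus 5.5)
Your strategy (the part from ``More concretely'' on) works, but the definition of $t_m$ is wrong as written. With $t_m:=\sigma t'_m\sigma^{-1}$ you get $t_mp_m=\sigma t'_m\sigma^{-1}p_m$. The step $\sigma t'_m\sigma^{-1}p_m=\sigma t'_m p_m$ would require $\sigma^{-1}$ to fix $\im(p_m)$ pointwise for every $m$. But $\sigma^{-1}\colon\M\to\M'$ is an arbitrary isomorphism, and nothing in your construction arranges this. Conjugating is the wrong operation, because $p_m$ acts on $\M$, not on $\M'$.

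The fix is to restrict instead of conjugating. Let $j\colon\M\hookrightarrow\M'$ be the inclusion, and set $w:=\sigma j$ and $t_m:=\sigma t'_m j$. Both are elementary self-embeddings of $\M$. Since $t'_m$ extends $p_m^{-1}$, you get $t_mp_m=\sigma t'_m j p_m=\sigma j=w$.

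This corrected version uses $t'_m$ only on $\M$. So all you need is a countable saturated $\M'\succeq\M$ together with elementary embeddings $\M\to\M'$ extending each $p_m^{-1}$. There are two easy ways to get these. One is a single compactness step: realise, in disjoint variables, the transported types $p_m^{-1}\bigl(\mathrm{tp}(\overline{m}/\im(p_m))\bigr)$ over $\Omega$, each finitely satisfiable in $\M$ because $p_m^{-1}$ is elementary, then pass to a countable saturated elementary extension. The other is to apply \cref{extensions of isomorphisms} directly to the isomorphisms $p_m^{-1}\colon p_m(\M)\to\M$. Either way, the forth-only chain and your abandoned first construction can be dropped. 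The saturation of a union of a chain is in any case immediate, since every finite subset lies in some saturated $\M_n\preceq\M'$.

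With this correction your proof is valid, and it takes a different route from the paper's. The paper builds the direct limit of $\M\xrightarrow{s_0}\M\xrightarrow{s_1}\M\to\cdots$. Concretely, it constructs an elementary chain $\M_0\preceq\M_1\preceq\cdots$ with isomorphisms $\alpha_l\colon\M_l\to\M$ satisfying $\alpha_{l+1}\iota_l=s_l\alpha_l$. Then $w$ and $t_m$ are the canonical maps from the $0$-th and $m$-th copies into the saturated union, transported back to $\M$. This yields a coherent family of factorisations, $t_m=t_{m+1}s_m$, which is more than the lemma needs. Your route instead makes $w$ (up to $\sigma$) an inclusion and treats each $m$ separately, as the problem of extending the elementary partial map $p_m^{-1}$. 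This reduces the lemma to the already established \cref{extensions of isomorphisms}, or to one compactness argument, at the cost of losing that coherence.
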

  \begin{proof}
   {We construct an elementary chain of countable models $\M=:\M_{0}\preceq \M_{1}\dots $ together with isomorphisms $\alpha_{l}:\M_{l}\to\M$ satisfying 
   \begin{equation}\label{eq:intertwine}
    \alpha_{l+1} \circ \iota_l = s_l \circ \alpha_l
    \qquad \text{for all } l \in \mathbb{N},
  \end{equation}
  where $\iota_l \colon \M_l \to \M_{l+1}$ denotes the inclusion. %
   Start with
  $\M_0 := \M$, and let $\alpha_0 $  be an arbitrary isomorphism of $\M_0$. For the inductive step, suppose $(\M_i)_{i \leq l}$ and $(\alpha_i)_{i \leq l}$ have been constructed. We can construct a countable elementary extension $\M_{l+1}$ of $\M_l$ such that the inclusion $\iota_l:\mathcal{M}_l\to\mathcal{M}_{l+1}$ is isomorphic over  $\alpha_l$ to the elementary embedding $s_l: \mathcal{M}\to\mathcal{M}$, that is $\alpha_{l+1}:\M_{l+1}\to \M$ is such that $\alpha_{l+1}\iota_{l}=s_{l}\alpha_{l}$.
  Let $\M' := \bigcup_{l \in \mathbb{N}} \M_l$. Then $\M'$ is a countable   elementary extension of $\M$, and it is saturated: any finite subset of $\M'$  is contained in some $\M_l$,  and for any finite $A\subseteq M'$ there is $l$ such that $A\subset M_{l}$, and since any type over $A$ is realised in $M_{l}$, it is also realised in $\M'$.
  By uniqueness of the countable saturated model,
  there is an isomorphism $\alpha \colon \M' \to \M$.
  For $l \in \mathbb{N}$, let $\iota_{l,\infty} \colon \M_l \to \M'$ be the
  inclusion, and set $w := \alpha\, \iota_{0,\infty}\, \alpha_0^{-1}$. We claim
  $w$ satisfies the statement of the Lemma. Fix $m \geq 1$ and let
  $t_m := \alpha\, \iota_{m,\infty}\, \alpha_m^{-1}$. This is an elementary
  self-embedding of $\M$ being a composition of elementary maps. Using
  $s_l = \alpha_{l+1}\, \iota_l\, \alpha_l^{-1}$ from \eqref{eq:intertwine}:
  \begin{equation*}
    t_m\, s_{m-1} \cdots s_0
      = \alpha\, \iota_{m,\infty}\, \alpha_m^{-1}
        (\alpha_m \iota_{m-1} \alpha_{m-1}^{-1})
        (\alpha_{m-1} \iota_{m-2} \alpha_{m-2}^{-1})
        \cdots
        (\alpha_1 \iota_0 \alpha_0^{-1})
      = \alpha\, \iota_{0,\infty}\, \alpha_0^{-1}
      = w,
  \end{equation*}
  since $\iota_{m,\infty}\, \iota_{m-1} \cdots \iota_0 = \iota_{0,\infty}$.
}

  \end{proof}

Our reason for caring about $k$-sinks is that their existence implies minimality of $\pw$:
  
\begin{lemma}\label{abstract minimality endomorphism monoids}
  	Let $G$ be the automorphism group of a saturated countable structure $\M$ with locally finite algebraic closure and let $S=\overline{G}$. If $\M$ admits a $k$-sink for some $k\geq 1$, then $\pw$ is a minimal Hausdorff semigroup topology on $S$. 
  \end{lemma}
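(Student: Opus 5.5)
We argue by contradiction. Let $\tau\subseteq\pw$ be a Hausdorff semigroup topology on $S$ and suppose $\tau\neq\pw$. The strategy is to produce an element lying in the $\tau$-open set of \cref{use sink} such that every $\tau$-neighbourhood of it contains an element whose image lies in the sink. This contradicts Hausdorffness.

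\emph{Producing dispersive elements.} By \cref{rigidity over all implies non T2}, not every $s\in S$ is $\tau$-rigid on $\Omega$. So some $s_0$ fails to be $\tau$-rigid at some $a'$. Then \cref{l:moving out}, which applies since \ref{sat} holds, followed by \cref{tau-dispersive everywhere}, gives an element that is $\tau$-dispersive on an entire orbit $G\cdot a$.

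Let $\Sigma$ be the union of all orbits on which some element of $S$ is $\tau$-dispersive, and set $\Delta:=\Omega\setminus\Sigma$. By the same two lemmas, every element of $S$ is $\tau$-rigid on $\Delta$. Item~\eqref{obs6} of \cref{moving tau dispersiveness around} then shows that no point of $\cl(\Delta)$ in the image of a dispersive element can carry dispersivity. From this I expect to get that $\Delta$ is $G$-invariant and algebraically closed, exactly as in the proof of \cref{thm:onebased}.

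Next we need a single $w$ that is deeply $\tau$-dispersive on $\Sigma$. Enumerate countably many elements, each $\tau$-dispersive on one of the (countably many) orbits in $\Sigma$, with each repeated infinitely often. Apply \cref{many divisors} to this sequence to obtain $w=t_m s_{m-1}\cdots s_0$ for every $m$. Items~\eqref{obs3} and \eqref{obs4} of \cref{moving tau dispersiveness around} propagate dispersivity through such products, so regrouping the factors should write $w$ as an arbitrarily long product of elements each dispersive on all of $\Sigma$. This bookkeeping is routine but fiddly.

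\emph{Using the sink.} Fix a $k$-sink $\Omega'$, and apply condition~\eqref{sink4} of \cref{sinks} to $\Delta$. This gives $v\in S$ with $\im(v)\cap\Omega'=v(\Delta)$, so that $v^{-1}(\Omega')=\Delta$. Since $\Sigma\neq\emptyset$ is $S$-invariant, $\im(w^{2k+1})$ meets $\Sigma$. Hence $\im(vw^{2k+1})\not\subseteq\Omega'$, i.e.\ $vw^{2k+1}$ lies in the $\tau$-open set $\mathcal{O}$ of \cref{use sink}.

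By continuity, $\U:=\{s\in S\mid vs\in\mathcal{O}\}$ is a $\tau$-neighbourhood of $w^{2k+1}$. \cref{building chains} now yields $h\in\U$ and a chain of algebraically closed sets $\E=(\overline{e}_i)_{i=0}^k$ with $\overline{e}_i\cap\overline{e}_{i+1}\subseteq\Delta$ and $\nn_\E h\subseteq\U$. It then suffices to find $u\in\nn_\E$ with $\im(vu)\subseteq\Omega'$. Indeed, then $\im(vuh)\subseteq\Omega'$, while $uh\in\U$ forces $vuh\in\mathcal{O}$, a contradiction.

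\emph{Main obstacle.} The hard step is this last one. Condition~\eqref{sink3} directly provides $u\in\nn_{\C}$ with $\im(u)\subseteq\Omega'$ only for chains whose consecutive intersections lie in $\Omega'$, whereas ours lie in $\Delta=v^{-1}(\Omega')$.

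My first attempt is to transport the chain along $v$. Choose $g\in G$ agreeing with $v$ on the finite set $\bigcup_i\overline{e}_i$. The chain $g(\E)$ has consecutive intersections inside $v(\Delta)\subseteq\Omega'$. Apply \eqref{sink3} to $g(\E)$ to get $u'\in\nn_{g(\E)}$ with $\im(u')\subseteq\Omega'$. Then, using \cref{o:commutation} and \cref{change chains}, try to rewrite $u'$ in the form $vu\cdot(\text{element of }G)$ with $u\in\nn_\E$, possibly after enlarging $\U$ by a right translate.

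If this transport fails because $v\notin G$, the fallback is to modify $v$ via \cref{extensions of isomorphisms}, as in \cref{l:moving out}, so that it intertwines the relevant stabilisers.
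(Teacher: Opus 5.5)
\paragraph{Gap in the final step.} The object you reduce to does not exist. $\Sigma$ is a non-empty union of $G$-orbits, and every element of $\overline{G}$ maps each $G$-orbit into itself, so $u(\Sigma)\subseteq\Sigma$ for every $u\in S$. Since $v^{-1}(\Omega')=\Delta$, this gives $vu(\Sigma)\cap\Omega'=\emptyset$. Hence $\im(vu)\not\subseteq\Omega'$ for \emph{every} $u\in S$; equivalently, $\im(vu)\subseteq\Omega'$ would force $\im(u)\subseteq\Delta$.

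Your argument that $vw^{2k+1}\in\mathcal{O}$ used nothing about $w$. So your $\U=\{s\mid vs\in\mathcal{O}\}$ is all of $S$ and carries no information. Neither transporting the chain nor modifying $v$ via \cref{extensions of isomorphisms} can rescue this. Pulling $\mathcal{O}$ back along the one-sided map $s\mapsto vs$ freezes $v$ as the leftmost factor. But condition~(\ref{sink3}) only produces elements whose leftmost factor is the sink-valued element.

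\paragraph{The missing idea.} Use \emph{joint} continuity, so that $v$ itself is perturbed into an automorphism.
\begin{itemize}
\item Choose $\U_1\in\mathcal{N}_\tau(v)$ and $\U_2\in\mathcal{N}_\tau(w^{2k+1})$ with $\U_1\U_2\subseteq\mathcal{O}$.
\item Apply \cref{building chains} to $\U_2$ to obtain $h$ and $\E$.
\item Since $\tau\subseteq\pw$, pick $g\in\U_1\cap G$ agreeing with $v$ on $\bigcup_i\overline{e}_i$, and also on a finite set witnessing a $\pw$-neighbourhood of $v$ inside $\U_1$.
\item Your transported chain $g(\E)$ then has consecutive intersections in $v(\Delta)\subseteq\Omega'$.
\item Condition~(\ref{sink3}) yields $u'\in\nn_{g(\E)}$ with $\im(u')\subseteq\Omega'$.
\item By \cref{o:commutation}, $u'gh\in\nn_{g(\E)}gh=g\,\nn_{\E}h\subseteq\U_1\U_2\subseteq\mathcal{O}$. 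This contradicts $\im(u'gh)\subseteq\Omega'$.
\end{itemize}
This is what the paper does. It first absorbs $v$ into the dispersive element, setting $w:=vw'$, which is still deeply dispersive on $\Sigma$ by Item~\eqref{obs4}. It then splits $w^{2k+2}=w\cdot w^{2k+1}$.

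\paragraph{A smaller point.} A single application of \cref{many divisors} does not give deep dispersivity when $\Sigma$ has infinitely many orbits. A finite block $s_{m'}\cdots s_m$ need not be dispersive on all of $\Sigma$. Instead, as in the paper, first produce one $s_J$ that is dispersive on all of $\Sigma$, then apply \cref{many divisors} to the constant sequence $(s_J)$.
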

  \begin{proof}
  	Suppose, for contradiction, that $\tau$ is a Hausdorff semigroup topology on
  $S$ strictly coarser than $\pw$. Let 
    \[\Delta:=\{a\in\Omega \ \vert \text{ every } s\in S \text{ is }\tau\text{-rigid on } a\}\;.\]
By \cref{rigidity over all implies non T2}, $\Delta\subsetneq\Omega$. {Moreover, by Item~\eqref{obs2} of \cref{moving tau dispersiveness around}, $\Delta$ is $G$-invariant.} Write $\Sigma:=\Omega\setminus \Delta$ and let $\{\Sigma_{j}\}_{j\in J}$ be the (necessarily countable) partition of $\Sigma$ into $G$-orbits.\\
  	
\textbf{Claim:}  	  There is $w'\in S$  that is deeply $\tau$-dispersive on all of $\Sigma$. 
  	\begin{subproof}[Proof of claim]
  	 Fix $j\in J$. Since $\Sigma_j\subseteq\Omega\setminus\Delta$, there is some $t_{j}\in S$ that is not $\tau$-rigid on some $a_{j}\in\Sigma_{j}$.  
     By \cref{l:moving out} there is $t'_{j}\in S$ that is $\tau$-dispersive on some $a'_{j}\in\Sigma_{j}$. Then by \cref{tau-dispersive everywhere} there {is}
     some $s_{j}\in S$ which is $\tau$-dispersive on every point of $\Sigma_{j}$. 
 
  	 Using \cref{many divisors} we construct an element $s_{J}\in S$ with the property that for every $j\in J$ there are $u_j, v_j\in S$ %
     such that $s_{J}=u_{j}s_{j}v_{j}$ (here the elements $v_j$ are just sequences of the form $s_{j_m}s_{j_{m-1}}\dots s_{j_0}$ under some ordering of $J$). By virtue of \cref{moving tau dispersiveness around}, we conclude that $s_{J}$ is $\tau$-dispersive on $\Sigma$. 
     To conclude, apply \cref{many divisors} to the constant sequence of countably many copies of $s_J$ to find $w'\in S$ with the property that $w'$ admits a decomposition of the form $w'=t_{m}s_{J}^{m}$ for every $m\geq 1$. By  \cref{moving tau dispersiveness around} applied to the term $t_{m}s_{J}$ we conclude that $w'$ is deeply $\tau$-dispersive on $\Sigma$.  
  	\end{subproof}

{We now argue that $\Delta$ is algebraically closed. Take $a\in\acl(\Delta)$. Then, since $w'\in S$, we can pick $g\in G$ such that $gw'(a)=a$.  By Item~\eqref{obs4} of \cref{moving tau dispersiveness around}, since $w'$ is $\tau$-dispersive on $\Sigma$ and $\Sigma$ is $G$-invariant, $g w'$ is $\tau$-dispersive on $\Sigma$. Now, $a\in \acl(\Delta)\cap \im(g w')$, and so by Item~\eqref{obs6} of \cref{moving tau dispersiveness around}, $g w'$ is not $\tau$-dispersive on $a$, meaning that $a\in\Delta$, and thus concluding the argument that $\Delta$ is algebraically closed.}

     Let $\Omega'$ be a $k$-sink for $S$, for some {$k\geq 1$.}
     By property~(\ref{sink4}) of the definition of a sink, there is $v\in S$ such that $\im(v)\cap\Omega'=v(\Delta)$.
   {   Since $\Delta$ is $G$-invariant, every
  element of $S$ maps $\Delta$ into $\Delta$ and $\Omega \setminus \Delta$
  into $\Omega \setminus \Delta$; hence the same property passes to every
  product $v v'$ with $v' \in S$.}
     In particular, the property holds for any power of $w=vw'$. This implies that for all $m$ the set 
     {    \begin{equation*}
  	\U:=\{s\in S\,|\,\im(s)\nsubseteq \Omega'\}\, 
  \end{equation*}}
{is $\tau$-open by \cref{use sink} and  we have
  $\U \in \mathcal{N}_\tau(w^m)$ for all $m \geq 1$. Using continuity of
  multiplication, take $\U' \in \mathcal{N}_\tau(w)$ and
  $\U'' \in \mathcal{N}_\tau(w^{2k+1})$ with $\U' \U'' \subseteq \U$.}

     { Since $w$ is deeply $\tau$-dispersive on $\Sigma = \Omega \setminus \Delta$
  and $\Delta$ is $G$-invariant and algebraically closed,
  \cref{building chains} applied to $\U''$ yields $h \in G$ and a chain of
  algebraically closed sets $\E' = (\overline{e}'_j)_{j=0}^{k}$ such that}
   \begin{itemize}
     	\item $\overline{e}'_{j}\cap \overline{e}'_{j+1}\subseteq\Delta$ for all $0\leq j\leq k-1$,
     	\item $\nn_{\E'}h\subseteq\U''$. 
     \end{itemize}
      Since %
      $\tau\subseteq\pw$, the $\tau$-neighbourhood $\U'$ of $w$ contains a  basic $\pw$-neighbourhood of $w$. Hence,
      we can find $h'\in\U'{\cap G}$ agreeing with $w$ on $\E'$. 
      
{Set  $\overline{e}_j := h'(\overline{e}'_j)$ and
  $\E := (\overline{e}_j)_{j=0}^{k}$. For $0 \leq j \leq k-1$,
  \[
    \overline{e}_j \cap \overline{e}_{j+1}
      = h'\bigl(\overline{e}'_j\bigr) \cap h'\bigl(\overline{e}'_{j+1}\bigr)
            \subseteq w(\Delta) = v w'(\Delta) \subseteq v(\Delta) \subseteq \Omega',
  \]
  using that $h'$ agrees with $w$ on $\E'$, that
  $\overline{e}'_j \cap \overline{e}'_{j+1} \subseteq \Delta$, that
  $w'(\Delta) \subseteq \Delta$, and the choice of $v$.   }   %
   { By \cref{o:commutation}, $\nn_{\E} h' = h' \nn_{\E'}$, so}
 
      \begin{equation*}
        \nn_{\E}h'h=h'\nn_{\E'}h\subseteq\U'\U''\subseteq\U.
      \end{equation*}
      Now apply Item~(\ref{sink3}) of the definition of a $k$-sink {to the chain $\E$}: we conclude the existence of $u\in\nn_{\E}$ with $\im(u)\subseteq \Omega'$. 
      { Then   $u h' h \in \nn_{\E}\, h' h \subseteq \U$, while
  $\im(u h' h) \subseteq \Omega'$, contradicting the
  definition of $\U$. Hence no Hausdorff semigroup topology on $S$ is strictly
  coarser than $\pw$, i.e., $\pw$ is minimal.}
  \end{proof}

 \subsection{{The method of universally embedded models}}

  In this section we define a natural way to obtain sinks through the existence of a sufficiently ``generic'' copy of $\M$ within itself, which we  call a universally embedded model. In Section~\ref{subsub:existence} we define universally embedded models and give examples. {We shall later show in Section~\ref{sub:universalfromlovely} that lovely pairs, a natural notion of a generic copy of a model~\cite{ben2003lovely}, are a source of universally embedded models.} 
Our main result in \ref{subsub:existence} is \cref{from base monotonicity to universally embedded}, where we show that
universally embedded models exist whenever {algebraic closure is modular.} In \cref{subsub:sinks}, we then show that under the same assumption universally embedded models yield sinks, completing the proof of \cref{t:minimality endomorphism monoids}.
  
  \subsubsection{On the existence of universally embedded models}\label{subsub:existence}
 \begin{definition}\label{def: absorbing configuration} Let $\Omega'\subseteq\Omega$ be a (possibly infinite) algebraically closed set. An \textbf{absorbing configuration} $(\overline{a}_1; \overline{a}_2; \overline{b})$ for $\Omega'$ consists of algebraically closed finite tuples $\overline{a}_1, \overline{a}_2$, and $\overline{b}$ from $\Omega$ such that 
\begin{enumerate}[label=(\alph*)]
\item\label{p1} $\overline{a}_1\subseteq \overline{a}_2$; and
          \item \label{p3}$\overline{b}\ind^{a}_{\overline{b}\cap\Omega'} \overline{a}_1$; and
          \item \label{p4}$\overline{b}\cap\Omega'\ind^{a}_{\overline{a}_1} \overline{a}_2$.
\end{enumerate}
 \end{definition}

  \begin{definition}\label{universally embedded models} Let $G\acts\Omega$ be the automorphism group of a countable saturated structure $\M$ with locally finite algebraic closure.
  	We say that {an algebraically closed set} $\Omega'\subsetneq\Omega$ is  \textbf{universally embedded} in $\Omega$ if:  

        \begin{itemize}
\item (absorption) whenever $(\overline{a}_1; \overline{a}_2; \overline{b})$ is an absorbing configuration for $\Omega'$, there are $\overline{a}_1' \overline{a}_2'\equiv_{\overline{b}} \overline{a}_1 \overline{a}_2$ with $\Omega'\cap \overline{a}_2'=\overline{a}_1'$.
        
  	\end{itemize}
  		 
We will see in \cref{obs:universally embedded models} below that if $\Omega'\subseteq\Omega$ is universally embedded, then it is the universe of an elementary submodel of $\M$; {accordingly, we call $\Omega'$ a
  \textbf{universally embedded model}.}
  We  say that $G\acts\Omega$ \textbf{admits universally embedded models} if a universally embedded $\Omega'\subseteq\Omega$ exists. \cref{fig:unversallyembedded} illustrates the behaviour of a universally embedded model. 
  \end{definition}

  Before turning to deeper structural results, the reader may verify the following are universally embedded models when $G=\Aut(\M)$:
  \begin{itemize}
  \item a dense codense subset of $\M:=(\mathbb{Q};<)$;
  \item a subspace of infinite dimension and infinite codimension, where $\M$ is a countably infinite vector space over a finite field.  
  \end{itemize}

 \begin{figure}[t]
     \begin{center} \includegraphics[width=0.5\textwidth]{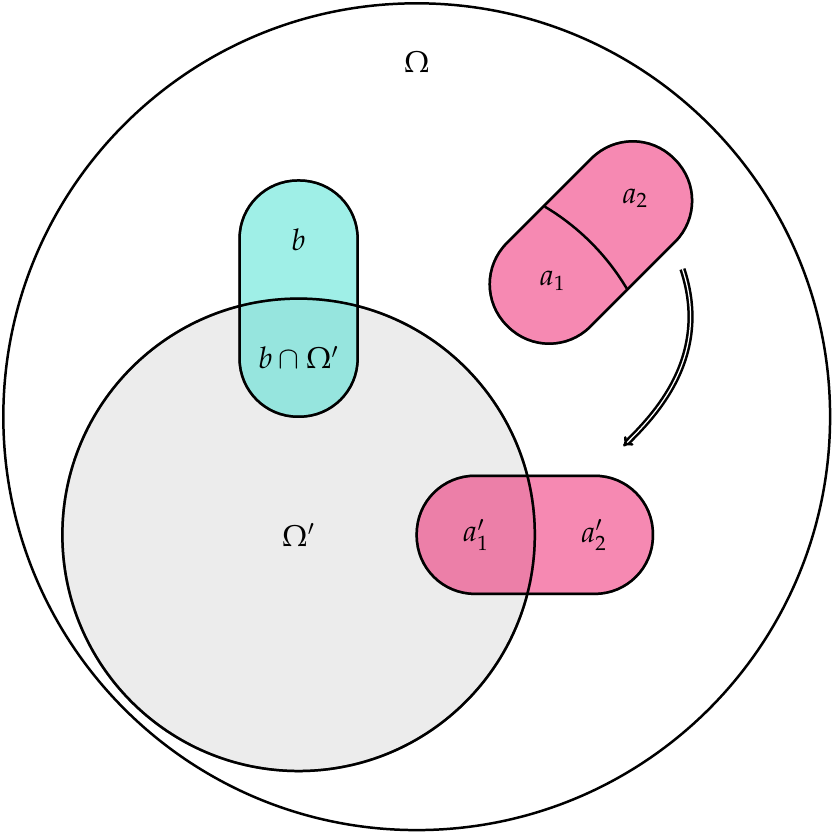}     
     \end{center}
     \caption{An illustration of the definition of a universally embedded model. In the figure, we have that $(\overline{a}_1; \overline{a}_2; \overline{b})$ is an absorbing configuration for $\Omega'$. In particular, $\overline{b}\ind^{a}_{\overline{b}\cap\Omega'} \overline{a}_1$ and $\overline{b}\cap\Omega'\ind^{a}_{\overline{a}_1} \overline{a}_2$. This allows us to ``absorb'' $\overline{a}_1$ into $\Omega'$: that is, to find $\overline{a}_1'\overline{a}_2'\equiv_{\overline{b}} \overline{a}_1 \overline{a}_2$ such that $\Omega'\cap \overline{a}_2'=\overline{a}_1'$.}
     \label{fig:unversallyembedded}
 \end{figure}

\begin{definition} Assume \ref{sat}. We say that an algebraically closed set $\Omega'\subseteq\Omega$ has the \textbf{coheir property}
 if for every absorbing configuration $(\overline{a}; \overline{a}; \bar b)$ for $\Omega'$, there is $\overline{a}'\equiv_{\overline{b}}\overline{a}$ with $\overline{a}'\subseteq\Omega'$. Equivalently, $\Omega'$ has the \textbf{coheir} property if and only if
 \begin{itemize}
     \item (coheir) for all finite and algebraically closed tuples $\overline{a}, \overline{b}$ from $\Omega$ with $\overline{b}\ind^{a}_{\overline{b}\cap\Omega'
     } \overline{a}$, there is $\overline{a}'\equiv_{\overline{b}} \overline{a}$ with $\overline{a}'\subseteq \Omega'$.
 \end{itemize}
In particular, every universally embedded model has the coheir property.
\end{definition}

\begin{observation}\label{obs:universally embedded models} Assume \ref{sat},  and 	let $\Omega'\subseteq\Omega$ be universally embedded. Then $\Omega'$ is the universe of
    an elementary submodel $\M'\preceq\M$ and $\M'$ is saturated.
\end{observation}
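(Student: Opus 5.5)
The plan is to apply the Tarski--Vaught test, with the coheir property (which every universally embedded set has) as the main tool, and then to prove saturation by the same mechanism. Work in a countable language, as permitted by the preliminaries, and identify definability with $G$-invariance where needed.

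\textbf{Elementarity.} Let $\varphi(x,\overline{c})$ be a formula with $\overline{c}$ from $\Omega'$, and suppose $\M\models\exists x\,\varphi(x,\overline{c})$. We want a witness inside $\Omega'$.

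Since $\Omega'$ is algebraically closed and $\acl$ is locally finite, we may enlarge $\overline{c}$ to the finite algebraically closed tuple $\overline{b}:=\acl(\overline{c})\subseteq\Omega'$, so that $\overline{b}\cap\Omega'=\overline{b}$.

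Pick a witness $e\in\Omega$ and let $\overline{a}:=\acl(e\overline{b})$. Then the condition $\overline{b}\ind^{a}_{\overline{b}\cap\Omega'}\overline{a}$ reads $\overline{b}\ind^a_{\overline{b}}\overline{a}$, which holds trivially by existence. So $(\overline{a};\overline{a};\overline{b})$ is an absorbing configuration for $\Omega'$.

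The coheir property now gives $\overline{a}'\equiv_{\overline{b}}\overline{a}$ with $\overline{a}'\subseteq\Omega'$. The coordinate of $\overline{a}'$ corresponding to $e$ realises $\varphi(x,\overline{c})$, because the equivalence $\equiv_{\overline{b}}$ is witnessed by an element of $G$ fixing $\overline{b}\supseteq\overline{c}$. By Tarski--Vaught, $\Omega'$ is the universe of some $\M'\preceq\M$. This step is routine.

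\textbf{Saturation.} Let $p$ be a type over a finite $C\subseteq\Omega'$; we may take $C=\overline{b}$ algebraically closed. Since $\M$ is saturated, $p$ is realised by some $e\in\Omega$, and the same argument produces a realisation in $\Omega'$. This works because "$\equiv_{\overline{b}}$" is orbit equivalence under $G_{\overline{b}}$, and in a saturated structure (enriched by the countable language with the same automorphism group) this coincides with equality of types over $\overline{b}$.

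Hence $\M'$ realises all types over finite subsets of its domain. A countable structure with this property is saturated, provided its theory is small: here $\mathrm{Th}(\M')=\mathrm{Th}(\M)$, and $\M$ is a countable saturated model, so the theory is small. Countability of $\M'$ is immediate since $\Omega'\subseteq\Omega$.

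It remains to check that $\Omega'$ is infinite, so that $\M'$ is a genuine model of this theory. This follows from elementarity whenever $\M$ is infinite, which holds because $\Omega$ is infinite.

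\textbf{Main obstacle.} The only delicate point is the identification of $G_{\overline{b}}$-orbits with complete types over $\overline{b}$, together with the choice of language. I would handle it by fixing the countable saturated $\M$ in a countable language whose automorphism group is $G$ (as in the preliminaries). In that setting, saturation plus countability gives that realisations of the same type over a finite set are conjugate under the stabiliser, by back-and-forth. Absorption itself is not needed beyond the coheir property.
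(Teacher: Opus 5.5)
Your proposal is correct and follows the paper's route: the Tarski--Vaught test via the coheir property applied with $\overline{b}=\acl(\overline{c})\subseteq\Omega'$, so that $(\overline{a};\overline{a};\overline{b})$ is trivially absorbing, and then the same argument for types over finite subsets of $\Omega'$ to get saturation. The points you flag as delicate need less than you suggest. Only the trivial direction is used (elements of $G=\Aut(\M)$ preserve types), so no change of language or back-and-forth is required, and for countable $\M'$, realising all types over finite sets is already the paper's definition of saturation, so smallness is not needed.
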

\begin{proof} This is a simple application of the Tarski--Vaught test~\cite[Theorem 2.1.2]{tent2012course} {and the coheir property (with $\overline{b}$ contained in $\Omega'$): take a formula $\phi(\overline{x}, \overline{b})$, where $\overline{b}$ is a tuple from $\Omega'$ and such that there is some $\overline{a}\in\Omega$ such that $\phi(\overline{a}, \overline{b})$ holds in $\M$. The coheir property applied to (the closure of) $\overline{b}$ tells us that there is some $\overline{a}'\subseteq\Omega'$ such that $\phi(\overline{a}', \overline{b})$ also holds in $\M$. By the Tarski-Vaught test, we then conclude that $\Omega'$ is the universe of an elementary submodel $\M'$ of $\M$. The same argument shows that every type over a finite  subset of $\Omega'$ which is realised in $\M$ is also realised in $\M'$; since $\M$ is  saturated, so is $\M'$.}
\end{proof}

 \begin{lemma}\label{from base monotonicity to universally embedded}
   Assume \ref{sat} and suppose {that algebraic closure is modular.} 
   Then $G\acts\Omega$ admits universally embedded models. In fact, the family of all universally embedded models is comeagre in the space of all algebraically closed subsets of $\Omega$. 
  \end{lemma}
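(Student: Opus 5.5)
We will prove the comeagreness statement directly; existence then follows by the Baire category theorem. The space of algebraically closed subsets of $\Omega$ is a closed subspace of $2^{\Omega}$: a set $X$ is algebraically closed iff for every finite $B\subseteq X$ we have $\acl(B)\subseteq X$, and $\acl(B)$ is finite. So this space is Polish, with basic open sets prescribing finitely many memberships and non-memberships. The requirement $\Omega'\subsetneq\Omega$ is comeagre, since for each $c\in\Omega\setminus\acl(\emptyset)$ the set of algebraically closed $X$ omitting $c$ is open, and it is non-empty because $\acl(\emptyset)$ itself omits $c$.

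For absorption, the plan is to write the family of universally embedded $\Omega'$ as a countable intersection of dense open sets. There are countably many triples $(\overline{a}_1;\overline{a}_2;\overline{b})$ of finite algebraically closed tuples. For each triple, put
\[
W_{\overline{a}_1,\overline{a}_2,\overline{b}}:=\{X \mid \text{either } (\overline{a}_1;\overline{a}_2;\overline{b}) \text{ is not absorbing for } X, \text{ or some } \overline{a}_1'\overline{a}_2'\equiv_{\overline{b}}\overline{a}_1\overline{a}_2 \text{ has } X\cap\overline{a}_2'=\overline{a}_1'\}.
\]
The second alternative is open, since it is witnessed by finitely many membership conditions. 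Whether the triple is absorbing depends only on $X\cap\overline{b}$, which is also a clopen condition. Hence $W_{\overline{a}_1,\overline{a}_2,\overline{b}}$ is open, and it remains to prove density.

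For density, fix a basic open set: a finite algebraically closed $P$ required to lie in $X$ and a finite $N$ required to avoid $X$. If some $X$ in this basic set already fails the absorbing condition, we are done. Otherwise we must find an algebraically closed $X\supseteq P$ with $X\cap N=\emptyset$ and $X\cap\overline{b}=\overline{b}\cap\Omega'$ for a configuration that is absorbing, and which absorbs $\overline{a}_1$. Enlarging $N$ and $P$ if necessary, we may assume the basic set determines $X\cap\overline{b}=:\overline{b}_0$.

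The natural candidate is $X:=\acl(P\cup\overline{a}_1')$ for a suitable copy $\overline{a}_1'\overline{a}_2'\equiv_{\overline{b}}\overline{a}_1\overline{a}_2$. Using Neumann's Lemma, choose this copy so that $\overline{a}_2'\ind^{a}_{\overline{b}}PN$. We then need two facts:
\[
\acl(P\cup\overline{a}_1')\cap(N\cup\overline{b})\subseteq P \qquad\text{and}\qquad \acl(P\cup\overline{a}_1')\cap\overline{a}_2'=\overline{a}_1'.
\]

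This verification is the main obstacle, and it is exactly where the absorbing conditions and modularity enter. The plan is to convert modularity into base monotonicity for $\ind^{a}$ via \cref{prop:modularity}, and then combine it with transitivity and the independences (b), (c) of \cref{def: absorbing configuration} to compute the intersections.

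For the first fact:
\begin{itemize}
\item From (b) we get $\overline{a}_1'\ind^{a}_{\overline{b}_0}\overline{b}$.
\item Combining this with $\overline{a}_1'\ind^{a}_{\overline{b}}P$ by transitivity gives $\overline{a}_1'\ind^{a}_{\overline{b}_0}P\overline{b}$.
\item Since $\overline{b}_0\subseteq P$, base monotonicity gives $\overline{a}_1'\ind^{a}_{P}\overline{b}$.
\item By modularity this yields $\acl(P\overline{a}_1')\cap\overline{b}\subseteq P$; the argument for $N$ is the same.
\end{itemize}

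For the second fact:
\begin{itemize}
\item From (c) together with the independence over $\overline{b}$, deduce $\overline{a}_2'\ind^{a}_{\overline{a}_1'}P$.
\item Modularity then gives $\acl(P\overline{a}_1')\cap\overline{a}_2'=\acl(\overline{a}_1'\cup(P\cap\overline{a}_2'))$.
\item This equals $\overline{a}_1'$, provided $P\cap\overline{a}_2'\subseteq\overline{a}_1'$, which the independence ensures.
\end{itemize}

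I expect the bookkeeping of which independences are over $\overline{b}$ versus over $\overline{b}_0$ to need care, possibly requiring the choice $\overline{a}_2'\ind^{a}_{\overline{b}}P N$ to be refined. Once density holds, the intersection of countably many dense open sets, together with the properness condition, is comeagre, and hence non-empty.
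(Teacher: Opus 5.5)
Your route is the paper's: a Baire category argument in the closed subspace of $2^{\Omega}$ consisting of algebraically closed sets. As in the paper, you refine the basic open set so that $X\cap\overline{b}=\overline{b}_0$ is fixed, choose $\overline{a}_1'\overline{a}_2'\equiv_{\overline{b}}\overline{a}_1\overline{a}_2$ independent over $\overline{b}$ from the finite data, and use $\acl(P\overline{a}_1')$, which is the paper's $E_0'=\acl(E_0\overline{a}_1')$. Your first fact is exactly the paper's chain: monotonicity, transitivity with (b), then base monotonicity.

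The one step you only assert is $\overline{a}_2'\ind^{a}_{\overline{a}_1'}P$. It goes through with your choice of copy as it stands, and it does not use (b). The missing observation is that $P\ind^{a}_{\overline{b}_0}\overline{b}$ holds trivially, because $P$ and $\overline{b}$ are algebraically closed with $P\cap\overline{b}=\overline{b}_0$. From there:
\begin{itemize}
\item transitivity with $P\ind^{a}_{\overline{b}}\overline{a}_2'$ gives $P\ind^{a}_{\overline{b}_0}\overline{a}_2'$;
\item base monotonicity, using $\overline{b}_0\subseteq\overline{b}_0\overline{a}_1'\subseteq\overline{b}_0\overline{a}_2'$, gives $P\ind^{a}_{\overline{b}_0\overline{a}_1'}\overline{a}_2'$;
\item transitivity with (c), read as $\overline{a}_2'\ind^{a}_{\overline{a}_1'}\overline{b}_0$, gives $\overline{a}_2'\ind^{a}_{\overline{a}_1'}\overline{b}_0P$, hence $\overline{a}_2'\ind^{a}_{\overline{a}_1'}P$.
\end{itemize}

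By the definition of $\ind^{a}$, this independence literally says $\acl(P\overline{a}_1')\cap\overline{a}_2'=\overline{a}_1'$. So your final appeal to modularity is superfluous, as is the one closing the first fact. Modularity enters only through base monotonicity.

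Your properness argument also needs a small repair: showing each set $\{X : c\notin X\}$ is non-empty and open does not give comeagreness. Either argue density, since every non-empty basic open set contains the finite set $\acl(P)$. Or note that absorption already excludes $\Omega'=\Omega$: take $\overline{a}_1=\overline{b}=\acl(\emptyset)\subsetneq\overline{a}_2$.
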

  \begin{proof}
  	Let $\mathfrak{C}\subseteq 2^{\Omega}$ 
    be the space of all (possibly infinite) algebraically closed subsets of $\Omega$,
    {where $2^{\Omega}$ carries the Tychonoff 
    topology.}
    It is not hard to see that   $\mathfrak{C}$ is closed in $2^{\Omega}$. A basic open set of $2^\Omega$ has  the form
  $\U_{E_0, E} = \{\, F \subseteq \Omega \mid F \cap E = E_0 \,\}$ for finite   $E_0 \subseteq E \subseteq \Omega$, and $\U_{E_0, E} \cap \mathfrak{C} \neq \emptyset$ if and only if  $\cl(E_0) \cap E = E_0$. %

     Since countable intersections of comeagre sets are comeagre, it suffices to   show that for each triple $(\overline{a}_1; \overline{a}_2; \overline{b})$  of finite algebraically closed tuples there is a comeagre   $\V \subseteq \mathfrak{C}$ such that every $\Omega' \in \V$ satisfies one   of:
  \begin{itemize}
    \item $(\overline{a}_1; \overline{a}_2; \overline{b})$ is not an absorbing
      configuration for $\Omega'$; or
    \item there are $\overline{a}'_1 \overline{a}'_2 \equiv_{\overline{b}}
      \overline{a}_1 \overline{a}_2$ with
      $\overline{a}'_2 \cap \Omega' = \overline{a}'_1$.
  \end{itemize}

     {  For this, it suffices in turn to show that every non-empty basic open set
  $\mathfrak{C} \cap \U_{E_0, E}$ contains a non-empty open subset $\U'$ all
  of whose members satisfy one of the two alternatives.}

  So take $E_0 \subseteq E$ with $\cl(E_0) \cap E = E_0$. Passing to an open   subset of $\mathfrak{C} \cap \U_{E_0, E}$ (i.e.\ enlarging $E_0$ and $E$),   we may assume $\overline{b} \subseteq E$ and that $E_0$ and $E$ are algebraically closed. Write $\overline{b}_0 := \overline{b} \cap E_0$. Since   $\overline{b} \subseteq E$, every
  $\Omega' \in \mathfrak{C} \cap \U_{E_0, E}$ satisfies
  $\Omega' \cap \overline{b} = \overline{b}_0$. 
  Consequently $(\overline{a}_1; \overline{a}_2; \overline{b})$ is an absorbing   configuration for such an $\Omega'$ if and only if it is an absorbing configuration for
  $\overline{b}_0$ (a condition independent of $\Omega'$). 
  If it fails, the  first alternative holds on all of $\mathfrak{C} \cap \U_{E_0, E}$ and we are  done; so assume $(\overline{a}_1; \overline{a}_2; \overline{b})$ is an  absorbing configuration for $\overline{b}_0$.
    
     We  construct finite algebraically closed sets $E'_{0}\subseteq E'$ with $E\subseteq E'$, and:
     
     \begin{enumerate}[label=(\roman*)]
     	\item\label{item:clos1} $E'_{0}\cap E=E_{0}$;
     	\item\label{item:clos2} there is $\overline{a}_2'\overline{a}_1'\equiv_{\overline{b}} \overline{a}_2\overline{a}_1$ such that $\overline{a}_2'\subseteq E'$ and $\overline{a}_2'\cap E_0'=\overline{a}_1'$.
     \end{enumerate}
    Granted these, it follows that the open (and non-empty) set  $\U':=\mathfrak{C}\cap\U_{E'_{0},E'}\subseteq\mathfrak{C}\cap\U_{E_{0},E}$ satisfies the required properties: { it
  is non-empty since $E'_0$ is algebraically closed, so
  $\cl(E'_0) \cap E' = E'_0$; the containment
  $\U' \subseteq \mathfrak{C} \cap \U_{E_0, E}$ follows from
  \ref{item:clos1} and $E \subseteq E'$; and every $\Omega' \in \U'$ satisfies   $\Omega' \cap E' = E'_0$, hence
  $\Omega' \cap \overline{a}'_2 = E'_0 \cap \overline{a}'_2 =  \overline{a}'_1$, giving the second alternative.}
    
 For the construction, use full existence to pick
  $\overline{a}'_1 \overline{a}'_2 \equiv_{\overline{b}}
  \overline{a}_1 \overline{a}_2$ with
  $E \ind^{a}_{\overline{b}} \overline{a}'_1 \overline{a}'_2$, and set
  $E' := \cl(E\, \overline{a}'_1 \overline{a}'_2)$,
  $E'_0 := \cl(E_0\, \overline{a}'_1)$. Conditions \ref{item:clos1} and
  \ref{item:clos2} are then equivalent, respectively, to:
   \begin{enumerate}[label=(\roman*$^\star$)]
    	\item \label{item1} $\overline{a}_1'\ind^{a}_{E_0} E$;
    	\item \label{item2} $E_0'\ind^{a}_{\overline{a}_1'} \overline{a}_2'$. 
    \end{enumerate}

   In order to do this, let us look back first at our assumption that $(\overline{a}_1; \overline{a}_2; \overline{b})$ is an absorbing configuration for $\overline{b}_0$. Since $\overline{a}_2'\overline{a}_1'\equiv_{\overline{b}} \overline{a}_2 \overline{a}_1$, by invariance, $(\overline{a}_1'; \overline{a}_2'; \overline{b})$ is also an absorbing configuration for $\overline{b}_0$.
    Conditions \ref{p3} and \ref{p4} in \cref{universally embedded models} are then equivalent to:
     
     \begin{minipage}{.48\linewidth} %
    	\begin{equation}\label{cons1}
    	\overline{b}\ind^{a}_{\overline{b}_{0}}\overline{a}_1',
    	\end{equation}
    \end{minipage}%
    \hfill %
    \begin{minipage}{.48\linewidth} %
\begin{equation}\label{cons2}
    	\overline{b}_{0}\ind^{a}_{\overline{a}_1'}\overline{a}_2'.
    	\end{equation}
    \end{minipage}	
      
    We use the assumption that $\ind^{a}$ is a symmetric independence relation (the non-trivial assumption is that it satisfies base monotonicity) in a crucial way. To establish \ref{item1} we use monotonicity, transitivity together with  \cref{cons1}, and finally base monotonicity:
    \begin{align*}
    E\ind^{a}_{\overline{b}}\overline{a}_2' \quad &\Rightarrow \quad E\ind^{a}_{\overline{b}}\overline{a}_1' &\text{ by monotonicity,}\\
&\Rightarrow\quad E\ind^{a}_{\overline{b}_{0}}\overline{a}_1' & \text{by transitivity and \eqref{cons1},}\\
&\Rightarrow \quad E\ind^{a}_{E_{0}}\overline{a}_1' & \text{ by base monotonicity.}
    \end{align*}
    Thus, we conclude \ref{item1} by symmetry. Finally, for \ref{item2}, 
    \begin{align*}
        E\ind^{a}_{\overline{b}}\overline{a}_2' \quad & \Rightarrow\quad E_{0}\ind^{a}_{\overline{b}_{{0}}}\overline{a}_2' &{\text{by transitivity and \eqref{cons1},}}\\
        &\Rightarrow\quad E_{0}\ind^{a}_{\overline{b}_0 \overline{a}_1'} \overline{a}_2' &\text{ by base monotonicity},\\
        &\Rightarrow\quad E_{0}\ind^{a}_{\overline{a}_1'}\overline{a}_2' &\text{ by transitivity,  \eqref{cons2}, and } \overline{b}_0\subseteq E_0.
    \end{align*}
    Since we verified \ref{item:clos1} and \ref{item:clos2}, this completes our proof.
  \end{proof}

\subsubsection{Universally embedded models as sinks}\label{subsub:sinks}

We first verify that universally embedded models satisfy condition~\ref{sink4} of \cref{sinks}.

\begin{lemma}\label{properties of universally embedded sets}
  	Assume \ref{sat} and let $\Omega'\subseteq\Omega$ be a universally embedded model. Then for every  $G$-invariant algebraically closed $\Delta\subseteq\Omega$  there is $v\in \overline{G}$ with $\im(v)\cap\Omega'=v(\Delta)$.

  \end{lemma}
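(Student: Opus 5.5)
We build $v$ by a back-and-forth / type-realisation argument, using the absorption property of $\Omega'$ to push the non-$\Delta$ part of a copy of $\M$ outside $\Omega'$. Enumerate $\Omega$ as $(b_i)_{i<\omega}$. We construct finite algebraically closed sets $X_0\subseteq X_1\subseteq\cdots$ exhausting $\Omega$ and partial elementary maps $v_n\colon X_n\to\Omega$, each extending the previous one and agreeing with an element of $G$ on $X_n$. Throughout we maintain the invariant
\[
v_n(X_n)\cap\Omega' = v_n(X_n\cap\Delta)\qquad\text{and}\qquad v_n(X_n\cap\Delta)\subseteq \Omega'.
\]
The second half is convenient and harmless. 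The union $v:=\bigcup_n v_n$ is then an elementary embedding, i.e.\ $v\in\overline{G}$, and it satisfies $\im(v)\cap\Omega'=v(\Delta)$.

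\emph{Start.} For $n=0$, take $X_0:=\acl(\emptyset)$. Since $\Omega'$ is an elementary submodel (\cref{obs:universally embedded models}), it contains $\acl(\emptyset)$. Since $\Delta$ is $G$-invariant and algebraically closed, $\acl(\emptyset)\subseteq\Delta$ as long as $\Delta\neq\emptyset$; the case $\Delta=\emptyset$ requires $\Omega'\cap\acl(\emptyset)=\emptyset$, which is a degenerate point to check separately. So we may set $v_0=\mathrm{id}$.

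\emph{Inductive step.} Given $v_n$, let $X_{n+1}:=\acl(X_n b_n)$. Note that $X_{n+1}\cap\Delta$ is algebraically closed, since $\Delta$ is algebraically closed and invariant. We must find an image $v_{n+1}(X_{n+1})$ of the correct type over $v_n(X_n)$ whose intersection with $\Omega'$ is exactly the image of $X_{n+1}\cap\Delta$. The plan proceeds in two stages.

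First, we place the $\Delta$-part inside $\Omega'$. Choose a realisation $\overline{a}_1$ of the image of $X_{n+1}\cap\Delta$, together with $\overline{a}_2$ realising the image of $X_{n+1}$, over $\overline{b}:=v_n(X_n)$, following $\mathrm{tp}(X_{n+1}/X_n)$ transported by $v_n$. By Neumann's Lemma we may take $\overline{a}_2\ind^a_{\overline{b}}\,\overline{b}'$ for anything needed.

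Second, we apply absorption to the configuration $(\overline{a}_1\cup\overline{b}_0;\ \overline{a}_2\cup\overline{b};\ \overline{b})$ with $\overline{b}_0:=\overline{b}\cap\Omega'=v_n(X_n\cap\Delta)$. Absorption produces a copy in which $\Omega'\cap\overline{a}_2'=\overline{a}_1'$. This is exactly the invariant, because the elements of $\overline{a}_2'$ coming from $X_{n+1}\setminus\Delta$ land outside $\Omega'$ while those from $\Delta$ land inside.

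We must verify conditions \ref{p3} and \ref{p4} for this configuration. Condition \ref{p3}, namely $\overline{b}\ind^a_{\overline{b}_0}\overline{a}_1$, should come from choosing $\overline{a}_1$ independent from $\overline{b}$ over $\overline{b}_0$. It is consistent with the type because, in the source, $X_n\ind^a_{X_n\cap\Delta}X_{n+1}\cap\Delta$ holds in the appropriate sense, via modularity (\cref{prop:modularity}) applied to the algebraically closed sets $X_n\cap\Delta\subseteq X_{n+1}\cap\Delta$. Condition \ref{p4}, namely $\overline{b}_0\ind^a_{\overline{a}_1}\overline{a}_2$, should be automatic since $\overline{b}_0\subseteq\overline{a}_1$ after including it.

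\emph{Main obstacle.} The expected main difficulty is this verification of \ref{p3}. The independences are dictated by the source configuration $X_n, X_{n+1}$, so they cannot simply be chosen freely. We would try to use modularity in the form $\acl(X_n\cup(X_{n+1}\cap\Delta))\cap\Delta$ versus $X_n\cap\Delta$. We also need to ensure that the newly added elements of the $\Delta$-part, which must lie in $\Omega'$, are not already forced outside $\Omega'$. If \ref{p3} fails, the fallback is to first use the coheir property to realise the $\Delta$-part in $\Omega'$ over $\overline{b}_0$, and then absorb the rest with $\overline{a}_1$ equal to the full new $\Delta$-part.
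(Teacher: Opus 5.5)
Your construction is the paper's. Take $g\in G$ extending $v_n$. Since $\overline{b}_0\subseteq\overline{a}_1$ and $\overline{b}\subseteq\overline{a}_2$, your configuration $(\overline{a}_1\cup\overline{b}_0;\overline{a}_2\cup\overline{b};\overline{b})$ is just $(g(X_{n+1}\cap\Delta);\,g(X_{n+1});\,g(X_n))$. That is exactly the absorbing configuration the paper uses, with the same inductive invariant.

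However, the step you single out as the main obstacle is immediate, and it must not rely on modularity, which is not a hypothesis of this lemma. Because $X_n\subseteq X_{n+1}$, we have $X_n\cap(X_{n+1}\cap\Delta)=X_n\cap\Delta$. All three sets are algebraically closed, so $X_n\ind^{a}_{X_n\cap\Delta}(X_{n+1}\cap\Delta)$ holds directly by the definition of $\ind^{a}$. Apply $g$ and use the invariant $\overline{b}\cap\Omega'=g(X_n\cap\Delta)$, and you get \ref{p3}. Condition \ref{p4} is existence plus monotonicity, as you say. Absorption then gives $\overline{a}_1'\overline{a}_2'\equiv_{\overline{b}}\overline{a}_1\overline{a}_2$ with $\overline{a}_2'\cap\Omega'=\overline{a}_1'$. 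This is precisely your invariant at stage $n+1$ (take $v_{n+1}:=hg$ on $X_{n+1}$, where $h\in G$ witnesses the type equality), so nothing more needs to be ensured about the new $\Delta$-elements.

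Several of your hedges should simply be dropped:
\begin{itemize}
\item You cannot ``choose'' $\overline{a}_1$ independent from $\overline{b}$ over $\overline{b}_0$, since its type over $\overline{b}$ is prescribed. The independence has to be, and is, automatic.
\item Neumann's Lemma plays no role.
\item Your fallback would not help. The coheir property needs the same independence \ref{p3} as its hypothesis, and realising the $\Delta$-part over $\overline{b}_0$ alone would not extend $v_n$.
\item $\Delta=\emptyset$ is not a separate case. Every algebraically closed $\Delta$ contains $\acl(\emptyset)$, so $\Delta=\emptyset$ forces $\acl(\emptyset)=\emptyset$. Moreover, $\acl(\emptyset)\subseteq\Omega'$ because $\Omega'$ is algebraically closed.
\end{itemize}
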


  \begin{proof}
 Fix such a $\Delta\subseteq\Omega$. Let $(A_{(i,2)})_{i\in\mathbb{N}}$ be a nested sequence of finite algebraically closed subsets of $\Omega$ with %
 { $A_{(0,2)} := \acl(\emptyset)$}
 and such that $\bigcup_{i\in\mathbb{N}} A_{(i,2)}=\Omega$. Set
  $A_{(i,1)} := A_{(i,2)} \cap \Delta$, for each $i\in\mathbb{N}$. %
  {It suffices to find a sequence
  $(g_i)_{i \in \mathbb{N}}$ in $G$ such that
  $g_{i+1} \upharpoonright_{A_{(i,2)}} = g_i \upharpoonright_{A_{(i,2)}}$ and
  \begin{equation}\label{eq: want for gi}
    g_i(A_{(i,1)}) = g_i(A_{(i,2)}) \cap \Omega'
  \end{equation}
  for every $i\in \mathbb N$: the limit $v$ of $(g_i)_{i\in \mathbb N}$ in the topology of pointwise
  convergence then satisfies $\im(v) \cap \Omega' = v(\Delta)$.}

{  We build the $g_i$ inductively, writing $A'_{(i,j)} := g_i(A_{(i,j)})$ for   $j \in \{1, 2\}$. %
For the base case, any   $g_0 \in G$ works as $A_{(0,2)} = \acl(\emptyset) \subseteq \Delta$, so  $A_{(0,1)} = A_{(0,2)}$, and any 
  $g_0$ preserves $\acl(\emptyset)$ setwise.} 

{ For the inductive step, suppose $g_n$ is given. Set
  $A''_{(n+1,j)} := g_n(A_{(n+1,j)})$ for $j \in \{1,2\}$ and let $\overline{a}^{k}_{(i,j)}$ denote an enumeration of  $A^{k}_{(i,j)}$, for $k \in \{\prime, \prime\prime\}$.}
  
\textbf{Claim:} $(\overline{a}''_{(n+1, 1)}; \overline{a}''_{(n+1, 2)}; \overline{a}'_{(n,2)})$ is an absorbing configuration for $\Omega'$.
\begin{subproof}[Proof of Claim] Firstly, \ref{p1} is true by construction (and the fact that both ${A_{(n+1, 2)}}$
and $\Delta$ are algebraically closed). For \ref{p3}, note that
\[A_{(n,2)}\cap A_{(n+1, 1)}=A_{(n,2)}\cap A_{(n+1, 1)}\cap\Delta=A_{(n,1)}\;.\]
From this and the definition of algebraic independence we get that 
\[A_{(n,2)}\ind^{a}_{A_{(n,1)}} A_{(n+1, 1)}\;.\]
Finally, by invariance (multiplying all sides by {$g_n$}), 
\[A'_{(n,2)}\ind^{a}_{A'_{(n,1)}} A''_{(n+1, 1)}\;.\]
This is precisely condition \ref{p3} for $(\overline{a}''_{(n+1, 1)}; \overline{a}''_{(n+1, 2)}; \overline{a}'_{(n,2)})$.

For condition \ref{p4}, note that by %
{existence} and monotonicity
\[A_{(n,1)}\ind^{a}_{A_{(n+1, 1)}} A_{(n+1, 2)}\;.\]
Again, by invariance, 
\[A'_{(n,1)}\ind^{a}_{A''_{(n+1, 1)}} A''_{(n+1, 2)}\;,\]
which is precisely \ref{p4}.
\end{subproof}

Since $(\overline{a}''_{(n+1, 1)}; \overline{a}''_{(n+1, {2})}; \overline{a}'_{(n,2)})$ is an absorbing configuration and $\Omega'$ is a universally embedded model, there are $A_{(n+1,2)}'$ and 
$A_{({n+1},1)}'$ such that
\begin{equation}\label{eq: where to send for 3}
\overline{a}_{(n+1,2)}''\overline{a}''_{(n+1,1)}\equiv_{A'_{(n,2)}} \overline{a}_{(n+1,2)}' 
{\overline{a}_{(n+1,1)}'}
\end{equation}
and $A_{(n+1,2)}'\cap \Omega'=A'_{(n+1,1)}$. {Let $h \in G$ witness
  \eqref{eq: where to send for 3}; then $g_{n+1} := h g_n$ agrees with $g_n$
  on $A_{(n,2)}$ (as $h$ fixes $\overline{a}'_{(n,2)}$ pointwise) and
  satisfies \eqref{eq: want for gi} for $i = n+1$, since
  $g_{n+1}(A_{(n+1,j)}) = h(A''_{(n+1,j)}) = A'_{(n+1,j)}$. This completes
  the induction, and the proof.}
  \end{proof}

  {We conclude by proving that universally embedded models are $1$-sinks whenever algebraic closure is modular.}

  \begin{lemma}\label{universally embedded are sinks}
Let $G\acts\Omega$ satisfy \ref{sat} {and be such that algebraic closure is modular. Let $\Omega'\subsetneq\Omega$ be a universally embedded model and let $S:=\overline{G}$.
Then $\Omega'$ is a $1$-sink for $S$.}
   	
  \end{lemma}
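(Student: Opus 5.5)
The plan is to check the two conditions of \cref{sinks} for $k=1$. Condition~(\ref{sink4}) is exactly \cref{properties of universally embedded sets}, so nothing new is needed there. Also, $\Omega'\subsetneq\Omega$ holds by the definition of a universally embedded set. Moreover, $\Omega'$ is infinite and algebraically closed, since by \cref{obs:universally embedded models} it is the universe of a saturated elementary submodel $\M'\preceq\M$. So all the work is in condition~(\ref{sink3}).

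Fix a chain of algebraically closed sets $\C=(\overline{c}_0,\overline{c}_1)$ with $\overline{c}_0\cap\overline{c}_1\subseteq\Omega'$. By \cref{o:chain}, an element $u\in S$ lies in $\nn_{\C}$ if and only if $\overline{c}_0\,u(\overline{c}_1)\orq\overline{c}_0\overline{c}_1$, that is, $u(\overline{c}_1)\orq_{\overline{c}_0}\overline{c}_1$. So it suffices to carry out two steps:
\begin{enumerate}
\item find $\overline{c}_1'\subseteq\Omega'$ with $\overline{c}_1'\orq_{\overline{c}_0}\overline{c}_1$;
\item find $u\in S$ with $\im(u)\subseteq\Omega'$ and $u(\overline{c}_1)=\overline{c}_1'$.
\end{enumerate}

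Step 2 is routine. Since $\M'$ is countable and saturated, and $\M'\preceq\M$ realises the same types as $\M$, there is an isomorphism $\M\to\M'$. Because saturated countable models are homogeneous, this isomorphism can be chosen to extend the elementary partial map $\overline{c}_1\mapsto\overline{c}_1'$. This partial map is elementary in $\M'$, since $\overline{c}_1'\orq\overline{c}_1$ in $\M$ and $\M'\preceq\M$. Composing with the inclusion $\M'\preceq\M$ gives an elementary embedding $u\in\EEmb(\M)=S$ with image $\Omega'$ and $u(\overline{c}_1)=\overline{c}_1'$.

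Step 1 is the key step, and it is where modularity enters. I would obtain it from the coheir property of $\Omega'$ (which every universally embedded model has), applied with $\overline{b}:=\overline{c}_0$ and $\overline{a}:=\overline{c}_1$. For this we need the hypothesis $\overline{c}_0\ind^{a}_{\overline{c}_0\cap\Omega'}\overline{c}_1$. Set $D:=\overline{c}_0\cap\Omega'$. It is finite and algebraically closed, being an intersection of algebraically closed sets, and $D\subseteq\overline{c}_0$. Apply modularity with $A:=D$, $B:=\overline{c}_0$ and $C:=\overline{c}_1$:
\[
\acl\bigl(D\cup(\overline{c}_1\cap\overline{c}_0)\bigr)=\acl(D\cup\overline{c}_1)\cap\overline{c}_0 .
\]
Since $\overline{c}_0\cap\overline{c}_1\subseteq\Omega'\cap\overline{c}_0=D$, the left-hand side equals $D$. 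Hence $\acl(D\cup\overline{c}_1)\cap\acl(D\cup\overline{c}_0)=D=\acl(D)$, which is exactly $\overline{c}_1\ind^{a}_{D}\overline{c}_0$. By symmetry this is the required hypothesis. The coheir property then yields $\overline{c}_1'\orq_{\overline{c}_0}\overline{c}_1$ with $\overline{c}_1'\subseteq\Omega'$, completing the argument.

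I expect the main obstacle to be this independence verification in Step 1, since without modularity the assumption $\overline{c}_0\cap\overline{c}_1\subseteq\Omega'$ alone does not give $\overline{c}_0\ind^{a}_{\overline{c}_0\cap\Omega'}\overline{c}_1$. A second point to check carefully is that the coheir property is stated for algebraically closed tuples. Here $\overline{c}_0$ and $\overline{c}_1$ are algebraically closed by assumption, so it applies directly.
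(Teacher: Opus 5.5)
Your proof is correct and follows the paper's argument. You reduce condition~(\ref{sink3}) via \cref{o:chain} to finding $\overline{c}_1'\subseteq\Omega'$ with $\overline{c}_1'\orq_{\overline{c}_0}\overline{c}_1$, obtain it by absorbing the configuration $(\overline{c}_1;\overline{c}_1;\overline{c}_0)$ (that is, by the coheir property), and then extend $\overline{c}_1\mapsto\overline{c}_1'$ to an elementary embedding onto $\M'$. The only cosmetic difference is that you verify $\overline{c}_0\ind^{a}_{\overline{c}_0\cap\Omega'}\overline{c}_1$ by a direct modularity computation, whereas the paper invokes base monotonicity; the two are equivalent by \cref{prop:modularity}.
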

  \begin{proof} %
    By definition, $\Omega' \subsetneq \Omega$ is algebraically closed. Meanwhile,
  property~\ref{sink4} was established in
  \cref{properties of universally embedded sets}. It remains to verify
  property~\ref{sink3}.

  	{Recall from \cref{obs:universally embedded models} that $\Omega'$ is the universe of a saturated elementary submodel $\M'\preceq \M$. Let $\C=(\overline{c}_{0}, \overline{c}_1)$ be a chain of algebraically closed sets.
    By \cref{o:chain}, in order to show that $\nn_{\C}$ contains an element whose image is contained in $\Omega'$, it suffices to show the existence of a chain of algebraically closed sets $\C'=(\overline{c}_{0}', \overline{c}_{1}')$ where $\overline{c}_{0}=\overline{c}'_{0}$, with $\overline{c}_{1}'$ a tuple from $\Omega'$,  and $\overline{c}'_{0}\overline{c}'_{1}\equiv \overline{c}_{0}\overline{c}_{1}$.  In fact, given $\C'$, since $\Omega'$ is the universe of a saturated elementary submodel of $\M$, a back and forth argument yields that the partial map $\phi:\overline{c}_1\to\overline{c}_1'$ extends to an elementary map $u:\M\to\M'$. From this, \cref{o:chain} yields that $u\in\nn_{\C}$. By construction $\im(u)\subseteq\Omega'$ as desired.}\\

    {
  So let $\C = (\overline{c}_0, \overline{c}_1)$ satisfy the hypothesis of \ref{sink3} (from \cref{sinks}), i.e.\ %
$\overline{c}_0 \cap \overline{c}_1 \subseteq {\Omega'}$.}
  	We claim that there is $\overline{c}_1'\equiv_{\overline{c}_0} \overline{c}_1$ such that $\overline{c}_1'\subseteq\Omega'$. Since $\Omega'$ is universally embedded, it suffices to show that $(\overline{c}_1; \overline{c}_1; \overline{c}_0)$ is an absorbing configuration for $\Omega'$. Write $\overline{d}_0:=\overline{c}_0\cap \Omega'$. Condition \ref{p1} is trivial, and conditions 
    \ref{p3} and \ref{p4}  are equivalent, respectively, to:
  	\begin{equation*}
\overline{c}_{0}\ind^{a}_{\overline{d}_0} \overline{c}_{1},\quad\text{ and } \quad \overline{d}_{0}\ind^{a}_{\overline{c}_{1}}\overline{c}_{1}\;.
  	\end{equation*}
    The first of the two conditions above follows from symmetry, base monotonicity, and the fact that $\overline{c}_0\ind^{a}_{\overline{c}_1\cap \overline{c}_0} \overline{c}_1$, since $\overline{c}_1\cap \overline{c}_0  \subseteq{\Omega'\cap \overline{c}_{0}=}\overline{d}_{0}
    $. 
    {The second condition is just an instantiation of existence.}
  \end{proof}

  Combining \cref{abstract minimality endomorphism monoids} with \cref{from base monotonicity to universally embedded} and \cref{universally embedded are sinks} we obtain:
  \begin{theorem}[\cref{mainthm: b}]\label{t:minimality endomorphism monoids}
  	Let $G$ be the automorphism group of a  countable saturated structure $\M$ with %
    locally finite {and modular} algebraic closure. 
    Then $\pw$ is a minimal Hausdorff semigroup topology on $\overline{G}$. 
  \end{theorem}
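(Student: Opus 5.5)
The theorem follows by assembling three results already established, so the plan is to check that their hypotheses line up. First, we are in setting \ref{sat}: $G$ is the automorphism group of a countable saturated structure with locally finite algebraic closure, so $\overline{G}=\EEmb(\M)$. By \cref{abstract minimality endomorphism monoids}, it then suffices to show that $\M$ admits a $k$-sink for some $k\geq 1$. We will in fact produce a $1$-sink.

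To get the sink, we invoke \cref{from base monotonicity to universally embedded}. Modularity of algebraic closure (equivalently, base monotonicity of $\ind^{a}$ by \cref{prop:modularity}) gives that universally embedded models form a comeagre subset of the space $\mathfrak{C}$ of algebraically closed subsets of $\Omega$. We fix one such $\Omega'$.

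The one point needing care is that the definition of a sink requires $\Omega'\subsetneq\Omega$, and $\Omega'$ should be infinite. I would handle this through the genericity statement. Since $\Omega$ is infinite, the sets containing a given point $a\notin\acl(\emptyset)$ and missing some other point form a nonempty open subset of $\mathfrak{C}$. For example, use Neumann's Lemma to find $b$ with $\acl(a)\cap\acl(b)=\acl(\emptyset)$ and $b \notin \acl(a)$; this gives the basic open set $\U_{\acl(a),\acl(a)\cup\{b\}}\cap\mathfrak{C}\neq\emptyset$. A comeagre set meets every nonempty open set, so we can choose $\Omega'$ proper. Moreover, $\Omega'$ is the universe of a saturated elementary submodel by \cref{obs:universally embedded models}, hence infinite.

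With such an $\Omega'$ in hand, \cref{universally embedded are sinks} (which uses modularity again) shows that $\Omega'$ is a $1$-sink for $\overline{G}$. Applying \cref{abstract minimality endomorphism monoids} then yields that no Hausdorff semigroup topology on $\overline{G}$ is strictly coarser than $\pw$, which is the desired minimality.

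I expect no real obstacle beyond the bookkeeping: matching the hypotheses of the three results and making sure the universally embedded model chosen is a proper subset.
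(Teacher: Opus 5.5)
Your proposal is correct and follows the paper's own argument, which also combines \cref{abstract minimality endomorphism monoids}, \cref{from base monotonicity to universally embedded} and \cref{universally embedded are sinks}. Your extra genericity check that $\Omega'$ is proper is harmless but unnecessary: $\Omega'\subsetneq\Omega$ is already part of the definition of a universally embedded set (indeed $\Omega$ itself fails absorption), and infiniteness follows from \cref{obs:universally embedded models}, as you note.
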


\section{Model-theoretic applications and examples}\label{sec:discussion}
{In this section, we discuss more concretely applications of our results in model-theoretic contexts. We begin
by providing some additional model-theoretic background in \cref{sub:modelbackground}. We then discuss applications of \cref{mainthm: b} and \cref{mainthm: c} in \cref{sub:mainapplications}. We conclude with \cref{sub:universalfromlovely}, where we discuss connections between universally embedded models and the more standard model-theoretic notion of a lovely pair~\cite{berenstein2010lovely}.}

\subsection{Model-theoretic background}\label{sub:modelbackground}

\subsubsection{Imaginaries}

{Let $\M$ be a countable saturated structure with domain $\Omega$. Let $(\sim_i)_{i\in I}$ be the collection of equivalence relations on finite powers of $\M$ which are definable without parameters. Letting $G=\Aut(\M)$, it is easy to see that $G$ acts naturally on the disjoint union}
\[{\M^{eq}:=\bigsqcup_{i\in I} \M^{n_i}/\sim_i}\;.\]
We denote by $\dcl^{eq}$ and $\acl^{eq}$ the notions of definable and algebraic closure arising from this action. {We identify $\M$ with $\M/=$. %
We call %
$\M$ the \textbf{real sort} of this action of $G$ and the other $\M^{n_i}/\sim_i$ 
\textbf{imaginary sorts}. We say that $G$ has \textbf{weak elimination of imaginaries} if for every %
$e\in\M^{eq}$
there is some finite tuple $b$ from %
$\M$ such that $e\in\dcl^{eq}(b)$ and $b\in\acl^{eq}(e)$. For more on imaginaries and weak elimination of imaginaries, the reader may consult~\cite[$\S$ 16]{poizat2000course}.}

\subsubsection{Stability and simplicity}

Independence relations play a central role in the study of \textbf{stable} theories and of the strictly larger class of (model-theoretically) \textbf{simple} theories. Stability and simplicity are combinatorial properties of a first-order theory which can be equivalently characterised by the existence of an independence relation (in this case defined on sufficiently small subsets of a sufficiently saturated model) satisfying certain axioms.\footnote{These axioms are essentially those of Definition~\ref{d:indipendence basic} together with base monotonicity, two other axioms (finite character and local character), and a generalisation of either stationarity or independent $3$-amalgamation, depending on whether we are characterising, respectively, stable, or simple theories.} We refer the reader to~\cite{tent2012course} for their formal definitions, as well as that of \textbf{non-forking independence}, a well-behaved notion of independence in the simple context, and of one-basedness, which corresponds to non-forking independence coinciding with an analogue of algebraic independence after having expanded the original structure by hyperimaginary elements (a generalisation of imaginaries). What will be important to us is that in the contexts we study these model-theoretic properties guarantee the existence of independence relations satisfying certain tameness conditions:

\begin{fact}\label{fact:modeltheoretic properties} Let $G\acts\Omega$ be the automorphism group of a countable saturated structure $\M$ with locally finite algebraic closure and weak elimination of imaginaries. Then, 
\begin{itemize}
    \item if the theory of  $\M$ is stable, there is an independence relation on $\Omega$ (known as non-forking independence) satisfying base monotonicity and stationarity;
     \item if the theory of $\M$ is simple, then there is an independence relation on $\Omega$ (known as non-forking independence) satisfying base monotonicity and independent $3$-amalgamation.  If the theory of $\mathcal{M}$ is, moreover, one-based, then non-forking independence is algebraic independence (and so, strongly one-based). 
\end{itemize} 
\end{fact}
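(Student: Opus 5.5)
The plan is to derive all three items from the standard theory of non-forking independence in $T^{eq}$. The main work is transferring each property from the imaginary setting to finite subsets of the real sort $\Omega$, and weak elimination of imaginaries is what makes this transfer possible.

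\textbf{Definition and basic axioms.} First, I define $A\ind_C B$ for $A,B,C\in\fps{\Omega}$ to mean $\mathrm{tp}(A/BC)$ does not fork over $C$, computed in the monster model of $T=\mathrm{Th}(\M)$. Since types over finite sets are all realised in the countable saturated $\M$, this relation is well defined and $G$-invariant on $\fps{\Omega}$. In a simple (and a fortiori stable) theory, non-forking satisfies symmetry, normality, monotonicity, transitivity, base monotonicity and extension (\cite{tent2012course}, \cite{kim2013simplicity}).

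Full existence in $\M$ needs one extra step: a non-forking extension of $\mathrm{tp}(A/C)$ to $BC$ is a type over a finite set, so it is realised in $\M$ by saturation. For anti-reflexivity, $a\ind_C a$ means $\mathrm{tp}(a/Ca)$ does not fork over $C$. Hence the formula $x=a$ does not divide over $C$, which forces $a\in\acl(C)$. This gives an independence relation in the sense of Definition~\ref{d:indipendence basic}, satisfying base monotonicity.

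\textbf{Stable case: stationarity.} In a stable theory, types over $\acl^{eq}(A)$ are stationary. Weak elimination of imaginaries gives that, for $A\in\X$, the sets $\acl^{eq}(A)$ and $\dcl^{eq}(A)$ coincide. To see this, each $e\in\acl^{eq}(A)$ is interdefinable-up-to-algebraicity with a real tuple $b$. Then $b\in\acl^{eq}(e)\subseteq\acl^{eq}(A)$, so $b\subseteq\acl(A)=A$, and $e\in\dcl^{eq}(b)$.

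Consequently $B_1\orq_A B_2$ implies that $B_1$ and $B_2$ have the same strong type over $A$. Stationarity of non-forking extensions then yields $B_1\orq_{AC}B_2$ whenever $B_i\ind_A C$.

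\textbf{Simple case: independent $3$-amalgamation.} Here I would invoke the Kim--Pillay independence theorem, which gives independent $3$-amalgamation for Lascar strong types over $A$. By the argument above, equality of types over an algebraically closed real $A$ gives equality of strong types.

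The main obstacle is upgrading strong types to Lascar strong types. My first attempt would use the local finiteness and saturation hypotheses to reduce to the case where $\mathrm{Lstp}=\mathrm{stp}$. In the intended $\omega$-categorical setting this holds; more generally it follows from elimination of hyperimaginaries, which is known for supersimple theories by Buechler--Pillay--Wagner. If that reduction fails in general, I would record this as the implicit hypothesis under which the fact is cited.

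\textbf{One-based case.} Finally, suppose $T$ is also one-based. Then $A\ind_C B$ holds if and only if $\acl^{eq}(AC)\cap\acl^{eq}(BC)=\acl^{eq}(C)$, modulo the same hyperimaginary reduction. By weak elimination of imaginaries, this intersection condition restricts to the real sort as $\acl(AC)\cap\acl(BC)=\acl(C)$, which is exactly $\ind^{a}$. This is strong one-basedness in the sense of Remark~\ref{rem:onebased}.
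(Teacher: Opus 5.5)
Your overall route is the same as the paper's: non-forking independence computed in $T^{eq}$, its standard axioms, and weak elimination of imaginaries to identify types over algebraically closed real sets with strong types. Your argument that $\acl^{eq}(A)=\dcl^{eq}(A)$ for $A\in\X$ is correct, and it settles the stable case.

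The gap is exactly the step you flag and then leave open. You never show that Lascar strong types coincide with strong types under the stated hypotheses. The Kim--Pillay independence theorem only amalgamates Lascar strong types. The characterisation of one-basedness in simple theories is phrased with $\mathrm{bdd}$ rather than $\acl^{eq}$. So both the independent $3$-amalgamation claim and the identification of non-forking independence with $\ind^{a}$ depend on this step. Your fallbacks do not cover the statement: it assumes neither $\omega$-categoricity nor supersimplicity, and adding $\mathrm{Lstp}=\mathrm{stp}$ as an extra hypothesis changes the claim.

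The missing idea is smallness. The paper works in a countable language, and a complete countable theory has a countable saturated model if and only if it is small. Kim's theorem then says that in a small simple theory, Lascar strong types over finite sets agree with strong types. This is exactly what the paper cites from \cite{kim2013simplicity} (Proposition 2.1.19(2) and Corollary 5.3.5). Local finiteness plays no role at this point; the saturation hypothesis enters only through smallness.

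With this in hand, the rest of your plan goes through:
\begin{itemize}
\item Your weak-elimination argument gives $\mathrm{tp}(-/A)=\mathrm{Lstp}(-/A)$ for $A\in\X$.
\item The independence theorem then yields independent $3$-amalgamation, with the amalgam realised in $\M$ by saturation and $\orq$ recovered by homogeneity.
\item In the one-based characterisation, $\mathrm{bdd}$ may be replaced by $\acl^{eq}$, after which your intersection computation correctly gives that non-forking independence equals $\ind^{a}$.
\end{itemize}
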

\begin{proof} This fact is mainly a consequence of the characterisation of non-forking independence in simple (and stable) theories (see, for example,~\cite[Section 3]{kim2013simplicity}), together with the fact that in simple and small theories, Lascar types agree with strong types (i.e., type over imaginary algebraically closed sets)~\cite[Proposition 2.1.19(2) and Corollary 5.3.5]{kim2013simplicity}, and that strong types agree with types {over algebraically closed sets} under the assumption of weak elimination of imaginaries. We refer the reader to~\cite[Section 3C]{ghaddln24} for a more detailed discussion. 
\end{proof}

For more on model-theoretic stability and simplicity, the reader may also consult~\cite{pillay1996geometric, baldwin2017fundamentals, kim2013simplicity}.

\subsection{On the scope of \cref{mainthm: b} and \cref{mainthm: c}}\label{sub:mainapplications}

Both Theorem~\ref{thm:onebased} and Theorem~\ref{t:minimality endomorphism monoids} apply to a wide range of structures. We discuss these in this subsection.\\

Firstly, from \cref{fact:modeltheoretic properties}, both Theorem~\ref{thm:onebased} and Theorem~\ref{t:minimality endomorphism monoids} apply to one-based countable saturated structures with locally finite algebraic closure and weak elimination of imaginaries.

\begin{theorem}\label{thm: simple theorem} Let $G=\Aut(\M)$ where $\M$ is the countable saturated model of a simple one-based theory with {locally finite algebraic closure and} weak elimination of imaginaries. Then, 
\begin{itemize}
    \item $\pw$ is minimal amongst Hausdorff semigroup topologies on $\overline{G}=\EEmb(\M)$;
    \item every semigroup topology on $G$ coarser than $\pw$ is a generalised pointwise topology. Moreover, if $G$ is transitive, $\pw$ is minimal amongst {$T_0$} semigroup topologies on $G$. 
\end{itemize}
\end{theorem}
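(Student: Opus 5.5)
The plan is to reduce both items to results already proved, using \cref{fact:modeltheoretic properties} to supply the hypotheses. Since $\M$ is countable saturated with locally finite algebraic closure, hypothesis \ref{sat} holds. Since the theory is simple and one-based and $\M$ has weak elimination of imaginaries, \cref{fact:modeltheoretic properties} shows that non-forking independence coincides with $\ind^{a}$. It also shows that this relation satisfies base monotonicity and independent $3$-amalgamation.

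For the first item, I will use \cref{prop:modularity}. Base monotonicity of $\ind^{a}$ is equivalent to modularity of algebraic closure, so algebraic closure is locally finite and modular. \cref{t:minimality endomorphism monoids} then gives that $\pw$ is minimal amongst Hausdorff semigroup topologies on $\overline{G}$. Saturation identifies $\overline{G}$ with $\EEmb(\M)$, which completes this item.

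For the second item, I will invoke \cref{thm:onebased} with $\ind=\ind^{a}$. By \cref{rem:onebased}, algebraic independence is strongly one-based, i.e.\ $1$-narrow, and independent $3$-amalgamation holds by the above. Hence every non-indiscrete semigroup topology on $G$ coarser than $\pw$ has the form $\pw^{\Delta_\tau,K_\tau}$. The indiscrete topology is the degenerate generalised pointwise topology with $\Delta=\emptyset$, by the convention in \cref{d:generalized pointwise stabilizer}.

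For the transitive clause, I will follow the proof of \cref{mainthm: c}. Transitivity forces $\Delta_\tau\in\{\emptyset,\Omega\}$. If $\Delta_\tau=\emptyset$, then $\tau$ is indiscrete and so not $T_0$. If $\Delta_\tau=\Omega$, every $g\in G$ is $\tau$-rigid on $\Omega$, and \cref{cor:T0} shows that $\tau$ is either $\pw$ or not $T_0$.

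The step needing the most care is the translation in \cref{fact:modeltheoretic properties}: one-basedness should make non-forking independence equal $\ind^{a}$ on the real sort itself, not merely on imaginaries or hyperimaginaries. That is exactly where weak elimination of imaginaries, together with local finiteness of $\acl$, enters. I will take that fact as given from the paper, so everything else is bookkeeping.
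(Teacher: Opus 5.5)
Your proposal is correct and follows the paper's proof essentially step by step. The paper likewise reads off base monotonicity, $1$-narrowness and independent $3$-amalgamation of $\ind^{a}$ from \cref{fact:modeltheoretic properties}, and gets modularity via \cref{prop:modularity}. It then invokes \cref{t:minimality endomorphism monoids} and \cref{thm:onebased}, handling the transitive clause exactly as in the proof of \cref{mainthm: c}; your explicit treatment of the indiscrete case via the $\Delta=\emptyset$ convention is a detail the paper leaves implicit there.
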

\begin{proof}%
We shall use \cref{fact:modeltheoretic properties}. Since we assumed that $\M$ is one-based, we have that non-forking independence is algebraic independence. Hence, algebraic independence satisfies base monotonicity, it is $1$-narrow (i.e., what we call strongly one-based), and satisfies independent $3$-amalgamation. From base monotonicity of algebraic independence we know that algebraic closure is modular. Hence, both \cref{t:minimality endomorphism monoids} and \cref{thm:onebased} apply. In the second case, note that,
as we argued in the proof of \cref{mainthm: c}, if $G$ is transitive, $\pw$ is minimal amongst {$T_0$} semigroup topologies on $G$.
\end{proof}

For many natural examples of $\omega$-categorical structures $\mathcal{M}$, whilst the natural action of $\mathrm{Aut}(\M)$ on $\mathcal{M}$ itself does not have weak elimination of imaginaries, we can find an expansion $\mathcal{M}^\star$ of $\mathcal{M}$ by finitely many imaginary sorts for which $\mathrm{Aut}(\M)\acts\mathcal{M}^\star$ has weak elimination of imaginaries. This property is sometimes called \textbf{very weak elimination of imaginaries}. Since for $\omega$-categorical structures simplicity, one-basedness, and locally finite algebraic closure are all maintained when adding finitely many imaginary sorts, both theorems apply to simple one-based $\omega$-categorical structures with very weak elimination of imaginaries. %

\begin{example} %
Smoothly approximable structures~\cite{cherlin2003finite}, which include all $\omega$-categorical $\omega$-stable structures~\cite{cherlin1985}, are a large class of simple one-based $\omega$-categorical structures~\cite[Proposition 5.5.2]{cherlin2003finite}. These structures are known to include various linear, projective, semiprojective, and affine geometries over finite fields (arising from vector spaces with forms)~\cite[Definitions 2.1.4, 2.1.7, and 2.1.8]{cherlin2003finite}, and  can be ``coordinatised'' by such Lie geometries~\cite[Theorem 2]{cherlin2003finite}. Some interesting structures in this class have weak elimination of imaginaries, such as basic linear and semiprojective geometries~\cite[Lemmas 2.3.5 and 2.3.6]{cherlin2003finite}.
This includes the infinite-dimensional versions of the geometries giving rise to the classical groups, namely the infinite-dimensional vector,  symplectic, unitary, and orthogonal spaces over finite fields %
(cf.~\cite{evans1991small}). \\
{Other smoothly approximable structures, such as basic affine geometries (not of quadratic type), have %
very weak elimination of imaginaries~\cite[Lemma 2.3.12]{cherlin2003finite}.
Our understanding is that it remains an open problem whether every smoothly approximable structure has very weak elimination of imaginaries.}%
\end{example}

\begin{example} %
From the recent proof of Koponen's conjecture~\cite{koponen2016binary} in~\cite{baldwin2024simplehomogeneousstructuresindiscernible}, we know that all simple finitely homogeneous structures are one-based. (This was already known for stable finitely homogeneous structures from~\cite{cherlin1985}.) So both Theorem~\ref{thm:onebased} and Theorem~\ref{t:minimality endomorphism monoids} apply to all such structures with   very weak elimination of imaginaries. It is an open problem whether all finitely homogeneous structures have very weak elimination of imaginaries~\cite[Problem 18]{cherlin2022homogeneous}. Standard examples of stable finitely homogeneous structures are $(\mathbb{N}; =)$, all countable structures in a finite unary language, and countably infinitely many disjoint copies of $K_n$. 
Examples of simple non-stable finitely homogeneous structures are the random graph, the random bipartite graph, and the generic tetrahedron-free $3$-hypergraph (the Fra\"{i}ss\'{e} limit of all finite uniform $3$-hypergraphs omitting a clique on four vertices). %
\end{example}

\begin{remark}Note that \cref{thm:onebased} also applies to simple $\omega$-categorical structures beyond the one-based context. In particular,~\cite[Section 3E]{ghaddln24} shows that a large class of simple $\omega$-categorical Hrushovski constructions satisfies $3$-narrowness. The method of universally embedded models does not straightforwardly apply to such structures; though we would expect adequate generalisations of our methods to yield minimality of $\pw$ for monoids of elementary embeddings of adequately large classes of $\omega$-categorical Hrushovski constructions for which algebraic independence does not satisfy base monotonicity. 
\end{remark}

 \begin{example} {The main advantage of using universally} embedded models 
 in our proof of \cref{t:minimality endomorphism monoids} is that, unlike \cref{thm:onebased}, we are not relying on stationarity or amalgamation properties that would constrain us to some particular area of the model-theoretic universe (such as that of simple theories). Firstly, algebraic closure is modular in any countable saturated structure whose automorphism group has no algebraicity, thus recovering the result of~\cite{pinskerschindler} that $\pw$ is minimal amongst Hausdorff semigroup topologies in this setting. Moreover, algebraic closure is also modular when it is degenerate (i.e., the algebraic closure of any set is the union of the algebraic closures of its points). This is, for example, the case for infinitely many copies of $K_n$. Finally, for a more interesting example, let $\M$ be a homogeneous structure with no algebraicity in the language $\mathcal{L}$, and let $\mathcal{N}$ be a countably infinite vector space over the finite field $\mathbb{F}_q$.
 Consider the free superposition $\M'$ of $\M$ and $\mathcal{N}$. This is just the Fra\"{i}ss\'{e} limit of finite-dimensional vector spaces over $\mathbb{F}_q$ with an underlying $\mathcal{L}$-structure in the age of $\M$.
{Then, algebraic closure for $\M'$ is modular since it agrees with algebraic closure in $\mathcal{N}$. Hence, 
\cref{from base monotonicity to universally embedded} and \cref{t:minimality endomorphism monoids} apply to $\M'$.}
 \end{example}

\subsection{Universally embedded models and lovely pairs}\label{sub:universalfromlovely}

{In this section, we note that lovely pairs are a natural source of universally embedded models. Lovely pairs are the standard model-theoretic notion of a ``generically embedded'' submodel. 
Our definition of a lovely pair given below in the context of \ref{sat} agrees with the standard definition of a lovely pair for a geometric theory~\cite[Definition 2.3]{berenstein2010lovely}; we clarify this in \cref{lem: lovely correspondence}. Lovely pairs of models are heavily studied in simple theories~\cite{ben2003lovely} (where the definition uses non-forking rather than algebraic independence), and in particular in stable theories, where they appear as \textbf{belles paires} from work of Poizat~\cite{poizat1983paires}. In the o-minimal context they are studied as \textbf{dense pairs}~\cite{van1998dense}.}

\begin{definition}\label{def:lovely} Assume \ref{sat}. We say that $(\Omega'; \Omega)$ forms a \textbf{lovely pair} if $\Omega'\subseteq\Omega$ is an algebraically closed subset of $\Omega$ satisfying the coheir property {and the following property:}
\begin{itemize}
    \item (extension) for all finite algebraically closed tuples $\overline{a}, \overline{b}$ from $\Omega$, %
      there is $\overline{a}'\equiv_{\overline{b}} \overline{a}$ such that
   \[{\acl(\overline{a}'\overline{b})\cap \acl(\Omega'\overline{b})=\overline{b}\;.}\]
\end{itemize}
 (If algebraic independence were extended to infinite algebraically closed sets, the conclusion could be phrased as $\overline{a}' \ind^{a}_{\overline{b}} \Omega'$.)
\end{definition}

\begin{lemma}\label{lem: lovely pairs} Assume \ref{sat} {and that algebraic closure is modular. Let $(\Omega';\Omega)$ be a lovely pair.}
Then $\Omega'$ is a universally embedded model.
\end{lemma}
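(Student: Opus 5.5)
The plan is to obtain the required $\overline{a}_1'\overline{a}_2'$ in two moves. First, the coheir property pushes a copy of $\overline{a}_1$ into $\Omega'$ over $\overline{b}$. Second, the extension property moves the rest of $\overline{a}_2$ away from $\Omega'$ over the algebraic closure of $\overline{b}$ and the new copy of $\overline{a}_1$. Modularity enters only in a final computation, which shows that this algebraic closure meets $\Omega'$ in nothing more than the closure of $\overline{b}\cap\Omega'$ and $\overline{a}_1'$.

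Fix an absorbing configuration $(\overline{a}_1;\overline{a}_2;\overline{b})$ for $\Omega'$ and write $\overline{b}_0:=\overline{b}\cap\Omega'$, which is algebraically closed.

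\emph{Step 1 (coheir).} Condition \ref{p3} says exactly that $\overline{b}\ind^{a}_{\overline{b}_0}\overline{a}_1$. The coheir property, which is part of being a lovely pair, then gives $\overline{a}_1'\equiv_{\overline{b}}\overline{a}_1$ with $\overline{a}_1'\subseteq\Omega'$. Choose $\overline{a}_2''$ with $\overline{a}_1'\overline{a}_2''\equiv_{\overline{b}}\overline{a}_1\overline{a}_2$. By invariance, both \ref{p3} and \ref{p4} transfer to $(\overline{a}_1';\overline{a}_2'';\overline{b})$. In particular
\[\acl(\overline{b}_0\overline{a}_1')\cap\overline{a}_2''=\overline{a}_1'.\]

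\emph{Step 2 (extension).} Let $\overline{c}:=\acl(\overline{b}\,\overline{a}_1')$, a finite algebraically closed tuple. The extension property gives $\overline{a}_2'\equiv_{\overline{c}}\overline{a}_2''$ with
\[\acl(\overline{a}_2'\overline{c})\cap\acl(\Omega'\overline{c})=\overline{c}.\]
Since the type is fixed over $\overline{c}\supseteq\overline{b}\,\overline{a}_1'$, we get $\overline{a}_1'\overline{a}_2'\equiv_{\overline{b}}\overline{a}_1\overline{a}_2$. We also still have $\acl(\overline{b}_0\overline{a}_1')\cap\overline{a}_2'=\overline{a}_1'$.

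It then remains to check that $\overline{a}_2'\cap\Omega'=\overline{a}_1'$. The inclusion $\supseteq$ holds because $\overline{a}_1'\subseteq\overline{a}_2'\cap\Omega'$. For $\subseteq$, note that $\overline{a}_2'\cap\Omega'$ lies in $\acl(\overline{a}_2'\overline{c})\cap\acl(\Omega'\overline{c})=\overline{c}$. Hence $\overline{a}_2'\cap\Omega'\subseteq\overline{a}_2'\cap(\overline{c}\cap\Omega')$.

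\emph{Step 3 (modularity).} This is the key step: show that $\overline{c}\cap\Omega'=D:=\acl(\overline{b}_0\overline{a}_1')$. Let $E\subseteq\Omega'$ be any finite algebraically closed set containing $D$ and $\overline{c}\cap\Omega'$. Apply modularity with $D\subseteq E$ and $C=\acl(\overline{b})$:
\[\acl(D\cup(\overline{b}\cap E))=\acl(D\cup\overline{b})\cap E.\]
Here $\overline{b}\cap E\subseteq\overline{b}_0\subseteq D$, so the left-hand side is $D$. Since $D\subseteq\overline{c}$, the right-hand side is $\acl(\overline{b}\,\overline{a}_1')\cap E=\overline{c}\cap E=\overline{c}\cap\Omega'$. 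Therefore $\overline{c}\cap\Omega'=D$, and so
\[\overline{a}_2'\cap\Omega'\subseteq\overline{a}_2'\cap D=\overline{a}_1'.\]
This establishes absorption. Together with $\Omega'\subsetneq\Omega$ being algebraically closed (properness should follow from extension applied to an element outside $\acl(\emptyset)$), it shows that $\Omega'$ is universally embedded.

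The step I expect to be the main obstacle is this bookkeeping in Step 3. Modularity is only stated for finite algebraically closed sets, so the infinite $\Omega'$ must be replaced by the finite witness $E$, as done above. One must also verify that \ref{p4} really yields the intersection equality $\acl(\overline{b}_0\overline{a}_1')\cap\overline{a}_2'=\overline{a}_1'$. This follows from the definition of $\ind^{a}$ once all the sets involved are taken algebraically closed.
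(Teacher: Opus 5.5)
Your proof is correct and follows the paper's argument. You use the coheir property to place a copy of $\overline{a}_1$ inside $\Omega'$, then the extension property over $\acl(\overline{a}_1'\overline{b})$, then modularity to show this set meets $\Omega'$ only inside $\acl(\overline{a}_1'\overline{b}_0)$, after which condition \ref{p4} forces $\overline{a}_2'\cap\Omega'=\overline{a}_1'$. The only cosmetic difference is that the paper applies modularity pointwise, to $\acl(\overline{a}_1'\overline{b}_0)\subseteq\acl(\overline{a}_1'\overline{b}_0d)$ for each $d$ in the intersection, whereas you apply it once to a finite algebraically closed $E\subseteq\Omega'$ covering the whole intersection; you also justify properness of $\Omega'$, which the paper leaves implicit.
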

\begin{proof}
    Let $(\overline{a}_1; \overline{a}_2; \overline{b})$ be an absorbing configuration for $\Omega'$. By the coheir property, there is  $\overline{a}_1'\equiv_{\overline{b}}\overline{a}_1$ with $\overline{a}_1'\subseteq\Omega'$. 
    Find $\overline{a}_2'$ such that $\overline{a}_1'\overline{a}_2'\equiv_{\overline{b}}\overline{a}_1\overline{a}_2$. {Let $\b_0:=\Omega'\cap \b$. By invariance, we have $\overline{a}_2'\ind^{a}_{\overline{a}_1'}\overline{b}_0$.}
    
{ Let $C:=\acl(\overline{a}_1'\overline{b})$. By the extension property (Definition \ref{def:lovely}),  }
there is $\overline{a}_2''\equiv_C\overline{a}_2'$ with
    \begin{equation}\label{eq:intersection for d}
        \acl(\overline{a}_2'' C)\cap\acl(\Omega'C)=C\;.
    \end{equation}

    { We verify that $\overline{a}''_2 \cap \Omega' = \overline{a}'_1$. For the
  inclusion $\subseteq$, let $d \in \overline{a}''_2 \cap \Omega'$. %
  By
  \eqref{eq:intersection for d}, $d \in C$, so $d \in C \cap \Omega'$. 

  {Let $E:=\acl(\a'_1 \b_0)$, $F:=\acl(\a'_1 b_0 d)$. Now,}
  \[C\cap F=\acl(E\b)\cap F=\acl(E\cup(\b\cap F))=\acl(E\cup\b_0)=E\;,\]
  {where for the intermediate equality we applied modularity from Proposition~\ref{prop:modularity} (for $E\subseteq F$ in the place of $A$ and $B$ and $\b$ in the place of $C$). Hence, since $d\in C\cap F$, $d\in E$.}
  {Since $\overline{a}''_2 \equiv_C \overline{a}'_2$ and $\overline{a}'_2 \ind^{a}_{\overline{a}'_1} \overline{b}_0$
  invariance gives  $\overline{a}''_2 \ind^{a}_{\overline{a}'_1} \overline{b}_0$, and  monotonicity gives $d \ind^{a}_{\overline{a}'_1} \overline{b}_0$. As   $d \in E$,  the definition of $\ind^{a}$  forces $d \in \overline{a}'_1$.}} The inclusion $\supseteq$ is straightforward: from
  $\overline{a}'_1 \overline{a}'_2 \equiv_{\overline{b}}
  \overline{a}_1 \overline{a}_2$ and $\overline{a}_1 \subseteq \overline{a}_2$
  we get $\overline{a}'_1 \subseteq \overline{a}'_2$; since
  $\overline{a}'_1 \subseteq C$ and $\overline{a}''_2 \equiv_C \overline{a}'_2$, we conclude that  $\overline{a}'_1 \subseteq \overline{a}''_2$ and
  $\overline{a}'_1 \subseteq \Omega'$.  Finally, $\overline{b} \subseteq C$ gives
  $\overline{a}'_1 \overline{a}''_2 \equiv_{\overline{b}}
  \overline{a}'_1 \overline{a}'_2 \equiv_{\overline{b}}
  \overline{a}_1 \overline{a}_2$, completing the proof.
\end{proof}

Since our setting differs slightly from that
of~\cite{berenstein2010lovely}, the next lemma reassures the reader that
countable saturated models of geometric theories with locally finite
algebraic closure do have lovely pairs. Recall that a first-order theory is \textbf{geometric} if it eliminates the quantifier $\exists^{\infty}$ (in the sense of~\cite[Definition 5.5.6]{tent2012course}) and model-theoretic algebraic closure~\cite[Definition 5.6.1]{tent2012course} forms a pregeometry~\cite[Definition 5.6.5]{tent2012course}.
\footnote{
Note that every $\omega$-categorical theory eliminates $\exists^\infty$.} 

\begin{lemma}\label{lem: lovely correspondence} Let $\M$ be a countable saturated model of a geometric theory $T$ and suppose that $\M$ has locally finite algebraic closure. Then there is some $\M'\preceq\M$ such that $(\M'; \M)$ forms a lovely pair (with respect to the action $\Aut(\M)\acts \Omega$).
\end{lemma}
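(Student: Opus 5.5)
We will build $\Omega'$ directly inside $\Omega$ by a countable back-and-forth construction, rather than appealing to the general existence theory of lovely pairs in \cite{berenstein2010lovely}. Since the statement only asks for \emph{some} $\M'\preceq\M$, it suffices to build an algebraically closed $\Omega'\subseteq\Omega$ with the coheir and extension properties of Definition~\ref{def:lovely}. Elementarity then follows from the coheir property by the Tarski--Vaught argument of Observation~\ref{obs:universally embedded models}.

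We enumerate all pairs $(\overline{a},\overline{b})$ of finite algebraically closed tuples from $\Omega$; there are countably many. We interleave two kinds of tasks and build an increasing sequence of finite algebraically closed sets $P_0\subseteq P_1\subseteq\cdots$ together with an increasing sequence of finite sets $N_0\subseteq N_1\subseteq\cdots$ of points promised to lie outside $\Omega'$. Throughout we maintain the invariant
\[\acl(P_n)\cap N_n=\emptyset,\qquad \text{i.e. } P_n\cap N_n=\emptyset,\]
together with the requirement that $\bigcup_n(P_n\cup N_n)=\Omega$. At the end we set $\Omega':=\bigcup_nP_n$, which is algebraically closed by local finiteness.

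\textbf{Coheir tasks.} Given $(\overline{a},\overline{b})$ with $\overline{b}\ind^a_{\overline{b}\cap P_n}\overline{a}$, we need to find $\overline{a}'\equiv_{\overline{b}}\overline{a}$ and add it to $P$. The hypothesis only refers to $\overline{b}\cap\Omega'$, so we first decide $\overline{b}$, putting $\overline{b}\setminus P_n$ into $N$. We then need $\overline{a}'\equiv_{\overline{b}}\overline{a}$ with $\acl(P_n\overline{a}')\cap N=\emptyset$. Elements of $N\cap\overline{b}$ are not in $\acl(\overline{a}(\overline{b}\cap P))$ by the independence hypothesis. Using the geometric assumption (acl is a pregeometry and $\exists^\infty$ is eliminated) we take $\overline{a}'$ to be a generic realisation over $P_n\overline{b}$ of the type of $\overline{a}$ over $\overline{b}\cap P_n$ ``free'' from $P_n N_n$. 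The exchange property then guarantees that $\acl(P_n\overline{a}')$ avoids the finitely many points of $N$. Concretely, we will apply Neumann's Lemma (Fact~\ref{f:neumann}) over $\overline{b}$ to move $\overline{a}'$ so that $\acl(P_n\overline{a}')\cap\acl(P_nN\overline{b})=\acl(P_n\overline{b}\cap\ldots)$. The pregeometry is what converts the independence hypothesis into $N\cap\acl(P_n\overline{a}')=\emptyset$.

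\textbf{Extension tasks.} Given $(\overline{a},\overline{b})$, we use Neumann's Lemma to pick $\overline{a}'\equiv_{\overline{b}}\overline{a}$ with $\acl(\overline{a}'\overline{b})\cap\acl(P_n N_n\overline{b})=\overline{b}$. We then add $\acl(\overline{a}'\overline{b})\setminus\overline{b}$ to $N$. We must also ensure that later additions to $P$ never bring elements of $\acl(\overline{a}'\overline{b})\setminus\overline{b}$ into $\acl(\Omega'\overline{b})$. To secure this, each later coheir step will choose its witnesses independent (in the pregeometry sense) from all previously recorded extension witnesses over the relevant base; the generic choice above already handles this if we include those witnesses in the set we avoid. \textbf{Bookkeeping tasks} add each element of $\Omega$ to $P$ or to $N$ whenever that is consistent with the invariant.

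We expect the main obstacle to be the interaction just described: showing that at each coheir stage a realisation can be chosen whose closure together with the current $P_n$ misses all promised points, including the finitely many extension constraints ``$\acl(\overline{a}'\overline{b})\cap\acl(\Omega'\overline{b})=\overline{b}$''. We plan to handle this by phrasing every constraint as an independence condition $\overline{a}'\ind^a_{\cdot}(\text{finite set})$ in the pregeometry. In a pregeometry with locally finite closure, independence is dimension-additive and satisfies base monotonicity, so the constraints propagate through the union. Saturation, via full existence, supplies the generic realisations at each finite stage.
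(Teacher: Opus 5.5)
The proposal breaks at the coheir step, exactly where you locate the ``main obstacle''. Neumann's Lemma over $\overline{b}$ only controls $\acl(\overline{a}'\overline{b})$. It gives $\overline{a}'\ind^{a}_{\overline{b}}W$ for any finite $W$, for instance $W=\acl(P_nN_n\overline{b}\bigcup_jE_j)$, where $E_j=\acl(\overline{a}'_j\overline{b}_j)$ are the recorded extension witnesses.

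What you actually need is control of $P_{n+1}=\acl(P_n\overline{a}')$. Specifically, you need $P_{n+1}\cap W=P_n$ (to miss $N$) and $E_j\ind^{a}_{\overline{b}_j}P_{n+1}$ (to keep the extension promises). The only way to get there is to combine $\overline{a}'\ind^{a}_{\overline{b}}W$ with the coheir hypothesis $\overline{a}'\ind^{a}_{C}\overline{b}$, where $C=\overline{b}\cap P_n$, by transitivity. You then have to raise the base from $C$ to $P_n$ by base monotonicity.

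You justify that last step by saying independence in a pregeometry has base monotonicity. That is true for dimension-independence. But the coheir hypothesis and the extension conclusion in \cref{def:lovely} are phrased with $\ind^{a}$. By \cref{prop:modularity}, $\ind^{a}$ has base monotonicity if and only if $\acl$ is modular, and modularity is not assumed. In a non-modular pregeometry, $\overline{a}\ind^{a}_{C}\overline{b}$ does not imply $\dim(\overline{a}/\overline{b})=\dim(\overline{a}/C)$.

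This cannot be fixed by choosing $\overline{a}'$ more generically, because the obstruction lies in $\mathrm{tp}(\overline{a}/\overline{b})$ itself. In fact any algebraically closed $\Omega'$ with both properties of \cref{def:lovely} forces modularity:
\begin{enumerate}
\item Take finite closed $A,B$ and set $C=A\cap B$. Coheir, with $\overline{b}=\acl(\emptyset)$, puts a copy $C'$ of $C$ inside $\Omega'$.
\item Apply extension to a basis of $B$ over $C$, one element at a time. This yields $B'$ with $B'C'\equiv BC$ and $\dim(B'/\Omega')=\dim(B'/C')$; in particular $B'\cap\Omega'=C'$.
\item Take $A^{*}$ with $A^{*}B'C'\equiv ABC$. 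The coheir hypothesis $B'\ind^{a}_{C'}A^{*}$ holds, so some $A'\subseteq\Omega'$ satisfies $A'\equiv_{B'}A^{*}$.
\item Then $\dim(B'/C')=\dim(B'/\Omega')\leq\dim(B'/A')\leq\dim(B'/C')$. This is exactly the modular law $\dim(A\cup B)+\dim(A\cap B)=\dim A+\dim B$.
\end{enumerate}
So for a locally finite but non-modular geometric $\acl$, no $\Omega'$ as in the statement exists. The lemma genuinely needs modularity, which is the only setting in which it is used (\cref{lem: lovely pairs}).

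The paper's own route has the same hidden dependence. It uses Berenstein--Vassiliev's theory $T_P$ and proves smallness via quantifier elimination for $T'_P$. But their density axiom concerns non-algebraic $1$-types, and it yields the $\ind^{a}$-coheir property only when $\ind^{a}$ agrees with dimension-independence.

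Once modularity is added, your direct construction does work and is a more elementary alternative to the paper's route:
\begin{enumerate}
\item At a coheir stage, choose $\overline{a}'\equiv_{\overline{b}}\overline{a}$ with $\overline{a}'\ind^{a}_{\overline{b}}W$.
\item Transitivity and base monotonicity give $P_{n+1}\ind^{a}_{P_n}W$, hence $P_{n+1}\cap N_n=\emptyset$.
\item Base monotonicity and transitivity again give $E_j\ind^{a}_{\overline{b}_j}P_{n+1}$ from $E_j\ind^{a}_{\overline{b}_j}P_n$.
\item Finite character passes everything to $\Omega'=\bigcup_nP_n$.
\end{enumerate}
This version uses only local finiteness and modularity. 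It needs neither exchange nor elimination of $\exists^{\infty}$, whereas the paper relies on the geometric hypothesis to import $T_P$.
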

\begin{proof} Since $\M$ is countable and saturated we may assume the language $\mathcal{L}$ of $T$ is countable {and that $T$ has quantifier elimination (see, for example,~\cite[Lemma 2.6]{pech2018reconstructing}}). By~\cite{berenstein2010lovely}, $T$ has lovely pairs in the sense of~\cite[Definition 2.3]{berenstein2010lovely}, and the theory $T_P$ of lovely pairs of models of $T$  (in the language $\mathcal{L}_P$ adding to $\mathcal{L}$ a predicate $P$ for the smaller model) is  axiomatised in~\cite[Theorem 2.10]{berenstein2010lovely}. Inspecting the proof of the second claim of~\cite[Theorem 2.10]{berenstein2010lovely}, one sees that in our context (locally finite algebraic closure) every countable saturated model of $T_P$ is itself a lovely pair. {It therefore suffices to show that  $T_P$ has a countable saturated model, which for a countable theory is  equivalent to having only countably many $n$-types for each  $n$~\cite[Lemma~4.3.9]{tent2012course}.}%

For this, we move to the expansion $T'_P$ in the language $\mathcal{L}_P'$ of $T_P$ which is obtained by adding, for each $\mathcal{L}$-formula $\phi(\overline{x}; \overline{y})$, a relation symbol $R_{\phi(\overline{x}; \overline{y})}$ together with the axiom:
\[\forall \overline{y}\left( R_{\phi(\overline{x}; \overline{y})}(\overline{y})\leftrightarrow \exists \overline{x} P(\overline{x})\wedge \phi(\overline{x}; \overline{y})\right)\;.\]
{By~\cite[Corollary~3.2]{berenstein2010lovely},
  $T'_P$ has quantifier elimination. 
  {We will show that} for a finite algebraically
  closed tuple $\overline{a}$ in a sufficiently saturated model
  $(\mathcal{N}'; \mathcal{N}) \models T'_P$, the $\mathcal{L}'_P$-type of
  $\overline{a}$ is determined by its $\mathcal{L}$-type together with the information of which coordinates of $\overline{a}$ lie in $P$. {To see this, consider an $\mathcal{L}$-formula $\phi(\overline{x};\overline{y})$ and a tuple $\a$ from $\mathcal{N}$. Now, take $\b$ such that $\phi(\b; \a)$ holds in $\mathcal{N}$, and let $\a_P:=\a\cap\mathcal{N}'$. If $\b\ind^a_{\a_P}\a$, by the coheir property we can deduce that $R_{\phi(\overline{x};\overline{y})}(\a)$ holds. So, we may assume that for any $\b$ such that $\phi(\b; \a)$ holds, there is some $c\in(\acl(\b\a_P)\cap\a)\setminus\a_P$. In particular, $c\not\in\mathcal{N}'$. But then we know that $\neg R_{\phi(\overline{x};\overline{y})}(\a)$ holds: if there was any $\b$ such that $\b\subseteq \mathcal{N}'$, since $\mathcal{N}'$ is algebraically closed (being a model), we would also have $c\in\mathcal{N}'$. Thus, we can conclude that whether $R_{\phi(\overline{x}; \overline{y})}(\overline{y})$ holds of a given tuple is entirely determined by its quantifier-free $\mathcal{L}$-type together with the information regarding which of its coordinates lie in $P$.} 
  
  By local finiteness,  every $n$-tuple lies in a finite algebraically closed tuple, so each $\mathcal{L}$-type of an $n$-tuple in $T$ admits only finitely many   extensions to an $\mathcal{L}'_P$-type in $T'_P$. Hence $T'_P$, and   therefore $T_P$, has only countably many $n$-types for each   $n \in \mathbb{N}$, as required.}
\end{proof}

\section{ Minimal semigroup topologies in Urysohn spaces }\label{sec:uri}

So far, we obtained results showing that $\pw$ is minimal amongst Hausdorff semigroup topologies in several monoids of elementary embeddings. Indeed, various sources of failures of minimality of $\pw$ for groups do not work in the context of monoids. In this section, we study semigroup topologies on the monoids of isometric embeddings of Urysohn spaces. These are homogeneous metric spaces which will yield examples of semigroup topologies for which $\pw$ is not minimal (amongst Hausdorff semigroup topologies). Indeed, the minimal Hausdorff semigroup topology for these structures naturally arises from the presence of a metric and is known as the \textbf{metric pointwise topology} $\mpw$. We will show minimality of $\mpw$ for the spaces of isometric embeddings of various Urysohn spaces by showing these coincide with the Zariski topology $\zar$. Our techniques can be seen as adapting the ideas of~\cite{pinskerschindler} to a metric context.\\

The Urysohn universal metric space $\U_{\mathbb R}$, also known as the \textbf{real Urysohn space},  constructed by Urysohn in 1925, is a complete separable metric space uniquely characterised by {\bf universality} (every separable metric space embeds isometrically into $\U_{\mathbb R}$) and {\bf %
homogeneity} (every partial isometry between finite subspaces extends to a global isometry). 
It was originally constructed by taking the completion of what we now call the rational Urysohn space  $\U_{\mathbb{Q}_{\geq 0}}$ . This countable rational metric space, although it can be identified today as the Fraïssé limit of the class of finite metric spaces with rational distances, was obtained by Urysohn through a direct constructive method. 
There is also a bounded version of the Urysohn space, known as the \textbf{real Urysohn sphere} $\U_{[0,1]}$, which is universal and homogeneous with respect to the class of  finite metric spaces of diameter at most $1$. 
Again, this can be obtained as the completion of the \textbf{rational Urysohn sphere} $\U_{\mathcal{Q}_1}$, which is the Fra\"{i}ss\'{e} limit of finite metric spaces with rational distances bounded by $1$. {For the aforementioned Urysohn spaces $\U$,} it is easy to see that the spaces of isometries
{$\mathrm{I}(\U):=\Aut(\U)$ and {of} isometric embeddings $\ie(\U):=\overline{\Aut(\U)}$}
are naturally endowed with a metric pointwise topology induced by the product topology on $\U^\U$, where $\U$ is endowed with the topology arising from its metric.\\

The isometry groups $\mathrm{I}(\U_{\mathbb R})$ and $\mathrm{I}(\U_{[0,1]})$, equipped with $\mpw$, are Polish groups whose algebraic and dynamical properties have been extensively studied. Notably, every Polish group embeds as a closed subgroup of $\mathrm{I}(\U_{\mathbb R})$ \cite{uspenskij1990group}, and $\mathrm{I}(\U_{\mathbb R})$  is extremely amenable~\cite{pestov2002ramsey}. The spaces of isometric embeddings have also been studied in a topological context, with, for example,~\cite{kubis2017katetov} showing that every transformation monoid on a countable set embeds in the space of isometric embeddings of $\U_{\mathbb{Q}_{\geq 0}}$.\\

The model-theoretic construction of  $\U_{\mathbb R}$ and $\U_{[0,1]}$  extends to a broader framework based on so-called distance monoids: for %
{several distance monoids (including all countable ones),} the associated class of finite metric structures admits a Fra\"iss\'e limit, yielding a universal and homogeneous structure. Details of this setting are provided below and may also be found in~\cite{con15}.\\

The minimality of $\mpw$ on $\mathrm{I}(\U_{[0,1]})$  was established in~\cite{uspenskij2008subgroups} {using Roelcke precompactness of the isometry group of the Urysohn sphere.}
In~\cite{gonzalesghadernezhad} the minimality of $\mpw$ on $\mathrm{I}(\U_{\mathbb R})$ and for other generalised Urysohn spaces was obtained through a different approach. In what follows, we study the monoids of isometric embeddings of generalised Urysohn spaces. These are also the monoids of elementary embeddings of these structures. We prove that $\mpw=\zar\subsetneq\pw$ for the monoids of isometric embeddings of $\U_{\mathbb Q_{\geq 0}}$, $\U_{\mathbb R}$, $\U_{[0,1]}$, and $\U_{\mathcal{Q}_1}$. We will give most proofs in the broader context of generalised Urysohn spaces~\cite{con15}.

\subsection{Generalised Urysohn spaces}

We work within the framework of generalised metric spaces introduced by~\cite{con15}. This setting encompasses both the rational numbers and bounded rationals, which are used in the construction of  $\U_{\mathbb R}$  and $\U_{[0,1]}$.

A {\bf distance monoid} $\rr=(R,\leq,\oplus,0)$ is a commutative monoid $(R, \oplus, 0)$ endowed with a total order $\leq$ which is compatible with the monoid structure by satisfying the following axioms: 

\begin{itemize}
	\item (positivity) for all  $s, r\in R, \ r\leq r\oplus s $;
	\item (order) for all $s, r, t\in R$, if $s\leq r$, then $s\oplus t\leq r\oplus t$.
\end{itemize}

\begin{definition} A distance monoid $\rr$ is \textbf{metrically complete} if for any $s\leq r\in R$, the set $\{t\in R\,|\,r\leq t\oplus s\}$ is of the form $\{t\,|\,t_{0}\leq t\}$ for some $t_{0}\in R$, which we denote by $r\ominus s$.
\end{definition}

\begin{fact}\label{fact:ominus} Suppose that $\rr$ is a metrically complete distance monoid. For all $r,s,t,u\in R$ with $s\leq r$, we have
\begin{enumerate}
    \item $r\ominus s\leq t$ if and only if $r\leq s\oplus t$;
    \item suppose that $s\leq t\oplus u$. Then, $r\leq t\oplus u\oplus (r\ominus s)$.
   
\end{enumerate}
\end{fact}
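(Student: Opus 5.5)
Both items follow by unwinding the definition of $r\ominus s$. Metric completeness says that, for $s\leq r$,
\[
\{t\in R\,|\,r\leq t\oplus s\}=\{t\,|\,r\ominus s\leq t\}.
\]

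For item (1), fix $t$. By this defining property, $r\ominus s\leq t$ holds if and only if $t$ lies in the set on the left, that is, if and only if $r\leq t\oplus s$. Since $(R,\oplus,0)$ is commutative, $t\oplus s=s\oplus t$, which gives exactly the stated equivalence. No further input is needed.

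For item (2), assume $s\leq t\oplus u$ and start from item (1) with $t$ replaced by $r\ominus s$. Since $r\ominus s\leq r\ominus s$ trivially, item (1) yields
\[
r\leq s\oplus(r\ominus s).
\]
Now apply the order axiom to the hypothesis $s\leq t\oplus u$, adding $r\ominus s$ on the right:
\[
s\oplus(r\ominus s)\leq t\oplus u\oplus(r\ominus s).
\]
Chaining the two inequalities by transitivity of the total order $\leq$ gives $r\leq t\oplus u\oplus(r\ominus s)$. Associativity and commutativity of $\oplus$ let us ignore bracketing and the order of summands throughout.

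Neither step presents a real obstacle. The only point needing care is that $r\ominus s$ is defined only when $s\leq r$; this is part of the hypotheses, so it is well defined in both items. Positivity is not needed here.
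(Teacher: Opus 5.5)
Your proof is correct and follows essentially the same route as the paper: item (1) is read off from the definition of $r\ominus s$ (using commutativity of $\oplus$), and item (2) chains $r\leq s\oplus(r\ominus s)$ with the order axiom applied to $s\leq t\oplus u$. The paper does this more tersely, while you also spell out where commutativity, the order axiom and transitivity are used.
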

\begin{proof} The first item is just the definition. For (2), note that by the first item, $r\leq s\oplus (r\ominus s)$. Hence, we have
\[r\leq s\oplus (r\ominus s)\leq t\oplus u\oplus (r\ominus s)\;.\]
\end{proof}

\begin{definition}
Given a distance monoid $\rr:=(R,\leq,\oplus,0)$, an $\bm{\rr}$\textbf{-metric space} $(\mathcal{X},d)$ consists of a set $X$ together with  a  map $d:X^{2}\to R$ such that for all $x,y,z\in X$:
\begin{enumerate}
	\item $d(x,y)=0$ if and only if  $x=y$;
	\item $d(x,y)=d(y,x)$;
	\item $d(x,z)\leq d(x,y)\oplus d(y,z)$.
\end{enumerate}
\end{definition}
\begin{definition}
     Let $(\mathcal{X}, d)$ and $(\mathcal{Y}, d)$ be $\rr$-metric spaces. An \textbf{isometric embedding}
    is a map $\phi:X\to Y$ such that for all $a,b\in X$
 \[d(\phi(a), \phi(b))=d(a,b)\;.\] 
If an isometric embedding is a bijection we call it an \textbf{isometry}. 
We denote by $\ie(\mathcal{X})$ the space of \textbf{isometric embeddings} of $\mathcal{X}$ into itself and by $\mathrm{I}(\mathcal{X})$ its space of isometries (into itself). 
\end{definition}

It is easy to see that the space of isometric embeddings $\ie(\mathcal{X})$ of an $\rr$-metric space $\mathcal{X}$ equals the space of elementary embeddings of $\mathcal{X}$ whenever the first-order theory of $\mathcal{X}$ has quantifier elimination in the language $\mathcal{L}_{\rr}:=\{D_r(x,y)\ \vert r\in\rr\}$ where each binary relation $D_r(x,y)$ is interpreted as the set $\{(a,b)\in X^2\ \vert d(a,b)\leq r\}$. Clearly, every elementary embedding in the above language is an isometric embedding. And quantifier elimination ensures that every isometric embedding is indeed elementary. We recall that a first-order theory $\mathcal{T}$ has quantifier elimination whenever every first-order formula is equivalent modulo $\mathcal{T}$ to a quantifier-free formula.

\begin{definition}
	\label{def-un}
Suppose $\rr$ is a distance monoid. An $\rr$-{\bf Urysohn space} is an $\rr$-metric space $\U$ satisfying the following two properties:
\begin{itemize}
	\item  (universality)\label{universality} any finite $\rr$-metric space embeds in $\U$; 
	\item (homogeneity) \label{ultrahomogeneity} every isometry between finite subspaces of $\U$ extends to an isometry of $\U$.
\end{itemize}

\end{definition}
\begin{remark}{By standard Fra\"{i}ss\'{e} theory, it is easy to see that universality and homogeneity of an $\rr$-Urysohn space $\U$ jointly imply the \textbf{extension property}:}
	\begin{enumerate}
		\item[(EP)] \label{extensionproperty} For any finite $\rr$-metric space $B$ and $A\subseteq B$, any isometric embedding
		$h:A\to\U$ extends to some isometric embedding $\bar{h}:B\to\U$. 
	\end{enumerate}
   {Indeed, the extension property is equivalent to the conjunction of universality and homogeneity under the assumption that $\U$ is countable.}
\end{remark}

\begin{fact}[{\cite[Theorem 2.7.7]{con15}}] \label{fact:existenceUri} Let $\rr$ be a countable distance monoid. Then, there is a countable $\rr$-Urysohn space $\U_{\rr}$. Moreover, $\U_{\rr}$ is unique up to isomorphism.
\end{fact}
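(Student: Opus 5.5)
The plan is the standard Fraïssé construction. Let $\mathcal{K}_{\rr}$ be the class of finite $\rr$-metric spaces, viewed as finite relational $\mathcal{L}_{\rr}$-structures with $D_r(x,y)$ interpreted as $d(x,y)\leq r$. Since $\rr$ is countable, $\mathcal{L}_{\rr}$ is a countable relational language. Up to isomorphism, $\mathcal{K}_{\rr}$ then contains only countably many members: a finite space on $n$ points is determined by an $n\times n$ matrix with entries in the countable set $R$. An isomorphism of $\mathcal{L}_{\rr}$-structures between $\rr$-metric spaces is exactly an isometry, because $d(a,b)$ is recovered as the least $r$ with $D_r(a,b)$. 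By Fraïssé's Theorem it therefore suffices to check that $\mathcal{K}_{\rr}$ is closed under substructures and has the joint embedding and amalgamation properties. The Fraïssé limit is then a countable homogeneous structure whose age is $\mathcal{K}_{\rr}$, which is exactly a countable $\rr$-Urysohn space. One routine point must be verified: the limit is itself an $\rr$-metric space. This holds because the metric axioms concern at most three points, so they hold in the limit since they hold in every finite substructure.

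Closure under substructures is immediate. Joint embedding follows from amalgamation over the empty space, since the empty space is in $\mathcal{K}_{\rr}$. The main step is amalgamation. Given $A\subseteq B_1$ and $A\subseteq B_2$ with $B_1\cap B_2=A$, I would use the free (shortest-path) amalgam on $B_1\cup B_2$. For $b_1\in B_1\setminus A$ and $b_2\in B_2\setminus A$, set
\[d(b_1,b_2):=\min_{a\in A}\bigl(d(b_1,a)\oplus d(a,b_2)\bigr).\]
The minimum exists because $A$ is finite and $\leq$ is total. When $A=\emptyset$ I need a different choice, namely the maximum of all distances occurring in $B_1\cup B_2$ (or $r\oplus r$ for such a maximum $r$). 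If $B_1\cup B_2$ is a single point, any nonzero element of $R$ works; if $R=\{0\}$ the class consists of one-point spaces and the claim is trivial.

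The remaining task is to verify the triangle inequality in each configuration of three points, and this is where I expect the main (routine) obstacle. The tools are associativity, commutativity and monotonicity of $\oplus$ (the order axiom), together with positivity. For a triangle $b_1,b_1',b_2$, bounding $d(b_1,b_2)$ by $d(b_1,b_1')\oplus d(b_1',b_2)$ uses the triangle inequality in $B_1$ at the minimizing $a$. For bounding $d(b_1,b_1')\leq d(b_1,b_2)\oplus d(b_2,b_1')$, pick minimizers $a,a'$. Then
\[d(b_1,b_1')\leq d(b_1,a)\oplus d(a,a')\oplus d(a',b_1'),\]
and $d(a,a')\leq d(a,b_2)\oplus d(b_2,a')$ by the triangle inequality in $B_2$. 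Positivity of the distances is clear, since $d(b_1,a)\oplus d(a,b_2)\geq d(b_1,a)>0$ when $b_1\notin A$. In the $A=\emptyset$ case, positivity gives $r\leq r\oplus s$, so the inequalities across parts hold because the cross distance dominates every distance.

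Uniqueness is the standard back-and-forth. Any two countable $\rr$-Urysohn spaces satisfy the extension property (EP) from the Remark and have the same age $\mathcal{K}_{\rr}$, so a back-and-forth between them builds an isometry. Equivalently, this is the uniqueness clause of Fraïssé's Theorem.
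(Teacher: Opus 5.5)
Your proposal is correct and is the standard argument behind this fact: the paper gives no proof of its own and defers to Conant~\cite{con15}, who obtains the $\rr$-Urysohn space as the Fra\"{i}ss\'{e} limit of the finite $\rr$-metric spaces using this same shortest-path (free) amalgamation, with uniqueness coming from the uniqueness of Fra\"{i}ss\'{e} limits. One slip is cosmetic only. In the case $A=\emptyset$, the degenerate situation that needs an arbitrary nonzero $r\in R$ is when $B_1$ and $B_2$ are both singletons, so that no nonzero distance occurs; it is not when $B_1\cup B_2$ is a single point.
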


\begin{definition} By a \textbf{standard} distance monoid $\rr$ we mean a distance monoid {whose order is dense and} such that whenever $r\in R\setminus\{0\}$ and $s\in R$ is such that $s<r$, there is $t\in R\setminus\{0\}$ such that $s\oplus t<r$. A \textbf{standard $\rr$-metric space} is an $\rr$-metric space $(\mathcal{X}, d)$, where $\rr$ is a standard distance monoid.
\end{definition}

{The following is a trivial consequence of standardness that we will at times require when choosing open balls:}

\begin{fact}\label{fact:standarddense} {Let $\rr$ be a standard distance monoid and $\mathcal{T}\subseteq\rr$ be a dense submonoid. Then, for every $r\in \rr\setminus\{0\}$, there are $s, t\in \mathcal{T}\setminus\{0\}$ such that $s\oplus t<r$.}
\end{fact}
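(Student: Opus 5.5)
The plan is to find $t\in\mathcal{T}\setminus\{0\}$ with $t\oplus t<r$ and take $s:=t$, so it suffices to produce small elements of $\mathcal{T}$ whose double lies below $r$. I will use standardness twice and density of $\mathcal{T}$ (in the order of $\rr$) once.

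\textbf{First step: a gap below $r$.} Apply standardness with $s=0<r$. This gives $t_1\in R\setminus\{0\}$ with $0\oplus t_1=t_1<r$.

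\textbf{Second step: a further gap.} Apply standardness again, now with $t_1<r$. This gives $t_2\in R\setminus\{0\}$ with $t_1\oplus t_2<r$. Put $u:=\min(t_1,t_2)$, which is non-zero. By the order axiom applied twice, and commutativity,
\[u\oplus u\leq t_1\oplus u\leq t_1\oplus t_2<r.\]

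\textbf{Third step: descend into $\mathcal{T}$.} By positivity $0\leq u$, and $u\neq 0$, so $0<u$. Density of $\mathcal{T}$ in $R$ gives $t\in\mathcal{T}$ with $0<t<u$; in particular $t\neq 0$. The order axiom then yields
\[t\oplus t\leq u\oplus t\leq u\oplus u<r,\]
so $s=t$ works.

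\textbf{Main obstacle.} The only delicate point is what ``dense submonoid'' means. I am reading it as order-dense: between any two distinct elements of $R$ there is an element of $\mathcal{T}$. If instead it only means topologically dense, the third step has to be adapted: one would pick an element of $\mathcal{T}$ inside the interval $(0,u)$ of $R$. That interval is non-empty because the order of a standard monoid is dense, and it is open in the order topology. Either reading gives $t$.
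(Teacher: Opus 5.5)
Your argument is correct. Two applications of standardness give a nonzero $u$ with $u\oplus u<r$. Density of $\mathcal{T}$ (under either reading, since the order on $R$ is dense) then yields $t\in\mathcal{T}$ with $0<t<u$, and the order axiom gives $t\oplus t<r$. The paper gives no proof, calling the fact a trivial consequence of standardness, and your proposal is the natural unpacking of that remark.
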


\cite[Theorem 2.8.11]{con15} also gives necessary and sufficient conditions for the first-order theory of an $\rr$-Urysohn space to have quantifier elimination. Since these conditions are slightly technical, we just note that the Urysohn spaces we give in the examples below are such that their first-order theories have quantifier elimination. 

\begin{examples}\label{examples:uri} We proceed here by giving some examples of distance monoids $\rr$ with their respective Urysohn spaces $\U_\rr$. 
\begin{itemize}
    \item Clearly, {$(\mathbb{Q}_{\geq 0}, \leq, \oplus, 0)$} is a metrically complete standard distance monoid. By \cref{fact:existenceUri}, there is a rational Urysohn space $\U_{\mathbb{Q}_{\geq 0}}$. Its completion is the  real Urysohn space $\U_{\mathbb{R}}$. Clearly, $\mathbb{R}_{\geq 0}$ is still metrically complete and standard. Moreover, both $\U_{\mathbb{Q}_{\geq 0}}$ and $\U_{\mathbb{R}}$ have quantifier elimination;
    \item Consider $\mathcal{Q}_1:=(\mathbb{Q}\cap[0,1], \leq, +_1, 0)$, where $+_1$ is truncated addition. Again, this is a metrically complete standard distance monoid. Its Urysohn space is known as the rational Urysohn sphere $\U_{\mathcal{Q}_1}$. Its completion is the Urysohn sphere $\U_{[0,1]}$, for the metrically complete standard distance monoid $([0,1], \leq +_1, 0)$. Again, both of these Urysohn spaces have quantifier elimination;
\item We can work in the following more general setting. Let $(G, \leq, +, 0)$ be an ordered Abelian group. Then, $(G_{\geq 0}, \leq, +, 0)$ , where $G_{\geq 0}:=\{g\in G\vert g\geq 0\}$ is a distance monoid. A \textbf{convex} distance monoid $\rr$ is a distance monoid $(I\cup\{0\}, \leq, \oplus, 0)$ whose domain is $I\cup\{0\}$, where $I$ is a convex subset of $G_{\geq 0}$ and where for $r,s\in R$, we define $r\oplus s:=\min\{r+s, \infty_R\}$, where $\infty_R$ is the $\leq$-maximal element of $R$ (if there is one). For any countable convex distance monoid $\rr$, $\U_{\rr}$ has quantifier elimination~\cite[Proposition 2.9.4]{con15}. Choosing $G$ to be $\mathbb{Q}$ and $\rr:=\mathbb{Q}_{\geq 0}$ or $\mathbb{Q}\cap[0,1]$, we get, respectively, the rational Urysohn space and the rational Urysohn sphere. However, note that we can pick $G$ to also be $\mathbb{Q}\times \mathbb{Q}$ ordered lexicographically (and $\rr:=G_{\geq 0}$) and again, obtain a metrically complete standard distance monoid. In particular, we are able to work with $\rr$-metric spaces where the elements of $\rr$ may not form a single Archimedean class.\footnote{We say that $r,s\in R$ are in the same Archimedean class if there is some positive integer $n$ such that $s\leq n\cdot r$ and $r\leq n\cdot s$.} 
\item We say that the distance monoid $\rr$ is \textbf{ultrametric} if for all $r,s\in R$, we have that $r\oplus s=\max\{r,s\}$. For every countable ultrametric distance monoid $\rr$, $\U_{\rr}$ has quantifier elimination~\cite[Proposition 2.9.4]{con15}. {So, the rational ultrametric Urysohn space $\U_{(\mathbb{Q}_{\geq 0}, \leq,  \max,0)}$ has quantifier elimination.}
 {We also know that the real ultrametric Urysohn} space {$\U_{(\mathbb{R}_{\geq 0}, \leq, \max,0)}$} has quantifier elimination.  Note that the theory of $\U_{\rr}$ where $\rr$ is an ultrametric space is essentially the theory of infinitely refining equivalence relations indexed by $(R, 0, \leq)$. These are important theories from the point of view of model theory~\cite{baldwin2017fundamentals}. {We note that the fact that we allow for $r\oplus s=s$ in our arguments will require us to be especially careful when choosing elements in $\rr$ as radii of open balls for our proofs.}
\end{itemize}  
\end{examples}

\subsection{Relating the Zariski and metric pointwise topology}\label{sub:relating zar and met}

In this section we give general conditions for a 
{submonoid}
of a space of isometric embeddings $\ie(\mathcal{X})$ to be such that the Zariski topology agrees with the metric pointwise topology $\mpw$. 
This is a natural topology on $\ie(\mathcal{X})$ which we introduce below and which is usually coarser than the topology of pointwise convergence. An interesting feature of our method is that, {besides} being the key to proving that $\mpw=\zar$ for the monoids of elementary embeddings of various Urysohn spaces, it also subsumes the technique underlying the result of Pinsker and Schindler~\cite{pinskerschindler} that $\zar=\pw$ for the monoids of elementary embeddings of $\omega$-categorical structures with no algebraicity.

\begin{remark} Note that any standard $\rr$-metric space $(\mathcal{X}, d)$ can be topologised with the generalised metric topology $\tau_{\mathrm{m}}$, whose basis of open sets is given by the set of open balls of the form 
\[B_\epsilon(a):=\{b\in X\ \vert \ d(a,b)<\epsilon\}\;,\]
for $a\in X$ and $\epsilon\in R\setminus\{0\}$. It is easy to see that $\tau_{\mathrm{m}}$ is a Hausdorff topology:  whenever $d(c,d)>0$ for $c,d\in\mathcal{X}$, since $\rr$ is standard, we can pick $\delta, \gamma \in R\setminus\{0\}$
 such that $\delta\oplus\gamma<d(c,d)$, and so $B_\delta(c)\cap B_\gamma(d)=\emptyset$. This can be seen  since if there was any $e\in B_\delta(c)\cap B_\gamma(d)$, we would have
 \[d(c,d)\leq d(c,e)\oplus d(e,d)<\delta\oplus\gamma<d(c,d)\;,\]
 leading to a contradiction. 
 \end{remark}

 \begin{definition}The \textbf{metric pointwise topology} $\mpw$ on $\ie(\mathcal{X})$ is obtained by taking the product topology on $X^X$, where each $X$ is endowed with $\tau_{\mathrm{m}}$, and then taking the induced subspace topology on $\ie(\mathcal{X})$. It is generated by open sets of the following form:
\[W_{(a, b, \epsilon)}:=\{s\in {\ie(\mathcal{X})}
\;
\vert \; s(a)\in B_\epsilon(b)\},\]
where $a,b\in X$  and  $\epsilon\in R \setminus\{0\}$. 
\end{definition}

\begin{lemma}\label{lem:hausdorff} Let $(\mathcal{X}, d)$ be a standard $\rr$-metric space. Then $\ie(\mathcal{X})$ with $\mpw$ is a Hausdorff topological semigroup. 
\end{lemma}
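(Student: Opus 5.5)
Two things need to be shown: that $\mpw$ is Hausdorff on $\ie(\mathcal{X})$, and that composition $\ie(\mathcal{X})\times\ie(\mathcal{X})\to\ie(\mathcal{X})$, $(s,t)\mapsto st$, is jointly continuous for $\mpw$.

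For Hausdorffness, by the preceding remark $\tau_{\mathrm{m}}$ is Hausdorff on $X$. A product of Hausdorff spaces is Hausdorff, so $X^X$ with the product topology is Hausdorff, and so is its subspace $\ie(\mathcal{X})$. Explicitly, if $s\neq t$, pick $a$ with $s(a)\neq t(a)$ and $\delta,\gamma\in R\setminus\{0\}$ with $\delta\oplus\gamma<d(s(a),t(a))$. Then $W_{(a,s(a),\delta)}$ and $W_{(a,t(a),\gamma)}$ are disjoint neighbourhoods of $s$ and $t$.

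For continuity it suffices to work with subbasic neighbourhoods. Fix $s,t$ and a subbasic neighbourhood $W_{(a,b,\epsilon)}$ of $st$, so $d(st(a),b)<\epsilon$. I will first shrink to a ball centred at $st(a)$. Using standardness, find $\epsilon'\in R\setminus\{0\}$ with $d(st(a),b)\oplus\epsilon'<\epsilon$. Then $B_{\epsilon'}(st(a))\subseteq B_\epsilon(b)$ by the triangle inequality and monotonicity of $\oplus$. Applying standardness again, choose $\delta,\gamma\in R\setminus\{0\}$ with $\delta\oplus\gamma<\epsilon'$: first take $\delta<\epsilon'$, then $\gamma$ with $\delta\oplus\gamma<\epsilon'$.

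As neighbourhoods I take $\mathcal{A}:=W_{(t(a),st(a),\delta)}\ni s$ and $\mathcal{B}:=W_{(a,t(a),\gamma)}\ni t$. For $s'\in\mathcal{A}$ and $t'\in\mathcal{B}$ the key estimate uses that $s'$ is an isometric embedding:
\[d(s't'(a),st(a))\leq d(s't'(a),s't(a))\oplus d(s't(a),st(a))=d(t'(a),t(a))\oplus d(s't(a),st(a))<\gamma\oplus\delta.\]
The final strict inequality needs some care: $x<\gamma$ and $y<\delta$ give $x\oplus y\leq\gamma\oplus\delta$ by the order axiom, but not strict inequality, since in ultrametric monoids $\oplus$ can absorb. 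I avoid this by aiming for $\leq\gamma\oplus\delta<\epsilon'$, which is all that is needed. Hence $s't'\in W_{(a,b,\epsilon)}$ and $\mathcal{A}\mathcal{B}\subseteq W_{(a,b,\epsilon)}$.

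It remains to handle basic neighbourhoods, which are finite intersections of subbasic ones. Intersecting the finitely many $\mathcal{A}$'s and $\mathcal{B}$'s obtained for each subbasic set gives the required neighbourhoods of $s$ and $t$. I expect the only delicate point to be the choice of radii via standardness, which is the absorption issue just described; everything else is routine.
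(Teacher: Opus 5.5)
Your proof is correct and follows essentially the same route as the paper. The paper gets Hausdorffness from products and subspaces of Hausdorff spaces. For joint continuity at $(s,t)$, it picks radii via standardness and uses that $s'$ is an isometric embedding to replace $d(s't'(a),s't(a))$ by $d(t'(a),t(a))$. The paper keeps $d(s(c),b)$ as a third summand rather than first recentring the ball at $st(a)$, which is only cosmetic.

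Your care in writing $\leq$ rather than $<$ where $\oplus$ may absorb is well placed: the paper's displayed chain writes $<$ at those steps, though its final conclusion $<\epsilon$ is unaffected.
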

\begin{proof} Since $\mathcal{X}$ is Hausdorff, Hausdorffness of $\ie(\mathcal{X})$ follows from the fact that products of Hausdorff spaces are Hausdorff, and  subspaces of Hausdorff spaces are Hausdorff. Let $X$ be the underlying set of $\mathcal{X}$.
For continuity of multiplication, let $s,t\in \ie(\mathcal X)$ with $st\in W_{(a,b,\epsilon)}$ where $a,b\in X$ and $\epsilon\in R\setminus\{0\}$.
Put $c:=t(a)$ and $\zeta:=d(s(c),b)$. {Since $\rr$ is standard, we can find $\rho\in R\setminus\{0\}$ such that $\zeta\oplus\rho<\epsilon$. Then choose $\delta,\eta\in  R\setminus\{0\}$}
{such that $\eta\oplus\delta<\rho$. We now claim $W_{(c, s(c), \eta)}\cdot W_{(a, c, \delta)} \subseteq W_{(a,b,\epsilon)}$. {This suffices to establish continuity of multiplication at the pair $(s,t)$,}
since clearly $t\in W_{(a, c, \delta)}$ and $s\in W_{(c, s(c), \eta)}$. 
Let $t'\in W_{(a, c, \delta)}$ and $s'\in W_{(c, s(c), \eta)}$. Then
\begin{align*}
  d(s't'(a), b) &\leq d(s't'(a), s'(c))\oplus d(s'(c), s(c))\oplus d(s(c),b) \\
  &= d(t'(a), c)\oplus d(s'(c), s(c))\oplus d(s(c),b)\\
  &<\delta \oplus \eta\oplus \zeta<\rho\oplus\zeta<\epsilon\;,
\end{align*}
where the equality holds because $s'$ is an isometric embedding, and the final inequality follows from the choices of $\eta$,$\delta$ and $\rho$. Hence $s't'\in W_{(a,b,\epsilon)}$.}
\end{proof}

\begin{notation} {Let $(\mathcal{X}, d)$ be a standard $\rr$-metric space. For $A\subseteq X$ and $\epsilon\in\rr\setminus\{0\}$, we write}
\[B_\epsilon(A):=\bigcup_{a\in A}B_\epsilon(a)\;.\]   
\end{notation}

\begin{definition} Let $(\mathcal{X}, d)$ be a standard $\rr$-metric space. Let $\mathcal{T}\subseteq \rr$ be a dense submonoid of $\rr$ with respect to the order. Let $S$ be a closed submonoid of $\ie(\mathcal{X})$ with respect to $\mpw$.  
\begin{itemize}
    \item We say that $S$ is \textbf{pinching} (with respect to $\mathcal{T}$) if for every %
    $\epsilon \in \mathcal{T}\setminus\{0\}$ and every $a\in X$, there are $\phi, \psi\in S$ such that 
\begin{equation}\label{niceUri}
\phi\upharpoonright_{X\setminus B_{\epsilon}(a)}=\psi\upharpoonright_{X\setminus B_{\epsilon}(a)} \text{ and } \ \phi(b)\neq\psi(b) \text{ for all } b\in B_{\epsilon}(a).
\end{equation}
See Figure~\ref{fig:pinch} for an illustration of the behaviour of $\phi$ and $\psi$.
\item We say that $S$ is \textbf{spreading} (with respect to $\mathcal{T}$) if for every %
$\epsilon\in \mathcal{T}\setminus\{0\}$ and every $a\in X$, there are $\sigma, \theta\in S$ such that 
\begin{equation}\label{eq:ballcontainments}
a\in\im(\sigma)\cap \im(\theta) \text{ and } \im(\sigma)\cap B_\epsilon(\im(\theta))\subseteq B_{\epsilon}(a). 
\end{equation}
See Figure~\ref{fig:spread} for an illustration of the behaviour of $\sigma$ and $\theta$.
\end{itemize}
    We say that $S$ is pinching (respectively, spreading) when it is pinching with respect to $R$ itself. 
\end{definition}

\begin{figure}
    \centering
    \includegraphics[width=0.7\textwidth]{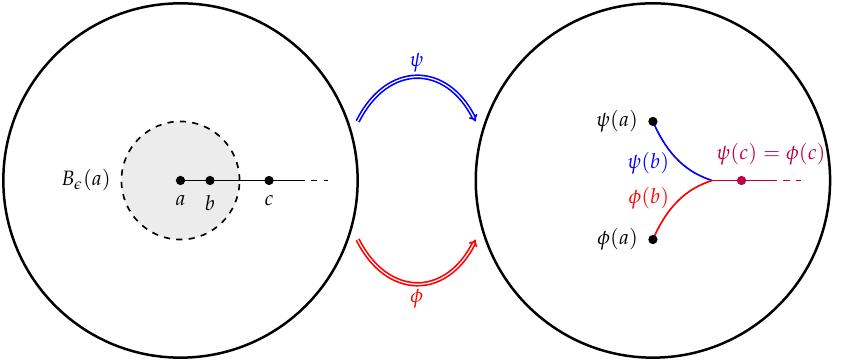}
    \caption{Illustration of {two isometric embeddings} $\phi$ and $\psi$ witnessing pinching at $a$. Note that $\phi$ and $\psi$ disagree everywhere in the ball $B_\epsilon(a)$, but agree everywhere else. }
    \label{fig:pinch}
\end{figure}
\begin{figure}
    \centering
    \includegraphics[width=0.4\textwidth]{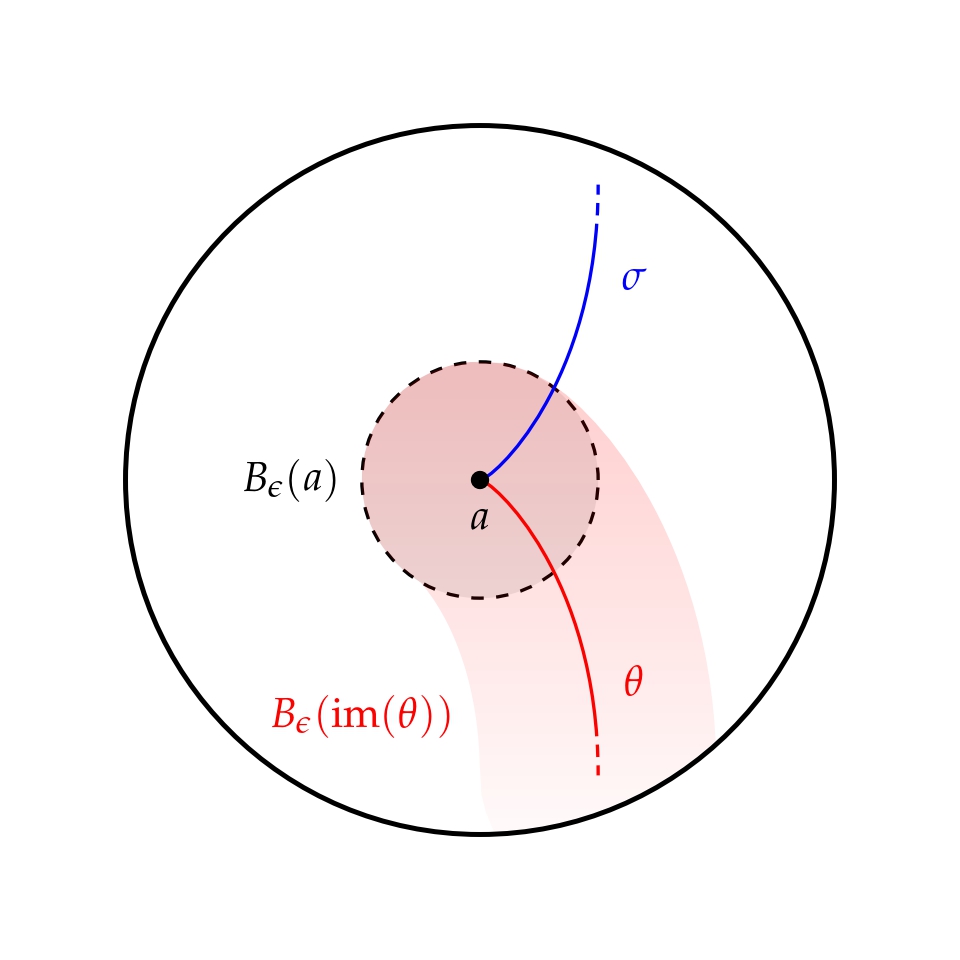}
    \caption{Illustration of {two isometric embeddings} $\sigma$ and $\theta$ witnessing spreading at $a$. Note that $a$ is in the images of both $\sigma$ and $\theta$ and that the ball around the image of $\theta$ of radius $\epsilon$ only contains points in the image of $\sigma$ inside of the ball $B_\epsilon(a)$.}
    \label{fig:spread}
\end{figure}

\begin{remark}  Note that any countable structure $\M$ can be considered as a standard metric space with the discrete metric. With this point of view in mind the metric pointwise topology $\mpw$ on $\EEmb(M)$ is just the topology of pointwise convergence $\pw$. The proof that $\zar=\pw$ for monoids of elementary embeddings of $\omega$-categorical structures with no algebraicity of~\cite{pinskerschindler} relies precisely on showing that $\EEmb(M)$ is pinching \cite[Lemma 3.6]{elliott2023polish} and spreading \cite[Lemma 3.6]{pinskerschindler} {in the discrete metric setting.}
Obviously, in this context open balls (of diameter $<1$) collapse to points, making proofs substantially simpler. 
\end{remark}

\begin{corollary}\label{Uaopen} Let $(\mathcal{X}, d)$ be a standard $\rr$-metric space. Let $S$ be a closed submonoid of $\ie(\mathcal{X})$ with respect to $\mpw$, and let $\mathcal{T}$ be a dense submonoid of $\rr$.  For $a\in X$ and $\epsilon\in \mathcal{T}\setminus\{0\}$, consider the set 
\[\mathcal{O}_{a,\epsilon}:=\left\{s\in S\vert B_{\epsilon}(a)\cap\im(s)\neq \emptyset\right\}.\]
If $S$ is pinching with respect to $\mathcal{T}$, then $\mathcal{O}_{a,\epsilon}\in\zar$.
\end{corollary}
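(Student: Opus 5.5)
Let $\phi,\psi\in S$ be the maps provided by pinching for $a$ and $\epsilon$, as in \eqref{niceUri}. I will show that
\[
\mathcal{O}_{a,\epsilon}=\{s\in S\mid \phi s\neq \psi s\}.
\]
The right-hand side is the solution set of a single disequality of the form \eqref{eq:inequality}: the left side is $\phi s$ and the right side is $\psi s$ (with identity parameters where needed). So it is a subbasic $\zar$-open set, and the identity gives $\mathcal{O}_{a,\epsilon}\in\zar$ directly. This mirrors \cref{use sink}, with the ball $B_\epsilon(a)$ playing the role of the complement of the sink.

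\textbf{Inclusion $\subseteq$.} Take $s\in\mathcal{O}_{a,\epsilon}$. Then there is $c\in X$ with $s(c)\in B_\epsilon(a)$. By pinching, $\phi(s(c))\neq\psi(s(c))$, so $\phi s\neq\psi s$.

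\textbf{Inclusion $\supseteq$.} Take $s\notin\mathcal{O}_{a,\epsilon}$, so that $\im(s)\subseteq X\setminus B_\epsilon(a)$. Since $\phi$ and $\psi$ agree on $X\setminus B_\epsilon(a)$, we get $\phi(s(c))=\psi(s(c))$ for every $c\in X$. Hence $\phi s=\psi s$, so $s$ is not in the right-hand side.

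Both inclusions are immediate from \eqref{niceUri}, so no step is a real obstacle. The only point to check is that $\phi$ and $\psi$ lie in $S$. This holds because pinching is assumed for $S$ with respect to $\mathcal{T}$ and $\epsilon\in\mathcal{T}\setminus\{0\}$, so they are legitimate parameters for the Zariski topology on $S$. Closedness of $S$ and standardness of $\rr$ are not needed for this corollary.
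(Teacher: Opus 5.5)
Your proposal is correct and is essentially the paper's proof: both take the pinching maps $\phi,\psi\in S$ and show that $\mathcal{O}_{a,\epsilon}$ equals the solution set of the single disequality $\phi s\neq\psi s$, via the same two inclusions. Your side remarks are also accurate: the identity is a legitimate parameter because $S$ is a monoid, and neither closedness of $S$ nor standardness of $\rr$ is used.
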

\begin{proof} Since $S$ is pinching (with respect to $\mathcal{T}$), there are $\phi, \psi\in S$ such that (\ref{niceUri}) holds. Since $\phi$ and $\psi$ agree outside of $B_\epsilon(a)$,  if $s\in S\setminus \mathcal O_{a,\epsilon}$  then $\phi s=\psi s$. 
Consider the following set $\mathcal{V}\in\zar$:\[\mathcal{V}=\{s\in\ S \ \vert \phi s\neq \psi s\}.\]
By the above observation,  $\mathcal V\subseteq \mathcal O_{a,\epsilon}$. The other direction $\mathcal O_{a,\epsilon}\subseteq \mathcal V$ is clear since $\phi (c)\neq \psi (c)$ for any $c\in B_{\epsilon}(a)$.
\end{proof}
\begin{corollary}\label{Zopen} Let $(\mathcal{X}, d)$ be a standard $\rr$-metric space. Let $S$ be a closed submonoid of $\ie(\mathcal{X})$ with respect to $\mpw$, and let $\mathcal{T}$ be a dense submonoid of $\rr$. Let $a\in X$, $\sigma, \theta\in S$, and $\zeta,\eta\in \mathcal T\setminus\{0\}$. Consider the following set
\[Z_{(a, \zeta,\eta)}:=\left\{s\in S \ \vert B_{\zeta}(a)\cap\im(s\sigma)\neq \emptyset \text{ and }  B_{\eta}(a)\cap\im(s\theta)\neq \emptyset \right\}.\]
If $S$ is pinching with respect to $\mathcal{T}$, then  $Z_{(a, \zeta,\eta)}\in\zar$.

\end{corollary}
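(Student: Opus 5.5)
The plan is to write $Z_{(a,\zeta,\eta)}$ as the intersection of two sets, each the preimage of a set of the form $\mathcal{O}_{a,\epsilon}$ under a right shift. Then \cref{Uaopen} and the fact that $\zar$ is semitopological (shift-continuous) on any semigroup~\cite[Proposition 2.2]{elliott2023automatic} finish the argument. Concretely, I would write
\[
Z_{(a,\zeta,\eta)}=\{s\in S\mid s\sigma\in\mathcal{O}_{a,\zeta}\}\cap\{s\in S\mid s\theta\in\mathcal{O}_{a,\eta}\}\;.
\]
This identity is immediate from the definitions, since $\im(s\sigma)\cap B_\zeta(a)\neq\emptyset$ is precisely the statement $s\sigma\in\mathcal{O}_{a,\zeta}$, and likewise for $\theta$ and $\eta$.

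Next, since $\zeta,\eta\in\mathcal{T}\setminus\{0\}$ and $S$ is pinching with respect to $\mathcal{T}$, \cref{Uaopen} gives $\mathcal{O}_{a,\zeta},\mathcal{O}_{a,\eta}\in\zar$. The maps $s\mapsto s\sigma$ and $s\mapsto s\theta$ are $\zar$-continuous, so both preimages are $\zar$-open. Their intersection $Z_{(a,\zeta,\eta)}$ is therefore $\zar$-open.

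Alternatively, and avoiding any appeal to shift-continuity, I would exhibit the defining inequalities directly. Taking $\phi,\psi$ witnessing pinching for $(a,\zeta)$, the proof of \cref{Uaopen} gives $\mathcal{O}_{a,\zeta}=\{u\mid\phi u\neq\psi u\}$. Hence $\{s\mid s\sigma\in\mathcal{O}_{a,\zeta}\}=\{s\mid\phi s\sigma\neq\psi s\sigma\}$, which is a solution set of an inequality of the form \eqref{eq:inequality} with one occurrence of the variable on each side. The same holds for $\theta$ with a pinching pair $(\phi',\psi')$ for $(a,\eta)$.

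There is no real obstacle here. The only point to check is that $\sigma,\theta\in S$, so that they are legitimate parameters in the equations. Closedness of $S$ is only needed so that \cref{Uaopen} applies as stated.
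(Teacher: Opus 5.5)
Your proposal is correct and follows the paper's proof exactly: the paper also writes $Z_{(a,\zeta,\eta)}$ as the intersection of the preimages of $\mathcal{O}_{a,\zeta}$ and $\mathcal{O}_{a,\eta}$ under $s\mapsto s\sigma$ and $s\mapsto s\theta$, and then uses \cref{Uaopen} together with shift-continuity of $\zar$. Your alternative of writing the preimages directly as solution sets of $\phi s\sigma\neq\psi s\sigma$ is also valid; it just unfolds the semitopological property in this particular case.
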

\begin{proof}
Consider the operations $\rho_\sigma: s\mapsto s\sigma$ and $\rho_{\theta}:s\mapsto s\theta$. Since $\zar$ is semitopological, we know that these are $\zar$-continuous. Moreover, 
\[Z_{(a, \zeta,\eta)}=\rho_\sigma^{-1}(\mathcal{O}_{a, \zeta})\cap \rho_{\theta}^{-1}(\mathcal{O}_{a, \eta}),\]

where $\mathcal{O}_{a, \zeta}, \mathcal{O}_{a, \eta}\in \zar$ by Corollary \ref{Uaopen}. Hence, $Z_{(a, \zeta,\eta)}\in\zar$.
\end{proof}
 
\begin{corollary}\label{containments} Let $(\mathcal{X}, d)$ be a standard $\rr$-metric space. Let $S$ be a closed submonoid of $\ie(\mathcal{X})$ with respect to $\mpw$, and let $\mathcal{T}$ be a dense submonoid of $\rr$.  Suppose that $S$ is pinching and spreading with respect to $\mathcal{T}$. {Let $a, b\in X$  and  $\epsilon, \zeta, \eta,\rho\in \mathcal T \setminus\{0\}$  where $\zeta<\eta$, $\zeta\oplus\eta<\epsilon$ and $\epsilon\oplus\eta<\rho$. Then, 
\[W_{(a, b, \zeta)}\subseteq Z_{(b, \zeta,\eta)}\subseteq W_{(a, b, \rho)}.\]}
\end{corollary}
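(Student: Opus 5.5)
The plan is to fix, once and for all, the elements $\sigma,\theta\in S$ on which $Z_{(b,\zeta,\eta)}$ depends. I take them to be witnesses of spreading at the point $a$ with radius $\epsilon$; this is legitimate because $\epsilon\in\mathcal{T}\setminus\{0\}$. So $a\in\im(\sigma)\cap\im(\theta)$ and $\im(\sigma)\cap B_\epsilon(\im(\theta))\subseteq B_\epsilon(a)$. Both inclusions then come from direct metric estimates. They use only the triangle inequality, the fact that elements of $S$ are isometric embeddings, and the order axiom of the distance monoid, i.e.\ $u\leq u'$ and $v\leq v'$ imply $u\oplus v\leq u'\oplus v'$. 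Pinching is needed only for the earlier fact $Z_{(b,\zeta,\eta)}\in\zar$ (\cref{Zopen}), not for the containments themselves.

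For $W_{(a,b,\zeta)}\subseteq Z_{(b,\zeta,\eta)}$, let $s$ satisfy $d(s(a),b)<\zeta$. Since $a\in\im(\sigma)$, the point $s(a)$ lies in $\im(s\sigma)\cap B_\zeta(b)$. Since $a\in\im(\theta)$ and $\zeta<\eta$, the point $s(a)$ also lies in $\im(s\theta)\cap B_\eta(b)$. Hence $s\in Z_{(b,\zeta,\eta)}$.

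For $Z_{(b,\zeta,\eta)}\subseteq W_{(a,b,\rho)}$, which is the main step, let $s\in Z_{(b,\zeta,\eta)}$. Pick $x\in\im(\sigma)$ with $d(s(x),b)<\zeta$ and $y\in\im(\theta)$ with $d(s(y),b)<\eta$. Because $s$ is an isometric embedding,
\[d(x,y)=d(s(x),s(y))\leq d(s(x),b)\oplus d(b,s(y))\leq\zeta\oplus\eta<\epsilon.\]
So $x\in\im(\sigma)\cap B_\epsilon(\im(\theta))\subseteq B_\epsilon(a)$, that is, $d(x,a)<\epsilon$. Then
\[d(s(a),b)\leq d(s(a),s(x))\oplus d(s(x),b)=d(a,x)\oplus d(s(x),b)\leq\epsilon\oplus\zeta\leq\epsilon\oplus\eta<\rho,\]
using $\zeta<\eta$ together with the order axiom. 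Hence $s\in W_{(a,b,\rho)}$.

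The only delicate point is the one the paper flags for ultrametric-type monoids: $\oplus$ need not be strictly monotone. So strict inequalities must not be pushed through $\oplus$. In each chain above I pass only to non-strict bounds via the order axiom, and the strict inequality comes solely from the hypotheses $\zeta\oplus\eta<\epsilon$ and $\epsilon\oplus\eta<\rho$. I expect checking this bookkeeping to be the main (though minor) obstacle.
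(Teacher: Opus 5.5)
Your proposal is correct and follows the paper's proof essentially step for step: you fix $\sigma,\theta$ as spreading witnesses at $a$ with radius $\epsilon$, get the first inclusion from $a\in\im(\sigma)\cap\im(\theta)$ and $\zeta<\eta$, and get the second by pulling $d(s(x),s(y))\leq\zeta\oplus\eta<\epsilon$ back through the isometric embedding $s$ and then bounding $d(s(a),b)\leq\epsilon\oplus\zeta\leq\epsilon\oplus\eta<\rho$. Your remark that pinching is needed only for $Z_{(b,\zeta,\eta)}\in\zar$ and not for the containments themselves is also accurate.
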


\begin{proof} %
Since $S$ is spreading with respect to $\mathcal T$, there are $\sigma,\theta\in S$  such that $a\in\im(\sigma)\cap \im(\theta)$ and
$\im(\sigma)\cap B_\epsilon(\im(\theta))\subseteq B_{\epsilon}(a)$.
For the first inclusion, note that $a\in\im(\sigma)\cap\im(\theta)$. If $s(a)\in B_{\zeta}(b)$  then clearly $s(a)\in B_{\eta}(b)$ and $ s\in Z_{(b, \zeta,\eta)}$.
For the second inclusion, suppose $s\in Z_{(b, \zeta,\eta)}$. 
Then, there are $c\in\im(\sigma)$ and $ e\in\im(\theta)$ such that $d(s(c), b)<\zeta$ and $ d(s(e), b)<\eta$. %
{Hence,} $d(s(c), s(e))\leq d(s(c),b)\oplus d(b,s(e))\leq \zeta\oplus\eta< \epsilon$. 
Since $s$ is an isometric embedding, $d(c,e)<\epsilon$ and therefore $c\in B_\epsilon(\im(\theta))$. Hence, we have $c\in \im(\sigma)\cap B_\epsilon(\im(\theta))\subseteq B_\epsilon(a)$, that is $d(a,c)<\epsilon$. Finally, 
\[d(s(a), b)\leq d(s(a), s(c))\oplus d(s(c), b)\leq d(a, c)\oplus\zeta\leq \epsilon\oplus \zeta\leq \epsilon\oplus\eta<\rho\;.\]
From the above inequality, we can deduce the inclusion $Z_{(b,\zeta, \eta)}\subseteq W_{(a,b, \rho)}$.
    
\end{proof}

\begin{theorem} \label{Urirealproof} Let $(\mathcal{X}, d)$ be a standard $\rr$-metric space. Let $S$ be a closed submonoid of $\ie(\mathcal{X})$ with respect to $\mpw$, and let $\mathcal{T}$ be a dense submonoid of $\rr$.  Suppose that $S$ is pinching and spreading with respect to $\mathcal{T}$. Let $a, b\in X$ and $\epsilon\in \mathcal{T} \setminus\{0\}$. Let $s_0\in W_{(a,b,\epsilon)}$. Then, there is some $\mathcal{V}\in\zar$ such that 
\[s_0\in \mathcal{V}\subseteq W_{(a,b,\epsilon)}.\]
In particular, $\mpw=\zar$ on $S$, and so $\mpw$ is minimal amongst Hausdorff semigroup topologies on $S$. 
\end{theorem}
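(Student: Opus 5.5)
The plan is to realise a suitable $Z$-set from \cref{Zopen} inside $W_{(a,b,\epsilon)}$ around $s_0$, using \cref{containments} together with a recentering trick. Since $s_0\in W_{(a,b,\epsilon)}$, we have $\gamma:=d(s_0(a),b)<\epsilon$.

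First we find a margin. Because $\rr$ is standard and $\mathcal{T}$ is dense, \cref{fact:standarddense} gives $\mu\in\mathcal{T}\setminus\{0\}$ with $\gamma\oplus\mu<\epsilon$. Put $c:=s_0(a)$. By the triangle inequality, $W_{(a,c,\mu)}\subseteq W_{(a,b,\epsilon)}$: if $d(s(a),c)<\mu$, then $d(s(a),b)\leq d(s(a),c)\oplus d(c,b)<\mu\oplus\gamma$, the strictness being inherited through the order axiom. The ultrametric case, where $\oplus$ is $\max$, needs care here. There I would use the cleaner choice of a $\mu$ strictly below $\epsilon$ with $\max(\gamma,\mu)<\epsilon$, which is possible by density.

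Next we choose radii, repeatedly applying \cref{fact:standarddense} and density of $\mathcal{T}$. We want $\zeta,\eta,\epsilon',\rho\in\mathcal{T}\setminus\{0\}$ with $\rho\leq\mu$, $\epsilon'\oplus\eta<\rho$, $\zeta\oplus\eta<\epsilon'$ and $\zeta<\eta$. Concretely, pick $\epsilon',\eta_1$ with $\epsilon'\oplus\eta_1<\mu$. Then pick $\zeta,\eta_2$ with $\zeta\oplus\eta_2<\epsilon'$. Finally let $\eta$ be an element of $\mathcal{T}\setminus\{0\}$ strictly between $\zeta$ and $\min(\eta_1,\eta_2)$ when $\zeta<\min(\eta_1,\eta_2)$, and otherwise shrink $\zeta$ first. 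Monotonicity of $\oplus$ then preserves the inequalities.

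Then apply \cref{containments} at the pair $(a,c)$ with radii $\epsilon'$, $\zeta$, $\eta$, $\rho$. This gives $\sigma,\theta$ from spreading such that
\[W_{(a,c,\zeta)}\subseteq Z_{(c,\zeta,\eta)}\subseteq W_{(a,c,\rho)}\subseteq W_{(a,c,\mu)}\subseteq W_{(a,b,\epsilon)}.\]
By \cref{Zopen}, $\mathcal{V}:=Z_{(c,\zeta,\eta)}\in\zar$. Moreover $s_0\in W_{(a,c,\zeta)}$ since $d(s_0(a),c)=0$, so $s_0\in\mathcal{V}$.

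For the conclusion, the sets $W_{(a,b,\epsilon)}$ with $\epsilon\in\mathcal{T}$ still form a subbasis of $\mpw$, by density of $\mathcal{T}$ and standardness. Hence every $\mpw$-open set is $\zar$-open, so $\mpw\subseteq\zar$. Conversely, $\mpw$ is a Hausdorff semigroup topology by \cref{lem:hausdorff}, and $\zar$ is contained in every $T_1$ semigroup topology, so $\zar\subseteq\mpw$. Equality follows. Since $\zar$ is coarser than every Hausdorff semigroup topology, $\mpw$ is minimal.

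The main obstacle is the bookkeeping of strict inequalities in a general distance monoid, especially when $\oplus$ is truncated or ultrametric. In those settings $r\oplus s$ may equal $r$, so each radius has to be chosen via standardness rather than by subtraction.
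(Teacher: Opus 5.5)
Your proof is correct and takes essentially the same route as the paper: you recenter at $c=s_0(a)$ with a margin obtained from standardness, choose radii in $\mathcal{T}$, sandwich $Z_{(c,\zeta,\eta)}$ between $W$-sets via \cref{containments}, invoke \cref{Zopen}, and conclude with \cref{lem:hausdorff} and the fact that $\zar$ lies below every Hausdorff semigroup topology. Two minor points do not affect correctness. First, $\oplus$ is only weakly monotone, so you get $d(s(a),c)\oplus\gamma\leq\mu\oplus\gamma$ rather than a strict inequality; strictness instead comes from $\mu\oplus\gamma<\epsilon$. Second, the margin $\mu$ comes directly from the definition of standardness plus density of $\mathcal{T}$, not literally from \cref{fact:standarddense}.
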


\begin{proof} Let $c:=s_0(a)$ and $\xi:=d(c,b)<\epsilon$. Since $\rr$ is standard and $\mathcal{T}$ is dense in $R$, there is $\eta\in \mathcal{T}\setminus\{0\}$ such that $\eta\oplus\xi<\epsilon$. 

Moreover, since $\rr$ is standard {and $\mathcal{T}$ is dense in $\rr$, by iterated applications of \cref{fact:standarddense}, we can find $\zeta, \eta_0, \epsilon_0, \rho\in T\setminus\{0\}$, all strictly smaller than $\eta$ and such that }
\[\zeta<\eta_0, \quad \zeta\oplus \eta_0<\epsilon_0, \quad \text{ and } \quad \epsilon_0\oplus\eta_0<\rho<\eta\;.\]

Applying Corollary \ref{containments}, we get
\[ s_0\in W_{(a,c,\zeta)}\subseteq Z_{(c,\zeta,\eta_0)}\subseteq W_{(a,c,\rho)}\subseteq W_{(a, b, \epsilon)}\;.\]

{Now, by setting $\V:=Z_{(c, \zeta, \eta_0)}$, and applying Corollary \ref{Zopen}, we obtain the desired conclusion.}
Note that we just proved that for every $\mpw$-open set $\W$ and any $s_0\in W$, there is a $\zar$-open set $\V$ such that $s_0\in\V\subseteq \W$. In particular, this means that $\mpw\subseteq \zar$. But since $\zar$ is contained in every Hausdorff semigroup topology on $S$, we also have $\zar\subseteq\mpw$ by Lemma~\ref{lem:hausdorff}. Hence, $\mpw=\zar$.
\end{proof}

\subsection{Minimal semigroup topologies on the 
{monoids of isometric} embeddings of Urysohn spaces}

In this section we prove that the monoids of isometric embeddings of various Urysohn spaces are such that the topology $\mpw$ agrees with the Zariski topology $\zar$, and is strictly coarser than the topology of pointwise convergence $\pw$. In particular, we prove under general conditions on $\rr$ that $\ie(\U_{\rr})$ is spreading and pinching, which by \cref{Urirealproof} implies that $\mpw=\zar$. Proving the above two properties is the main challenge of this section.

\begin{lemma}\label{Urilemma1}  Let $\rr$ be a countably infinite metrically complete, standard distance monoid, and let $\U$ be $\U_\rr$. Then, $\ie(\U)$ is pinching.  
\end{lemma}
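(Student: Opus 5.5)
The plan is to build the two witnesses $\phi,\psi$ for the point $a$ and radius $\epsilon$ from a single isometric embedding of a ``doubled'' space into $\U$. Write $B:=B_\epsilon(a)$. Let $Y$ be the $\rr$-metric space whose underlying set is $U\sqcup B'$, where $B'=\{b'\mid b\in B\}$ is a disjoint copy of $B$. The metric $d_Y$ agrees with $d$ on $U$, satisfies $d_Y(b',c')=d(b,c)$ for $b,c\in B$, and satisfies $d_Y(b',x)=d(b,x)$ for $x\in U\setminus B$. This makes both $U$ and $(U\setminus B)\cup B'$ isometric copies of $\U$, glued along $U\setminus B$. It remains to choose the ``cross'' distances $d_Y(b',c)$ for $b,c\in B$ so that they are positive when $b=c$ and the triangle inequality holds.

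For each $b\in B$ I set $\delta_b:=\epsilon\ominus d(a,b)$, which exists by metric completeness. It is nonzero: if $\epsilon\ominus d(a,b)=0$ then \cref{fact:ominus}(1) gives $\epsilon\le d(a,b)$, contradicting $b\in B$. The key estimate is that every $x\in U\setminus B$ satisfies $d(b,x)\ge\delta_b$. Indeed, $\epsilon\le d(a,x)\le d(a,b)\oplus d(b,x)$, and \cref{fact:ominus}(1) turns this into $\delta_b\le d(b,x)$.

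My first candidate for the cross distances is
\[d_Y(b',c):=\max\{d(b,c),\min(\delta_b,\delta_c)\}.\]
This gives $d_Y(b',b)=\delta_b>0$ and reduces to $d(b,c)$ whenever $b$ and $c$ are far apart relative to their distance to the boundary of the ball. The main obstacle is verifying the triangle inequality for all triples of the following mixed types:
\begin{itemize}
\item $(b',c,x)$ with $x\in U\setminus B$;
\item $(b',c,e)$ with $e\in B$;
\item $(b',c',e)$.
\end{itemize}
The inequalities involving $x$ outside the ball should follow from the estimate $d(b,x)\ge\delta_b$ together with \cref{fact:ominus}(2). The purely internal ones need care with $\min(\delta_b,\delta_c)$ versus $\delta_e$, in particular bounding $\delta_b$ by $\delta_e\oplus d(b,e)$, which should again come from \cref{fact:ominus}. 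If the max/min formula fails, I would fall back to a free-amalgamation-style definition. This sets $d_Y(b',c)$ to the least value compatible with all paths through $U\setminus B$ and through the other copy, capped below by the $\delta$'s. The difficulty there is that infima need not exist in $R$, so I would restrict to finitely many points at a time and use the inductive construction below.

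Once $Y$ is an $\rr$-metric space, it is countable. Enumerating $Y$ and applying (EP) repeatedly, a standard forth argument, yields an isometric embedding $\iota\colon Y\to\U$. Define
\[\phi:=\iota\restriction U,\qquad \psi(x):=\begin{cases}\iota(x)&\text{if } x\notin B,\\ \iota(x')&\text{if } x\in B.\end{cases}\]
Both maps are isometric embeddings of $\U$ into itself, since each is $\iota$ composed with an isometry from $\U$ onto one of the two copies inside $Y$. They agree on $U\setminus B$. For $b\in B$ they differ, because $d(\phi(b),\psi(b))=d_Y(b,b')=\delta_b>0$. This is exactly \eqref{niceUri}, so $\ie(\U)$ is pinching.
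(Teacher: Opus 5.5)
Your construction is correct, and the uniform formula $d_Y(b',c)=\max\{d(b,c),\min(\delta_b,\delta_c)\}$ needs no fallback: the two estimates you isolated are exactly what is required. In each of your three mixed triangle types, every inequality either reduces to the triangle inequality in $\U$ (using $d(b,c)\le d_Y(b',c)$ and the order axiom), or it is one of the following.
\begin{itemize}
\item For $(b',c,x)$, it is $\min(\delta_b,\delta_c)\le\delta_b\le d(b,x)\le d(b,x)\oplus d(x,c)$, by your estimate and positivity; \cref{fact:ominus}(2) is not needed here.
\item For $(b',c,e)$ and $(b',c',e)$, it is an inequality such as $\min(\delta_b,\delta_c)\le d_Y(b',e)\oplus d(e,c)$. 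You get this by splitting on which $\delta$ realises the minimum in $d_Y(b',e)$. In the nontrivial case you use the Lipschitz bound $\delta_c\le\delta_e\oplus d(e,c)$, which is \cref{fact:ominus}(2) applied to $d(a,e)\le d(a,c)\oplus d(c,e)$, followed by \cref{fact:ominus}(1).
\end{itemize}
This case check is the whole content of the lemma, so you should write it out rather than leave it at ``should follow''.

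The paper uses the same doubling idea but interleaves it with an enumeration $(a_i)$. It builds $b_i=\phi(a_i)$ and $c_i=\psi(a_i)$ step by step, with $b_i=c_i$ outside the ball and $c_0$ at distance $\epsilon$ from $a$, and at each stage it checks only the triangles through the new points. Your version separates constructing the amalgam $Y$ from embedding it. This cuts the verification to three symmetric triangle types and makes the isometry of $\phi$ and $\psi$ automatic.

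The cross distances also differ. The paper prescribes $\max\{\delta_{i,j},\epsilon\ominus\alpha_i,\epsilon\ominus\alpha_j\}$ for $i,j\neq 0$ in \eqref{max}, but $\max\{\alpha_i,\epsilon\ominus\alpha_i\}$ against $c_0$ in \eqref{withzero}. Your formula reproduces the latter at $b=a$, since $\delta_a=\epsilon$, and its uniformity matters: the paper's two prescriptions are incompatible. For instance, over $\mathbb{Q}_{\geq 0}$ with $\epsilon=10$, $\alpha_i=5$, $\alpha_j=1$ and $\delta_{i,j}=5$, they demand $d(b_i,c_j)=9$, while $d(b_i,c_0)+d(c_0,c_j)=5+1$. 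Using the max formula uniformly, which gives $d(b_i,c_0)=\epsilon$, would also be consistent. So your global formulation is not only tidier; it avoids a real slip in the step-by-step bookkeeping.
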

\begin{proof} 
 
{Let $a\in\U$, and  $\epsilon\in \rr\setminus\{0\}$.} 
 Choose $c\in \U$ so that $d(c,a)=\epsilon$. Fix an  enumeration $a_0, a_1, \dots$ of elements of $\U$ starting from $a_0=a$. We build by induction two sets $B=\{b_i:i\in \mathbb N, b_i\in \U\}$ and $C=\{c_i:i\in \mathbb N, c_i\in \U\}$ such that
\begin{align}
b_0=a_0 \mbox{ and } c_0=c \label{9}\\
    \delta_{i,j}:=d(a_i, a_j)=d(b_i, b_j)=d(c_i, c_j)\label{isoemb} \text{ for all } i,j\in \mathbb N\\
     b_i=c_i \text{ if }\ \alpha_i:=d(a_0, a_i)\geq \epsilon \label{eq:equalitybc}\\ d(b_i,c_0)=d(c_i,b_0)=\max\{\alpha_i, \epsilon\ominus\alpha_i\} \text{ for all } i\in \mathbb N\setminus\{0\} \label{withzero}\\
     d(b_i, c_j)=\mathrm{max}\left\{\delta_{i,j}, \epsilon\ominus\alpha_i, \epsilon\ominus\alpha_j\right\} \text{ for }\ \alpha_i,\alpha_j< \epsilon \text{ when } i,j\neq 0 
     .\label{max}
\end{align}
If this can be done for all $n\in \mathbb N$, then the maps $a_i\mapsto b_i$ and $a_i\mapsto c_i$ will yield the desired $\phi$ and $\psi$. In particular, condition (\ref{isoemb}) ensures that $\phi$ and $\psi$ are isometric embeddings and (\ref{eq:equalitybc}), the fact that $a\neq c$, and condition (\ref{max}) jointly ensure that property (\ref{niceUri}) is satisfied.

The base case $n=0$ is trivial. For the inductive step, we may assume there is a sequence that satisfies the conditions (\ref{9})--(\ref{max}) for $n>0$, and we prove that we can find the appropriate $b_{n+1}$ and $c_{n+1}$. Let $p(x_0, \dots, x_{n+1})$
{be the quantifier-free type of the tuple $(a_0, \dots, a_{n+1})$.}\\

\textbf{Case 1:} $\alpha_{n+1}\geq\epsilon$. In this case we need to show \begin{equation}\label{typesat1}
   p(b_0, \dots, b_n, x_{n+1})\cup p(c_0, \dots, c_n, x_{n+1}) 
\end{equation}
is satisfiable in $\U$. Then, $b_{n+1}=c_{n+1}$ will be any element of $\U$ satisfying the type above. By the universality of $\U$ and the extension property (EP), we need to show there is an $\rr$-metric space $D\cup \{e\}$ where $D$ is isometric to $\{b_0,\cdots,b_n\}\cup \{c_0,\dots,c_n\}$, and the distances of $e$ to $D$ are as suggested by the type $p$. Note that in order to show $D\cup \{e\}$ is an $\rr$-metric space, it is sufficient to check the triangle inequality holds for new triplets $(b_i,c_j,e)$, where $i,j\leq n$.
Based on our notation $d(b_i,e)=\delta_{i,n+1}$ and $d(c_j,e)=\delta_{j,n+1}$. Then, by the triangle inequality for $(a_i, a_j, a_{n+1})$,
\begin{equation}\label{a1}
    d(b_i, e)=\delta_{i, n+1}\leq \delta_{i,j}\oplus\delta_{j, n+1}=\delta_{i,j}\oplus d(c_j,e)
\end{equation}
and similarly 
\begin{equation}\label{a2} d(c_j, e)\leq \delta_{i,j}\oplus d(b_i,e).\end{equation}

When $b_i=c_i$ or $b_j=c_j$ then $d(b_i,c_j)=\delta_{i,j}$. From the inequalities above and the fact that $\delta_{i,j}\leq \delta_{i,n+1}\oplus\delta_{j,n+1}$, we immediately have all the triangle inequalities that we want. 
Now consider the case where $b_i\neq c_i$ and $b_j\neq c_j$. By (\ref{eq:equalitybc}), this implies that $\alpha_i,\alpha_j< \epsilon$. 
According to (\ref{max}), if $i,j\neq 0$, we have that $d(b_i,c_j)=\max\{\delta_{i,j}, \epsilon\ominus\alpha_i,\epsilon\ominus\alpha_j\}$. This, combined with (\ref{a1}) and (\ref{a2}) immediately implies that    $d(b_i, e)\leq d(b_i,c_j)\oplus d(c_j,e)$ and $d(c_j, e)\leq d(b_i,c_j)\oplus d(b_i,e)$. 
Again, if $d(b_i,c_j)=\delta_{i,j}$, then the above inequalities, together with $\delta_{ij}\leq\delta_{i, n+1}\oplus\delta_{j, n+1}$, complete the job. Assume $d(b_i,c_j)=\epsilon\ominus\alpha_i$. Then since $\alpha_{n+1}\geq\epsilon$ we have
\[\epsilon \leq \alpha_{n+1}\oplus\delta_{j, n+1}\leq \alpha_i\oplus\delta_{i, n+1}\oplus\delta_{j, n+1}, \]
and so by Fact~\ref{fact:ominus}.1,
\[ \epsilon\ominus\alpha_i \leq \delta_{i, n+1}\oplus\delta_{j, n+1}\;.\]
This settles all the triangle inequalities we need to check. The case of $d(b_i,c_j)=\epsilon\ominus\alpha_j$ is identical. When $i=j=0$ the required triangle inequalities are obvious. 
Now consider the case of $i\neq 0$ and $j=0$. According to assumption (\ref{withzero}), $d(b_i,c_0)=\max\{\alpha_i,\epsilon\ominus\alpha_i\}$. If $d(b_i,c_0) = \alpha_i$, since we know that $\alpha_i< \epsilon$,  we obtain $\alpha_i\leq \alpha_{n+1}\oplus\delta_{i,n+1}$ (note that $d(e,c_0)=\alpha_{n+1}$). 
The rest of the inequalities also hold. \\
\textbf{Case 2:} $\alpha_{n+1}<\epsilon$.  Let $\Gamma(x_{n+1}, y_{n+1})$ be the quantifier-free type asserting
\begin{align*}
    \left\{d(b_i, y_{n+1})=\mathrm{max}\{\delta_{i, n+1}, \epsilon\ominus\alpha_i, \epsilon\ominus\alpha_{n+1}\}= d(c_i, x_{n+1}) \ \vert \ 0< i\leq n, \alpha_i<\epsilon \right\}\cup\\
\{d(x_{n+1},c_0)=d(y_{n+1},b_0)=\max\{\alpha_{n+1}, \epsilon\ominus\alpha_{n+1}\} \}
\end{align*}
It is enough to show that the following quantifier free type $\Theta(x_{n+1}, y_{n+1})$ is satisfiable in $\U$:
\begin{equation}\label{typesat2}
     p(b_0, \dots, b_n, x_{n+1})\cup p(c_0, \dots, c_n, y_{n+1}) \cup \Gamma(x_{n+1}, y_{n+1})\cup\{d(x_{n+1}, y_{n+1})=\epsilon\ominus\alpha_{n+1}\}.
\end{equation}
Note that $\Theta(x_{n+1}, y_{n+1})$ entirely specifies the distance relations between $x_{n+1}$, or $y_{n+1}$ and any of the elements in $\{b_i\vert 0\leq i\leq n\}\cup\{c_i\vert 0\leq i\leq n\}$, and it also specifies the distance between $x_{n+1}$ and $y_{n+1}$. So, again by universality, in order to show that $\Theta(x_{n+1}, y_{n+1})$ is satisfiable in $\U$, we only need to show that the finite structure on $\{b_i\vert 0\leq i\leq n+1\}\cup\{c_i\vert 0\leq i\leq n+1\}$ whose distance-relations are given by $\Theta(b_{n+1}, c_{n+1})$ is still an $\rr$-metric space.
And for this, we only need to check the triangle inequality for all triplets of distinct points of the form $(b_{n+1}, c_i, b_j)$ and $(b_{n+1}, c_i, c_j)$ for $i\leq n+1, j\leq n$. %
  For $(b_{n+1}, c_i, b_j)$  for $i\leq n+1, j\leq n$ the required triangle inequalities {clearly hold} if $c_i= b_i$ or $ b_j=c_j$. Thus we only need to deal with the cases of $c_i\neq  b_i$ and $ b_j\neq c_j$.\\ 

We begin by considering $(b_{n+1}, c_i, b_j)$ for $i\leq n+1, j\leq n$. Firstly according to the type $\Gamma$, $d(c_i, b_{n+1})$ may be any  value $\delta_{i, n+1}, \epsilon\ominus\alpha_i$ or $\epsilon\ominus\alpha_{n+1}$ when $i\neq 0$.  
Consider the triangle inequality starting from $d(b_j, b_{n+1})$. We get
\[d(b_j, b_{n+1})=\delta_{j, n+1}\leq \delta_{i, j}\oplus\delta_{i, n+1}\leq d(c_i, b_j)\oplus d(c_i, b_{n+1}).\]
When $i\neq n+1$ and  $d(c_i, b_{n+1})=\delta_{i, n+1}$ we get 
\[d(c_i, b_{n+1})=\delta_{i, n+1}\leq \delta_{i,j}\oplus\delta_{j, n+1}\leq d(c_i, b_j)\oplus d(b_j, b_{n+1}).\]
When $i=n+1$ or $d(c_i,b_{n+1})=\epsilon \ominus\alpha_i$, we get
\[d(c_i, b_{n+1})=\epsilon\ominus\alpha_i\leq d(c_i, b_j)\leq d(c_i, b_j)\oplus d(b_j, b_{n+1}).\]

Now we only need to deal with  $d(c_i,b_{n+1})=\epsilon \ominus\alpha_{n+1}$ when $i\neq n+1$. Note that $\alpha_j\leq \alpha_{n+1}\oplus \delta_{j,n+1}$. Then, using Fact~\ref{fact:ominus}.2 for the first inequality,
\[d(c_i,b_{n+1})=\epsilon\ominus\alpha_{n+1}\leq \epsilon \ominus\alpha_j\oplus\delta_{j,n+1}\leq d(c_i,b_j)\oplus d(b_j,b_{n+1}).\]

Finally, the case of starting the triangle inequality from $d(c_i, b_j)$ also splits into three cases. The only case not identical to previously treated ones is that of $d(c_i, b_j)=\epsilon\ominus\alpha_j$. To deal with this case, note that $\alpha_{n+1}\leq \alpha_j\oplus \delta_{j, n+1}$, yielding (again, using Fact~\ref{fact:ominus}.2)
\[\epsilon\ominus\alpha_j\leq (\epsilon\ominus\alpha_{n+1}){\oplus}\delta_{j, n+1}\leq d(c_i, b_{n+1})\oplus d(b_j, b_{n+1}).\]
When $i=0$ similar inequalities hold. Hence, all triplets $(b_{n+1}, c_i, b_j)$ for $i\leq n+1, j\leq n$ satisfy the required triangle inequalities.\\

We now move to the case of triplets of the form $(b_{n+1}, c_i, c_j)$ for $i\leq n+1, j\leq n$. Firstly, note that if $i\neq n+1$
\[d(c_i, c_j)=\delta_{i,j}\leq\delta_{i, n+1}\oplus \delta_{j, n+1}\leq d(c_i, b_{n+1})\oplus d(c_j, b_{n+1}).\]
The triangle inequalities starting with $d(c_i, b_{n+1})$ and $d(c_j, b_{n+1})$ can be dealt with analogously to previous cases.\\

The only remaining triangle inequalities to check are those for the triplet $(b_{n+1}, c_{n+1}, c_j)$ for $j\leq n$. Clearly,
\[d(c_j, c_{n+1})=\delta_{j,n+1}\leq\delta_{j, n+1}\oplus (\epsilon\ominus\alpha_{n+1})\leq d(c_j, b_{n+1})\oplus d(c_{n+1}, b_{n+1}).\] also
\[d(c_{n+1},b_{n+1})=\epsilon\ominus\alpha_{n+1}\ \leq d(c_{n+1},c_j)\oplus d(c_j,b_{n+1})\] holds simply because $d(c_j,b_{n+1})=\max\{\delta_{j,n+1},\epsilon\ominus\alpha_j,\epsilon\ominus\alpha_{n+1}\}$. Finally, we need to consider three separate cases, depending on whether  $d(c_j, b_{n+1})$ takes the value  $\delta_{j, n+1}, \epsilon\ominus\alpha_j$ or $\epsilon\ominus\alpha_{n+1}$. 
In the first case 
\[d(c_j, b_{n+1})=\delta_{j,n+1}\leq\delta_{j, n+1}\oplus (\epsilon\ominus\alpha_{n+1})= d(c_j, c_{n+1})\oplus d(c_{n+1}, b_{n+1}).\]
In the second case, from $\alpha_{n+1}\leq \alpha_j\oplus \delta_{j,n+1}$ and Fact~\ref{fact:ominus}.2, we get
\[d(c_j, b_{n+1})=(\epsilon\ominus\alpha_j)\leq (\epsilon\ominus\alpha_{n+1})\oplus \delta_{j,n+1}= d(c_{n+1}, b_{n+1})\oplus d(c_j, c_{n+1}).\]
For the third and final case,
\[d(c_j, b_{n+1})=\epsilon\ominus\alpha_{n+1}= d(c_{n+1},b_{n+1})\leq d(c_{n+1}, b_{n+1})\oplus d(c_j, c_{n+1}). \]
This concludes our proof of the lemma.
\end{proof}

\begin{corollary}\label{realcor1} Let $\rr$ be either {$\mathbb{R}_{\geq 0}$} or $[0,1]$ (with $\oplus$ being truncated addition in the latter case), and let $\mathcal{T}$ be {$\mathbb{Q}_{\geq0}$} or $\mathcal{Q}_1:=\mathbb{Q}\cap[0,1]$ respectively. Let $\U^\star:=\U_{\rr}$. Then, {$\ie(\U^\star)$} is pinching with respect to $\mathcal{T}$.
    
\end{corollary}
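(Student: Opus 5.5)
The plan is to transfer pinching from the countable rational Urysohn space to its completion. Write $\U_{\mathcal{T}}$ for the countable Urysohn space of $\mathcal{T}$, which is $\U_{\mathbb{Q}_{\geq 0}}$ or $\U_{\mathcal{Q}_1}$. By Examples~\ref{examples:uri}, $\mathcal{T}$ is a countably infinite, metrically complete, standard distance monoid, so \cref{Urilemma1} applies to it. We identify $\U^\star$ with the completion of $\U_{\mathcal{T}}$, so that $\U_{\mathcal{T}}\subseteq\U^\star$ is dense.

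Fix $a\in\U^\star$ and $\epsilon\in\mathcal{T}\setminus\{0\}$. The first step is to reduce to the case $a\in\U_{\mathcal{T}}$. The natural approach is to rerun the inductive construction in the proof of \cref{Urilemma1} directly inside $\U^\star$. Choose a countable dense set $\{a_1,a_2,\dots\}\subseteq\U^\star$ and set $a_0:=a$. Pick $c\in\U^\star$ with $d(a,c)=\epsilon$. We then build $b_i,c_i$ satisfying conditions (\ref{9})--(\ref{max}). The triangle-inequality verifications in that proof only use the distance-monoid axioms and Fact~\ref{fact:ominus}, and these hold in $\rr$, which is metrically complete and standard. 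The one change is where the new points come from: instead of invoking countable universality and (EP), we use the approximate extension property of $\U^\star$. This says that any one-point extension of a finite subspace of $\U^\star$ to an $\rr$-metric space is realised in $\U^\star$; this is the standard Katětov/extension property of $\U_{\mathbb{R}}$ and of $\U_{[0,1]}$.

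Having defined $\phi,\psi$ on the dense set $\{a_i\}$ with $a_i\mapsto b_i$ and $a_i\mapsto c_i$, we extend them by continuity to isometric embeddings $\phi,\psi\colon\U^\star\to\U^\star$; this uses completeness of $\U^\star$. It then remains to check (\ref{niceUri}) on all of $\U^\star$.

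First take $x\notin B_\epsilon(a)$, so $d(x,a)\geq\epsilon$. If $d(x,a)>\epsilon$, approximate $x$ by points $a_i$ with $\alpha_i>\epsilon$. For those points $b_i=c_i$ by (\ref{eq:equalitybc}), so continuity gives $\phi(x)=\psi(x)$. The boundary case $d(x,a)=\epsilon$ is the delicate one, because $x$ may only be approximable by points inside the ball. To handle it, we estimate $d(b_i,c_i)$ using (\ref{max}) with $i=j$, which gives $d(b_i,c_i)=\epsilon\ominus\alpha_i$ when $\alpha_i<\epsilon$. In $\mathbb{R}$ this quantity tends to $0$ as $\alpha_i\to\epsilon$, so $\phi(x)=\psi(x)$ again follows by continuity.

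Next take $x\in B_\epsilon(a)$. Approximating $x$ by $a_i$ with $\alpha_i\to d(a,x)<\epsilon$, the same formula gives $d(\phi(x),\psi(x))=\lim_i(\epsilon\ominus\alpha_i)=\epsilon-d(a,x)>0$, so $\phi(x)\neq\psi(x)$. The case $x=a$ gives distance $\epsilon$, consistent with $d(b_0,c_0)=\epsilon$.

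The main obstacle is continuity of $\ominus$. In $\mathbb{R}_{\geq 0}$ and in $[0,1]$ with truncated addition we have $\epsilon\ominus\alpha=\max(\epsilon-\alpha,0)$, which is continuous, so the limit arguments above go through. I would check this explicitly for both monoids, and also confirm that Case~2 of the construction can always be realised by the extension property of the complete space.
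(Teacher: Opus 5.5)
Your argument is correct in outline, but it follows a different route from the paper. The paper does not rerun the construction inside $\U^\star$. Instead it uses transitivity of $\mathrm{I}(\U^\star)$ to move $a$ into the dense countable space $\U$ ($=\U_{\mathbb{Q}_{\geq 0}}$ or $\U_{\mathcal{Q}_1}$), and applies \cref{Urilemma1} there as a black box; this is where $\epsilon\in\mathcal{T}$ is needed, to get $c\in\U$ with $d(a,c)=\epsilon$. It then extends $\phi,\psi$ to the completion by continuity and computes $d(\phi^\star(e),\psi^\star(e))$ as the limit of the diagonal distances, exactly as in your last step. You instead build $\phi,\psi$ directly on a countable dense subset of $\U^\star$ containing $a$, replacing countable universality and (EP) by the one-point extension property of the complete space. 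This makes the reduction to $a\in\U$ unnecessary, so your sentence announcing that reduction is vestigial. It also gives pinching for every $\epsilon\in\rr\setminus\{0\}$, i.e.\ with respect to $\rr$ itself. The price is that you need the exact Urysohn extension property of $\U_{\mathbb{R}}$ and $\U_{[0,1]}$, a fact the paper never states. An approximate version would not do, since \eqref{isoemb} and \eqref{eq:equalitybc} must hold on the nose.

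One caveat applies equally to your route and to the paper's, since both rest on realising \eqref{9}--\eqref{max} in a non-ultrametric monoid. As literally stated, \eqref{withzero} contradicts \eqref{isoemb} and \eqref{max}. Take $\epsilon=1$, $\alpha_1=1/2$, $\alpha_2=1/8$ and $\delta_{1,2}=1/2$. Then \eqref{max} forces $d(b_1,c_2)=7/8$, whereas \eqref{withzero} and \eqref{isoemb} give $d(b_1,c_0)+d(c_0,c_2)=1/2+1/8$. So the triangle-inequality checks do not simply carry over: they fail for triples through $b_0$ or $c_0$, which the paper's verification only asserts. The repair is to set $d(b_i,c_0)=d(c_i,b_0)=\max\{\alpha_i,\epsilon\}$, i.e.\ to let \eqref{max} include the index $0$. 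The configuration is then $d(b_x,c_y)=\max\{d(x,y),f(x),f(y)\}$ with $f(x)=\epsilon\ominus d(a,x)$, read as $0$ once $d(a,x)\geq\epsilon$. This is a pseudometric because $f(x)\leq f(z)\oplus d(x,z)$ by \cref{fact:ominus}, and it identifies $b_x$ with $c_x$ exactly when $d(a,x)\geq\epsilon$. Your limit argument uses only $d(b_i,c_i)=\epsilon\ominus\alpha_i$ for $\alpha_i<\epsilon$ and $b_i=c_i$ otherwise, so with this correction it goes through unchanged.
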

\begin{proof}
Let $\U$ be $\U_{\mathbb{Q}_{\geq 0}}$, or $\U_{\mathcal{Q}_1}$ depending on whether $\U^\star$ is $\U_{\mathbb{R}_{\geq_0}}$ or $\U_{[0,1]}$ respectively. Let $a\in\U^\star$ and fix $\epsilon\in \mathcal T\setminus\{0\}$. By transitivity of $\mathrm I(\U^\star)$ and density of  $\U$  in $\U^\star$, we may assume $a\in\U$.
Consider $\phi, \psi\in\mathrm{IE}(\U)$ in \cref{Urilemma1}, so that, for an enumeration $(a_i)_{i\in\mathbb{N}}$ of $\U$, and writing $b_i:=\phi(a_i)$ and $c_i:=\psi(a_i)$, conditions (\ref{9})-(\ref{max}) are satisfied. 
Given $e\in\U^\star$, pick a  Cauchy sequence $(d_i)_{i\in\mathbb{N}}$ in $\U$ converging to $e$. Define $\phi^\star:\U^\star\to\U^\star$ by setting for any $e\in\U^\star$ 
\[\phi^\star(e):=\lim_{i\to\infty} \phi(d_i)\;.\]
{Note that since $\phi$ is an isometric embedding $(\phi(d_i))_{i\in\mathbb{N}}$ is Cauchy and the limit does not depend on the choice of $(d_i)_{i\in\mathbb{N}}$.}
Hence $\phi^\star$ is indeed well-defined and continuous. {By continuity of the distance function, it is an isometric embedding extending
$\phi$.} %
We can similarly define $\psi^\star$. {Since $\phi^\star$ and $\psi^\star$ are continuous, for any Cauchy sequence $(d_i)_{i\in\N}$ from $\U$ converging to $e$,}
\[d(\phi^\star(e), \psi^\star(e))=\lim_{i\to\infty} d(\phi(d_i), \psi(d_i))\;.\]
{By \eqref{withzero} and \eqref{max}, the right hand side of the above equals $\epsilon \ominus d(a,e)$ whenever $d(a, e)<\epsilon$ and $0$ otherwise.}
{Hence,}
\[\phi^\star\upharpoonright_{\U^\star\setminus B_{\epsilon}(a)}=\psi^\star\upharpoonright_{\U^\star\setminus B_{\epsilon}(a)}\;,\]
{and $\phi^\star(e)\neq\psi^\star(e)$ for every $e\in B_\epsilon (a)$, as desired. This concludes the proof.}
\end{proof}

\begin{lemma}\label{Urilemma2} Let $\rr$ be a countably infinite standard distance monoid and let $\U$ be $\U_{\rr}$.  Then, $\ie(\U)$ is spreading.
\end{lemma}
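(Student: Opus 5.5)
Given $a\in\U$ and $\epsilon\in R\setminus\{0\}$, I will construct $\sigma,\theta\in\ie(\U)$ by building their images as countable subspaces of $\U$ via the extension property (EP), exactly as in the proof of \cref{Urilemma1}. Fix an enumeration $a=a_0,a_1,\dots$ of $\U$. The idea is to make $\im(\sigma)$ and $\im(\theta)$ two copies of $\U$ glued only at $a$, placed as far apart as the metric allows. I will define points $b_i:=\sigma(a_i)$ and $c_i:=\theta(a_i)$ with $b_0=c_0=a$, $d(b_i,b_j)=d(c_i,c_j)=\delta_{i,j}$, and cross distances
\[d(b_i,c_j):=\alpha_i\oplus\alpha_j,\]
where $\alpha_i:=d(a_0,a_i)$. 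In other words, the union is the free amalgam of two copies of $\U$ over the single point $a$, with the path metric through $a$.

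The first step is to check that this prescription defines an $\rr$-metric on $B\cup C$. Within each copy there is nothing to check. For a mixed triangle $(b_i,b_k,c_j)$, I need
\[\alpha_i\oplus\alpha_j\le \delta_{i,k}\oplus\alpha_k\oplus\alpha_j,\]
which follows from $\alpha_i\le\delta_{i,k}\oplus\alpha_k$ and the order axiom. I also need
\[\delta_{i,k}\le(\alpha_i\oplus\alpha_j)\oplus(\alpha_k\oplus\alpha_j),\]
which follows from $\delta_{i,k}\le\alpha_i\oplus\alpha_k$ together with positivity. Triangles with two $c$'s are symmetric. Positivity of distances holds because $\alpha_i\oplus\alpha_j=0$ forces $i=j=0$, and then $b_0=c_0$ is intended. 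Having verified this, I proceed inductively: at stage $n+1$ the finite metric space on $\{b_0,\dots,b_{n+1},c_0,\dots,c_{n+1}\}$ is an $\rr$-metric space extending the current one, so by (EP) I can realise $b_{n+1}$ and then $c_{n+1}$ inside $\U$. In the limit, $a_i\mapsto b_i$ and $a_i\mapsto c_i$ are isometric embeddings, since the intra-copy distances are preserved, and $a\in\im(\sigma)\cap\im(\theta)$. No quantifier elimination is needed here, since elements of $\ie(\U)$ are just isometric embeddings.

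The second step is to verify \eqref{eq:ballcontainments}. Suppose $b_i\in B_\epsilon(\im(\theta))$, so $d(b_i,c_j)<\epsilon$ for some $j$. Then
\[\alpha_i\le\alpha_i\oplus\alpha_j<\epsilon\]
by positivity, so $d(b_i,a)=\alpha_i<\epsilon$, giving $b_i\in B_\epsilon(a)$.

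This argument does not seem to use standardness or countability beyond making (EP) available through \cref{fact:existenceUri}. The main obstacle I expect is the degenerate ultrametric-like behaviour of $\oplus$, where $r\oplus s$ may equal $\max\{r,s\}$, so that the glued distances might collapse. However, the only facts used are positivity and order compatibility, which hold in every distance monoid. So the triangle-inequality check in the first step is the only real content, and I will carry it out carefully for all mixed triples, including those involving the index $0$.
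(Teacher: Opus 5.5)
Your proposal is correct and follows the paper's proof: the same cross-distance prescription $d(b_i,c_j)=\alpha_i\oplus\alpha_j$ with $b_0=c_0=a$, the same inductive realisation via the extension property, and the same one-line verification that $d(b_i,c_j)<\epsilon$ forces $\alpha_i<\epsilon$. Your global triangle-inequality check on the glued space, using only positivity and the order axiom, is a repackaging of the paper's stage-by-stage check of the new triples $(b_{n+1},c_i,b_j)$ and $(b_{n+1},c_i,c_j)$. The only point to state explicitly is that the identification $b_0=c_0$ is consistent, which holds because $0\oplus\alpha_j=\alpha_j=\delta_{0,j}$.
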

\begin{proof} {Let $a\in\U$, and  $\epsilon\in \rr\setminus\{0\}$.} Fix an enumeration {$(a_i)_{i\in \mathbb N}$} of $\U$ where $a_0=a$. We build inductively {$(b_i)_{i\in \mathbb N}$ and $(c_i)_{i\in \mathbb N}$} such that $b_i,c_i\in \U$, writing $\delta_{i,j}:=d(a_i, a_j)$ and $\alpha_i:=d(a_0, a_i)$,
\begin{align}
    b_0=a_0=c_0, \label{boh1}\\
   d(b_i,b_j)= \delta_{i,j} =d(c_i,c_j),\label{boh2}\\
    d(b_i, c_j)=\alpha_i\oplus \alpha_j \;. \label{boh3}
\end{align}
Note that if we can find such sequences, then the maps $\sigma:a_i\mapsto b_i$ and $\theta:a_i\mapsto c_i$ are isometric embeddings of $\U$ satisfying the requirements of the Lemma. In particular, for 
    \[d\in\im(\sigma)\cap B_\epsilon(\im(\theta)),\]
there is some $i\in \mathbb N$ such that $d=b_i$ and some $j\in \mathbb N$ such that $d(b_i, c_j)<\epsilon$ where $c_j\in \im(\theta)$. Then, 
\[\alpha_i=d(b_i, a)\leq d(b_i, a)\oplus d(c_j, a)=\alpha_i\oplus \alpha_j=d(b_i, c_j)<\epsilon,\]
implying $d=b_i\in B_{\epsilon}(a)$, as desired.\\

We {now proceed to} prove by induction that we can find sequences {$(b_i)_{i\in \mathbb N}$ and $(c_i)_{i\in \mathbb N}$} satisfying conditions (\ref{boh1}), (\ref{boh2}) and (\ref{boh3}). The base case $i=0$ is trivial. For the induction step, by {the extension property,}
we just need to check that for $b_{n+1}$ and $c_{n+1}$ satisfying the conditions of (\ref{boh2}) and (\ref{boh3}), we do not get any triplet breaking the triangle inequality. Without loss of generality, we may only check triplets of the form $(b_{n+1}, c_i, b_j)$ and $(b_{n+1}, c_i, c_j)$ for $i\leq n+1, j\leq n$.\\

For triplets of the form $(b_{n+1}, c_i, b_j)$ for $i\leq n+1, j\leq n$, consider first the triangle inequality starting from $d(b_j, b_{n+1})$. We get
\[d(b_j, b_{n+1})=\delta_{j, n+1}\leq \alpha_j\oplus \alpha_{n+1}\leq (\alpha_i\oplus \alpha_j)\oplus (\alpha_i\oplus \alpha_{n+1})=d(c_i, b_j)\oplus d(c_i, b_{n+1}).\]
The case of $d(c_i, b_{n+1})$ is identical to the above.
For $d(c_i, b_j)$, we get
\[d(c_i, b_j)=\alpha_i\oplus \alpha_j\leq \alpha_i\oplus \delta_{j, n+1}\oplus \alpha_{n+1}=\alpha_i\oplus \alpha_{n+1} \oplus \delta_{j, n+1}= d(c_i, b_{n+1})\oplus d(b_j, b_{n+1}).\]

For triplets $(b_{n+1}, c_i, c_j)$ with $i\leq n+1, j\leq n$, consider first $d(c_i, b_{n+1})$. We get
\[d(c_i, b_{n+1})=\alpha_i\oplus \alpha_{n+1}\leq \delta_{i,j}\oplus\alpha_j\oplus\alpha_{n+1}=d(c_i, c_j)\oplus d(c_j, b_{n+1}).\]
The case of $d(c_j, b_{n+1})$ is identical.
For $d(c_i, c_j)$ we have
\[d(c_i, c_j)=\delta_{i,j}\leq \alpha_i\oplus\alpha_j\leq (\alpha_i\oplus\alpha_{n+1})\oplus (\alpha_j\oplus \alpha_{n+1})=d(c_i, b_{n+1})\oplus d(c_j, b_{n+1}).\]
Thus, all triplets of vertices still satisfy the triangle inequality and so we can find $b_{n+1}$ and $c_{n+1}$ satisfying (\ref{boh2}) and (\ref{boh3}). This concludes the induction proof and so the proof of the Lemma.
\end{proof}

\begin{corollary} \label{realcor2} Let $\rr$ be either $\mathbb{R}_{\geq 0}$ or $[0,1]$ (with $\oplus$ being truncated addition in the latter case), and let $\mathcal{T}$ be $\mathbb{Q}_{\geq 0}$ or $\mathcal{Q}_1:=\mathbb{Q}\cap[0,1]$ respectively. Let $\U^\star:=\U_{\rr}$. Then, $\ie(\U^\star)$ is spreading with respect to $\mathcal{T}$.
\end{corollary}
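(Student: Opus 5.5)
The plan is to mirror the proof of \cref{realcor1}: build the witnesses on the countable dense rational space and extend them to the completion by continuity. Let $\U$ be $\U_{\mathbb{Q}_{\geq 0}}$ or $\U_{\mathcal{Q}_1}$, according to whether $\U^\star$ is $\U_{\mathbb{R}_{\geq 0}}$ or $\U_{[0,1]}$. We view $\U$ as a dense subset of $\U^\star$. Fix $a\in\U^\star$ and $\epsilon\in\mathcal{T}\setminus\{0\}$. By transitivity of $\mathrm{I}(\U^\star)$, we may assume $a\in\U$. Note that conjugating witnesses by an isometry $g$ with $g(a')=a$ preserves \eqref{eq:ballcontainments}, since images and balls are carried to images and balls.

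First, apply \cref{Urilemma2} to $\U$, whose distance monoid is countably infinite and standard. This gives $\sigma,\theta\in\ie(\U)$ satisfying \eqref{boh1}--\eqref{boh3}. The key quantitative fact, stronger than \eqref{eq:ballcontainments} itself, is that for all $x,y\in\U$ we have
\[d(\sigma(x),\theta(y))=d(a,x)\oplus d(a,y).\]
Next, extend $\sigma,\theta$ to isometric embeddings $\sigma^\star,\theta^\star$ of $\U^\star$ via limits of Cauchy sequences, exactly as in \cref{realcor1}. By continuity of $d$ and of $\oplus$ (addition, or truncated addition, on the reals), the identity above persists for all $x,y\in\U^\star$. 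Clearly $a=\sigma^\star(a)=\theta^\star(a)$, so $a\in\im(\sigma^\star)\cap\im(\theta^\star)$.

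It remains to verify the containment. Suppose $z=\sigma^\star(x)\in B_\epsilon(\im(\theta^\star))$, so that $d(\sigma^\star(x),\theta^\star(y))<\epsilon$ for some $y\in\U^\star$. Then
\[d(z,a)=d(\sigma^\star(x),\sigma^\star(a))=d(x,a)\leq d(a,x)\oplus d(a,y)<\epsilon,\]
so $z\in B_\epsilon(a)$, as required.

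The main obstacle is justifying that the mixed-distance identity passes to the completion. This requires checking that $\oplus$ is continuous; truncated addition is continuous, so this works. It also requires that the limit defining $\sigma^\star(x)$ is well defined, which was already handled in \cref{realcor1}.
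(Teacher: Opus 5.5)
Your proposal is correct and follows the paper's route: reduce to $a\in\U$ by transitivity, take $\sigma,\theta$ from the construction in \cref{Urilemma2}, extend them to isometric embeddings of the completion, and check the containment. The only difference is in the last step: you pass the identity $d(\sigma(x),\theta(y))=d(a,x)\oplus d(a,y)$ to $\U^\star$ by continuity of $d$ and $\oplus$ (as the paper does in \cref{realcor1}), whereas the paper checks the containment directly by approximating $e,f$ with points of $\U$ and choosing auxiliary radii $\epsilon',\delta,\gamma$; your version is somewhat cleaner.
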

\begin{proof}
    The proof follows the same structure as that of Corollary~\ref{realcor1}: let $\U$ stand for $\U_{\mathbb{Q}_{\geq 0}}$ or $\U_{\mathcal{Q}_1}$ depending on whether $\U^\star$ stands for $\U_\mathbb{R}$ or $\U_{[0,1]}$ respectively. 
    {Let $a\in\U^\star$ and fix $\epsilon\in \mathcal T\setminus\{0\}$. By transitivity of $\mathrm{I}(\U^\star)$ and the fact that $\U^\star$ is the completion of $\U$, we may assume $a\in \U$. Let $\sigma, \theta\in \mathrm{IE}(\U)$ be as constructed in Lemma~\ref{Urilemma2} in order to witness (\ref{eq:ballcontainments}). Then, we can extend them to $\sigma^\star$ and $\theta^\star$ in $\mathrm{IE}(\U^\star)$ just as in Corollary~\ref{realcor1}. Since $a\in\im(\sigma)\cap \im(\theta)$, we have $a\in\im(\sigma^\star)\cap \im(\theta^\star)$. So it only remains the containment
    \[\im(\sigma^\star)\cap B_\epsilon(\im(\theta^\star))\subseteq B_\epsilon(a).\]
    Consider $e\in\U^\star$ such that  $d(\sigma^\star(e), \theta^\star(f))<\epsilon'<\epsilon$ for some $f\in \U^\star$ and $\epsilon'\in \mathcal T$. }
    Then, pick $\delta, \gamma\in \mathcal{T}\setminus\{0\}$ such that $\epsilon'\oplus\delta\oplus\gamma\oplus\gamma<\epsilon$. 
    By density of $\U$ in $\U^\star$, pick $b,c\in \U$ such that {$d(b,e)<\gamma$ and $d(c,f)< \delta$.}
    Thus, since $\sigma^\star$ and $\theta^\star$ are isometric embeddings, {$d(\sigma(b), \sigma^\star(e))=d(b,e)$ and $d(\theta(c), \theta^\star(f))=d(c,f)$. }
    By the triangle inequality, {$d(\theta(c), \sigma^\star(e))\leq d (\sigma^\star(e),\theta^\star(f))\oplus d(\theta^\star(f),\theta(c))<\epsilon'\oplus\delta$ } and 
\[d(\sigma(b), \theta(c))< \epsilon'\oplus\delta\oplus\gamma\;.\]
 However, by the construction of $\sigma$ and $\theta$ in Lemma~\ref{Urilemma2}, if $\alpha=d(a,b)$, $\beta=d(a,c)$, we have that $d(\sigma(b), \theta(c))=\alpha\oplus\beta$. Comparing the above equations, we obtain that
 \[\alpha\oplus\beta<\epsilon'\oplus\delta\oplus\gamma\;.\]
Recall also that $\sigma(a)=a$, and so, since $\sigma$ is an {isometric embedding,}
$d(a, \sigma(b))=\alpha$. From this and the above inequalities, we obtain that
 
 \[d(a, \sigma^\star(e))\leq \alpha \oplus d(\sigma(b), \sigma^\star(e))=\alpha\oplus\gamma\leq \alpha\oplus\beta\oplus\gamma <\epsilon'\oplus\delta\oplus\gamma\oplus\gamma<\epsilon\;,\]
 where the last inequality follows from the choice of $\delta$ and $\gamma$. But then, as desired, $\sigma^\star(e)\in B_\epsilon(a)$.
    
\end{proof}

\begin{lemma}\label{lem: buildmetric} 
Let $\rr$ be a standard distance monoid, let $F$ be a finite $\rr$-metric space, and fix $a\in F$ and $\epsilon\in R\setminus \{0\}$. {Then there are a finite $\rr$-metric space $F\cup\{b\}$ with $b\notin F$ and $\delta\in R\setminus \{0\}$ with $\delta<\epsilon$ such that $d(a,b)=\delta$ and the map fixing $F\setminus\{a\}$ pointwise and {transposing} $a$ 
{and} $b$ is an isometry.}
\end{lemma}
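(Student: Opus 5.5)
The natural construction is to make $b$ a ``twin'' of $a$. Define $d(b,x):=d(a,x)$ for all $x\in F\setminus\{a\}$, and $d(a,b):=\delta$ for some $\delta\in R\setminus\{0\}$ with $\delta<\epsilon$ still to be chosen. Standardness of $\rr$ gives a dense order, so nonzero elements below $\epsilon$ exist: apply the standardness clause with $s=0<\epsilon=r$ to get $t\neq 0$ with $t=0\oplus t<\epsilon$. We then take $\delta:=t$.

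With this definition, the transposition of $a$ and $b$ that fixes $F\setminus\{a\}$ pointwise preserves every distance. Distances between points of $F\setminus\{a\}$ are untouched, and $d(a,x)$ and $d(b,x)$ are swapped but are equal for such $x$. Finally $d(a,b)=d(b,a)$. So the isometry claim is immediate once $F\cup\{b\}$ is shown to be an $\rr$-metric space.

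Symmetry and positivity are clear from the definition, since $\delta\neq 0$ and $d(b,x)=d(a,x)\neq 0$ for $x\neq a$. The only work is checking the triangle inequality on triples that contain $b$.

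Triples $(b,x,y)$ with $x,y\in F\setminus\{a\}$ are copies of the triples $(a,x,y)$, so they hold because $F$ is a metric space.

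Triples containing both $a$ and $b$ split into three inequalities, for $x\in F\setminus\{a\}$:
\begin{itemize}
\item $d(a,b)=\delta\leq d(a,x)\oplus d(x,b)=d(a,x)\oplus d(a,x)$;
\item $d(a,x)\leq d(a,b)\oplus d(b,x)=\delta\oplus d(a,x)$, which holds by positivity;
\item $d(b,x)\leq d(b,a)\oplus d(a,x)$, which is the same as the previous one.
\end{itemize}

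So the one genuine constraint is $\delta\leq d(a,x)\oplus d(a,x)$ for every $x\in F\setminus\{a\}$. Positivity gives $d(a,x)\leq d(a,x)\oplus d(a,x)$, so it suffices to take $\delta\leq\min\{d(a,x):x\in F\setminus\{a\}\}$. This is the only step where some care is needed, because in the ultrametric or truncated cases $r\oplus r$ may equal $r$, so one cannot hope for more room than this.

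Since $F$ is finite and the distances are nonzero, this minimum $m$ exists and is nonzero; if $F=\{a\}$ there is no constraint. We therefore choose $\delta\in R\setminus\{0\}$ with $\delta<\epsilon$ and $\delta\leq m$, for instance $\delta$ below $\min\{\epsilon,m\}$ obtained by standardness as above. No real obstacle remains beyond this bookkeeping.
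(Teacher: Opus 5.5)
Your proposal is correct and matches the paper's proof: make $b$ a twin of $a$, use standardness to choose a nonzero $\delta$ below both $\epsilon$ and $\min\{d(a,f) : f\in F\setminus\{a\}\}$, and check only the triangle inequalities for triples containing $b$. The paper takes $\delta$ strictly below this minimum, but your non-strict bound works equally well, and you also spell out, via the standardness clause with $s=0$, why such a $\delta$ exists, which the paper leaves implicit.
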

\begin{proof} Since $\rr$ is standard, we can choose $\delta\in R\setminus\{0\}$ such that $\delta<\min\{\epsilon, \{d(f, a)\ \vert f\in F\setminus\{a\}\}\}$. We just need to prove that the finite structure with domain $F\cup \{b\}$ {with the distances of $F$ together with}
\[\{d(b, f):=d(a, f) \ \vert \ f\in F\setminus \{a\}\}\cup \{d(a, b)=\delta\}\]
is an $\rr$-metric space. {Since $F$ is an $\rr$-metric space and $\delta\neq 0$, we only need to verify the triangle inequalities for the triplets involving 
$b$.}
 {For a triplet $\{b,f,f'\}$ with $f,f'\in F\setminus \{a\}$, the required inequalities hold because $b$ has the same distances to $f$ and $f'$ as $a$ does. {Now consider a triplet} $\{a,b,f\}$ with $f\in F\setminus\{a\}$: first note that $d(a,b)=\delta< d(a,f)\leq d(a, b)\oplus d(f,b) $. Second $d(a,f)=d(b,f)\leq d(a, b)\oplus d(f,b) $ again by the assigned values. This shows $F\cup \{b\}$ is a finite $\rr$-metric space, and clearly by construction the map fixing $F\setminus\{a\}$ and %
 {swapping} $a$ {and} $b$ is an isometry.}
\end{proof}

\begin{theorem}\label{thm:urysohnmain} Let $\rr$ be 
\begin{itemize}
    \item EITHER a countably infinite metrically complete standard distance monoid; 
    \item OR {$(\mathbb{R}_{\geq 0}, \leq, +, 0)$};
    \item OR $([0,1], \leq, +_1, 0)$, where $+_1$ is truncated addition.
\end{itemize}
Let $\U:=\U_{\rr}$. 
{Then $\zar=\mpw\subsetneq\pw$ on $\ie(\U)$.} 
That is, $\mpw$ is minimal amongst Hausdorff semigroup topologies on $\ie(\U)$, and it is strictly coarser than $\pw$.
\end{theorem}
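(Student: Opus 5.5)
The plan is to split the statement into two parts: $\zar=\mpw$ on $\ie(\U)$, and $\mpw\subsetneq\pw$. Minimality of $\mpw$ then comes out of \cref{Urirealproof}.

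For the equality $\zar=\mpw$, everything reduces to \cref{Urirealproof}, which needs $\ie(\U)$ to be a $\mpw$-closed submonoid of $\ie(\U)$ (trivially true) that is pinching and spreading with respect to some dense submonoid $\mathcal{T}$.
\begin{itemize}
\item In the countable metrically complete standard case, take $\mathcal{T}=R$. Pinching is \cref{Urilemma1} and spreading is \cref{Urilemma2}.
\item For $\rr=\mathbb{R}_{\geq0}$ take $\mathcal{T}=\mathbb{Q}_{\geq0}$, and for $\rr=[0,1]$ take $\mathcal{T}=\mathcal{Q}_1$. In both cases pinching is \cref{realcor1} and spreading is \cref{realcor2}.
\end{itemize}
The real monoids are standard, and these $\mathcal{T}$ are dense submonoids, so the hypotheses of \cref{Urirealproof} are met. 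That theorem yields $\mpw=\zar$ and minimality of $\mpw$ among Hausdorff semigroup topologies. The only thing to watch is that the basic sets $W_{(a,b,\epsilon)}$ with $\epsilon\in\mathcal{T}$ still generate $\mpw$. This holds by density of $\mathcal{T}$ in $\rr$.

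For $\mpw\subseteq\pw$, note that each $W_{(a,b,\epsilon)}$ is a union of sets $\nn_{\langle a,c\rangle}$ with $c\in B_\epsilon(b)$, hence is $\pw$-open. (Equivalently, $\zar\subseteq\pw$ because $\pw$ is a $T_1$ semigroup topology.)

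For strictness, I will show that the $\pw$-open set $\nn_{\langle a,a\rangle}$, the embeddings fixing a point $a$, is not $\mpw$-open. It suffices to show that every basic $\mpw$-neighbourhood of the identity,
\[\bigcap_{i<n}W_{(f_i,f_i,\epsilon_i)},\]
contains an embedding moving $a$. We may assume $a$ is among the $f_i$; set $F=\{f_0,\dots,f_{n-1}\}$ and $\epsilon=\min_i\epsilon_i$. Apply \cref{lem: buildmetric} to $F$, $a$ and $\epsilon$ to get an abstract point $b$ with $0<d(a,b)=\delta<\epsilon$ such that swapping $a$ and $b$ while fixing $F\setminus\{a\}$ is an isometry.

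In the countable case, the extension property realises $b$ inside $\U$ over $F$. Homogeneity then gives an isometry $g$ of $\U$ fixing $F\setminus\{a\}$ and sending $a$ to $b\neq a$. This $g$ moves each $f_i$ by less than $\epsilon_i$, so it lies in the neighbourhood but not in $\nn_{\langle a,a\rangle}$.

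In the real cases, $F$ is a finite subset of the complete Urysohn space, and the real Urysohn space and sphere are finitely injective and homogeneous. So I will invoke homogeneity of $\U_{\mathbb{R}}$ and $\U_{[0,1]}$ for finite partial isometries, which is the classical characterisation recalled at the start of the section.

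The main obstacle I expect is bookkeeping rather than substance. One point is checking that $\delta$ from \cref{lem: buildmetric} really lies in $R$ as required in each of the three cases. The other is making sure the real-case homogeneity is applied to finite sets that need not lie in the rational skeleton. If citing homogeneity of the completion is considered insufficient, I would instead approximate: perturb $F$ into $\U_{\mathbb{Q}}$ and transfer using the continuity arguments from \cref{realcor1}.
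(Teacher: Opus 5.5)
Your proposal is correct and follows essentially the same route as the paper. You obtain $\zar=\mpw$, and hence minimality, from \cref{Urirealproof} fed by \cref{Urilemma1} and \cref{Urilemma2} in the countable case and by \cref{realcor1} and \cref{realcor2} in the real cases. You obtain strictness from \cref{lem: buildmetric} together with the extension property and homogeneity of $\U$.

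The only difference is cosmetic. You work at the identity with $\nn_{\langle a,a\rangle}$, and reducing to neighbourhoods $\bigcap_i W_{(f_i,f_i,\epsilon_i)}$ silently uses standardness to pick $t_i$ with $d(a_i,b_i)\oplus t_i<\epsilon_i$. The paper instead works at an arbitrary $h$ in a basic $\mpw$-open set and post-composes with the isometry $g$, showing that no nonempty basic $\mpw$-open set is contained in any $\nn_\phi$.
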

\begin{proof} 
The fact that $\zar=\mpw$, and hence minimality of $\mpw$, follows from the lemmas above and \cref{Urirealproof}. 
It remains to show that $\mpw$ is strictly coarser than $\pw$. {
For this, it suffices to find a 
$\pw$-open set which is not $\mpw$-open.}

Let $W$ be a non-empty $\mpw$-basic open set of the form
\[W:=\bigcap_{i=1}^n W_{(a_i, b_i, \epsilon_i)}\:.\]

{We show $W\nsubseteq\nn_\phi$  for every bijection $\phi:C\to D$ between non-empty finite subsets $C$ and $D$ of $\U$. Let $h\in W$. If $h\not\in \nn_\phi$, we are done; so assume $h\in \nn_\phi$ which means $h_{\upharpoonright C}=\phi$.
Put $A:=\{a_i\vert 1\leq i\leq n\}$. As $\rr$ is standard for each $i$ we can choose $t_i\in R\setminus \{0\}$ such that $d(h(a_i),b_i)\oplus t_i<\epsilon_i$ and set $\epsilon:=\min\{t_i\vert 1\leq i\leq n\}$.
Fix $a\in C$, and let $a':=h(a)=\phi(a)$. Let $F:=h(A\cup C)=h(A)\cup D$. By \cref{lem: buildmetric}, there is a finite $\rr$-metric space $F\cup\{c\}$ with $d(a', c)<\epsilon$ such that the map $\xi:F\cup\{c\}\to F\cup \{c\}$ fixing $F\setminus\{a'\}$ and sending $a'$ to $c$ is an isometry. By the extension property the embedding of $F$ into $\U$ extends to an embedding of $F\cup\{c\}$ into $\U$; and by homogeneity $\xi$ extends to an isometry $g\in\mathrm{I}(\U)$.
Consider the isometric embedding $\ell:=g\circ h$. First note that $\ell(a)=g h(a)=g(a')=c$, and $d(c, a')>0$, so $\ell(a)\neq \phi(a)$ and hence $\ell\not\in\nn_\phi$. Second, we check that $\ell\in W$. Fix $1\leq i\leq n$. If $h(a_i)\neq a'$, then $g$ fixes $h(a_i)$ so $\ell(a_i)=h(a_i)$ and therefore $d(\ell(a_i),b_i)=d(h(a_i),b_i)<\epsilon_i$. If $h(a_i)=a'$, then $\ell(a_i)=c$ and
\[d(\ell(a_i),b_i)=d(c,b_i)\leq d(c,a')\oplus d(a',b_i)<\epsilon\oplus d(h(a_i),b_i)\leq t_i\oplus d(h(a_i),b_i)<\epsilon_i,\]
where the last inequality follows from the choice of $t_i$.
So we can conclude that $\ell\in W$. In particular, $\ell \in W\setminus \nn_\phi$ as desired. This concludes the proof that $\mpw\subsetneq\pw$.}
\end{proof}

\begin{remark}
    In \cite{gonzalesghadernezhad}, all group topologies coarser than $\pw$ on the isometry groups of the Urysohn space and the Urysohn sphere have been classified. It seems plausible that, by modifying the arguments in Section 5 of \cite{gonzalesghadernezhad} to adapt them to the semigroup and monoid setting, one may show that there is no intermediate semigroup topology strictly between $\pw$ and $\mpw$.
\end{remark}

\textbf{Acknowledgements:} We would like to thank Tingxiang Zou for a lovely talk about lovely pairs. This work was supported by UKRI EP/X024431/1. Funded by the European Union (ERC, POCOCOP, 101071674). Views and opinions expressed are however those of the authors only and do not necessarily reflect those of the European Union or the European Research Council Executive Agency. Neither the European Union nor the granting authority can be held responsible for them. This research was funded in whole or in part by the Austrian Science Fund (FWF) [I 5948]. For the purpose of Open Access, the authors have applied a CC BY public copyright licence to any Author Accepted Manuscript (AAM) version arising from this submission. J. de la Nuez Gonz\'alez is supported by the Mid-Career Researcher Program (RS-2023-00278510) through the National Research Foundation funded by the government of Korea and by the KIAS individual grant SP084001.
 \printbibliography

\end{document}